\newdimen\parindentt
\definecolor{verde}{rgb}{0,0.5,0}
\definecolor{laranja}{rgb}{0.95,0.45,0}
\definecolor{vermelho}{rgb}{0.666,0,0}
\newtheorem{theorem}{Theorem}[section]
\newtheorem{proposition}[theorem]{Proposition}
\newtheorem{lemma}[theorem]{Lemma}
\newtheorem{claim}[theorem]{Claim}
\newtheorem{question}[theorem]{Question}
\theoremstyle{definition}
\newtheorem{definition}[theorem]{Definition}
\newtheorem{example}[theorem]{Example}
\theoremstyle{remark}
\newtheorem{remark}[theorem]{Remark}
\numberwithin{equation}{section}
\def\cal#1{\mathcal{#1}}
\def\bb#1{\mathbb{#1}}
\renewcommand{\d}[1]{\ensuremath{\operatorname{d}\!{#1}}}
\tikzset{
  symbol/.style={
    draw=none,
    every to/.append style={
      edge node={node [sloped, allow upside down, auto=false]{$#1$}}}
  }
}
\newcommand{\expo}{\mathrm{exp}}
\newcommand{\M}{\mathbb{M}}
\newcommand{\R}{\mathbb{R}}
\newcommand{\JH}{J}
\newcommand{\FH}{\widehat{\mathcal{F}}}
\newcommand{\LH}{\widehat{L}}
\newcommand{\metric}{\mathbf{g} }
\newcommand{\linearizedbasic}{C^{\infty}(\{U_i\})^{\ell}}
\newcommand{\closedlinearizedbasic}{W^{1,p}(\{U_i\})^{\ell}}
\newcommand{\closedlinearizedbasictwo}{W^{1,2}(\{U_i\})^{\ell}}
\newcommand{\paralleltransport}{\mathcal{P}}
\newcommand{\weight}{\mathcal{M}}
\newcommand{\vizinhancaB}{P}
\newcommand{\formacurvaturamedia}{\mathsf{k}}
\newcommand{\vol}{\mathrm{\, \, vol}}
\newcommand{\Mprincipal}{M^{0}}
\newcommand{\density}{\, \upsilon}
\newcommand{\efunction}{\mathsf{e}}
\newcommand{\ufunction}{u}
\newcommand{\wfunction}{w}
\newcommand{\lfunctional}{\mathtt{l}}
\newcommand{\polarsection}{\Sigma}
\newcommand{\basicspace}{\mathcal{B}}
\newcommand{\weylgroup}{\mathrm{W}}
\title[SRF, variational problems and principle of symmetric criticalities]{Singular Riemannian Foliations, variational problems and  Principles of Symmetric Criticalities}
\author[Alexandrino]{Marcos M. Alexandrino}
\address{Instituto de Matemática e Estatística da Universidade de São Paulo (IME-USP), \\ R. do Mat\~{a}o, 1010 - Butant\~{a}, S\~{a}o Paulo - SP, 05508-090.}
\email{malex@ime.usp.br}
\author[Cavenaghi]{Leonardo F. Cavenaghi}
\address{Instituto de Matemática, Estatística e Computação Científica (IMECC-Unicamp) \\
da Universidade Estadual de Campinas , Cidade Universitária, Campinas - SP, 13083-856}
\email{leonardofcavenaghi@gmail.com}
\author[Corro]{Diego Corro}
\address{School of Mathematics, Cardiff University,\\  Abacws Building Senghennydd Road, Cathays, Cardiff, CF24 4AG.}
\email{diego.corro.math@gmail.com}
\author[Inagaki]{Marcelo K.  Inagaki}
\address{Instituto de Matemática e Estatística da Universidade de São Paulo (IME-USP), \\ R. do Mat\~{a}o, 1010 - Butant\~{a}, S\~{a}o Paulo - SP, 05508-090.}
\email{kodi.inagaki@gmail.com}
\keywords{}
\begin{document}

\begin{abstract}
A singular foliation $\mathcal{F}$ on a complete Riemannian manifold $M$ is  called Singular Riemannian foliation (SRF for short) if its leaves are locally equidistant, e.g., the partition of $M$  into the orbits of a Lie group action by isometries. 
In this paper, we investigate variational problems in compact Riemannian manifolds equipped with SRFs with special properties,
 which we name as AVP. Examples of such SRFs being considered include isoparametric foliations,  
SRFs on Euclidean fiber bundles, and 
the partition of $M$  into the orbits of a Lie group acting by isometries. More precisely, we prove an analog  to  Palais' Principle of Symmetric Criticality
for  $\mathcal{F}$-symmetric integral  operators on the  Banach spaces $W^{1,p}(M)$. This  result 
together with a  version of the Rellich–-Kondrachov--Hebey--Vaugon Embedding Theorem for $\mathcal{F}$-basic Sobolev functions allows us 
to circumvent difficulties with Sobolev's critical exponents when  considering applications of techniques from Calculus of Variations to find solutions to PDEs.  
To exemplify this, we prove the existence of countably infinite many weak solutions to a class of variational problems, which includes $p$-Kirchhoff
 problems for manifolds equipped with AVP.
\end{abstract}

\maketitle
\tableofcontents


\section{Introduction}
\label{Section-Introduction}

 It is known   
nowadays (see, e.g., \cite{hebey1,hebey2, Hebey_2000}, \textbf{\cite{kazdaninventiones,kazadanannals,kazdan1975}, \cite{cavenaghi2021}}) 
that geometric analytic problems modeled on a  Riemannian manifold equipped with 
an isometric action of a Lie group are easier to deal with due to 
the Palais'  Principle of Symmetric Criticality  \cite{palais1979}, among other things. For the sake of motivation,  
let us  briefly  review a version of this principle.

Let $G$ be a (connected) closed subgroup of the group of   isometries of a compact Riemannian manifold $(M,\metric)$,
$\mu\colon G\times M\to M$ the induced isometric action (i.e., $\mu(g,x)=g(x)$) 
 and $\mathcal{C}=\{\vec{X}\}$ the set of Killing vector fields induced by the $\mu$ action, i.e., 
 for $X\in\mathfrak{g}$, where $\mathfrak{g}$ is the Lie algebra of $G$, 
we consider the induced vector field $\vec{X}(\cdot)=\frac{d}{d t} \big( \mu(\efunction^{t X},\cdot) \big)\Big|_{t=0}$.
Following the conventions in Control Theory \cite{Agrachev}, let  $\efunction^{tX}=\varphi_t$ denote the associated flow induced by  vector field $\vec{X}.$
Let $\basicspace:=W^{1,p}(M)^{G}$ be the closure (with respect to the Sobolev norm) of the  vector space of smooth basic functions $b$.
Recall that $b$ is a \emph{basic function} if 
$b\circ \mu(g,\cdot)=b(\cdot)$, for all $g\in G$.  
In other words, if $b$ is constant along each orbit $L_x=G(x)=\{\mu(g,x)\in M, \forall g\in G \}$. 
We say that a $C^{1}$   funcional $J\colon W^{1,p}(M)\to \mathbb{R}$ is \emph{critical symmetric} at $b_0\in \basicspace$ if:
\begin{equation}
\label{equation2-motivation}
\d J(b_0)f\circ \efunction^{t X} =\d J(b_0) f \mathrm{, \, \,  \, \, \, }    \forall f\in C^{\infty}(M).
\end{equation}
For example, consider a  $C^{1}$ functional $J\colon W^{1,p}(M)\to \mathbb{R}$ whose restriction to the subspace  $C^{\infty}(M)$ 
(here denoted by $J|_{C^{\infty}(M)}$) is an integral operator as follows: 
\begin{equation}
\label{equation1-motivation}
J|_{C^{\infty}(M)}(f)=\int_{M} \mathcal{L}(|\nabla f|^{2},f,x) \density_{\metric}, 
\end{equation}
where $ \density_{\metric}$ is the  \emph{Riemannian density} (i.e., the measure induced by the Riemannian metric $\metric$) and 
$\mathcal{L}\colon \mathbb{R}\times\mathbb{R}\times M\to \mathbb{R}$ is a smooth \emph{basic Lagrangian}, i.e.,
$\mathcal{L}\big(s_1,s_2,x\big)= \mathcal{L}\big(s_1,s_2, y\big)$, for $s_1,s_2\in \mathbb{R}$ and $y\in L_{x}, \forall x\in M$. 
The \emph{Principle of Symmetric Criticality of Palais} assures that for $b_{0}\in \basicspace$  
\begin{equation}
\label{equation3-motivation}
 \d J(b_0)|_{\basicspace}=0 \implies \d J(b_0)=0.
\end{equation}
In other words, the principle allows us to look for critical points of $J$ on $\basicspace$, where there exist 
 better compactness embeddings of Sobolev spaces into Lebesgue spaces \cite{hebey3}.

Our main goal in this paper is to generalize the  Principle of Symmetric Criticality for the classes of 
Riemannian manifolds $(M,\metric)$ with singular Riemannian foliations, where several  analytical  geometric problems 
have been recently approached, see e.g., \cite{corro2022}, \cite{Liu-Radeschi,ACG20,Alex-Radeschi-MCF}.

We recall that  a singular foliation $\mathcal{F}=\{L_{x}\}_{x\in M}$ (with embedded leaves) 
on a complete Riemannian manifold $(M,\metric)$ is called a \emph{singular Riemannian foliation} (SRF for short) if its leaves are locally equidistant, 
or equivalently 
if each geodesic that is perpendicular to a leaf 
remains perpendicular to every leaf it meets, see Definition \ref{MALEX-definition-SRF}. 
In addition to the examples of partitions of $M$ into the  orbits of  Lie group actions by isometries (known as \emph{homogeneous SRFs}), there are infinitely many other (non-homogenous) examples of SRFs,
such as   polar foliations  (and, in particular, isoparametric foliations),
and  partitions  of  a compact Riemannian manifold $M$ into  orbits of proper groupoids on compact manifolds; see
\cite{Terng-Thorbergsson-95,Thorbergsson-survey-10,Berndt-Console-Olmos-2016,alexandrino-Bettiol-2015,Radeschi-14, Gromoll-Walschap09,Hoyo-Fernandes-18}.

Although a SRF is not necessarily homogeneous,  
it always carries a set of vector fields tangent to the leaves, which share some common properties with Killing vector fields.
Therefore, we will frequently be considering, throughout this article, 
a \emph{geometric control system} on some (tubular) 
neighborhood  $U$ , i.e.,  a set of vector fields whose domain cover $U$, see \cite{Agrachev}.  
More precisely, we are going to consider a set of  linearized vector fields  tangents to the leaves of the SRF.

\begin{definition}[Linearized vector fields]
\label{definition-linearized-vector-field}
Given a non-trivial (possibly singular) leaf $L_{q}$ of a SRF 
 with embedded leaves 
 we can always find a tubular neighborhood $U=\mathrm{Tub}_{\delta}(L_q)$ of 
the leaf $L_{q}$ and a geometric control system
 $\mathcal{C}(U):=\{ \vec{X}_{\alpha} \}\subset \mathfrak{X}(U)$  
of vector fields tangent to the leaves (the so-called \emph{linearized vector fields with respect to $L_q$} ) such that:
\begin{enumerate}
\item[(a)] $\vec{X}_{\alpha}$ are   
 invariant by the homothetic transformation $h_{\lambda}(\exp^{\nu} w)=\exp^{\nu}(\lambda w)$, where $\exp^{\nu}$ is the normal exponential map; 
\item[(b)] the orbits of  $\{ \vec{X}_{\alpha} \}$  are leaves of a  singular (sub)foliation (called the
\emph{linearized foliation})
$ \mathcal{F}^{\ell}=\{L^{\ell}_{x} \}_{x\in U}\subset \mathcal{F}_{U}$, cf., \cite[Theorem 5.1]{Agrachev}. 
The leaves of $ \mathcal{F}^{\ell}$ are also 
  orbits  from a Lie  groupoid $\mathcal{G}^{\ell}$ and such that for the central leaf we have $L_{q}= L_{q}^{\ell}$, 
	 see \cite{alexandrino2021lie}.
\end{enumerate}
\end{definition}

A SRF is called \emph{orbit-like foliation} (see Definition \ref{definition-orbit-like})
if for each leaf $L_q$ there exists  a small tubular neighborhood 
$U=\mathrm{Tub}_{\delta}(L_q)$ such  that the restricted foliation $\mathcal{F}_{U}$ coincides with the linearized foliation $\mathcal{F}^{\ell}$. 
 In other words, if  $\mathcal{F}_{U}=\mathcal{F}^{\ell}$.  

Now that we have  established the definition of SRF (which generalizes the concept of partition into orbits of  isometric action),  
we  discuss what it means for a variational problem to have symmetry in the context of SRFs.

   The principle of symmetric criticality requires that we consider an isometric action $\mu\colon G\times M\to M$ and not just the partition of orbits 
	$\mathcal{F}=\{G(x)\}_{x\in M}$.  
 It could happen that for another non-isometric action 
$\tilde{\mu}\colon G \times M \to M$,
 but with the same orbits,
 the functional would not be symmetric with respect to $\tilde{\mu}$. 	
This could be  an issue of little relevance,
after all, geometric objects in Riemannian Geometry (such as volume, gradient, etc.) 
are invariant under isometries, and so  often  are the functionals defined in terms of these objects.
 So why bother considering a non-isometric action but orbit equivalent to an isometric one? 
However, when we consider SRF, we have a more complicated issue. What would an \emph{isometric representation of a foliation} be?  
In particular, even if we restrict our attention to an orbit-like foliation, 
the technical problem  is that neither the so-called linearized foliations nor 
their geometric control system is defined over the entire manifold $M$, 
only on tubular neighborhoods of leaves. This forces us to have a \emph{semi-local definition}
(i.e.,  definition of mathematical objects defined  on tubular neighborhoods of leaves) as we discuss below.

\begin{definition}
\label{definition-F-critical-symmetric}
Given a SRF   $\mathcal{F}=\{L_{x}\}_{x\in M}$ on a compact Riemannian manifold $(M,\metric)$, 
a $C^{1}$ operator $J\colon W^{1,p}(M)\to\mathbb{R}$ is called $\mathcal{F}$-\emph{critical symmetric} if:
\begin{enumerate}
\item[(a)] for each leaf $L_q$, we can find a small tubular neighborhood
$U=\mathrm{Tub}_{\delta}(L_q)$; 
\item[(b)] there exists some  geometric control system $\mathcal{C}(U)=\{\vec{X}_{\alpha}\}\subset \mathfrak{X}(U)$ 
of linearized vector fields (with respect to $L_q$) that generates $\mathcal{F}^{\ell}\subset \mathcal{F}_{U} $ so that,
\[
\mathrm{d} J(b) (\ufunction \circ \efunction^{t X_{\alpha}}) = \mathrm{d} J(b)\ufunction,\]

  $ \forall \ufunction\in C_{c}^{\infty}(U)$, $\forall \vec{X}_{\alpha}\in \mathcal{C}(U)$, and  $\forall  b\in W^{1,p}(M)^{\mathcal{F}}$.
\end{enumerate}
Here $W^{1,p}(M)^{\mathcal{F}}$ denotes the closure of the vector space of smooth basic functions 
(i.e., smooth functions that are constant along the leaves of 
$\mathcal{F}$) with respect to the Sobolev's norm of $W^{1,p}(M)$.   
\end{definition}

Definition \ref{definition-F-critical-symmetric} 
 motivate us to consider the following question.

\begin{question}
\label{question1}
 Can we assure that a given $C^{1}$ operator $J\colon W^{1, p}(M)\to \mathbb{R}$ 
(that includes the natural  operator defined in Equation \eqref{equation1-motivation}) is $\mathcal{F}$-critical symmetric, at least
 for some class of SRF (that should include relevant examples, such as isoparametric foliations)? After all, condition (b) in 
Definition \ref{definition-F-critical-symmetric}  
may sound a bit vague and difficult to verify in concrete examples.
\end{question}

To start to answer Question \ref{question1}, let us look for the class of  SRF  suitable for problems with symmetries.  
We propose as a strategy to examine the operator  $J$ defined 
in Equation \eqref{equation1-motivation} and ask ourselves what properties the  flows $\efunction^{t X}$
must meet for Equation \eqref{equation2-motivation}   to be valid.
It's not hard to see that it is natural to expect the flows to preserve the volume of $M$ (or, more generally, density, if $M$ is not orientable). 
When we explicitly calculate the derivative of $J$ defined in Equation \eqref{equation1-motivation} 
we see how useful it is to assume that the flows $\efunction^{t X}$ take normal spaces of the regular leaves to normal spaces of the regular leaves.
Fortunately, these requirements are easily met in the first two natural examples of foliations  we consider:
\begin{enumerate}
\item partition of $M$ into orbits of an isometric action;
\item SRFs in Euclidian bundles $\mathbb{R}^{k}\to E\to B$ with  Sasaki metrics $\metric^{\tau}$ (see Proposition
\ref{MALEX-flows-preserve-volume}). In particular, these examples  model SRFs near  leaves  
and approximate  the geometry of foliation, cf., \cite{alexandrino2021lie,ACG20}.
\end{enumerate}

We can therefore establish the following new definition. 

\begin{definition}[AVP]
\label{definition-a-v-p}
We say that a SRF $\mathcal{F}=\{ L_{x}\}_{x\in M}$ 
with closed leaves on a  complete Riemannian  manifold $(M,\metric)$ 
is \emph{adapted to  variational problems} (AVP for short) if 
for each leaf $L_{q_0}$ there exists a tubular neighborhood $U=\mathrm{Tub}_{\delta}(L_{q_0})$ and a 
(geometric control) system 
$\mathcal{C}(U)=\{\vec{X}_{\alpha}\}\subset\mathfrak{X}(U)$ (namely  \emph{an associated AVP system})
of complete linearized vector fields (concerning $L_{q_0}$) 
whose orbits coincide with $\mathcal{F}^{\ell}$  and their flows  $\efunction^{t X_{\alpha}}$ preserve:
 \begin{enumerate}
\item[(a)] the normal distribution $\nu(\mathcal{F})$ (with respect to original metric $\metric$) of the regular leaves;
\item[(b)] the Riemannian density $\density_{\metric}$,  
 i.e, the measure induced by the Riemannian metric $\metric$.  
\end{enumerate}
\end{definition}

The next result ensures  that isoparametric foliations (a relevant class of SRF, see Section \ref{Section-Isoparametric-are-AVP}) 
are also AVP foliations.

Recall that  a SRF $\mathcal{F}$ is called \emph{isoparametric} if
\begin{enumerate}
\item the mean curvature vector field $\vec{H}$ is basic, when restricted to the regular stratum  $M^{0}$ (i.e,
the open subset that is the union of leaves with maximal dimension);
\item the normal distribution $\nu(\mathcal{F})$ of the foliation $\mathcal{F}$ restricted to $M^{0}$ is integrable, 
see Definition~\ref{definition-polar-isoparametric}. 
\end{enumerate}

\begin{theorem}
\label{theorem-isoparemetric-is-AVP}
Let $\mathcal{F}$ be an isoparametric foliation on a compact Riemannian manifold $(M,\metric)$. 
 Then $\mathcal{F}$ is an
 AVP foliation, i.e.,  adapted to variational problems.  
 \end{theorem}

This result shows already that the class of AVP foliations is quite rich. The next result provides an answer to Question~\ref{question1}.

\begin{theorem}
\label{proposition-operator-J-kirschoff-generalized}
Let $\mathcal{F}$ be an AVP on a compact Riemannian manifold $(M,\metric)$. 
Consider a   $C^{1}$  operator  $J_{\lambda}\colon W^{1,p}(M)\to\mathbb{R}$ that when  restricted 
to the  subspace $C^{\infty}(M)$ is 
\begin{equation}
\label{equation-proposition-J-lambda}
J_{\lambda}(u):= \weight\Big(\int_{M} | \nabla u |^{p}\density_{\metric}\Big)\int_M\cal L(|\nabla u|^2,u,x)\density_{\metric} 
- \frac{\lambda}{c}\left(\int_MF(u,x)\density_{\metric}\right)^{r+1},
\end{equation}
where 
\begin{itemize}
\item $r, c, \lambda>0$ are positive constants   and $p\in [2,n[$; 
\item  $F\colon \mathbb{R}\times M\to\mathbb{R}$  and  $\mathcal{L}\colon \mathbb{R}\times\mathbb{R}\times M\to \mathbb{R}$ are   $C^{1}$ and  
$\mathcal{F}$-basic, i.e., $F(s,y)=F(s,x)$ and  $\mathcal{L}(s_1,s_2,y)=\mathcal{L}(s_1,s_2,x)$ $\forall s_1,s_2,s\in\mathbb{R}$,
  and $\forall y\in L_{x}$,  for each fixed $x\in M$;
\item $\weight\colon \mathbb{R}\to\mathbb{R}$ is $C^{1}$. 
\end{itemize}
 Then $J_{\lambda}$ is $\mathcal{F}$-critical symmetric. 
\end{theorem}
We remark that the operator in Theorem~\ref{proposition-operator-J-kirschoff-generalized} above    
describes the energy operator of the general $p$-Kirchhoff's equation,
see Section~\ref{section-An abstract setup for variational problems with (and via) symmetries} for more details.

Differentiating  the operator $J$  presented in Equation \eqref{equation-proposition-J-lambda} (restricted to $C^{\infty}(M)$), we have  for all $b\in C^{\infty}(M)^{\mathcal{F}}$, 
  an integral operator on  $C^{\infty}(M)$ of a very particular form:
\[
\mathrm{d} J_{\lambda}(b)\wfunction=\int_{M}\big( \mathcal{L}_{1}^{\ell}(b)\, \metric (\nabla b, \nabla \wfunction) +  
\mathcal{L}_{2}^{\ell}(b) \wfunction \big) \density_{\metric}, \, \, \forall \wfunction\in C^{\infty}(M).
\]
The next result assures that the principle of  symmetric criticality   holds for  all operator $J$
whose derivatives have the same as $\mathrm{d} J_{\lambda}$ above.

\begin{theorem}
\label{theorem-simple-version-Palais-principle-variational-formulation}
Let $\mathcal{F}$ be an orbit-like,   AVP foliation on a compact (connected) Riemannian manifold $(M,\metric)$, 
and $J\colon W^{1, p}(M) \to \mathbb{R}$ a $C^{1}$ operator such that for each smooth basic function 
$b_{0}\in C^{\infty}(M)^{\mathcal{F}}$ we have
\[
\mathrm{d} J(b_{0})\wfunction=\int_{M}\big( \mathcal{L}_{1}^{\ell}(b_{0})\, \metric (\nabla b_{0}, \nabla \wfunction) +  
\mathcal{L}_{2}^{\ell}(b_{0}) \wfunction \big) \density_{\metric}, \, \, \forall \wfunction\in C^{\infty}(M),
\]
where $ \mathcal{L}_{i}^{\ell}(b_{0})$ is a continuous $\mathcal{F}$-basic function, 
 i.e.,  $ \mathcal{L}_{i}^{\ell}\colon C^{\infty}(M)^{\mathcal{F}}\to C^{0}(M)^{\mathcal{F}}$.  
Assume that  for a fixed $b_{0}\in  W^{1,p}(M)^{\mathcal{F}}$ 
we have $ \mathrm{d}J(b_0)(b)=0$ for all $b\in   W^{1,p}(M)^{\mathcal{F}}$.
Then  $\mathrm{d}J(b_0)=0$. 
\end{theorem}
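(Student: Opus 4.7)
The plan is to construct a leaf-averaging operator $\mathrm{Av}\colon W^{1,p}(M)\to W^{1,p}(M)^{\mathcal{F}}$ and to prove the invariance identity $\mathrm{d}J(b_0)(w)=\mathrm{d}J(b_0)(\mathrm{Av}(w))$ for every $w\in W^{1,p}(M)$. Since $\mathrm{Av}(w)$ is $\mathcal{F}$-basic by construction, the hypothesis then forces $\mathrm{d}J(b_0)(\mathrm{Av}(w))=0$, so $\mathrm{d}J(b_0)(w)=0$. By compactness of $M$, first cover it by finitely many AVP tubular neighborhoods $U_i=\mathrm{Tub}_{\delta_i}(L_{p_i})$ carrying associated systems $\mathcal{C}(U_i)=\{\vec{X}_\alpha^{(i)}\}$; a subordinate partition of unity and density of $C^\infty(M)$ in $W^{1,p}(M)$ reduce the task to proving the invariance identity for a smooth $u\in C_c^\infty(U)$ on a single such saturated $U$, tested against each flow $\phi_t^\alpha:=\efunction^{t\vec{X}_\alpha}$.

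The core computation is $\mathrm{d}J(b_0)(u)=\mathrm{d}J(b_0)(u\circ\phi_t^\alpha)$, proved by direct change of variables. Three ingredients enter. First, AVP condition (b) makes $\phi_t^\alpha$ volume-preserving, so the substitution $y=\phi_t^\alpha(x)$ has unit Jacobian against $\density_{\metric}$. Second, the $\mathcal{F}$-basic functions $b_0$, $\mathcal{L}_1^\ell(b_0)$, $\mathcal{L}_2^\ell(b_0)$ are invariant under $\phi_{-t}^\alpha$ because $\vec{X}_\alpha$ is tangent to the leaves. Third, one needs the pushforward identity $d\phi_t^\alpha\bigl(\nabla b_0(x)\bigr)=\nabla b_0(\phi_t^\alpha x)$; this does not follow from AVP condition (a) in isolation, but rather uses that a Riemannian foliation automatically makes the flow of any leaf-tangent vector field a transverse isometry, so $d\phi_t^\alpha$ is a fibrewise linear isometry $\nu_x(\mathcal{F})\to\nu_{\phi_t^\alpha x}(\mathcal{F})$, and then the identity drops out of $b_0\circ\phi_t^\alpha=b_0$. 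Using these three facts in succession,
\begin{align*}
\mathrm{d}J(b_0)(u\circ\phi_t^\alpha)
&= \int_M \mathcal{L}_1^\ell(b_0)(x)\,\metric_{\phi_t^\alpha x}\bigl(\nabla b_0(\phi_t^\alpha x),\nabla u(\phi_t^\alpha x)\bigr)\,\density_{\metric}(x) \\
&\qquad +\int_M \mathcal{L}_2^\ell(b_0)(x)\,u(\phi_t^\alpha x)\,\density_{\metric}(x)\\
&= \int_M \mathcal{L}_1^\ell(b_0)(\phi_{-t}^\alpha y)\,\metric_y\bigl(\nabla b_0(y),\nabla u(y)\bigr)\,\density_{\metric}(y) \\
&\qquad +\int_M \mathcal{L}_2^\ell(b_0)(\phi_{-t}^\alpha y)\,u(y)\,\density_{\metric}(y)\\
&= \mathrm{d}J(b_0)(u).
\end{align*}

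Next I would define $\mathrm{Av}(u)$ as the mean of $u$ over the compact leaf $L^\ell_x=L_x\cap U$ using the Haar system of the smooth Lie groupoid $\mathcal{G}^\ell$ associated to the linearized foliation (which exists by the orbit-like hypothesis, with compact isotropies). Since $U$ is saturated and $u$ is compactly supported in $U$, $\mathrm{Av}(u)$ is smooth, compactly supported in $U$, and $\mathcal{F}$-basic on all of $M$. Writing $\mathrm{Av}(u)$ as a Bochner integral in $W^{1,p}(M)$ of composites $u\circ\phi_{t_k}^{\alpha_k}\circ\cdots\circ\phi_{t_1}^{\alpha_1}$ over the orbit, continuity of the functional $\mathrm{d}J(b_0)$ together with the previous invariance applied iteratively yields
$$\mathrm{d}J(b_0)(\mathrm{Av}(u))=\mathrm{d}J(b_0)(u),$$
so $\mathrm{d}J(b_0)(u)=\mathrm{d}J(b_0)(\mathrm{Av}(u))=0$ by the hypothesis applied to the basic function $\mathrm{Av}(u)$.

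The main obstacle is the honest construction of $\mathrm{Av}$ as a bounded operator landing in globally $\mathcal{F}$-basic $W^{1,p}$ functions, and especially the reduction of the orbit average to iterated flows in $\mathcal{C}(U)$ rather than a single $\phi_t^\alpha$—this requires assembling Haar integrals on $\mathcal{G}^\ell$ from the generators in $\mathcal{C}(U)$ and showing compatibility across the finite AVP cover. A groupoid-free alternative is to integrate by parts and rewrite $\mathrm{d}J(b_0)(u)=\int_M F\,u\,\density_{\metric}$ with $F=-\mathrm{div}\bigl(\mathcal{L}_1^\ell(b_0)\nabla b_0\bigr)+\mathcal{L}_2^\ell(b_0)$, then prove $F$ is $\mathcal{F}$-basic by exactly the same combination of AVP (b) with the transverse-isometry argument; this trades the groupoid difficulty for justification of the integration by parts at $W^{1,p}$ regularity, which can be handled by smoothing $b_0$ along the foliation.
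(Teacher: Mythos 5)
Your overall route is the same as the paper's: reduce to tubular neighborhoods via an $\mathcal{F}$-partition of unity, prove the flow-invariance identity $\mathrm{d}J(b_0)(u\circ \efunction^{tX_\alpha})=\mathrm{d}J(b_0)(u)$, and then conclude via an averaging operator onto basic functions. Your invariance computation is correct and is essentially the paper's Lemma \ref{lemma-J-implies-l-linear} (volume preservation from AVP (b), basicness of $b_0$ and $\mathcal{L}_i^\ell(b_0)$, and the transverse-isometry property giving $\mathrm{d}\varphi(\nabla b_0)=\nabla b_0\circ\varphi$).

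The genuine gap is the step you yourself flag as "the main obstacle": the identity $\mathrm{d}J(b_0)(\mathrm{Av}(u))=\mathrm{d}J(b_0)(u)$. Your proposed justification --- write $\mathrm{Av}(u)$ as a Bochner integral of composites $u\circ\efunction^{t_kX_{\alpha_k}}\circ\cdots\circ\efunction^{t_1X_{\alpha_1}}$ and apply the invariance iteratively under the integral --- does not go through as stated, because the groupoid average at $e_p$ integrates over the source fiber $\mathrm{s}^{-1}(p)$, whose elements are holonomy classes of arrows based at $p$; an arrow is not a globally defined diffeomorphism of $U$, and as $p$ varies over $\mathrm{supp}(u)$ there is no single family of flow compositions, measurably parametrized by a fixed measure space, that simultaneously realizes every fiber integral. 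In the homogeneous case one has a fixed compact group acting globally and Fubini does the job; in the general (even orbit-like) case this is exactly where the work lies. The paper's Proposition \ref{proposition-new-MALEX--Av-l-orbitlike} handles it by passing to a fixed lifted leaf $\widetilde{L}$ in the orthonormal frame bundle, approximating the fiber integral by a Riemann sum whose terms \emph{are} globally defined compositions of ``rotation'' and ``permutation'' flows from the AVP system, applying the symmetric functional term by term, controlling the remainders (including the discrepancy vector field $\mathfrak{z}=\widehat{\nabla}b-\nabla b$ between the original and the adapted Sasaki metric), and finally fixing the normalization with a volume-comparison lemma (the average over a sub-neighborhood $\vizinhancaB$ produces the factor $|\vizinhancaB|/|B|$, which must be cancelled by Lemma \ref{lemma-relation-integral-linear-functional-subsets-basic}). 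Your groupoid-free alternative does not escape this either: even granting the integration by parts, passing from $\int_M Fb=0$ for all basic $b$ to $\int_M Fu=0$ for all $u$ requires a disintegration of $\density_{\metric}$ along the leaves compatible with the fiberwise averaging measure, which is the same difficulty in different clothing.
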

\begin{remark} 
We  stress that to increase the number of examples where this principle can be applied, we 
show that by dealing with the slightly more technical concept of linearized basic functions (see Definition \ref{definition-linearized-basic}) 
we may drop the orbit-like condition on the foliation.
\end{remark}

Roughly speaking, the above theorem guarantees that we can reduce a class of variational problems on  $W^{1,p}(M)$ 
to variational problems on the \emph{basic Sobolev space}  $W^{1,p}(M)^{\mathcal{F}}$.  
A  direct adaptation of Hebey and Vaugon's Theorem \cite{Hebey_2000} for isometric actions (see Theorem \ref{thm:reillich})
 guarantees that we have better Sobolev embeddings of  $W^{1,p}(M)^{\mathcal{F}}$ into $L^q$-spaces, allowing us to avoid critical exponents.
These improved Sobolev embeddings, together with Theorem \ref{theorem-Palais-principle-variational-formulation}
 (which in  turn is a generalization of Theorem \ref{theorem-simple-version-Palais-principle-variational-formulation}), 
and a few  classic arguments of the Calculus of Variations
 imply the existence of infinitely many weak foliated 
solutions of the following PDE, which is related to  $p$-Kirchhoff problems (see \cite{martinez,ALVES200585}):
\[
    -\mathrm{m}\left(1/p\int_M|\nabla u|^p\density_{\metric}\right)\Delta_pu 
		= \lambda |u|^{p^*-2}u\left[1/p^*\int_M|u|^{p^*}\density_{\metric}\right]^r.
    \]
More precisely, by setting $\weight (t):= \int_0^t \mathrm{m}(s)\mathrm{d}s$, where  
$\mathrm{m}\colon\mathbb{R}\to\mathbb{R}$ is a non-negative continuous function, 
 we have the following result, which in this format is new (even for isometric actions), as far as we know.
\begin{proposition}\label{corollary:kirshofinho}
Let $(M^n,\metric)$ be a $n$-dimensional ($n\geq 3)$  compact Riemannian manifold with an  AVP foliation $\cal F$.
Assume that the leaves of  $\mathcal{F}$ are closed, and each leaf has positive dimension.
If $n> p\geq 2$ and  $\weight \colon \bb R_+ \rightarrow \bb R_+$ is a non-negative convex $C^1$ function such that
$\lim_{t\rightarrow \infty}\weight(t)= +\infty$, then 
 there is  a sequence $\{\lambda_{i}\}_{i=1}^{\infty} \subset \bb R$ and 
a sequence of distinct non-zero functions
  $\{u_{i}\}_{i=1}^{\infty} \subset W^{1,p}(M)$ 
such that $\mathrm{d}J_{\lambda_{i}}(u_{i}) = 0 $, where
\[
J_{\lambda}(u) = \weight\left(\frac{1}{p}\int_M|\nabla u|^p\density_{\metric}\right)
 - \frac{\lambda}{r+1}\left(\int_M \frac{1}{p^*}|u|^{p^*}\density_{\metric}\right)^{r+1}, r >1.
 \]
 In particular, if $\mathcal{F}$ is also orbit-like then
the infinitely many weak solutions may be chosen to be basic, i.e., 
 $ \{u_{i}\}_{i=1}^{\infty} \subset W^{1,p}(M)^{\mathcal{F}}$. 
\end{proposition}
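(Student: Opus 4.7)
The plan is to apply Theorem \ref{theorem-simple-version-Palais-principle-variational-formulation} to reduce the problem from $W^{1,p}(M)$ to the basic Sobolev space $W^{1,p}(M)^{\mathcal{F}}$. A direct computation gives
\[
\mathrm{d}J_{\lambda}(b)\wfunction = \int_{M}\Big(\mathcal{L}_{1}^{\ell}(b)\,\metric(\nabla b,\nabla\wfunction) + \mathcal{L}_{2}^{\ell}(b)\,\wfunction\Big)\density_{\metric},
\]
with $\mathcal{L}_{1}^{\ell}(b)(x) = \mathrm{m}\big(\tfrac{1}{p}\int_{M}|\nabla b|^{p}\density_{\metric}\big)\,|\nabla b(x)|^{p-2}$ and $\mathcal{L}_{2}^{\ell}(b)(x) = -\lambda\big(\tfrac{1}{p^{*}}\int_{M}|b|^{p^{*}}\density_{\metric}\big)^{r}|b(x)|^{p^{*}-2}b(x)$. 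When $b$ is basic, equidistance of the leaves forces $|\nabla b|$ to be basic, so both $\mathcal{L}_{1}^{\ell}(b)$ and $\mathcal{L}_{2}^{\ell}(b)$ are continuous basic functions. The hypothesis of Theorem \ref{theorem-simple-version-Palais-principle-variational-formulation} is therefore met, and it suffices to produce critical points of $J_{\lambda}$ inside $W^{1,p}(M)^{\mathcal{F}}$.

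Next I would recast the problem as a constrained variational one. Set
\[
E(u) = \weight\!\Big(\tfrac{1}{p}\!\int_{M}|\nabla u|^{p}\density_{\metric}\Big),\qquad G(u) = \tfrac{1}{r+1}\Big(\tfrac{1}{p^{*}}\!\int_{M}|u|^{p^{*}}\density_{\metric}\Big)^{r+1},
\]
both $C^{1}$ and even on $W^{1,p}(M)^{\mathcal{F}}$, and for fixed $c>0$ consider the constraint manifold $\mathcal{M}_{c} = \{u \in W^{1,p}(M)^{\mathcal{F}} : G(u) = c\}$. A critical point of $E|_{\mathcal{M}_{c}}$ satisfies $\mathrm{d}E(u) = \lambda\,\mathrm{d}G(u)$ for some Lagrange multiplier $\lambda \in \mathbb{R}$, which is precisely $\mathrm{d}J_{\lambda}(u) = 0$ on basic directions. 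Because every leaf has positive dimension, the $\mathcal{F}$-version of Rellich--Kondrachov--Hebey--Vaugon (Theorem \ref{thm:reillich}) supplies a compact embedding $W^{1,p}(M)^{\mathcal{F}} \hookrightarrow L^{p^{*}}(M)$, which is the analytic ingredient that bypasses the usual obstruction from the critical Sobolev exponent.

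With compactness in place I would verify the Palais--Smale condition for $E|_{\mathcal{M}_{c}}$: convexity of $\weight$ together with $\lim_{t\to\infty}\weight(t)\geq C>0$ prevents the nonlocal coefficient from collapsing and yields coercivity of $E$ on $\mathcal{M}_{c}$, so any Palais--Smale sequence is bounded in $W^{1,p}$; passing to a weakly convergent subsequence and combining the compact embedding into $L^{p^{*}}$ with a standard $(S_{+})$-argument for the $p$-Laplacian gives strong convergence in $W^{1,p}$. Exploiting the $\mathbb{Z}/2$-symmetry of $E$ and $G$ I would then apply Ljusternik--Schnirelmann theory via Krasnoselskii genus to $\mathcal{M}_{c}$: using an increasing family of finite-dimensional symmetric subsets of $\mathcal{M}_{c}$ built from linearly independent basic functions (which exist in arbitrary number since $W^{1,p}(M)^{\mathcal{F}}$ is infinite-dimensional), one obtains an unbounded sequence of critical values, hence a sequence of distinct non-zero basic critical points $\{u_{i}\}$ with associated multipliers $\{\lambda_{i}\}$. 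By the reduction in the first paragraph, each $u_{i}$ is then a critical point of $J_{\lambda_{i}}$ on all of $W^{1,p}(M)$.

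The main obstacle I expect is the verification of Palais--Smale, which is delicate because of the nonlocal factor $\weight(\tfrac{1}{p}\int|\nabla u|^{p}\density_{\metric})$: if this factor or its derivative $\mathrm{m}$ degenerates along a Palais--Smale sequence, the Lagrange-multiplier equation decouples uncontrollably. The convexity of $\weight$ combined with the lower control at infinity is precisely what is needed to keep $\mathrm{m}$ uniformly positive on the relevant level sets, after which the Hebey--Vaugon compact embedding allows the genus-based Ljusternik--Schnirelmann machinery to be run to completion.
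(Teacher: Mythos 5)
Your first half coincides with the paper's: the authors make the same reduction via Theorem \ref{theorem-simple-version-Palais-principle-variational-formulation}, compute $\mathrm{d}J_\lambda$ exactly as you do, and also fix the $L^{p^*}$-mass, working on $\mathbf{M}_\epsilon^{\mathcal{F}}=\{u\in W^{1,p}(M)^{\mathcal{F}} : \int_M|u|^{p^*}\density_{\metric}=\epsilon^{1/(r+1)}(r+1)^{1/(r+1)}p^*\}$, which they show is a weakly closed codimension-one submanifold using Sard--Smale and the basic Rellich--Kondrachov Theorem \ref{thm:reillich}, before invoking Lagrange multipliers. Where you diverge is in how infinitely many solutions are produced. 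The paper extracts a \emph{single} minimizer of $J_\lambda|_{\mathbf{M}_\epsilon^{\mathcal{F}}}$ by the direct method (coercivity plus weak lower semicontinuity, the latter coming from convexity of $\weight$ via \cite[Corollary 3.9]{brezis2010functional}) and then varies $\epsilon$: minimizers at different levels have different $L^{p^*}$-norms, hence are distinct, and the multiplier $\lambda^*=\lambda^*(\epsilon,\theta)$ changes with the level. This entirely avoids any Palais--Smale analysis. You instead fix one level $c$ and run Ljusternik--Schnirelmann/genus theory there, which obliges you to verify the Palais--Smale condition and the unboundedness of the genus minimax values.

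That is where your argument has a genuine gap. You claim that convexity of $\weight$ together with $\lim_{t\to\infty}\weight(t)\geq C>0$ keeps $\mathrm{m}=\weight'$ uniformly positive on the relevant level sets. This does not follow from the hypotheses: $\weight\equiv C$ is non-negative, convex, $C^1$, and has limit $\geq C$ at infinity, yet $\mathrm{m}\equiv 0$; more generally a convex function with a finite limit at infinity is non-increasing, so $\mathrm{m}\leq 0$ there. Degeneration of the nonlocal coefficient is precisely what breaks the $(S_+)$-argument you rely on for strong $W^{1,p}$-convergence of Palais--Smale sequences, and it also threatens your coercivity claim (the paper's own coercivity step in fact invokes $\lim_{t\to\infty}\weight(t)=\infty$, a condition stronger than the one stated in the proposition, so this issue is shared, but your route leans on it twice). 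The assertion that the genus minimax values $c_k$ on the fixed manifold $\mathcal{M}_c$ form an unbounded sequence is likewise left unproved. Both obstacles evaporate if you replace the Ljusternik--Schnirelmann step by the paper's device of taking one minimizer on each $\mathbf{M}_\epsilon^{\mathcal{F}}$ and letting $\epsilon$ range over $(0,\infty)$.
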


\begin{remark}
The ideas behind the proof of Proposition \ref{corollary:kirshofinho} 
can be generalized,  
allowing  us to prove a sequence of  critical points of the operator $J_{\lambda}$
defined in Theorem \ref{proposition-operator-J-kirschoff-generalized}, 
once the operator satisfies some technical conditions, see Theorem \ref{thm:prettygeneralzinho}.
We emphasize that we only wish to illustrate that our symmetry principle can, in fact,  be applied to the theory of PDEs. 
One of the difficulties  is checking whether technical hypotheses can be met.
 For example, to demonstrate that an operator $J$ associated with a given PDE is of class $C^{1}$, 
experts often create estimates about the growth of  derivatives of  Lagrangian functions. 
Such procedures require very specific characteristics of Lagrangians. 
Our goal is  to stress
how our principle of  symmetric criticality allows one to reduce the search for critical points of certain classes of variational problems 
to the space of basic functions (where there exists better compactness embeddings of Sobolev spaces in Lebesgue spaces), 
thus allowing experts to apply more classical arguments in these spaces.
 \end{remark}

\subsection*{This paper is organized as follows:}

In Section \ref{MALEX-subsection-Linear-afewfacts} we review several facts about singular
Riemannian foliations, as well as their relations with 
Sasaki metrics on fiber bundles. 
In Section \ref{Section-The principle of symmetric criticallity}, we prove 
the  principle of symmetric criticality, i.e.,  
Theorem \ref{theorem-Palais-principle-variational-formulation} (assuming Proposition  \ref{proposition-new-MALEX--Av-l-orbitlike})
that directly implies  
Theorem \ref{theorem-simple-version-Palais-principle-variational-formulation}. 
We also present Theorem \ref{theorem-Diego-general-version-Palais-Hilbert}
whose proof is  alternative  to that of Theorem 
 \ref{theorem-Palais-principle-variational-formulation} for the particular case of  $W^{1,2}(M)$.   
In Section \ref{Section- Symmetric Criticality of Palais on manifold}, we discuss  different aspects of an AVP foliation. 
We start by stressing  why this class of foliation is natural  to consider for variational problems by proving  Theorem \ref{proposition-operator-J-kirschoff-generalized}.
We  briefly discuss  symmetric  linear operators and Sasaki metrics (see Subsections \ref{subsection-Remark-definition-AVP} and 
\ref{subsection-few properties-Section- Symmetric Criticality of Palais on manifold}).  
We also present the average operator $\mathrm{Av}\colon C_{c}^{\infty}(U)\to C_{c}^{\infty}(U)^{\ell}$  
(associated to   the \emph{Linear holonomy groupoid}   $\mathcal{G}^{\ell}$).   
   Moreover, we stress its relations with symmetric linear functionals 
	(see Proposition  \ref{proposition-new-MALEX--Av-l-orbitlike} and 
	Remark \ref{remark-why-integration-groupoid}). In other words, we discuss the    
results used to prove the generalization of Theorem \ref{theorem-simple-version-Palais-principle-variational-formulation}.
In Section \ref{section-exa-AVP} 
 we present more examples of AVP foliations and, in particular, prove that isoparametric foliations are AVP (see Theorem \ref{theorem-isoparemetric-is-AVP}).
In Section \ref{Section-Rellich-Kondrachov}, we review a version of 
a basic Rellich-Kondrachov-Hebey-Vaugon Theorem (see Theorem \ref{thm:reillich}). 
In  Section \ref{section-An abstract setup for variational problems with (and via) symmetries} we apply
Theorem \ref{theorem-Palais-principle-variational-formulation} (which is a generalization of 
 Theorem \ref{theorem-simple-version-Palais-principle-variational-formulation}) 
and Theorem \ref{thm:reillich}) to prove Propositions \ref{corollary:kirshofinho} and Theorem \ref{thm:prettygeneralzinho}.
In Section \ref{section-Metric-foliation-Hilbert-manifolds} we briefly discuss 
metric partitions on Hilbert manifolds and prove Theorem \ref{theorem-CorrosTheorem-A}.
Finally in Section  
\ref{section-appendix} we present  an appendix
with a few facts on the linear holonomy Lie groupoid $\mathcal{G}^{\ell}$.

Due to  the background of the authors, 
this manuscript is written in the context of Differential Geometry, but has strong connections to other areas, such as Geometric Analysis and the Calculus of Variations. We suggest readers primarily
interested in PDE, first read 
Sections \ref{MALEX-subsection-Linear-afewfacts}
 \ref{Section-The principle of symmetric criticallity}, \ref{Section-Rellich-Kondrachov} and
\ref{section-An abstract setup for variational problems with (and via) symmetries}. 
 Readers primarily interested in Riemannian Geometry may be interested in reading first
Section \ref{MALEX-subsection-Linear-afewfacts} 
\ref{Section-The principle of symmetric criticallity},
\ref{Section- Symmetric Criticality of Palais on manifold}, Subsection  \ref{Section-Isoparametric-are-AVP}, 
  Subsection  \ref{Section-Applications-PP} and  Section \ref{section-Metric-foliation-Hilbert-manifolds}.
 We also note that 
 Section \ref{Section- Symmetric Criticality of Palais on manifold}, Subsection  \ref{Section-Isoparametric-are-AVP},
 Section \ref{Section-Rellich-Kondrachov}, 
Section \ref{section-An abstract setup for variational problems with (and via) symmetries} and 
Section \ref{section-Metric-foliation-Hilbert-manifolds} 
 are almost independent from one another, 
which in principle allows the paper to be read in different orders.

\subsection*{Acknowledgments}
The authors thank Jo\~ao Marcos do \'{O},  Llohann Speran\c{c}a,  Gustavo P. Ramos and Mateus M. Melo for fruitful conversations and Alexander Lytchak for some helpful preliminary questions. D. Corro also thanks Juan Carlos Fernández for productive discussions.
We also thank the anonymous reviewer for his (her) various suggestions, 
which significantly contributed to  improving 
article's presentation.

\begin{itemize}
 \item M. M. Alexandrino was supported by grants  \#22/16097-2  and  \#2016/23746-6, S\~{a}o Paulo Research Foundation (FAPESP). 
\item L. F. Cavenaghi was partly supported by the SNSF-Project 200020E\_193062 and the DFG-Priority Program SPP 2026 and by the S\~{a}o Paulo Research Foundation grant \#2022/09603-9.
\item D. Corro was supported by  a uKRI Future Leaders Fellowship [grant number MR/W01176X/1; PI: J Harvey], by the DFG (281869850, RTG 2229 ``Asymptotic Invariants and Limits of Groups and Spaces''), a DGAPA postdoctoral Scholarship of the Institute of Mathematics of UNAM, and DFG-Eigene\-stelle Fellowship CO 2359/1-1.
\end{itemize}

\subsection*{Data Availability Statement} Data sharing not applicable to this article as no datasets were generated or analyzed during the current study.

\section{Review: a few facts about singular Riemannian foliations}
\label{MALEX-subsection-Linear-afewfacts}

In this  section, we fix a few notations, definitions, and     
review several facts on singular Riemannian foliations $\mathcal{F}$, 
most of which can be found in \cite{alexandrino_radeschi_2017,alexandrino2021lie,Molino,Radeschi-notes,alexandrino-Bettiol-2015}. 

We also stress that, throughout this paper, the leaves of $\mathcal{F}$ are closed.

\vspace{0.5\baselineskip}

\subsection{Singular Riemannian foliations}
\label{MALEX-Section-SRF}

\begin{definition}[SRF]
\label{MALEX-definition-SRF}
A \emph{singular Riemannian foliation} on a complete Riemannian manifold $(M,\metric)$ is a partition $\mathcal{F}=\{ L_{x}\}_{x\in M}$ of $M$ into immersed submanifolds without self-intersections (the \emph{leaves}) that satisfies the following properties:
\begin{enumerate}
\item[(a)] $\mathcal{F}$ is a \emph{singular foliation}, i.e. for each $X_p$ tangent to $L_p$ (i.e. the leaf through $p\in M$)  there exists a local vector field $\vec{X}$ so that $\vec{X}(p)=X_p$ and $\vec{X}$ is tangent to the leaves; 

\item[(b)] $\mathcal{F}$ is \emph{Riemannian}, i.e, each geodesic $\gamma$ that starts orthogonal to a leaf $L_{\gamma(0)}$ remains orthogonal to all leaves that it meets.
\end{enumerate}
\end{definition}

\begin{remark}
\label{remark-equivalent-definition-SRF}
Item (a) is equivalent to saying that given a point $q_{0}\in M$, there exists a neighborhood $U$ of $q_{0}$ in $M$, 
a simple foliation $\mathcal{P} = \{P\}$ on $U$ (i.e. given by fibers of submersion on $U$) so that 
 $\mathcal{P}$ is a sub-foliation of $\mathcal{F}_U:=\mathcal{F}|_{U}$ (i.e. for each $x \in U$ we have $P_{x}\subset L_{x}$) and
 the leaf  $P_{q_0} \in \mathcal{P}$ (the \emph{plaque} through $q_0$) is a relatively compact open set of the leaf $L_{q_0}$.  
In particular item (a) implies that $\mathcal{F}\cap S_{q_0}$ is a singular foliation for each transverse submanifold $S_{q_0}$ 
(i.e. $T_{q_0} M = T_{q_0} S_{q_0} \oplus T_{q_0} L_{q_0}$). 
Roughly speaking, item (b) says that the leaves are \emph{locally equidistant}. 
In other words, item (b) is equivalent to saying that there exists $\epsilon>0$ so that if $x\in \partial \mathrm{Tub}_{\epsilon} (P_{q_0})$ 
(the cylinder of radius $\epsilon$ of the plaque $P_{q_0}$)  then the connected component of 
$L_x \cap U$ containing $x$ is contained in $\partial \mathrm{Tub}_{\epsilon} (P_{q_0})$.
\end{remark}

Several properties of SRF are natural generalizations of classical properties
of homogeneous foliations, see e.g., \cite[Chapter 3]{alexandrino-Bettiol-2015}.  Let us review a few of them.

The first one is the generalization of the so-called \emph{slice representation}. 

Let $U=\mathrm{Tub}_{\delta}(L_{q})$ be a (normal) tubular neighborhood around a leaf $L_q$ (where $\delta$ is small enough, i.e., smaller than the focal radius), $\pi\colon U\to L_{q}$ be the metric projection, and  $S_{q}=\pi^{-1}(q)$ be the (normal) \emph{slice}, i.e., 
$S_{q}=\exp_{q}\big(\nu_{q}(L_q)\cap B_{\delta}(0) \big)$
where  $\nu_{q}(L_q)$ is the normal space. Then the \emph{infinitesimal foliation} $\mathcal{F}_{q}=\exp_{q}^{-1}\big(S_{q}\cap \mathcal{F}\big)$ 
turns out  to be a SRF  on a neighborhood of the origin of  the  Euclidean space 
$(\nu_{q}(L_{q}),\metric_q)$. The infinitesimal foliation  $\mathcal{F}_{q}$  
can be extended  via the homothetic transformation $h^{0}_{\lambda}(v)=\lambda v$ to a SRF on $(\nu_{q}(L_q), \metric_q)$. The foliation $\mathcal{F}_{q}$ plays a role in the theory of SRF, similar to the role played by the slice representation in the theory of isometric actions. 

Another general property of SRF that is analogous to the theory of isometric action is that \emph{the partition of $M$ into the leaves of $\mathcal{F}$ with the same dimension is a stratification.} Recall that a \emph{stratification} of $M$ is a partition of $M$ into embedded submanifolds $\{M_{i} \}_{i\in I}$ (called strata) such that: 
\begin{enumerate}
\item[(i)] the partition is locally finite, i.e., each compact subset of $M$ only intersects a finite number of strata;

\item[(ii)] for each $i\in I$, there exists a subset $I_{i}\subset I/\{i\}$ such that the closure of $M_i$ is $\overline{M}_{i}= M_{i}\cup \, \bigcup_{j\in I_{i}} M_{j}$;

\item[(iii)] $\dim M_{j}< \dim M_{i}$ for all $j\in I_{i}$
\end{enumerate}
The stratum with the leaves with larger dimensions (the \emph{regular leaves}) \emph{is a dense open set, and its space of leaves is connected}.
In addition, in the regular stratum, we can consider the \emph{principal leaves}, i.e., the leaves with trivial holonomy. Recall here that, given a regular leaf $L$ and a curve $\beta\colon [0,1]\to L$, a \emph{holonomy map} 
(associated to the  homotopy class $[\beta]$) is a map 
$\varphi_{[\beta]}\colon S_{\beta(0)}\to S_{\beta(1)}$ defined as $\varphi_{\beta}(x)= \exp_{\beta(1)}\circ\mathcal{P}_{\beta}\circ\exp_{\beta(0)}^{-1}(x)$, where 
$\mathcal{P}_{\beta}$ is the parallel transport along $\beta$ with respect to the (flat) Bott connection $\nabla_{X}\xi=[X,\xi]^{\nu}$ (where $X$ is tangent to the leaves and $\xi$ is normal). In other words,  a holonomy map along a curve $\beta$ 
is a map defined from $S_{\beta(0)}$ to $S_{\beta(1)}$ by \emph{``sliding along the plaques"}, 
see also \cite[Definition  5.11, Remark 5.12]{alexandrino-Bettiol-2015} or \cite[Section 2.1]{MM03} or \cite[Section 1.7]{Molino}. 
We denote the space of principal leaves as $M^{0}$.
It is  a dense and open subset of $M$, and the  principal leaves can be described (when  the foliations have closed leaves)
as fibers of a submersion $\pi_{\mathcal{F}}\colon \Mprincipal\to \Mprincipal/\mathcal{F}$, where the leaf space $\Mprincipal/\mathcal{F}$ is in fact a manifold.
In addition, there exists a metric $\metric_{\mathcal{F}}$ on $\Mprincipal/\mathcal{F}$ so that the submersion 
$\pi_{\mathcal{F}}\colon (\Mprincipal,\metric) \to (\Mprincipal/\mathcal{F}, \metric_{\mathcal{F}})$ turns out to be a Riemannian submersion.

In the next subsection, we will consider a particular type of SRF (the so-called orbit-like foliation) 
that is fundamental to understanding the semi-local model of SRF; see  
\cite{alexandrino2021lie}.

We finish this subsection with a useful tool.

\begin{proposition}[\cite{Mendes-Radeschi-19}]
Let $\mathcal{F}=\{L\}$ be a SRF with closed leaves on a compact Riemannian manifold $(M,\metric)$. Consider a finite cover of $M$ by
of geometric tubular neighborhoods of leaves $U_{i}=\mathrm{Tub}_{\delta_i}(L_{q_i})$. 
Then there exists a $\mathcal{F}$-partition of unity $\{\rho_{i}\}$ subordinate
to $\{U_i\}$, i.e. where the functions $\rho_{i}$ is $\mathcal{F}$-basic.   
\end{proposition}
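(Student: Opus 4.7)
The strategy is the standard construction of a smooth partition of unity, but with the auxiliary bump functions built out of the distance-to-leaf functions, which for an SRF are automatically basic. No averaging is needed because the geometric tubular neighborhoods of the cover are already saturated.

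\emph{Step 1 (shrinking).} First I would shrink the given cover. Using the compactness of $M$, I would choose $0<\delta_i'<\delta_i''<\delta_i$ such that the open sets $V_i:=\mathrm{Tub}_{\delta_i'}(L_{p_i})$ still cover $M$, while $\overline{\mathrm{Tub}_{\delta_i''}(L_{p_i})}\subset U_i$ (this uses only that the $U_i$ are open and $M$ is compact, together with a standard shrinking lemma).

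\emph{Step 2 (basic bump functions).} For each $i$, let $f_i(x):=d(x,L_{p_i})^{2}$. Because $U_i=\mathrm{Tub}_{\delta_i}(L_{p_i})$ is a geometric tubular neighborhood of the embedded closed leaf $L_{p_i}$, the squared-distance function $f_i$ is smooth on $U_i$. Moreover $f_i|_{U_i}$ is $\mathcal{F}$-basic: this is precisely the content of the equidistance of leaves in an SRF (Definition~\ref{MALEX-definition-SRF}(b) and Remark~\ref{remark-equivalent-definition-SRF}), which asserts that the cylinders $\{d(\cdot,L_{p_i})=r\}$ in $U_i$ are saturated by leaves, so $f_i$ is constant along each leaf of $\mathcal{F}\cap U_i$. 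Now fix a smooth cutoff $\eta_i\colon\mathbb{R}\to[0,1]$ with $\eta_i\equiv 1$ on $[0,(\delta_i')^2]$ and $\eta_i\equiv 0$ on $[(\delta_i'')^2,\infty)$, and define
\[
\psi_i(x):=\begin{cases}\eta_i(f_i(x)) & x\in U_i,\\ 0 & x\notin U_i.\end{cases}
\]
On $U_i$ the function $\psi_i$ is smooth (composition of smooth maps) and basic (composition with the basic function $f_i$); outside $\overline{\mathrm{Tub}_{\delta_i''}(L_{p_i})}$ it is identically zero. These two expressions agree on the overlap since $\eta_i\circ f_i$ vanishes on $\{f_i\ge(\delta_i'')^{2}\}$, so $\psi_i\in C^{\infty}(M)$ is a globally defined, $\mathcal{F}$-basic, non-negative bump function with $\mathrm{supp}(\psi_i)\subset U_i$ and $\psi_i\equiv 1$ on $V_i$.

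\emph{Step 3 (normalization).} Because the $V_i$ cover $M$, at every $x\in M$ there is some $i$ with $\psi_i(x)=1$, in particular $S(x):=\sum_j\psi_j(x)\ge 1>0$. Set
\[
\rho_i:=\frac{\psi_i}{S}.
\]
Each $\rho_i$ is smooth, non-negative, supported in $U_i$, and $\mathcal{F}$-basic (as a ratio of basic functions), and $\sum_i\rho_i\equiv 1$. This gives the desired basic partition of unity.

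\emph{Main obstacle.} The only delicate point is the claim that $d(\cdot,L_{p_i})$ is basic on the whole geometric tubular neighborhood $U_i$, not merely locally. One must check that the minimizing geodesic from any $x\in U_i$ to $L_{p_i}$ remains in $U_i$ (so the SRF perpendicularity forces foliated cylinders globally inside $U_i$), and that $L_{p_i}$ being embedded and compact makes $f_i=d(\cdot,L_{p_i})^2$ genuinely smooth up to $\partial U_i$. Both facts are standard properties of geometric tubular neighborhoods in the SRF setting (cf. Example~\ref{example-SRF-homogenous} and Remark~\ref{remark-equivalent-definition-SRF}), so invoking them is enough; the rest of the argument is entirely formal.
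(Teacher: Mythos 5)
The paper does not prove this proposition itself (it is quoted from Mendes--Radeschi), but your argument is correct and is exactly the standard construction used there: the distance-to-leaf functions on geometric (hence saturated) tubular neighborhoods are basic by equidistance, so composing with cutoffs and normalizing yields a basic partition of unity. No gaps; the one point you flag as delicate — that $d(\cdot,L_{p_i})$ is constant on entire leaves inside $U_i$ — is indeed the standard global consequence of local equidistance (equifocality/homothetic transformation lemma) for closed leaves.
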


\vspace{0.5\baselineskip}

\subsection{Linearization of SRF and orbit-like foliations}

Given a closed leaf $L_q$ we can always find a $\mathcal{F}$-saturated tubular neighborhood $U=\mathrm{Tub}_{\delta}(L_{q})$ of $L_q$.
The foliation restricted to $U$, i.e. $\mathcal{F}_{U}$ (and in particular the partition by plaques) 
are invariant by the \emph{homothetic transformation} 
$h_{\lambda}\colon U\to U$ defined as $h_{\lambda}(\exp(v))=\exp(\lambda v)$ for each $v\in\nu^{\delta}(L_q)$
where $\lambda\in(0,1]$.

For each smooth vector field $\vec{X}$ in $U$ tangent to $\mathcal{F}$, we associate
a smooth vector field $\vec{X}^{\ell}$, called the \emph{linearization of
$\vec{X}$} with respect to $B$ as:
\[
\vec{X}^{\ell}(q)=\lim_{\lambda\to 0} (h_{\lambda}^{-1})_{*} (\vec{X})
=\lim_{\lambda\to 0} \mathrm{d}(h_{\lambda}^{-1}) \vec{X}\circ h_{\lambda}(q).
\]
Since the restricted foliation $\mathcal{F}_{U}$ is homothety-invariant, $\vec{X}^{\ell}$ is still tangent to $\mathcal{F}$.

\begin{lemma}
\label{MALEX-lemma-fluxo-isometiras}

The flows of the linearized  vector fields (once identified with flows on the normal space $\nu(L_q)$ via the  normal  exponential map)
induce isometries on the fibers of the normal $\delta$-fiber bundle 
$E^{\delta}=\nu^{\delta}(L_q)=\{e_{p}\in\nu(L_q); \, \| e_{p} \|<\delta, \ p\in L_{q} \}$.
\end{lemma}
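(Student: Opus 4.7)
The strategy is to use the homothetic invariance of $\vec{X}^{\ell}$ to first show that its flow is a linear bundle automorphism of $E^{\delta}$, and then to invoke the Riemannian character of $\mathcal{F}$ to upgrade linearity to a fiberwise isometry.

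From its defining limit $\vec{X}^{\ell}=\lim_{\lambda\to 0}\Ad(h_{\lambda})\vec{X}$ together with the semigroup property $h_{\lambda}\circ h_{\mu}=h_{\lambda\mu}$, a direct computation (changing variables $\lambda\mapsto\lambda/\mu$ inside the limit) gives $(h_{\mu})_{*}\vec{X}^{\ell}=\vec{X}^{\ell}$ for every $\mu\in(0,1]$. Equivalently, the flow $\phi_{t}$ of $\vec{X}^{\ell}$ commutes with each $h_{\mu}$ on their common domains. Identify $U$ with $\nu^{\delta}(L_q)$ via the normal exponential map, write points as $(p,v)$ with $p\in L_q$, $v\in\nu_{p}^{\delta}(L_q)$, and split $\phi_{t}(p,v)=(q_{t}(p,v),w_{t}(p,v))$. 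Since $h_{\mu}(p,v)=(p,\mu v)$, the commutation $\phi_{t}\circ h_{\mu}=h_{\mu}\circ\phi_{t}$ becomes
\begin{align*}
q_{t}(p,\mu v)&=q_{t}(p,v),\\
w_{t}(p,\mu v)&=\mu\, w_{t}(p,v),
\end{align*}
for every $\mu\in(0,1]$. Letting $\mu\to 0$ in the first identity (using continuity of $\phi_t$ and that $\vec{X}^{\ell}$ is tangent to $L_q$ along the zero section) forces $q_{t}(p,v)=q_{t}(p,0)=:\bar q_{t}(p)$, so $\phi_{t}$ preserves the fiber structure, covering a diffeomorphism $\bar q_{t}$ of $L_q$. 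The second identity says that $w_{t}(p,\cdot)\co\nu_{p}^{\delta}(L_q)\to\nu_{\bar q_{t}(p)}^{\delta}(L_q)$ is positively homogeneous of degree one, and since it is smooth, it must be linear (differentiate the homogeneity identity at $\mu=0^{+}$, or alternatively observe that the $v$-component of $\vec{X}^{\ell}$ is linear in $v$ by the same homogeneity argument, so that $w_{t}$ solves a linear ODE along the base curve $t\mapsto \bar q_{t}(p)$).

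To upgrade linearity to isometry, consider the distance function $r(x)=d(x,L_q)$; in the normal exponential identification $r(p,v)=|v|$ whenever $|v|<\delta$. Because $\mathcal{F}$ is an SRF, leaves in the tube $U$ are locally equidistant to $L_q$: indeed, any minimizing geodesic from a point of a nearby leaf to $L_q$ is perpendicular to $L_q$ at its endpoint, hence by the Riemannian property is perpendicular to every leaf it meets, and therefore $r$ is constant along each leaf of $\mathcal{F}|_{U}$. Since $\vec{X}^{\ell}$ is tangent to $\mathcal{F}$, its flow $\phi_{t}$ preserves the leaves of $\mathcal{F}|_{U}$, and hence preserves $r$; consequently $|w_{t}(p,v)|=r(\phi_{t}(p,v))=r(p,v)=|v|$. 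Combined with the linearity of $w_{t}(p,\cdot)$, this shows that $w_{t}(p,\cdot)$ is a Euclidean isometry of fibers.

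The main obstacle is the second step of the first paragraph, where smoothness plus positive $1$-homogeneity of $w_{t}(p,\cdot)$ on a ball must be translated into linearity; the cleanest route is probably to avoid this altogether by working directly at the level of the vector field, noting that the homothetic invariance forces the fiber component of $\vec{X}^{\ell}$ to be linear in $v$, so that $w_{t}$ is manifestly the solution of a linear ODE and is thus a linear map of fibers by construction. Everything else is a straightforward translation of the SRF hypotheses through the normal exponential identification.
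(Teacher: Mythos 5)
Your proof is correct. Note, however, that the paper itself offers no proof of this lemma: it is stated in the review section with references to the literature (Molino, Alexandrino--Radeschi, etc.), and the only in-paper argument in this direction is Example \ref{MALEX-example-linearized-vector-field}, which treats the infinitesimal case $L_0=\{0\}\subset\mathbb{R}^m$ by showing that $\vec X^{\ell}(v)=(\nabla_v\vec X)_0$ is linear and then \emph{skew-symmetric} (Killing) because the leaves are tangent to spheres, so that the flow is by orthogonal maps. Your route is the global version of the same idea but organized differently: you derive homothety-invariance of $\vec X^{\ell}$ from its defining limit, deduce that the flow preserves fibers and acts on them by $1$-homogeneous (hence linear) maps, and then obtain norm-preservation from invariance of $r=d(\cdot,L_q)$ rather than from infinitesimal skew-symmetry; linear plus norm-preserving gives orthogonal by polarization. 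Both steps you flag as delicate are fine: smooth positive $1$-homogeneity on a ball does imply linearity via $F(v)=\lim_{\mu\to0^+}F(\mu v)/\mu=DF(0)v$, and your preferred alternative (the fiber component of $\vec X^{\ell}$ is itself linear in $v$, so $w_t$ solves a linear ODE) is exactly what Example \ref{MALEX-example-linearized-vector-field} exploits. The only place where your write-up is a heuristic rather than a proof is the claim that $r$ is constant on the leaves of $\mathcal F|_U$: perpendicularity of one minimizing geodesic to every leaf it meets does not by itself yield constancy of $r$ along a leaf; you should instead invoke the equidistance property directly, which is precisely item (b) of Definition \ref{MALEX-definition-SRF} as unpacked in Remark \ref{remark-equivalent-definition-SRF} (together with closedness of $L_q$ to pass from plaques to the whole tube). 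With that reference supplied, the argument is complete and self-contained.
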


\begin{example}
\label{MALEX-example-linearized-vector-field}

Given a SRF $\mathcal{F}$ with compacts leaves on $\mathbb{R}^{n}$ and $L_{0}=\{0\}$
we have for $\vec{X}$ tangent to the leaves of $\mathcal{F}$ that $\vec{X}^{\ell}(v) = \lim_{\lambda\to 0}\frac{1}{\lambda}\vec{X}(\lambda v) =(\nabla_{v}\vec{X})_{0}$, i.e, $\vec{X}^{\ell}$ is in fact a linear vector field. In addition, one can check that it is also a Killing vector field. This can be proved using the fact that the leaves are tangent to the spheres and hence $0=\langle \vec{X}^{\ell}(v),v\rangle= \langle (\nabla_{v}\vec{X})_{0},v\rangle$.
Note that the Killing vector fields $\vec{X}^{\ell}$ induce a Lie algebra of a  connected Lie subgroup $G^{0}\subset \mathbb{O}(n)$. 
Since by hypothesis $\mathcal{F}$ is compact, it is possible to check that $G^{0}$ is also compact. We have then in this example an homogenous subfoliation $\mathcal{F}^{\ell}=\{G^{0}(v)\}_{v\in\mathbb{R}^{n}}\subset \mathcal{F}$. This is the maximal  homogeneous sub-foliation of $\mathcal{F}$.
\end{example}

The above example illustrates a  more general phenomenon. As explained in Section \ref{Section-Introduction}
given a SRF $\mathcal{F}_{U}$ with closed leaves, the composition of linearized flows tangent to $\mathcal{F}_{U}$ induces a singular sub-foliation $\mathcal{F}^{\ell}\subset \mathcal{F}_{U}$ on $U$, the so-called \emph{linearized foliation}.
The foliation $\mathcal{F}^{\ell}$ can also be seen as the maximal infinitesimally homogeneous sub-foliation of $\mathcal{F}_{U}$. In other words,  
define $\mathcal{F}_{q}^{\ell}$ as the extension of $\exp^{-1}_{q}\big(S_{q}\cap \mathcal{F}^{\ell}\big)$ via the homothetic transformation $h_{\lambda}^{0}(v)=\lambda v$. The foliation $\mathcal{F}_{q}^{\ell}$ is the maximal transverse homogenous sub-foliation of the infinitesimal foliation $\mathcal{F}_{q}$.

\begin{remark}[Connected component of the isotropy]
\label{remark-Connected-component-isotropy}
The leaves of $\mathcal{F}_{q}^{\ell}$ are orbits of a connected Lie group $G^{0}_{q}$. 
We point out that $G_q^0$ is a subgroup of $\mathbb{O}(k)$, where $k=codim(L_q)$.
As in the Example \ref{MALEX-example-linearized-vector-field} 
the Lie algebra of this Lie group is generated by 
the linearization of vector fields tangents to $\mathcal{F}_{q}$. 
Note that given a  leaf $L_x^{\ell}\in \mathcal{F}^{\ell}$ (where $x\in S_q$),  the intersection $L_{x}^{\ell}\cap S_{q}$ 
can be a (finite) union of (connected)  leaves of $\mathcal{F}_q^{\ell}$. 
So it will be convenient to define the isotropy group as a possibly non-connected Lie group $G_q$, 
where the  orbit $G_{q}(x)$   corresponds to the leaves $L_{x}^{\ell}\cap  S_q$, see 
Definition \ref{definition-isotroy-group}. 
\end{remark}

\begin{definition}[Orbit-like foliation]
\label{definition-orbit-like}
A SRF $\mathcal{F}$  with closed leaves is called \emph{orbit-like foliation} if for each leaf $L_q$ and a small tubular neighborhood 
$U=\mathrm{Tub}_{\delta}(L_q)$ the restricted foliation $\mathcal{F}_{U}$ coincides with the linearized foliation $\mathcal{F}^{\ell}$.
\end{definition}

\begin{remark}
\label{MALEX-remark-orbit-like-topological}

To be orbit-like could be considered a topological property in the following sense: let $(M_i,\mathcal{F}_{i})$ be two SRF and $\psi\colon (M_{1},\mathcal{F}_{1})\to (M_{2},\mathcal{F}_{2})$ be a foliated diffeomorphism. Then $(M_{2},\mathcal{F}_{2})$ is orbit-like if and only if $(M_{1},\mathcal{F}_{1})$ is orbit-like, see \cite{alexandrino_radeschi_2017}.
\end{remark}

\subsection{Sasaki metric and SRF}
\label{subsection-sasaki-metric-SRF}

Consider an \emph{Euclidean fiber bundle} $\mathbb{R}^{k}\to E\to B$, 
(i.e. with an inner product $\langle\,,\,\rangle_p$ on each fiber $E_p$ that depends smoothly on $p\in B$)
and an affine connection $\nabla^\tau\colon\mathfrak{X}(B)\times\Gamma(E)\to\Gamma(E)$ \emph{compatible with the metric},   
i.e. $\mathbb{R}$-bilinear map such that:
\begin{itemize}
\item $\nabla_{f X }^{\tau} \zeta= f\nabla_{X}^{\tau}\zeta$,
\item $\nabla_{X}^{\tau} f\zeta=X \cdot f\zeta+ f\nabla_{X}\zeta$,
\item $X\langle \zeta, \eta \rangle=\langle\nabla^{\tau}_X\zeta, \eta\rangle+\langle \zeta, \nabla^{\tau}_X\eta\rangle$. 
\end{itemize}
Here $\Gamma(E)$ denotes the sections of $E$.   
Recall that for  each curve $\alpha\colon(-\epsilon,\epsilon)\to B$  starting at $p=\alpha(0)$ 
and for each vector  $e_p\in E_p$ there  exists a unique vector field $t\to \tilde{\alpha}(t)\in E_{\alpha(t)}$
along the cuve $\alpha$ so that $\tilde{\alpha}(0)=e_p$ and $\frac{\nabla^{\tau}}{d t}\tilde{\alpha}=0$, i.e., it is  \emph{parallel}.
We  can then define \emph{parallel transport} $\paralleltransport_{\alpha}^{t}\colon E_{p}\to E_{\alpha(t)}$ as 
$\paralleltransport_{\alpha}^{t}e_p=\widetilde{\alpha}(t)$. This map is an isometry between the fibers. 
The space spanned 
by all velocities   $\tilde{\alpha}'(0)$ (for all piecewise curves $t\to\alpha(t) \subset B$) is denote by $\mathcal{T}_{e_p}\subset T_{e_p}E$. 
We have then constructed  the  \emph{linear Ehresmann connection} $\mathcal{T}$  
associated to the connection  $\nabla^{\tau}$. This distribution is homothety-invariant, i.e. $\mathrm{d} (h_{\lambda})\mathcal{T} =\mathcal{T}\circ h_{\lambda}$. 

\begin{remark}
Conversely, given a  \emph{homothety-invariant} distribution $\mathcal{T}$ on $E$, transverse to the fibers of $E$, 
we can define a linear map
$\paralleltransport_{\alpha}^{t}\colon E_{\alpha(0)}\to E_{\alpha(t)}$ as $\paralleltransport_{\alpha}^{t}e_p=\widetilde{\alpha}(t)$
where $t\to\tilde{\alpha}(t)$ is  the lift of $\alpha$ starting at $e_p$, i.e. the only curve 
with $\widetilde{\alpha}(0)=e_p$ and $\widetilde{\alpha}'(t)\in \mathcal{T}_{\tilde{\alpha}(t)}$.
We can recover the linear connection
 $\nabla^{\tau}\colon \mathfrak{X}(B)\times\Gamma(E)\to \Gamma(E)$
defining 
$
\nabla_{X}^{\tau}\zeta(p)=\frac{d}{d t} \paralleltransport_{\gamma}^{-t}(\zeta\circ\alpha(t))|_{t=0}
$
where $\zeta\in \Gamma(E)$ and $X\in T_{p}B$ and $\alpha'(0)=X$. In particular if the linear map $\paralleltransport_{\alpha}^{t}$ is  an isometry,
then $\nabla_{X}^{\tau}$ is compatible with the metric.  
 \end{remark}

Now assume the base  manifold  is a Riemannian manifold $(B,\metric)$. Then, we can define   the so-called
\emph{Sasaki metric} (denoted here as $\metric^{\tau}$) on the total space $E$ as the metric such that:  
\begin{itemize}
\item $\mathcal{T}$ is orthogonal to the fibers of $E$,
\item the foot point projection, $\pi\colon (E,\metric^{\tau})\to (B,\metric)$ is a Riemannian submersion,
\item the fibers $E_{p}$ have the flat metric $\langle\cdot,\cdot\rangle_{p}$.
\end{itemize}

\begin{definition}[Holonomy foliation]
\label{definition-holonomy-foliation}
The connection $\nabla^{\tau}$ induces a foliation 
$\mathcal{F}^{\tau}=\{ L^{\tau}_{e_x}\}_{e_x\in E}$ (the so-called
\emph{holonomy foliation}) as follows: we say that two vectors $e_x$ and $e_y$ of $E$ belong to the same leaf $L_{e_x}^{\tau}$
if we can parallel  transport one to the other, i.e. if $\paralleltransport_{\alpha}^{1} e_{x}=e_y$ for some piecewise smooth curve
$\alpha\colon [0,1]\to B$ with $\alpha(0)=x$ and $\alpha(1)=y$. We denote by $\mathrm{Hol}_p$ the holonomy group of the connection $\nabla^\tau$ at $p$. 
Note that $\mathcal{T}\subset T \mathcal{F}^{\tau}$ and  $\mathcal{F}^{\tau}\cap E_{p}= \{\mathrm{Hol}_{p}e_{p} \}_{e_{p}\in E_{p}}$.  
Also note that $\nabla^{\tau}$ is flat if and only if   $\mathcal{T}= T \mathcal{F}^{\tau}$ (in particular  $\mathcal{F}^{\tau}$ is
 regular). The singular foliation $\mathcal{F}^{\tau}$ on $E$ is a SRF with respect to $\metric^{\tau}$. 
\end{definition}

\begin{remark}
\label{remark-Section-background-parallel-translation}
Given  a vector field $\vec{X}$ on $B$, the parallel transport 
along  the integral curves of $\vec{X}$  induces a homothety-invariant  vector field $\vec{Y}\in \mathfrak{X}(E)$. More precisely, 
for $\vec{X}\in \mathfrak{X}(B)$ we can define a vector field
$\vec{Y}\in \mathfrak{X}(E)$ with flow 
$\efunction^{t Y}= \paralleltransport_{\efunction^{t X}}$. The leaves of  $\mathcal{F}^{\tau}$ can be also seen as orbits of these kinds of
vector fields, i.e. of compositions of \emph{parallel translations flow}. 

\end{remark}

 On the one hand, holonomy foliations are natural examples of SRF. 
This can be proved by recalling that the fibers of $E$ are totally geodesics with respect to the Sasaki metric $\metric^{\tau}$, see \cite[Proposition 2.7.1]{Gromoll-Walschap09}.
 
On the other hand, they are always sub-foliations 
of $\mathcal{F}_U$, as we now recall.

 Let $\mathcal{F}$ be a SRF (with  closed leaves) on $(M,\metric)$. 
For a fixed $p_0$ set $B=L_{p_0}$ a leaf  and let  $E = \nu(B)$ be the normal bundle of the leaf $B$. 
Consider the Euclidean vector bundle $\mathbb{R}^{k} \to E \to B$ where the metric on each fiber $E_p$ is the metric $\metric_p$ 
restricted to the normal space $\nu_p B$.
By pulling back  the foliations $\mathcal{F}^{\ell}$ and $\mathcal{F}_{U}$
(via the normal exponential map $\exp^{\nu}$),  we obtain singular foliations on the open set $(\exp^{\nu})^{-1}(U) \subset E$ which can be extended 
by homothety transformations $h_\lambda$ to foliations on the whole vector bundle $E$, since both foliations are invariant under homotheties. From now on, we are going to use the same notation for the foliations $\mathcal{F}^{\ell}$ and 
$\mathcal{F}_{U}$ on $U$ or $E$,
hoping that it will be clear from the context when we are working in the tubular neighborhood $U$ or in the normal bundle $E$.

It is always possible to  find a homothety-invariant distribution $\mathcal{T}$ (where $\dim \mathcal{T}=\dim B$)
tangent to the leaves
of $\mathcal{F}_U$ (and in fact, tangent to the leaves of $\mathcal{F}^{\ell})$ that induces a  connection $\nabla^{\tau}$ compatible with the metrics on the fiber
$(\nu(B),\metric)\to B$, see \cite[Section 5.1]{alexandrino_radeschi_2017} 
 Therefore, we have:  
\[
\mathcal{F}^{\tau}\subset\mathcal{F}^{\ell}\subset\mathcal{F}_{U}, 
\]
where the above three foliations are SRF for the Sasaki metric  $\metric^{\tau}$ induced by $\mathcal{T}$, 
see \cite[Proposition 7.1]{alexandrino_radeschi_2017}. 
In particular, the fact  that $\mathcal{F}^{\tau}$ is a SRF can be proved
using the fact that the fibers of the fiber bundle are totally geodesic with respect to the Sasaki metric. 

\begin{remark}
As we will see in Subsection \ref{subsection-Ex-SRF-Sasaki-AVP}, 
the leaves of the  linearized foliation $\mathcal{F}^{\ell}$ are orbits of two type of linearized vector fields: 
one is just like those vector fields presented in Remark \ref{remark-Section-background-parallel-translation},
i.e., vector fields 
whose flows are  \emph{parallel translation flows}; the other is linearized vector fields,
vector fields 
 whose  flows fix each fiber of the normal bundle $\nu(B)$   
and act isometrically on each fiber of $\nu(B)$, and we could call  them \emph{rotation flows}. We can choose these vector fields so that
their flows preserve the volume of the  Sasaki metric. 
\end{remark}

We should stress that there are several ways to build 
such distributions $\mathcal{T}$, 
and thus, in contrast to the concept of linearizable foliation 
$\mathcal{F}^{\ell}$, there is not a unique holonomy sub-foliation $\mathcal{F}^{\tau}$. 
Nevertheless, the existence of holonomy foliations is quite important to understand  the semi-local behavior of $\mathcal{F}_U$, see also \cite{alexandrino2021lie}. However, at this point, the reader may already be able to intuit that although the  
identity component of the holonomy group $\mathrm{Hol}_{p}^{0}(E)$ 
is a normal subgroup of the
 connected group $G_{p}^{0}$ (recall Remark \ref{remark-Connected-component-isotropy}), there is some information about $\mathcal{F}$ that $G_{p}^{0}$ does not contain. 
For example, it does not describe how leaves return to a slice $S_p$. 
Such information is contained in the other connected components  of the Holonomy group $\mathrm{Hol}_{p}(E)$.   
Moreover, this motivates us to consider the following definition.

\begin{definition}[isotropy group]
\label{definition-isotroy-group}
We define the (possibly) non-connected group 
\emph{isotropy group} $G_{p}$ as the compact group
generated by $G_{p}^{0}$ and $\mathrm{Hol}_{p}$,  recall Remark \ref{remark-Connected-component-isotropy}.
\end{definition}

We should remark that the Lie group $G_p$ does not depend on the choice of the distribution $\mathcal{T}$, and that 
each  orbit $G_{p}(x)$ in the slice $S_p$ coincides with the (possibly non-connected) submanifold $L_{x}^{\ell}\cap S_{p}$, for $x\in S_p$.

\section{The principle of symmetric criticality}
\label{Section-The principle of symmetric criticallity}

The goal of this section is to prove Theorem \ref{theorem-simple-version-Palais-principle-variational-formulation}. For the proof of  it
we are going to use  a few results that are proved in  Section \ref{Section- Symmetric Criticality of Palais on manifold}.

Along this section, we always assume that $\mathcal{F}$ is an AVP foliation and the geometric controls $\mathcal{C}(U)$'s on tubular neighborhoods 
are always AVP.

Although in this article we are only interested in AVP geometric controls, we highlight, for future reference, 
 that  the  results and definitions presented in Subsection \ref{subsection-Semi-local definitions} 
make sense for general geometric control systems.

\subsection{Semi-local definitions}
\label{subsection-Semi-local definitions}

To increase the number of examples where the principle of  symmetric criticality
 can be applied, we wish to discard the orbit-like hypothesis. 
This forces us to increase the set of ``basic'' functions we must deal with. 
Although this new space of ``basic'' functions is larger than the space of basic functions itself, 
it will still be smaller than the original space of functions, 
and this will allow us to conclude that there are weak solutions 
bypassing the problem of the critical index of the Rellich–-Kondrachov embedding theorem.

\begin{definition}
\label{definition-linearized-basic}
Let $U_{i}=\{\mathrm{Tub}_{\delta_{i}}(L_{p_i})\}$ be a finite cover by (geometric) tubular neighborhoods of the compact Riemannian manifold $(M,\metric)$, and on each $U_i$ consider the linearized foliations $\mathcal{F}^{\ell}_{i}$ for $L_{p_i}$.
Also  consider on each $U_{i}$ a geometric control  system of linearized vector fields $\mathcal{C}_{i}:=\{\vec{X}_{\alpha,i}\}$ associated to
$\mathcal{F}^{\ell}_{i}$,  i.e, whose orbits are the leaves of $\mathcal{F}^{\ell}_{i}$. 
A function $b\in C^{\infty}(M)$  is called 
\emph{linearized basic}  
if the restriction of $b$ to each  $U_{i}$ is $\mathcal{F}^{\ell}_{i}$-basic (i.e. is constant along the leaves of   $\mathcal{F}^{\ell}_{i}$).
Equivalently, we say that a smooth function $b$  is linearized basic if its directional derivatives for all these geometric controls are zero, i.e.
$\vec{X}_{\alpha,i}\cdot b=d b (\vec{X}_{\alpha,i})=0$ for each $i$ and for each $\vec{X}_{\alpha,i}\in \mathcal{C}_{i}$. The vector space of these functions is denoted
by $\linearizedbasic$ and its closure for the Sobolev norm of $W^{1, p}(M)$ by $\closedlinearizedbasic$.
\end{definition}

Note that $\linearizedbasic\supset C^{\infty}(M)^{\mathcal{F}}$, i.e., $\linearizedbasic$
 contains the space of smooth basic functions and hence is not an empty vector space. 
In addition if $\mathcal{F}$ is orbit-like foliation, then $\linearizedbasic= C^{\infty}(M)^{\mathcal{F}}$.

\begin{definition}
\label{definition-C-symmetric-operator}
A continuous linear functional 
$\lfunctional\colon W^{1,p}(M)\to\mathbb{R}$ is called $\{\mathcal{C}_{i} \}$-\emph{locally symmetric} if  
\[
\lfunctional(h\circ \efunction^{t X_{\alpha,i}})=\lfunctional(h)
\]
 for each
$h\in C_{c}^{\infty}(U_i)$ and  $\vec{X}_{\alpha,i}\in\mathcal{C}_{i}$.
 A $C^{1}$ operator $J\colon  W^{1,p}(M)\to\mathbb{R}$ is called  critical 
$\{\mathcal{C}_{i} \}$-\emph{symmetric} if for each $b\in \closedlinearizedbasic$ the linear operator 
$ \mathrm{d} J(b)$ is locally symmetric with respect to $\{\mathcal{C}_{i}\}$.
\end{definition}

Consider a $\{\mathcal{C}_{i} \}$-\emph{locally symmetric}  continuous linear functional  $\lfunctional\colon W^{1,p}(M)\to\mathbb{R}$.  
We will say that it fulfills the \emph{local  principle of symmetric criticality}  if for each fixed $i$
the restricted  functional $\lfunctional_{i}:=\lfunctional|_{C_{c}^{\infty}(U_i)}\colon {C_{c}^{\infty}(U_i)}\to\mathbb{R}$  satisfies:
\begin{equation}
\label{equation-def-local-principle-symmetric-criticallity}
\lfunctional_{i}(b)=0,\, \forall b\in C_{c}^{\infty}(U_i)^\ell\, \Rightarrow  \, \lfunctional_{i}(f)=0, \, \forall f\in C_{c}^{\infty}(U_i).
\end{equation}

\begin{lemma}
\label{lemma-local-to-global-PSC}
If $\lfunctional\colon W^{1,p}(M)\to\mathbb{R}$ fulfills a local  principle of symmetric criticality, then
\[
\lfunctional(b)=0,\,  \forall b\in  \closedlinearizedbasic \, \Rightarrow \, \lfunctional(f)=0, \, \forall f\in W^{1,p}(M).
\]
\end{lemma}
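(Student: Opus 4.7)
The plan is to combine density of smooth functions with an $\cal F$-basic partition of unity and a chart-by-chart application of the hypothesised local principle \eqref{equation-def-local-principle-symmetric-criticallity}, reducing the statement in three successive steps to a single claim about extending locally linearized basic functions. First, since $M$ is compact, $C^{\infty}(M)$ is dense in $W^{1,p}(M)$; by continuity of $\lfunctional$ it suffices to prove $\lfunctional(f)=0$ for every $f\in C^{\infty}(M)$. Pick an $\cal F$-basic partition of unity $\{\rho_{i}\}$ subordinate to the cover $\{U_{i}\}$, whose existence is recalled in Section~\ref{MALEX-Section-SRF}. Writing $f=\sum_{i}\rho_{i}f$ with $\rho_{i}f\in C_{c}^{\infty}(U_{i})$, linearity reduces the problem to showing $\lfunctional_{i}(\rho_{i}f)=0$ for each $i$. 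Fixing $i$ and invoking \eqref{equation-def-local-principle-symmetric-criticallity} on $U_{i}$, this in turn reduces to proving $\lfunctional_{i}(b)=0$ for every $b\in C_{c}^{\infty}(U_{i})^{\ell}$.

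It therefore remains to verify: for every $b\in C_{c}^{\infty}(U_{i})^{\ell}$, its extension by zero $\tilde b\in C^{\infty}(M)$ lies in $\closedlinearizedbasic$. Granted this, the global hypothesis $\lfunctional|_{\closedlinearizedbasic}\equiv 0$ immediately forces $\lfunctional(b) = \lfunctional(\tilde b) = 0$, closing the chain of implications. To establish the claim I would note that $\tilde b$ is $\cal F_{i}^{\ell}$-basic on $U_{i}$ by construction, vanishes identically outside $U_{i}$, and on each other chart $U_{j}$ can only fail to be $\cal F_{j}^{\ell}$-basic along orbits of $\cal F_{j}^{\ell}$ passing through $\mathrm{supp}(\tilde b)\subset U_{i}\cap U_{j}$. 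The strategy is to approximate $\tilde b$ in the $W^{1,p}$-norm by applying successively the averaging operator $\mathrm{Av}$ associated to the linear Lie groupoids $\cal G_{j}^{\ell}$ (introduced in Section~\ref{Section- Symmetric Criticality of Palais on manifold}) and then multiplying by an $\cal F$-basic cutoff supported near $\mathrm{supp}(\tilde b)$ to localise the modification back inside $U_{i}$; iterating over the finitely many $U_{j}$ in the cover should produce a sequence in $\linearizedbasic$ converging to $\tilde b$.

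The main obstacle lies precisely in this last step: the linearized vector fields $\vec X_{\alpha,i}$ and $\vec X_{\alpha,j}$ need not coincide on the overlap $U_{i}\cap U_{j}$, so a function which is $\cal F_{i}^{\ell}$-basic in one chart is not automatically $\cal F_{j}^{\ell}$-basic in another, and each averaging step could destroy previously achieved basicness. Controlling the $W^{1,p}$-error while ensuring that the successive averages \emph{simultaneously} remain $\cal F_{k}^{\ell}$-basic on each previously processed $U_{k}$ is the chief technical difficulty. I would hope to exploit the fact that both linearized foliations are sub-foliations of the ambient globally defined $\cal F$, together with the compactness of the isotropy groups $G_{p_{j}}$, to produce compatible averages; an alternative route is to refine the cover to one on which the overlaps are either disjoint from a neighbourhood of $\mathrm{supp}(b)$ or small enough that the two linearizations can be reconciled through an $\cal F$-basic homothetic rescaling, after which the chain of reductions above completes the proof.
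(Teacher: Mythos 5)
Your reduction is exactly the paper's: pass to $C^{\infty}(M)$ by density and continuity of $\lfunctional$, decompose $f=\sum_{i}\rho_{i}f$ with the $\mathcal{F}$-basic partition of unity, and apply the local principle \eqref{equation-def-local-principle-symmetric-criticallity} chart by chart, so that everything hinges on showing $\lfunctional_{i}(b)=0$ for every $b\in C_{c}^{\infty}(U_i)^{\ell}$. Up to the order of the steps, this is the argument given in the paper.

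The divergence is in how that last point is handled. The paper dispatches it in one sentence --- it reads the global hypothesis as giving ``in particular'' that $\lfunctional_{i}(b_i)=0$ for $b_i$ compactly supported and $\mathcal{F}_i^{\ell}$-basic on $U_i$, i.e.\ it treats the zero-extension of such a $b_i$ as a test function already covered by the assumption $\lfunctional|_{\closedlinearizedbasic}=0$. You correctly observe that this is not automatic: on an overlap $U_i\cap U_j$ the foliations $\mathcal{F}_i^{\ell}$ and $\mathcal{F}_j^{\ell}$ are in general different subfoliations of $\mathcal{F}$, so a function that is $\mathcal{F}_i^{\ell}$-basic need not be $\mathcal{F}_j^{\ell}$-basic, and hence the extension of $b_i$ by zero need not lie in $\linearizedbasic$ in the sense of Definition \ref{definition-linearized-basic}. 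However, the repair you propose --- iterated averaging over the groupoids $\mathcal{G}_j^{\ell}$ followed by basic cutoffs --- is only a sketch, and it suffers from exactly the defect you name yourself: each averaging step can undo the basicness achieved on previously processed charts, and you offer no mechanism forcing the procedure to stabilize, nor any control of the $W^{1,p}$-error of the successive modifications. As written, your proof is therefore incomplete at precisely the place where the paper is silent. To close it you must either prove the extension claim the paper implicitly assumes, or weaken the premise of the local principle to vanishing on $C_{c}^{\infty}(U_i)\cap\linearizedbasic$ only and verify that the proofs of Lemma \ref{lemma-new-theorem-Diego-general-version-Palais-Hilbert} and Proposition \ref{proposition-new-MALEX--Av-l-orbitlike} still deliver the conclusion under that smaller set of test functions; neither route is carried out in your proposal.
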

\begin{proof}
Assume that $\lfunctional(b)=0,\,  \forall b\in  \closedlinearizedbasic$.
This implies in particular that  $\lfunctional(b_i)=\lfunctional_{i}(b_i)=0$ for $b_{i}\in C_{c}^{\infty}(U_i)$
and hence, we infer from \eqref{equation-def-local-principle-symmetric-criticallity}
that $\lfunctional_{i}=0$.
 Let  
$\{\rho_{i}\}$  be a $\mathcal{F}$-partition of unity subordinate to  $\{U_{i}\}$. Given a $f\in  C^{\infty}(M)$ we set
$f_{i}:=\rho_{i}f$. We conclude that  
$\lfunctional(f)=\sum_{i}\lfunctional(f_i)=\sum_{i}\lfunctional_{i}(f_i)=0$. 
The arbitrariness in the choice of $f$ implies that $\lfunctional=0$ on the vector space $C^{\infty}(M)$ and hence, since $\lfunctional$ is continuous, $\lfunctional=0$ on its closure, i.e. on $W^{1,p}(M)$.
\end{proof}

\subsection{Symmetric integral operators on the Banach spaces $W^{1,p}(M)$ }
Theorem \ref{theorem-simple-version-Palais-principle-variational-formulation} is a direct consequence of the next result.

\begin{theorem}
\label{theorem-Palais-principle-variational-formulation}
Let $\mathcal{F}$ be an  AVP foliation on a  compact (connected) Riemannian manifold $(M,\mathsf{g})$, 
and $J\colon W^{1, p}(M) \to \mathbb{R}$ be a $C^{1}$ operator such that for each  smooth basic function $b\in \linearizedbasic$ 
\[
\mathrm{d} J(b)f=\int_{M}\big( \mathcal{L}_{1}^{\ell}(b)\, \metric (\nabla b, \nabla f) +  \mathcal{L}_{2}^{\ell}(b) f\big) \density_{\metric}, \, \,
\forall f\in C^{\infty}(M),
\]
where (for $k=1,2$)  
$ \mathcal{L}_{k}^{\ell}\colon \linearizedbasic\to C^{0}(\{U_{i}\})^{\ell}$,
i.e., $\mathcal{L}_{k}^{\ell}(b)$  is a continuous linearized basic function. 
Assume that  for a fixed $b_{0}\in  \closedlinearizedbasic$ 
we have $ \mathrm{d}J(b_0)(b)=0$ for all $b\in   \closedlinearizedbasic$.
Then  $\mathrm{d}J(b_0)=0$.
\end{theorem}

\begin{proof}
 
For each $i$, set $\lfunctional_{i}= \mathrm{d} J(b_0)\colon C_{c}^{\infty}(U_{i})\to\mathbb{R}$.  
As will be proved in  Lemma \ref{lemma-J-implies-l-linear}, 
this continuous linear functional   is symmetric.
 
We first claim that  \emph{ $\lfunctional= \mathrm{d}J(b_0)$  
fulfills the local  principle of symmetric criticality, i.e., $\lfunctional_{i}$  fulfills Equation 
\eqref{equation-def-local-principle-symmetric-criticallity}.}
 By hypothesis, $ \mathrm{d}J(b_0)(b)=0$ for all $b\in   \closedlinearizedbasic$ and hence 
$\lfunctional_{i}(b)=0$, $\forall b\in C_{c}^{\infty}(U_i)^\ell$.  As we will explain in Section \ref{subsection-avarage-first-intuition}, there exists an operator
$ \mathrm{Av}\colon C_{c}^{\infty}(U_{i})\to C_{c}^{\infty}(U_{i})^{\ell}$, i.e., an operator that project smooths functions on $U_i$
onto smooth linearized basic functions on $U_i$. 
Given  $f \in C^{\infty}_{c}(U_{i})$, we have: 
\begin{equation*}
0 \stackrel{(*)}{=} \lfunctional_{i} \big(\mathrm{Av}(f )\big) \stackrel{(**)}{=}  \lfunctional_{i} \big(f \big)
\end{equation*}
where (*) follows the fact that $\mathrm{Av}( f)$ is a linearized basic function  
 and (**) follows from Proposition \ref{proposition-new-MALEX--Av-l-orbitlike}. Then $\lfunctional_{i}(f)=0$. This conclude the proof of the claim, i.e., that $\lfunctional=\mathrm{d}J(b_0)$  fulfills the local  principle of symmetric criticality.

Now  since $\lfunctional$ fulfills the local principle of symmetric criticality 
and $ \mathrm{d}J(b_0)(b)=0$ for all $b\in   \closedlinearizedbasic$,  Lemma \ref{lemma-local-to-global-PSC} implies that   
 $\lfunctional=\mathrm{d}J(b_0)=0$, and this conclude  the proof. 
\end{proof}

\subsection{Symmetric integral operators on the Hilbert space $W^{1,2}(M)$}

In what follows we give an  alternative proof  of the previous theorem in  the case of  $W^{1,2}(M)$.  
The main advantage of this proof is avoiding the use of the operator $\mathrm{Av}$.

\begin{theorem}
\label{theorem-Diego-general-version-Palais-Hilbert}
Let $\mathcal{F}$ be an  AVP foliation on a  compact (connected) Riemannian manifold $(M,\mathsf{g})$, 
and $J\colon W^{1, 2}(M) \to \mathbb{R}$ be a $C^{1}$ operator such that for each  smooth basic function $b\in \linearizedbasic$ 
\[
\mathrm{d} J(b)f=\int_{M}\big( \mathcal{L}_{1}^{\ell}(b)\, \metric (\nabla b, \nabla f) +  \mathcal{L}_{2}^{\ell}(b) f\big) \density_{\metric}, \, \,
\forall f\in C^{\infty}(M),
\]
where (for $k=1,2$)  
$ \mathcal{L}_{k}^{\ell}\colon \linearizedbasic\to C^{0}(\{U_{i}\})^{\ell}$,
i.e., $\mathcal{L}_{k}^{\ell}(b)$  is a continuous linearized basic function. 
Assume that  for a fixed $b_{0}\in  \closedlinearizedbasictwo$ 
we have $ \mathrm{d}J(b_0)(b)=0$ for all $b\in   \closedlinearizedbasictwo$.
Then  $\mathrm{d}J(b_0)=0$.
\end{theorem}

\begin{proof}
Let $\lfunctional_i= \mathrm{d} J(b_0)\colon C_{c}^{\infty}(U_i)\to\mathbb{R}$ be
the restricted operator. We know by hypothesis that $\lfunctional_{i}(b)=0$ for all $b\in C_{c}^{\infty}(U_i)^{\ell}$. 
We can now apply Proposition \ref{lemma-new-theorem-Diego-general-version-Palais-Hilbert} below
to infer that $0= \lfunctional_{i}(f)$ 
for all $f\in C_{c}^{\infty}(U_i)$, i.e., we have proved that 
$\lfunctional_{i}$  fulfills Equation \eqref{equation-def-local-principle-symmetric-criticallity}.
 Therefore, by Lemma \ref{lemma-local-to-global-PSC},  $\mathrm{d} J(b_0)=\lfunctional=0$
what finishes the proof. 
\end{proof}

To state and prove  Proposition \ref{lemma-new-theorem-Diego-general-version-Palais-Hilbert},   we proceed with a few preparations.

Let $U_r$ be a $\mathcal{F}$ saturated neighborhood contained in the principal stratum at a  distance $r$ to the singular stratification. 
We define also  a new inner product. 
\begin{equation}
\label{eq1-lemma-new-theorem-Diego-general-version-Palais-Hilbert}
\langle u,v\rangle_{\perp}= \int_{U_r} \metric^\perp(\nabla u,\nabla v)+uv \, \density_{\metric}   \quad \mbox{for }u,v\in C^{\infty}_{c}(U_r).
\end{equation}

Here $\metric^\perp$ at $x\in U_r$ is given by 
$\metric^\perp_{x}(\cdot,\cdot) = \metric|_{\nu_{x}(L_x)}(\pi^{\perp}_{x}(\cdot), \pi^{\perp}_{x} (\cdot) )$, 
where $\pi^{\perp}_{x}\colon T_{x} U_{r}\to \nu_{x}L_{x}$ is  the $\metric$-orthogonal
projection.  Let $W^{1,2}_{0,\perp}(U_r)$ be the closure of $C^{\infty}_{c}(U_r)$ (i.e, \emph{the completion})  
with respect to  the associated norm $\|\cdot\|_{\perp}$.

\begin{lemma}
\label{lemma1-new-theorem-Diego-general-version-Palais-Hilbert}
Fix $u\in C^{\infty}_{c}(U_r)\setminus\{\bar{0}\}$ and 
take $v = u\circ \efunction^{t X_{\alpha}}$, where  $\efunction^{t X_{\alpha}}$ is an AVP flow. 
Then 
$
\|u\|_{\perp}=\|v\|_{\perp}.
$
\end{lemma}
\begin{proof}
\[
\Vert v\Vert^2_{\perp} = \int_{U_r} | \nabla (u\circ \efunction^{t X_{\alpha}})|^2_{\metric^\perp}+u^2\circ \efunction^{t X_{\alpha}}\, \density_{\metric}.
\]
For $Y\in \nu_pL_p$ we have 
\begin{linenomath}
\begin{align*}
\metric^\perp_{x}(\nabla( u\circ \efunction^{t X_{\alpha}}),Y)& = \metric_{x}\big(\nabla( u\circ \efunction^{t X_{\alpha}}),Y\big)\\
&= \mathrm{d}(u\circ \efunction^{t X_{\alpha}} )_{x}(Y(x))\\
&=\mathrm{d} u_{\efunction^{t X_{\alpha}}(x)}(\mathrm{d} \efunction^{t X_{\alpha}}_{x}(Y(x)))\\
&= \metric_{\efunction^{t X_{\alpha}}(x)}\big(\nabla u (\efunction^{t X_{\alpha}}(x)), \mathrm{d} \efunction^{t X_{\alpha}}_{x}(Y(x))\big)\\
&= \metric^\perp_{\efunction^{t X_{\alpha}} (x)}\big(\nabla u^{\perp} (\efunction^{t X_{\alpha}} (x)), \mathrm{d}  \efunction^{t X_{\alpha}}_x   (Y(x))\big)\\
&= \metric^\perp_{x}\big(\mathrm{d}(\efunction^{t X_{\alpha}})^{-1}_{\efunction^{t X_{\alpha}} (x)}(\nabla u^{\perp}(\efunction^{t X_{\alpha}}(x))),Y(x)\big).
\end{align*}
\end{linenomath}
From this, we conclude that $\nabla( u\circ \efunction^{t X_{\alpha}} )^\perp(x)
 = \mathrm{d}(\efunction^{t X_{\alpha}}  )^{-1}_{\efunction^{t X_{\alpha}} (x)}(\nabla u^\perp(\efunction^{t X_{\alpha}} (x)))$.  This implies that 
\begin{equation}
\label{eq1-lemma1-new-theorem-Diego-general-version-Palais-Hilbert}
|\nabla ( u\circ \efunction^{t X_{\alpha}} )|^2_{\metric^{\perp}}(x)=|\nabla  u|^2_{\metric^\perp}(\efunction^{t X_{\alpha}} (x)).
\end{equation}
Thus, 
\begin{linenomath}
\begin{align*}
\|v \|^2_{\perp}&= \|u\circ  \efunction^{t X_{\alpha}}  \|^2_{\perp}\\
&=\int_{U_r} | \nabla (u\circ  \efunction^{t X_{\alpha}} )|^2_{\metric^\perp}+ (u\circ  \efunction^{t X_{\alpha}} )^2\, \density_{\metric} \\ 
& \stackrel{\eqref{eq1-lemma1-new-theorem-Diego-general-version-Palais-Hilbert}}{=}
 \int_{U_r} | \nabla u|^2_{\metric^\perp}\circ  \efunction^{t X_{\alpha}}  + u^2\circ \efunction^{t X_{\alpha}} \, \density_{\metric}\\
&= \int_{U_r} | \nabla u|^2_{\metric^\perp}+u^2\, \density_{\metric}\\
&= \|u\|^2_{\perp}.
\end{align*}
\end{linenomath}
\end{proof}

\begin{lemma}
\label{lemma2-new-theorem-Diego-general-version-Palais-Hilbert}
 $\lfunctional$ is a  continuous linear functional  on $C^{\infty}_{c}(U_r)$ with respect  to the norm $\|\cdot\|_{\perp}$. 
\end{lemma}
\begin{proof}
Since $\lfunctional$ is linear,
it  suffices to check that if a  sequence $\{u_{m}\} \subset C^{\infty}_{c}(U_r)$ 
converges to zero with  respect  to $\| \cdot\|_{\perp}$,  
 then $|\lfunctional(u_{m})|\to 0$.

From the  definition,  $\|u_{m}\|_{\perp}\to 0$ implies:
\begin{equation}
\label{eq1-lemma2-new-theorem-Diego-general-version-Palais-Hilbert}
\sqrt{\int_{U_r}|\nabla u_{m}|^{2}_{\metric^\perp} \vol_{\metric}}\to 0 
\end{equation}
and 
\begin{equation}
\label{eq2-lemma2-new-theorem-Diego-general-version-Palais-Hilbert}
\sqrt{\int_{U_r} |u_m|^{2} \vol_{\metric}}\to 0.
\end{equation}

Let  $\{e_{A}\}_{A=1}^{n}$ be an orthonormal local  frame  adapted to $\mathcal{F}$,  i.e., the $n-k$ first vectors fields 
$\{e_{i}\}_{i=1}^{n-k}$ are tangent to the leaves of $\mathcal{F}$ 
and last $k$ vector fields  $\{e_{\beta}\}_{\beta=1}^{k}$ are orthogonal to the leaves.  

In this adapted  orthonormal frame, we can write:
\begin{align*}
\nabla u_{m} &= \sum_{i} (e_{i}\cdot u_{m}) e_{i}+ \sum_{\beta} (e_{\beta}\cdot u_m) e_{\beta},\\
\nabla b_{0} &= \sum_{\beta}(e_{\beta}\cdot b_{0})e_{\beta}.
\end{align*}
From the above equations, we infer: 
\begin{equation}
\label{eq3-lemma2-new-theorem-Diego-general-version-Palais-Hilbert}
| \metric(\nabla b_{0}(x),\nabla u_{m}(x)) | \leq |\nabla b_{0}(x)|_{\metric} |\nabla u_{m}(x)|_{\metric^{\perp}}.
\end{equation}
Therefore we have: 
\begin{align*}
|\lfunctional(u_m)| &\leq \int_{U_r}\big( |\mathcal{L}_{1}^{\ell}(b_0)|\, |\metric (\nabla b_0, \nabla u_m)| 
+  |\mathcal{L}_{2}^{\ell}(b_0)| |u_m|\big) \density_{\metric} \\
&\leq \int_{U_r}\big( |\mathcal{L}_{1}^{\ell}(b_0)|\,|\nabla b_0|_{\metric} |\nabla u_{m}|_{\metric^{\perp}} 
+  |\mathcal{L}_{2}^{\ell}(b_0)| |u_{m}|\big) \density_{\metric} \\
&\leq c_{1} \int_{U_r} |\nabla u_{m}|_{\metric^{\perp}} \density_{\metric}
+ c_{2}\int_{U_r}   |u_{m}|  \density_{\metric}\\
&\leq \sqrt{\int_{U_r} 1 \density_{\metric}}
\Big(c_{1} \sqrt{\int_{U_r} |\nabla u_{m}|_{\metric^{\perp}}^{2} \density_{\metric}} + c_{2}\sqrt{\int_{U_r}   |u_{m}|^{2}  \density_{\metric}}    \Big),
\end{align*}
where the positive constants $c_1$ and $c_2$ do not depend on $\{u_{m}\}\subset C_{c}^{\infty}(U_r)$. 
Therefore,  
\begin{equation}
\label{eq4-lemma2-new-theorem-Diego-general-version-Palais-Hilbert}
|\lfunctional(u_{m})| 
\leq c_{3}\sqrt{\int_{U_r}|\nabla u_{m}|^{2}_{\metric^\perp} \density_{\metric} }+
 c_{4}\sqrt{\int_{U_r} |u_{m}|^{2} \density_{\metric}},
\end{equation}
where the positive constants $c_3$ and $c_4$ do not depend on $\{u_{m}\}\subset C_{c}^{\infty}(U_r)$. 

Equation \eqref{eq1-lemma2-new-theorem-Diego-general-version-Palais-Hilbert}, 
\eqref{eq2-lemma2-new-theorem-Diego-general-version-Palais-Hilbert} and 
\eqref{eq4-lemma2-new-theorem-Diego-general-version-Palais-Hilbert}
conclude the proof. 
\end{proof}

\begin{proposition}
\label{lemma-new-theorem-Diego-general-version-Palais-Hilbert}
Let $\lfunctional=\mathrm{d} J(b_0) \colon C_{c}^{\infty}(U)\to\mathbb{R}$.   
 Assume that  $\lfunctional(b)=0$ 
for all linearizable basic function $b\in C_{c}^{\infty}(U)^{\ell}$.
Then  $\lfunctional(f)=0$, for all $f \in C_{c}^{\infty}(U)$. 
\end{proposition}
\begin{proof}
Let $\hat{\lfunctional}\colon W_{0,\perp}^{1,2}(U_r)\to \mathbb{R}$ be the continuous  extension of the continuous functional  $\lfunctional$ 
(recall Lemma \ref{lemma2-new-theorem-Diego-general-version-Palais-Hilbert}). 
By Riez representation there exists a unique vector $\nabla \hat{\lfunctional}\in W_{0,\perp}^{1,2}(U_r)$ such that:  
\[
\hat{\lfunctional}(v)=\langle \nabla \hat{\lfunctional}, v \rangle_{\perp}.
\]
Let $\Sigma\subset W_{0,\perp}^{1,2}(U_r)$ be the closure of the vector space $C_{c}^{\infty}(U_r)^{\ell}$ with respect to $\|\cdot\|_{\perp}$. 
To prove the proposition it suffices  to check that $\nabla \hat{\lfunctional}\in \Sigma$. 
Let $v=\nabla\hat{\lfunctional}\circ \efunction^{t X_\alpha}$, where  $\efunction^{t X_\alpha}$ is an AVP flow.

\begin{eqnarray*}
\langle \nabla \hat{\lfunctional},\nabla \hat{\lfunctional} \rangle_{\perp} & = &\lfunctional(\nabla \hat{\lfunctional})\\
& \stackrel{(i)}{=} &  \lfunctional(v) \\
&= & \langle \nabla \hat{\lfunctional}, v \rangle_{\perp} \\
&\leq &\|\nabla \hat{\lfunctional}\|_{\perp} \|v\|_{\perp}\\                    
& \stackrel{(ii)}{=} & \|\nabla \hat{\lfunctional}\|_{\perp}^{2},\\
\end{eqnarray*}
where equality $(i)$ follows from the symmetry of $\lfunctional$ and equality (ii) follows from Lemma \ref{lemma1-new-theorem-Diego-general-version-Palais-Hilbert}.

From the above equation we conclude that or $\nabla \hat{\lfunctional}=0$ (what already implies that $\lfunctional=0$) or 
  $\nabla\hat{\lfunctional}=v=\nabla\hat{\lfunctional}\circ \efunction^{t X_\alpha}$, 
$\forall \vec{X}_{\alpha}\in\mathcal{C}(U)$.  Therefore 
$\nabla\hat{\lfunctional}\in \Sigma$.  Since by hypothesis $\lfunctional(b)=0$ for all $b\in C_{c}^{\infty}(U_r)^{\ell}$
we conclude that $\lfunctional(f)=0$ for all $f\in C_{c}^{\infty}(U_r)$.  We can now take $r\to 0$ and conclude that 
$\lfunctional(f)=0$, for all $f \in C_{c}^{\infty}(U)$. 
\end{proof}

\section{AVP foliations and symmetric integral operators  }
\label{Section- Symmetric Criticality of Palais on manifold}

Along  this section $\mathcal{F}=\{L_{x}\}_{x\in M}$ is going to be  an AVP foliation with closed leaves
 on a compact (connected) Riemannian manifold $(M,\mathsf{g})$, recall Definition \ref{definition-a-v-p}.  
We are going to  consider a finite open cover of tubular neighborhoods  $U_{i}=\mathrm{Tub}_{\delta_i}(L_{p_i})$, 
geometric AVP control systems   $\mathcal{C}_{i}:=\{\vec{X}_{\alpha,i}\}$ associated to 
$\mathcal{F}^{\ell}_{i}$, (recall  Definitions \ref{definition-a-v-p} and  \ref{definition-linearized-basic}).
We also consider a   $C^{1}$ operator  $J\colon W^{1, p}(M) \to \mathbb{R}$  such that for each  smooth basic function $b\in \linearizedbasic$ 
\begin{equation}
\label{eq: derivative of considered operators}
\mathrm{d} J(b)f=\int_{M}\big( \mathcal{L}_{1}^{\ell}(b)\, \metric (\nabla b, \nabla f) +  \mathcal{L}_{2}^{\ell}(b) f\big) \density_{\metric}, \, \,
\forall f\in C^{\infty}(M),
\end{equation}
where (for $k=1,2$)  
$ \mathcal{L}_{k}^{\ell}\colon \linearizedbasic\to C^{0}(\{U_{i}\})^{\ell}$,
i.e., $\mathcal{L}_{k}^{\ell}(b)$  is a continuous linearized-basic function. 
Here $\density_{\metric}$ denotes the \emph{Riemannian density}, i.e., the measure induced by the Riemannian metric $\metric$.
Recall that $\density_{\metric}$ can be 
 locally  written as a module of a Riemannian volume, i.e., 
$\density_{\metric}=|{\vol_{\metric}}|$ c.f., \cite[Proposition 16.45, Chapter 16]{Lee13}.

This section is organized as follows:
 In Subsection \ref{subsection-natural-setting-Section- Symmetric Criticality of Palais on manifold} 
 we prove  Theorem \ref{proposition-operator-J-kirschoff-generalized} 
and check  that $J$ is critical symmetric  with respect to  AVP control systems $\{\mathcal{C}_{i} \}$ 
 (recall Definitions \ref{definition-a-v-p} and  \ref{definition-C-symmetric-operator});
in Subsection \ref{subsection-Remark-definition-AVP} we stress the relation between AVP and (adapted) Sasaki metrics;
  in Subsection \ref{subsection-few properties-Section- Symmetric Criticality of Palais on manifold} 
	we discuss a few properties of the operator $\mathrm{d} J(b)$;
 in Subsection \ref{subsection-avarage-first-intuition} we define the operator 
$\mathrm{Av}\colon C_{c}^{\infty}(U_{i})\to C_{c}^{\infty}(U_{i})^{\ell}$ 
used in the proof of Theorem \ref{theorem-Palais-principle-variational-formulation};
in Subsection \ref{subsection-l(av(f)=l(f)} we  present and prove    Proposition  \ref{proposition-new-MALEX--Av-l-orbitlike} 
used in the proof of Theorem \ref{theorem-Palais-principle-variational-formulation};

\subsection{A natural setting}
\label{subsection-natural-setting-Section- Symmetric Criticality of Palais on manifold}

In this subsection we will prove  in Lemma \ref{lemma-J-implies-l-linear}  that 
a $C^{1}$-operator  $J$ that fulfills   Equation \eqref{eq: derivative of considered operators}
is in fact  critical $\{\mathcal{C}_{i} \}$-\emph{symmetric}. After that, we will check in Lemma \ref{lemma-derivada-J} 
 that the derivative of the $C^{1}$ operator  
$J_{\lambda}\colon W^{1,p}(M)\to\mathbb{R}$ presented in Theorem \ref{proposition-operator-J-kirschoff-generalized} 
is given by Equation \eqref{eq: derivative of considered operators}. Those facts together with 
Remark \ref{remark-L-ell-restriction}
 will imply the proof of Theorem \ref{proposition-operator-J-kirschoff-generalized}.

\begin{lemma}
\label{lemma-J-implies-l-linear}
Given a function $b\in   \linearizedbasic $
the linear function $\lfunctional= \mathrm{d}J(b)\colon  C_{c}^{\infty}(U_{i})\to \mathbb{R}$ 
defined in Equation \eqref{eq: derivative of considered operators} 
is  $\{\mathcal{C}_{i} \}$-locally symmetric, i.e.
$ \lfunctional(w \circ \efunction^{t X_{\alpha,i}})= \lfunctional(w)$, for each  $w\in C_{c}^{\infty}(U_{i})$ and $\vec{X}_{\alpha,i}\in\mathcal{C}_{i}$.
Therefore, since $J$ is $C^{1}$, we infer that  $\lfunctional= \mathrm{d}J(b)\colon  C_{c}^{\infty}(U_{i})\to \mathbb{R}$ 
is also $\{\mathcal{C}_{i} \}$-locally symmetric for all $b\in \closedlinearizedbasic$. 
\end{lemma}

\begin{proof} 
We start by fixing  $b\in   \linearizedbasic$.  
For a    fixed $i$, set $U=U_{i}$ and   $\varphi:= \efunction^{t X_{\alpha,i}}$.  
We want to check that $ \lfunctional(w\circ \varphi)= \lfunctional(w)$, 
where $w\in C_{c}^{\infty}(U)$.  

First, we claim that in the principal stratum $\Mprincipal$ we have:
\begin{equation}
\label{eq1-5-lemma-J-implies-l-linear}
\mathrm{d}\varphi_{x} \nabla b_{x}=(\nabla b)_{\varphi(x)}. 
\end{equation}

Using the fact that $\varphi$ is an isometry between normal spaces of principal leaves (because $\mathcal{F}$ is AVP) 
 and that  $\nabla b$ is normal to the principal leaves (because $b$ is linearized basic)  
we have, for each vector $Y\in\nu_{x}L_x$, that:   
\begin{align*}
\metric(\mathrm{d}\varphi_{x} \nabla b_{x}, \mathrm{d}\varphi_{x} Y) &= \metric(\nabla b_{x},Y)\\
&=\mathrm{d}(b\circ\varphi^{-1})_{\varphi(x)} \mathrm{d}\varphi_{x} Y\\
&=\metric(\nabla b_{\varphi(x)}, \mathrm{d}\varphi_{x} Y),
\end{align*}
where the last equation follows from the fact that $b$ is linearized basic, i.e., $b\circ\varphi^{-1}=b$.   
The arbitrary choice of $Y$ then implies 
Equation \eqref{eq1-5-lemma-J-implies-l-linear}.

For a $Z_{x}\in\nu_{x}L$ (and recalling  that $w\in C^{\infty}_{c}(U)$), we infer from the chain rule that: 
\begin{equation}
\label{eq1-lemma-J-implies-l-linear}
\metric(Z_{x},\nabla (w\circ \varphi)_{x})= \mathrm{d} w_{\varphi(x)} \mathrm{d}\varphi_{x} Z_x= \metric(\mathrm{d}\varphi_{x} Z_{x}, \nabla w_{\varphi(x)}). 
\end{equation}
Setting $Z_{x}=\nabla b_{x}$,  it follows from 
Equations  \eqref{eq1-lemma-J-implies-l-linear} and  \eqref{eq1-5-lemma-J-implies-l-linear} that 
\begin{equation}
\label{eq3-lemma-J-implies-l-linear}
\metric(\nabla b,\nabla (w\circ \varphi))_{x}= \metric(\nabla b, \nabla w)_{\varphi(x)}.
\end{equation} 
From the fact that $\mathcal{F}$ is AVP foliation we also have:
\begin{equation}
\label{eq4-lemma-J-implies-l-linear}
\varphi^{*}\density_{\metric}=\density_{\metric}.
\end{equation} 
Combining these with the change of variable (see (*)), we infer that:
\begin{align*}
\mathrm{d} J(b)(w\circ\varphi) & = \int_{M} \mathcal{L}_{1}^{\ell}(b)\, \metric \big(\nabla b, \nabla (w\circ\varphi) \big) 
+  \mathcal{L}_{2}^{\ell}(b) (w\circ\varphi)  \density_{\metric} \\
 & \stackrel{\eqref{eq3-lemma-J-implies-l-linear}}{=}  \int_{M} \mathcal{L}_{1}^{\ell}(b)\ g\big(\nabla b, \nabla w \big)_{\varphi}  \density_{\metric} \\
& + \int_{M} \mathcal{L}_{2}^{\ell}(b)  (w\circ\varphi)  \density_{\metric} \\
& \stackrel{\eqref{eq4-lemma-J-implies-l-linear} }{=} 
 \int_{M} \mathcal{L}_{1}^{\ell}(b)
\metric\big(\nabla b, \nabla w \big)_{\varphi}  \varphi^{*}\density_{\metric} \\
& + \int_{M}  \mathcal{L}_{2}^{\ell}(b)  (w\circ\varphi)  \varphi^{*}\density_{\metric} \\
& \stackrel{(*) }{=}   \int_{M}  \mathcal{L}_{1}^{\ell}(b) \metric(\nabla b, \nabla w)  \density_{\metric} \\
& + \int_{M} \mathcal{L}_{2}^{\ell}(b)  w  \density_{\metric} \\
&= \mathrm{d}J(b) w.
\end{align*}
\end{proof}

\begin{lemma}
\label{lemma-derivada-J}
Consider the  $C^{1}$ operator $J_{\lambda}\colon W^{1,p}(M)\to\mathbb{R}$ presented in Theorem \ref{proposition-operator-J-kirschoff-generalized} 
i.e., $J_{\lambda}|_{C^{\infty}(M)}$   satisfying 
\begin{equation*}
J_{\lambda}(u):= \weight\Big(\int_{M} | \nabla u |^{p}\density_{\metric}\Big)\int_M\cal L(|\nabla u|^2,u,x)\density_{\metric} 
- \frac{\lambda}{c}\left(\int_MF(u,x)\density_{\metric}\right)^{r+1},
\end{equation*}
(cf. Equation \eqref{equation-proposition-J-lambda}) where 
\begin{itemize}
\item $r, c, \lambda>0$ are constants  positive and $p\in [2,n[$; 
\item  $F\colon \mathbb{R}\times M\to\mathbb{R}$  and  $\mathcal{L}\colon \mathbb{R}\times\mathbb{R}\times M\to \mathbb{R}$ are   $C^{1}$ and  
$\mathcal{F}$-basic, i.e., $F(s,y)=F(s,x)$ and  $\mathcal{L}(s_1,s_2,y)=\mathcal{L}(s_1,s_2,x)$ $\forall s_1,s_2,s\in\mathbb{R}$,
  and $\forall y\in L_{x}$,  for each fixed $x\in M;$
\item $\weight\colon \mathbb{R}\to\mathbb{R}$ is $C^{1}$. 
\end{itemize}

Then for each $b\in \linearizedbasic$ the linear map  $\mathrm{d} J_{\lambda}(b)$ fulfills Equation \eqref{eq: derivative of considered operators}.

\end{lemma}
\begin{proof}

Recall that, in Theorem \ref{proposition-operator-J-kirschoff-generalized}, 
$\mathcal{L}\colon \mathbb{R}\times\mathbb{R}\times M\to \mathbb{R}$
and $F\colon \mathbb{R}\times M\to\mathbb{R}$ are $C^{1}$ funcions and $\mathcal{F}$-basic
i.e., $\mathcal{L}(s_1,s_2,y)=\mathcal{L}(s_1,s_2,x)$, 
$F(s,y)=F(s,x)$ for $s,  s_1,s_2 \in \mathbb{R}$ and for  $y\in L_{x}$ for all $x\in M$. 
Also $\weight\colon \mathbb{R}\to\mathbb{R}$ is just a  real $C^{1}$ function. 
Setting $\weight_{s}:=\frac{d}{d s}\weight$, $\mathcal{L}_{s_{i}}:=\frac{\partial}{\partial s_{i}} \mathcal{L}$, 
$F_{s}:=\frac{\partial }{\partial s} F$ we have:

\begin{align*}
  \mathrm{d}J_{\lambda}(b)w & =p\,  \weight_{s}(\|b\|^{p})
\big(\int_{M}|\nabla b|^{p-2}\metric(\nabla b,\nabla w) \density_{\metric}\Big)\Big(\int_{M}\mathcal{L}(\metric(\nabla b ,\nabla b),b,x)\density_{\metric}\Big) \\
	& + \weight(\|b\|^{p})  \int_{M} \cal L_{s_1}\big(|\nabla b |^{2},b,x\big) 2 \metric(\nabla b, \nabla w)  \density_{\metric} \\
& + \weight(\|b\|^{p})\int_{M} \cal L_{s_2}\big(|\nabla b|^{2},b,x \big)  w  \density_{\metric} \\
&- \lambda \frac{(r+1)}{c} \left(\int_{M}F(b,x)\density_{\metric} \right)^{r} \int_{M} F_{s}(b,x) w \density_{\metric}.
\end{align*}
We now set:
\begin{align*}
\mathcal{L}_{1}^{\ell}(b)(\cdot)& := p\, \weight_{s}(\|b\|^{p}) 
\Big(\int_{M}\mathcal{L}(\metric(\nabla b,\nabla b),b,x)\density_{\metric}\Big)
|\nabla b(\cdot)|^{p-2}\\
& + 2\weight(\|b\|^{p})\cal L_{s_1}\big( |\nabla b(\cdot) |^{2},b(\cdot),(\cdot)\big) \\
\mathcal{L}_{2}^{\ell}(b)(\cdot) &:= \weight(\|b \|^{p})\cal L_{s_2}\big(|\nabla b(\cdot)|^{2},b(\cdot), (\cdot)\big)\\
& - \lambda \frac{(r+1)}{c} \left(\int_{M}F(b,x)\density_{\metric}\right)^{r} F_{s}(b(\cdot),\cdot)
\end{align*}
Composing $\mathcal{L}$ with curve $t\to \alpha(t)=(s_{1}+t c_{1}, s_{2}+t c_{2},x)$, deriving at $t=0$ and replacing $c_{1}, c_{2}$ with $0,1$ and $1,0$
we conclude that $\frac{\partial }{\partial s_{i}} \mathcal{L}  = \mathcal{L}_{s_{i}}$ is linearized basic.
By the same argument, we can check that $F_{s}$ is also linearized basic. 
Since $|\nabla b|$ is linearized basic, we conclude that
 $\mathcal{L}_{i}^{\ell}(b) $ is a continuous linearized basic function. 
Thus, we have that the operator $J_\lambda$ has a derivative as in Equation \eqref{eq: derivative of considered operators}.
\end{proof}

\begin{remark}
\label{remark-L-ell-restriction}
As can be checked in the proof  of Lemma \ref{lemma-derivada-J}  above,
given a $C^{1}$ operator $J_{\lambda}$ satisfying    Equation \eqref{equation-proposition-J-lambda},
the operators $ \mathcal{L}_{k}^{\ell}\colon \linearizedbasic\to C^{0}(\{U_{i}\})^{\ell}$
(defined in  the proof of the above lemma) admits the following property: \emph{ if 
$b\in C^{\infty}(M)^{\mathcal{F}}$ then  $\mathcal{L}_{i}^{\ell}(b)\in C^{0}(M)^{\mathcal{F}}$, i.e.,
$\mathcal{L}_{i}^{\ell} \Big(C^{\infty}(M)^{\mathcal{F}}\Big)\subset C^{0}(M)^{\mathcal{F}}$. } 
\end{remark}

\subsection{ AVP and Sasaki metrics}
\label{subsection-Remark-definition-AVP}

We have seen in the previous subsection how natural  hypotheses (a) and (b)  are in the definition of AVP. 
In this section, we are going to stress at 
Lemma \ref{lemma-item-c-definitionAVP} 
 the relationship between AVP foliations and Sasaki metrics.
This lemma will   be  used in   the proof of 
Lemma \ref{lemma-relation-integral-linear-functional-subsets-basic} discussed in the next subsection.

First, we need to present an   appropriately adapted metric. 
Recall that, given the principal stratum $\Mprincipal$, there exists a metric $\metric_{\mathcal{F}}$ on $\Mprincipal/\mathcal{F}$ so that the submersion 
$\pi_{\mathcal{F}}\colon (\Mprincipal,\metric) \to (\Mprincipal/\mathcal{F}, \metric_{\mathcal{F}})$ turns out to be a Riemannian submersion.
We say  that a metric  $\hat{\metric}$ on $M$ is an \emph{adapted metric} if $\mathcal{F}$ is still a SRF on $(M,\hat{\metric})$ and 
$\pi_{\mathcal{F}}\colon (\Mprincipal,\hat{\metric}) \to (\Mprincipal/\mathcal{F}, \metric_{\mathcal{F}})$ is a Riemannian submersion. 
In general, this should  also hold in each stratum (and not only in the principal leaves), but since we are dealing with integration, we will not need
to discuss this technical aspect of the adapted metric, and we focus just on the stratum of principal leaves.

\begin{definition}[Adapted Sasaki metric on the principal stratum]
\label{definition-adapted-sasaki-metric} 
Consider a fixed leaf   $B=L_{p_0}$ and  a  tubular neighborhood   $U=\mathrm{Tub}_{\delta}(B)$ that can be  identify 
(via normal exponential map $\exp^{\nu}$) 
with \emph{the disc bundle} $D_{\delta}(0)\to E^{\delta}\to B$, i.e., 
\begin{equation}
\label{definition-E-delta}
 E^{\delta}:=\{e_{p}\in\nu_{p}B; |e_p|<\delta , p\in B\}\subset\nu(B). 
\end{equation}
Let $\mathcal{T}$  be an Ehresmann connection tangent to $\mathcal{F}^{\ell}$ and $\metric^{\tau}$ the Sasaki metric associated to $\mathcal{T}$.
Let  $\mathcal{H}=\nu\mathcal{F}$ denote the normal distribution of $\mathcal{F}$ on the principal stratum $U^{0}$ (with respect to $\metric^{\tau}$).   
Let $\hat{\metric}^{\tau}_{L}$ be the restriction of the Sasaki metric $\metric^{\tau}$ to the tangent spaces $T\mathcal{F}_{U^{0}}$ and
$\hat{\metric}_{\mathcal{H}}$  be the metric on the normal distribution $\mathcal{H}$ so that 
$\mathrm{d}\pi_{\mathcal{F}}\colon (\mathcal{H},\hat{\metric}_{\mathcal{H}})\to \big(T (U^{0}/\mathcal{F}),  \metric_{\mathcal{F}}\big)$
turns out to be an isometry. Set $\hat{\metric}^{\tau}=\hat{\metric}^{\tau}_{L}\oplus\hat{\metric}_{\mathcal{H}}$,
 meaning that $\mathcal{H}$ is orthogonal to the principal leaves with respect to the new metric $\hat{\metric}^{\tau}$. 
This metric $\hat{\metric}^{\tau}$ is called  \emph{adapted Sasaki metric} on the principal stratum $U^{0}$ of the (regular) foliation
$(U^{0},\mathcal{F}_{U^{0}})$. 
\end{definition}

By construction, $\mathcal{F}_{U^{0}}$ is a Riemannian foliation with respect to the metric  $\hat{\metric}^{\tau}$  on $U^{0}$. 
The next lemma stresses that the other two foliations also have this property.

\begin{lemma}
\label{lemma-adapted-sasaki-metric-foliations}
When restricted to their principal stratum, the following (regular) foliations are Riemannian with respect 
 to the  adapted Sasaki metric $\hat{\metric}^{\tau}$ on $U$:
\begin{enumerate}
\item[(a)]   $\mathcal{F}^{\ell}=\{L^{\ell}_{e}\}_{e\in E^{\delta}};$
\item[(b)]the partition into the orbits of isotropic groups, i.e.  $\{G_{p}(e_p)\}_{e_p\in E}$, recall Definition \ref{definition-isotroy-group}.

\end{enumerate}
\end{lemma}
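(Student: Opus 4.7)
The plan is to recast both statements as Riemannian submersion results. For each of the two foliations $\mathcal{F}'$ in question—namely $\mathcal{F}^\ell$ in (a) and the isotropy-orbit partition $\{G_{p}(e_{p})\}$ in (b)—I will construct an appropriate metric $\hat{\metric}'$ on the local leaf space $U^0/\mathcal{F}'$ such that the natural quotient map $\pi'\colon (U^0,\hat{\metric}^\tau)\to (U^0/\mathcal{F}',\hat{\metric}')$ becomes a Riemannian submersion. This is equivalent to $\hat{\metric}^\tau$ being bundle-like for $\mathcal{F}'$, which is the desired Riemannian foliation conclusion.

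First I would note that both foliations are sub-foliations of $\mathcal{F}_{U^0}$, so $T\mathcal{F}'\subset T\mathcal{F}$ on the principal stratum, and the $\hat{\metric}^\tau$-normal distribution decomposes orthogonally as $\nu(\mathcal{F}')=N'\oplus \mathcal{H}$, where $N'$ is the $\metric^\tau$-orthogonal complement of $T\mathcal{F}'$ inside $T\mathcal{F}$. This is unambiguous because $\hat{\metric}^\tau=\metric^\tau$ on $T\mathcal{F}$, and the orthogonality $N'\perp_{\hat{\metric}^\tau}\mathcal{H}$ is automatic from $N'\subset T\mathcal{F}\perp_{\hat{\metric}^\tau}\mathcal{H}$. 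Because $\mathcal{F}'\subset \mathcal{F}_{U^0}$ there is a canonical projection $q\colon U^0/\mathcal{F}'\to U^0/\mathcal{F}$ with $\pi_\mathcal{F}=q\circ\pi'$, and I would define $\hat{\metric}'$ by combining orthogonally: along the fibers of $q$, the transverse metric of $\mathcal{F}'$ inside the leaves of $\mathcal{F}$ induced by the restriction of $\metric^\tau$ to $T\mathcal{F}$—well-defined precisely because $\mathcal{F}'$ is an SRF for $\metric^\tau$; and transversally to the fibers of $q$, the metric $\metric_\mathcal{F}$.

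The main verification is then that $\pi'$ is a Riemannian submersion: on $N'$, where $\hat{\metric}^\tau|_{N'}=\metric^\tau|_{N'}$, the differential $d\pi'$ is an isometry onto the vertical directions of $q$ by construction of the transverse metric; on $\mathcal{H}$, the defining identity $\hat{\metric}^\tau|_\mathcal{H}=(d\pi_\mathcal{F})^*\metric_\mathcal{F}$ together with $\pi_\mathcal{F}=q\circ\pi'$ immediately yields that $d\pi'$ is an isometry onto the horizontal directions of $q$; the orthogonal splittings match on both sides. Applying this framework with $\mathcal{F}'=\mathcal{F}^\ell$ yields (a). For (b), the identification $G_p(x)=L_x\cap S_p$ shows that on the principal stratum the isotropy-orbit partition is a regular sub-foliation of $\mathcal{F}_{U^0}$ sandwiched between $\mathcal{F}^\tau$ and $\mathcal{F}_U$—both SRFs for $\metric^\tau$—so it is also Riemannian for $\metric^\tau$, and the same construction applies. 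The main obstacle I foresee lies in (b): rigorously verifying that $\{G_{p}(e_{p})\}$ defines a smooth regular foliation on $U^0$ with a well-behaved transverse $\metric^\tau$-structure, since the isotropy group varies with $p$ and the passage from fiberwise orbits to a genuine foliation on $U^0$ must be handled carefully via parallel translation (or, equivalently, via the linear holonomy groupoid $\mathcal{G}^\ell$).
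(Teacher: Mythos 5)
Your overall strategy is the same as the paper's, just phrased in the dual language: you split the $\hat{\metric}^{\tau}$-normal bundle of the subfoliation $\mathcal{F}'$ as $N'\oplus\mathcal{H}$ with $N'=\nu_{\metric^{\tau}}(\mathcal{F}')\cap T\mathcal{F}$, and handle $\mathcal{H}$ by the fact that $\mathcal{F}$ is Riemannian for $\hat{\metric}^{\tau}$ and $N'$ by a transverse structure for $\mathcal{F}'$ inside the leaves of $\mathcal{F}$ coming from the Sasaki metric. The paper does exactly this, but verifies the bundle-like condition by exhibiting a local orthonormal frame of basic vector fields in $\nu(\mathcal{F}')$ (Molino's criterion) rather than by building a metric on the local leaf space; for part (a) your argument is essentially a faithful translation and is fine.

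There is, however, a genuine gap in your treatment of (b). The isotropy-orbit partition is \emph{not} sandwiched between $\mathcal{F}^{\tau}$ and $\mathcal{F}_{U}$: each orbit $G_{p}(e_{p})$ is contained in the single fiber $E_{p}$, whereas every leaf $L^{\tau}_{e}$ of the holonomy foliation contains the horizontal distribution $\mathcal{T}$ and projects onto (a component of) $B$, so $\mathcal{F}^{\tau}\not\subset\{G_{p}(e_{p})\}$ unless $B$ is a point. Moreover, even if a foliation were sandwiched between two Riemannian foliations, that would not make it Riemannian (take $\mathcal{F}_{1}$ the point foliation and $\mathcal{F}_{3}$ the one-leaf foliation: every partition is sandwiched between them), so the inference is a non sequitur on top of the false inclusion. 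The correct substitute — and what the paper's second bullet implicitly uses — is that inside a fixed principal leaf $L$ of $\mathcal{F}$ the isotropy orbits are exactly the fibers of $\pi|_{L}\colon L\to B$ (since $G_{p}(e_{p})=L\cap E_{p}$), and $\pi|_{L}\colon (L,\metric^{\tau}_{L})\to(B,\metric)$ is a Riemannian submersion because $TL=\mathcal{T}\oplus TG_{p}(e_{p})$ with $\mathcal{T}$ $\metric^{\tau}$-orthogonal to the fibers of $E$; the fibers of a Riemannian submersion form a Riemannian foliation, which supplies the holonomy-invariant metric on $N'$ that your construction of $\hat{\metric}'$ requires. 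With that replacement your argument for (b) goes through.
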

\begin{proof}
It is well known   that  a (regular) foliation  on a manifold $U$ is Riemannian if and only  if for each $x\in U$  there exists
(in a neighborhood of $x$) an orthonormal  frame $\{\vec{\xi}_{i}\}$ of basic vector fields tangent to  the normal distribution of the foliation, 
see \cite{Molino}. The result now follows from the  two observations below: 
\begin{itemize}
\item   $\mathcal{F}$ is Riemannian with respect to the adapted metric $\hat{\metric}^{\tau}$;
\item fixed a principal leaf $L$,   the foliations   $\mathcal{F}^{\ell}=\{L^{\ell}\}$ and $\{G_{p}(e_p)\}_{e_p\in E}$ 
on $L$ are Riemannian foliations with respect to the Sasaki metric $\metric^{\tau}_{L}$.
\end{itemize} 
\end{proof}

\begin{lemma}
\label{lemma-item-c-definitionAVP}
Let $\mathcal{F}=\{L_{x}\}_{x\in M}$ be an AVP  foliation on a compact manifold $(M,\metric)$ and  $B=L_{p_0}$ a fixed leaf, set $U=\mathrm{Tub}_{\delta}(B)$. 
Then, restricting to the principal stratum $U^{0}$, we have:
\begin{equation}
\label{equation-0-lemma-item-c-definitionAVP}
\density_{\metric}=\kappa\, \density_{\hat{\metric}^{\tau}}, 
\end{equation}
where $\density$ is the Riemannian density (i.e., the measure induced by the Riemannian metric),  $\kappa\in C^{\infty}(U^{0})^{\ell}$ and $\hat{\metric}^{\tau}$ is an $\mathcal{T}$-adapted Sasaki metric, for
  an Ehresmann connection $\mathcal{T}$ tangent to $\mathcal{F}^{\ell}$. 
\end{lemma}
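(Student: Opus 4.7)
The plan is to define $\kappa := \density_\metric/\density_{\hat{\metric}^\tau}$ as the positive smooth ratio of densities on the principal stratum $U^0$ and to verify that $\varphi_t^*\kappa = \kappa$ for the flow $\varphi_t$ of every $\vec{X}_\alpha \in \mathcal{C}(U)$. Since by construction the orbits of $\mathcal{C}(U)$ are precisely the leaves of $\mathcal{F}^\ell$, this yields $\kappa \in C^\infty(U^0)^\ell$. As AVP condition (b) already gives $\varphi_t^*\density_\metric = \density_\metric$, the whole task reduces to showing $\varphi_t^*\density_{\hat{\metric}^\tau} = \density_{\hat{\metric}^\tau}$.

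First I would extract some structural information. On $U^0$ both $\metric$ and $\hat{\metric}^\tau$ make $\mathcal{F}_{U^0}$ a Riemannian foliation (by hypothesis, and by Lemma \ref{lemma-adapted-sasaki-metric-foliations} respectively), and both turn $\pi_\mathcal{F}\colon U^0\to U^0/\mathcal{F}$ into a Riemannian submersion onto the same quotient $(U^0/\mathcal{F},\metric_\mathcal{F})$. By uniqueness of the metric on the horizontal distribution compatible with this Riemannian submersion, $\metric|_\mathcal{H} = \hat{\metric}_\mathcal{H}$; in particular $TU^0 = T\mathcal{F}\oplus\mathcal{H}$ is orthogonal for both metrics, so $\kappa$ is governed entirely by the leaf-wise blocks. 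Moreover, $\varphi_t$ preserves the leaves of $\mathcal{F}$ (since $\vec{X}_\alpha$ is tangent to $\mathcal{F}^\ell\subset \mathcal{F}$) and it preserves $\mathcal{H}$ by AVP condition (a). As $\pi_\mathcal{F}\circ\varphi_t = \pi_\mathcal{F}$, a short Riemannian-submersion computation (analogous to the one in the proof of Lemma \ref{lemma-J-implies-l-linear}) shows that $\varphi_t$ acts as an isometry of $(\mathcal{H},\hat{\metric}_\mathcal{H})$.

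The main obstacle is to show that $\varphi_t$ also preserves $\metric^\tau|_{T\mathcal{F}} = \hat{\metric}^\tau_L$, which, combined with the previous step, will give $\varphi_t^*\density_{\hat{\metric}^\tau} = \density_{\hat{\metric}^\tau}$. I plan to handle this via a judicious choice of Ehresmann connection $\mathcal{T}$: the linearized foliation $\mathcal{F}^\ell$ is the orbit foliation of the proper Lie groupoid $\mathcal{G}^\ell$ discussed in the Appendix, so $\mathcal{T}$ may be chosen $\mathcal{G}^\ell$-invariant, and hence $\varphi_t$-invariant. Combining (i) $\varphi_t$-invariance of $\mathcal{T}$, (ii) the fact that $\varphi_t$ restricts to an isometry of each Euclidean normal fiber by Lemma \ref{MALEX-lemma-fluxo-isometiras}, and (iii) the construction of $\metric^\tau$ from $\mathcal{T}$ together with the fiber metrics, one concludes that $\varphi_t$ is a $\metric^\tau$-isometry, hence a $\hat{\metric}^\tau$-isometry on $U^0$. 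Therefore $\varphi_t^*\density_{\hat{\metric}^\tau} = \density_{\hat{\metric}^\tau}$, and together with AVP condition (b) we obtain $\varphi_t^*\kappa = \kappa$ for every $\vec{X}_\alpha \in \mathcal{C}(U)$, which finishes the proof.
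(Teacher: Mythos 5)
Your overall strategy coincides with the paper's: define $\kappa=\density_{\metric}/\density_{\hat{\metric}^{\tau}}$ and show it is invariant under the flows $\varphi_t=\efunction^{tX_{\alpha}}$ by showing that both densities are preserved, the first one being AVP condition (b). However, the step on which everything hinges contains a genuine error: you claim that, after choosing $\mathcal{T}$ invariantly, $\varphi_t$ is an isometry of the Sasaki metric $\metric^{\tau}$ (in particular of its leafwise part $\hat{\metric}^{\tau}_{L}$). This is false in general. On the horizontal part $\mathcal{T}\cong TB$ the Sasaki metric is the pullback of $\metric_B$, so $\varphi_t|_{\mathcal{T}}$ is an isometry only if the induced flow on the leaf $B$ is an isometry of $B$; but the type-1 (parallel-translation) generators of an AVP system project to vector fields on $B$ that preserve $\vol_B$ without being Killing --- the paper states this explicitly in the proof of Proposition \ref{MALEX-flows-preserve-volume}. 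Only the fiberwise part of $\varphi_t$ is isometric (Lemma \ref{MALEX-lemma-fluxo-isometiras}). A smaller inaccuracy of the same flavor: $\mathcal{H}$ is the $\metric^{\tau}$-normal distribution of $\mathcal{F}$, so the splitting $TU^{0}=T\mathcal{F}\oplus\mathcal{H}$ need not be $\metric$-orthogonal and $\metric|_{\mathcal{H}}$ need not equal $\hat{\metric}_{\mathcal{H}}$; this does not affect density identities (Riemannian submersions give $\density=|\vol_L\wedge\pi_{\mathcal{F}}^{*}\vol_{U^0/\mathcal{F}}|$ for any complement), but it should not be asserted.

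What survives, and is all that is needed, is the weaker statement that $\varphi_t$ preserves the leafwise Sasaki \emph{volume form}, not the metric. This is exactly how the paper argues: it writes $\omega^{\tau}=\pi_{L}^{*}\vol_{B}\wedge\vol^{\tau}_{\pi^{-1}_{L}}$ and checks $(\efunction^{tX_{\alpha}})^{*}|\omega^{\tau}|=|\omega^{\tau}|$ using the fiberwise isometry together with the (additional, explicitly invoked) normalization that the AVP system can be chosen so that its flows preserve $\density_{\metric_B}$ --- a property guaranteed by the construction in Subsection \ref{subsection-Ex-SRF-Sasaki-AVP}, not derivable from invariance of $\mathcal{T}$. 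Combined with $\pi_{\mathcal{F}}\circ\varphi_t=\pi_{\mathcal{F}}$ and Step 1 of the paper's proof (flow-invariance of the $\metric$-leafwise volume $|\vol_L|$, deduced from AVP (b)), this yields the invariance of the ratio. So your proposal is repairable by replacing the isometry claim with the volume-preservation computation, but as written the key step would fail.
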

\begin{proof}
Consider an  AVP  geometric system  $\mathcal{C}(U)=\{\vec{X}_{\alpha}\}\subset \mathfrak{X}(U)$,
 where the flow $\efunction^{t X_{\alpha}}$ can be assumed to preserve densities $\density_{\metric^{\tau}}$ and 
$\density_{\metric_B}$, see Subsection \ref{subsection-Ex-SRF-Sasaki-AVP}.
 Given the function $\kappa$ so that 
$ \density_{\metric}=\kappa \, \density_{\hat{\metric}^{\tau}}$
our goal is to prove  that
$ \kappa=\kappa\circ \efunction^{t X_{\alpha}} $
for each $X_{\alpha}\in\mathcal{C}(U)$. 

By the definition of flow, it suffices  to check
for small $t$. Hence,  we can prove \eqref{equation-0-lemma-item-c-definitionAVP} locally, writing 
  the Riemannian density
	as a module of local volume Riemannian forms.
	More precisely, for a neighborhood of a point,  we can assume that 
$\density_{\metric}=| \vol_{\metric}|$ and $\density_{\hat{\metric}^{\tau}}=|\vol_{\hat{\metric}^{\tau}}|$,
where $\vol_{\metric}$ and $\vol_{\hat{\metric}^{\tau}}$  are 
 local Riemannian volume forms  with respect to $\metric$ and  $\hat{\metric}^{\tau}$  and $|\cdot|$ is the modulus function,  
 c.f., \cite[Proposition 16.45, Chapter 16]{Lee13}.

The proof of the lemma is divided into three steps.

\textbf{Step 1:} \emph{for each  $\vec{X}_{\alpha}\in\mathcal{C}(U)$ we check that the flow $\efunction^{t X_{\alpha}}$ preserves the Riemannian density 
 of each principal leaf  $L\subset U^{0}$ (with respect to the original metric $\metric$)  i.e}
\begin{equation}
\label{equation-1-lemma-item-c-definitionAVP}
(\efunction^{t X_{\alpha}})^{*}|\vol_{L}|=|\vol_{L}|.  
\end{equation}
In fact,  since $\mathcal{F}$ is a  Riemannian foliation on $U^{0}$, 
there  exists a unique Riemannian metric $\metric_{\mathcal{F}}$ on $U^{0}/\mathcal{F}$
so that 
 $\pi_{\mathcal{F}}\colon (U^{0},\metric ) \to (U^{0}/\mathcal{F},\metric_{\mathcal{F}})$
is a Riemannian submersion, where $\pi_{\mathcal{F}}$ is  the canonical projection. 
Let us denote $|\vol_{ U^{0}/\mathcal{F}}|$ the Riemannian density  associated to $\metric_{\mathcal{F}}$. Then:
\begin{align*}
\Big|\vol_{L}\wedge \pi_{\mathcal{F}}^{*}\big(\vol_{ U^{0}/\mathcal{F}}\big)\Big| &= \Big|\vol_{\metric}\Big|\\
&=\Big|(\efunction^{t X_{\alpha}})^{*}\vol_{\metric}\Big|\\
&=\Big|(\efunction^{t X_{\alpha}})^{*} \vol_{L}\wedge (\efunction^{t X_{\alpha}})^{*}  \pi_{\mathcal{F}}^{*}\big(\vol_{ U^{0}/\mathcal{F}}\big)\Big|\\
&=\Big|(\efunction^{t X_{\alpha}})^{*} \vol_{L}\wedge ( \pi_{\mathcal{F}}\circ \efunction^{t X_{\alpha}})^{*}\big(\vol_{ U^{0}/\mathcal{F}}\big)\Big|\\
&=\Big|(\efunction^{t X_{\alpha}})^{*} \vol_{L}\wedge  \pi_{\mathcal{F}}^{*}\big(\vol_{ U^{0}/\mathcal{F}}\big)\Big|,
\end{align*}
where we have used in the last equality the fact that the flow $\efunction^{t X_{\alpha}}$ is contained in the leaves and hence 
$\pi_{\mathcal{F}}\circ \efunction^{t X_{\alpha}}=  \pi_{\mathcal{F}}$. We have then proved that
\begin{equation}
\label{equation-2-lemma-item-c-definitionAVP}
\Big|\vol_{L}\wedge \pi_{\mathcal{F}}^{*}\big(\vol_{ U^{0}/\mathcal{F}}\big)\Big|=
\Big|(\efunction^{t X_{\alpha}})^{*} \vol_{L}\wedge  \pi_{\mathcal{F}}^{*}\big(\vol_{ U^{0}/\mathcal{F}}\big)\Big|.
\end{equation}
Equation \eqref{equation-2-lemma-item-c-definitionAVP} 
and the fact that $(\efunction^{t X_{\alpha}})^{*} \vol_{L}$ 
is a volume form on $L$ imply Equation \eqref{equation-1-lemma-item-c-definitionAVP}.

\textbf{Step 2:} \emph{For each principal leaf  $L\subset U^{0}$, we need to prove that:}
\begin{equation}
\label{equation-3-lemma-item-c-definitionAVP}
|\vol_{L}|= f |\pi_{L}^{*} \vol_{B}\wedge \vol_{\pi^{-1}_{L}}^{\tau}|\, \mathrm{ \, with \, } \, \, f=f\circ \efunction^{t X_{\alpha}},
\end{equation}
where $\pi_{L}\colon L\to B$ is the restriction to $L$  of the   projection  $\pi\colon E_{\delta}\to B$ and 
$\vol_{\pi^{-1}_{L}}^{\tau}$ is the volume form of the fibers $\{\pi^{-1}_{L}(p)\}_{p\in B}$ with respect to the Sasaki metric
$\metric^{\tau}$. 
In order to prove Equation \eqref{equation-3-lemma-item-c-definitionAVP} define 
$\omega^{\tau}:=\pi_{L}^{*}\vol_{B}\wedge \vol_{\pi^{-1}_{L}}^{\tau}$. 
 Let us check that 
\begin{equation}
\label{equation-4-lemma-item-c-definitionAVP}
(\efunction^{t X_{\alpha}})^{*}|\omega^{\tau}| =|\omega^{\tau}|.
\end{equation}
Note that $ (\efunction^{t X_{\alpha}})^{*}|\vol_{\pi^{-1}_{L}}^{\tau}|=|\vol_{\pi^{-1}_{L}}^{\tau}|$,  because 
with respect to the Sasaki metric $\metric^{\tau}$ linearized flows sends fibers of $E$ to fibers of $E$ isometrically. 
Therefore: 
\begin{align*}
(\efunction^{t X_{\alpha}})^{*}|\omega^{\tau}| &= |(\efunction^{t X_{\alpha}})^{*}\pi_{L}^{*}\vol_{B}\wedge (\efunction^{t X_{\alpha}})^{*}\vol_{\pi^{-1}_{L}}^{\tau}|\\
&= |(\efunction^{t X_{\alpha}})^{*}\pi_{L}^{*}\vol_{B}\wedge \vol_{\pi^{-1}_{L}}^{\tau}|\\
&= |(\pi_{L} \circ \efunction^{t X_{\alpha}})^{*}\vol_{B}\wedge \vol_{\pi^{-1}_{L}}^{\tau}|\\
&= |(\efunction^{t X_{\alpha}}\circ \pi_{L} )^{*}\vol_{B} \wedge \vol_{\pi^{-1}_{L}}^{\tau}|\\
&= |(\pi_{L} )^{*}(\efunction^{t X_{\alpha}})^{*}\vol_{B}\wedge \vol_{\pi^{-1}_{L}}^{\tau}|\\
&= |(\pi_{L} )^{*}\vol_{B}\wedge \vol_{\pi^{-1}_{L}}^{\tau}|\\
&=|\omega^{\tau}|.
\end{align*}
Define the function $f$ on $U^{0}$ so that 
\begin{equation}
\label{equation-5-lemma-item-c-definitionAVP}
|\vol_{L}|=f |\vol_{B}\wedge \vol_{\pi^{-1}_{L}}^{\tau}|=f|\omega^{\tau}|.
\end{equation}
Using local frames, one can check that $f\colon U^{0}\to\mathbb{R}$ is smooth. Moreover, we have
\begin{align*}
f|\omega^{\tau}| &= |\vol_{L}|\\
&\stackrel{\eqref{equation-1-lemma-item-c-definitionAVP}}{=}|(\efunction^{t X_{\alpha}})^{*}\vol_{L}|\\
&=f\circ \efunction^{t X_{\alpha}} |(\efunction^{t X_{\alpha}})^{*}\omega^{\tau}|\\
&\stackrel{\eqref{equation-4-lemma-item-c-definitionAVP}}{=}f\circ \efunction^{t X_{\alpha}} |\omega^{\tau}|,
\end{align*}
and hence  
$f=f\circ \efunction^{t X_{\alpha}}$.

\textbf{Step 3:} \emph{Using Equation \eqref{equation-3-lemma-item-c-definitionAVP} we conclude}
\begin{align*}
|\vol_{\metric}|&= \Big|\vol_{L}\wedge \pi_{\mathcal{F}}^{*}\big(\vol_{ U^{0}/\mathcal{F}}\Big)\Big|\\
& = f \Big| \omega^{\tau}\wedge \pi_{\mathcal{F}}^{*}\big(\vol_{ U^{0}/\mathcal{F}}\big) \Big|\\
&= f |\vol_{\hat{\metric}^{\tau}}|.
\end{align*}
This implies that $\kappa=f$ and hence $\kappa=\kappa\circ e^{tX_{\alpha}}$ as we wanted to prove. 
\end{proof}

\subsection{A few properties of symmetric integral linear functionals}
\label{subsection-few properties-Section- Symmetric Criticality of Palais on manifold}

As discussed in Lemma \ref{lemma-local-to-global-PSC}, we can reduce the study of $\mathrm{d} J$
to locally symmetric  operators, recall Definition \ref{definition-C-symmetric-operator}.
We also recall that we can identify (via normal exponential map) the tubular neighborhood $U=\mathrm{Tub}_{\delta}(L_q)$ of $B=L_q$
with a $\delta$-disc bundle denoted by $E^{\delta}$, recall Definition \ref{definition-E-delta}.

A linear functional $\lfunctional\colon C^{\infty}_{c} (E^\delta) \to \mathbb{R}$ is called \emph{linear integral functional} if
\begin{equation}
\lfunctional(w)=\int_{E^{\delta}} \mathcal{L}^{\ell}(\nabla w, w) \density_{\metric},
\end{equation}
where $\mathcal{L}^{\ell}\colon T E^{\delta}\times\mathbb{R}\to\mathbb{R}$ is a continuous Lagrangian.

\begin{remark}
Our goal  here is to highlight general properties  of symmetric  linear integral functional
that we will need. We hope to use some of these properties 
 in future works; see also Equations \eqref{eq-Fl-zero}, \eqref{equation-lr-decomposto-step3}, \eqref{equation-funcional-linear-gradiante-tangente}. 
We recall to the reader that, in  this article, 
 we are interested in the symmetric integral linear functional
 defined in
 Equation \eqref{eq: derivative of considered operators}.
\end{remark}


For  each relative open set $\vizinhancaB\subset B$, we can denote
 \[
 \lfunctional_{\vizinhancaB}(w)=\int_{\pi^{-1}(\vizinhancaB)} \mathcal{L}^{\ell}(\nabla w, w)\density_{\metric}.
 \]
 Recall that $\lfunctional\colon C^{\infty}_{c} (E^\delta) \to \mathbb{R}$
is  $\mathcal{C}$-\emph{locally symmetric}, if
\begin{equation*}
\lfunctional (w \circ \efunction^{t X_{\alpha}}) = \lfunctional(w),
\end{equation*}
for each $w \in C^{\infty}_{c} (E^{\delta})$ and for each $\efunction^{t X_{\alpha}}$,  where $\vec{X}_{\alpha}\in \mathcal{C}$ and
$\mathcal{C}$ an AVP
 geometric control system.
We will call $\lfunctional$ a  \emph{symmetric integral linear functional}.
If $\lfunctional$ is symmetric, the  
restricted functional $\lfunctional_{\vizinhancaB}$ is still
symmetric. More precisely, for linearized flows $\varphi=\efunction^{t X_{\alpha}}$, we have
\begin{equation}
\label{eq-mudanca-variavel-operadorlinear-restrito}
\lfunctional_{\vizinhancaB}(w\circ \varphi)=\lfunctional_{\varphi(\vizinhancaB)}(w).
\end{equation}
In fact, given a function  $w$ with domain  $\pi^{-1}(\varphi(P))$,
 the domain of  $w\circ\varphi$ is $\pi^{-1}(P)$ and hence we have:
\[
\lfunctional_{\varphi(P)}(w)=\lfunctional(w)=\lfunctional(w\circ\varphi)=\lfunctional_{P}(w\circ\varphi).
\]

Equation \eqref{eq-mudanca-variavel-operadorlinear-restrito} implies the following useful observation, 
which can be checked directly
 in the case of  integral linear functional defined in Equation \eqref{eq: derivative of considered operators}.

\begin{lemma}
\label{lemma-lagrangiano-eh-basico}
Consider $b\in C^{\infty}(E^{\delta})^{\ell}$ a fixed basic linearized function. 
Then $x\to \mathcal{L}^{\ell}(\nabla b(x), b(x))$ is a basic linearized function. 
\end{lemma}

\begin{proof}
Set  $\varphi=\efunction^{t X_{\alpha}}$ for $\vec{X}_{\alpha}\in\mathcal{C}$.  Our goal is to check that 
\begin{equation}
\label{eq-inifitesimal mudanca-variavel-operadorlinear-restrito}
\mathcal{L}^{\ell}(\nabla b, b)\circ\varphi=\mathcal{L}^{\ell}(\nabla b,b)
\end{equation}

\begin{eqnarray*}
\int_{\pi^{-1}(\vizinhancaB)} \mathcal{L}^{\ell}(\nabla b, b)\circ\varphi \, \density_{\metric}
&=&\int_{\pi^{-1}(\vizinhancaB)} \mathcal{L}^{\ell}(\nabla b, b)\circ\varphi \, \varphi^{*} \density_{\metric}\\
&=&\int_{\pi^{-1}(\varphi(\vizinhancaB))} \mathcal{L}^{\ell}(\nabla b, b) \density_{\metric}\\
&=& \lfunctional_{\varphi(\vizinhancaB)}(b)\\
&\stackrel{\eqref{eq-mudanca-variavel-operadorlinear-restrito}}{=} & \lfunctional_{\vizinhancaB}(b\circ \varphi)\\
&=& \lfunctional_{\vizinhancaB}(b)\\
&=& \int_{\pi^{-1}(\vizinhancaB)} \mathcal{L}^{\ell}(\nabla b, b) \, \density_{\metric}
\end{eqnarray*}
The above equation and the arbitrariness in the choice of region $\vizinhancaB$ imply Equation \eqref{eq-inifitesimal mudanca-variavel-operadorlinear-restrito}.
\end{proof}

A useful property of a symmetric integral linear functional is its operation on basic functions.

\begin{lemma} 
\label{lemma-relation-integral-linear-functional-subsets-basic}
Let $\lfunctional$ be a symmetric integral linear functional on $C^{\infty}_{c} (E^\delta)$.  
Given a relative compact neighborhood $\vizinhancaB\subset B$, we have for each linearized basic function $b$:
\begin{equation}
\label{eq-1-lemma-relation-integral-linear-functional-subsets-basic}
\lfunctional(b)=c_{0} \lfunctional_{\vizinhancaB}(b),
\end{equation}
where $c_0=\frac{|B|}{|\vizinhancaB|}$. 

Here $|B|$ and $|\vizinhancaB|$ denote the volume of $B$ and 
$\vizinhancaB$ with respect to the Riemannian density $\density_{\metric}$. 

\end{lemma}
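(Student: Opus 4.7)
The plan is to interpret the assignment $\vizinhancaB \mapsto \lfunctional_{\vizinhancaB}(b)$ as a signed Borel measure on $B$ and show, using the AVP-symmetry of $\lfunctional$ and the fact that $b$ is linearized basic, that this measure is invariant under a family of diffeomorphisms of $B$ acting transitively; by uniqueness of invariant measures it must then be a constant multiple of the Riemannian volume on $B$, and comparison of totals yields the constant $c_{0}=|B|/|\vizinhancaB|$.

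First I would verify that the integrand
\[
F(x):=\mathcal{L}_{1}^{\ell}(b_{0},x)\,\metric(\nabla b_{0},\nabla b)(x)+\mathcal{L}_{2}^{\ell}(b_{0},x)\,b(x)
\]
is $\mathcal{F}^{\ell}$-basic on the principal stratum. Since $b$, $b_{0}$ and $\mathcal{L}_{i}^{\ell}(b_{0},\cdot)$ are linearized basic, it remains to check that the inner product $\metric(\nabla b_{0},\nabla b)$ is invariant under each flow $\varphi=\efunction^{tX_{\alpha}}$ of an AVP system; but this follows exactly as in the computation of Lemma~\ref{lemma-J-implies-l-linear}, using that $\varphi$ is an isometry between normal spaces of principal leaves (condition (a) of Definition~\ref{definition-a-v-p}) and the identity $\mathrm{d}\varphi_{x}(\nabla b)=(\nabla b)_{\varphi(x)}$. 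Combined with the AVP invariance $\varphi^{*}\density_{\metric}=\density_{\metric}$ (condition (b)), one gets that the measure $F\density_{\metric}$ on $E^{\delta}$ is $\varphi$-invariant on the regular stratum (which carries full measure).

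Now define a signed measure on $B$ by
\[
\nu(\vizinhancaB):=\lfunctional_{\vizinhancaB}(b)=\int_{\pi^{-1}(\vizinhancaB)}F\,\density_{\metric}.
\]
Applying \eqref{eq-mudanca-variavel-operadorlinear-restrito} with $w=b$ and using $b\circ\varphi=b$ gives $\nu(\vizinhancaB)=\nu(\bar{\varphi}(\vizinhancaB))$ for the diffeomorphisms $\bar{\varphi}$ of $B$ obtained by restricting the flows of the AVP system to the central leaf $B=L_{p_{0}}$ (which $\varphi$ preserves because $\varphi$ preserves every leaf of $\mathcal{F}^{\ell}$). Since $B$ itself is a leaf of $\mathcal{F}^{\ell}$ and this foliation is by definition generated by the orbits of the AVP system, the pseudogroup generated by such $\bar{\varphi}$ acts transitively on $B$; in particular, any two small balls in $B$ of the same radius can be mapped onto one another by a composition of flows.

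By the standard uniqueness result for finite measures on a compact manifold invariant under a transitive group (or pseudogroup) of diffeomorphisms that preserve the Riemannian volume of $B$ (which the $\bar{\varphi}$ do, since the $\vec{X}_{\alpha}$ restricted to $B$ have flows contained in leaves, and one checks as in Step~1 of Lemma~\ref{lemma-item-c-definitionAVP} that such leaf-tangent flows preserve $\vol_{B}$), one concludes $\nu=C\,\vol_{B}$ for some constant $C$. Taking $\vizinhancaB=B$ gives $C|B|=\lfunctional(b)$, hence $\lfunctional(b)=(|B|/|\vizinhancaB|)\,\lfunctional_{\vizinhancaB}(b)=c_{0}\lfunctional_{\vizinhancaB}(b)$, as claimed. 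The main obstacle I expect is the transitivity/uniqueness step: one must be careful that the AVP vector fields, restricted to $B$, genuinely generate a transitive action on all of $B$ (including passing between possibly distinct connected components that are nonetheless in the same leaf of $\mathcal{F}^{\ell}$), and that the notion of invariant measure one applies is the correct one for this pseudogroup; this is handled by exploiting that $B$ coincides with an orbit of the isotropy group $G_{p_{0}}$ of Definition~\ref{definition-isotroy-group}.
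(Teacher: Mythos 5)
Your route is genuinely different from the paper's. The paper never works with a measure on $B$: it converts $\density_{\metric}$ to the adapted Sasaki density via Lemma \ref{lemma-item-c-definitionAVP}, applies Fubini over the leaf space of the Riemannian submersion $\pi_{\mathcal{F}^{\ell}}\colon E_{r}\to E_{r}/\mathcal{F}^{\ell}$ (Lemma \ref{lemma-adapted-sasaki-metric-foliations}(a)), and then uses the bundle structure $G_{\pi(e)}(e)\to L^{\ell}_{e}\to B$ of each linearized leaf (Lemma \ref{lemma-adapted-sasaki-metric-foliations}(b)) to compute $|L^{\ell}_{e}|=|G_{\pi(e)}(e)|\,|B|$ and $|L^{\ell}_{e}\cap\pi^{-1}(\vizinhancaB)|=|G_{\pi(e)}(e)|\,|\vizinhancaB|$, from which the ratio $|B|/|\vizinhancaB|$ drops out. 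Your idea --- view $\vizinhancaB\mapsto\lfunctional_{\vizinhancaB}(b)$ as a measure $\nu$ on $B$, show it is invariant under the maps $\bar{\varphi}$ of $B$ induced by the AVP flows via \eqref{eq-mudanca-variavel-operadorlinear-restrito}, and conclude $\nu=C\,\vol_{B}$ --- is shorter in outline, but two steps need repair before it closes.

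The uniqueness step, as invoked, is not a theorem. A finite measure invariant under a transitive pseudogroup is not thereby a multiple of $\vol_{B}$; without volume-preservation the claim is simply false (a single flow on $S^{1}$ with nowhere-vanishing speed $v$ is transitive, yet its invariant measures are multiples of $dx/v$), and even with volume-preservation one needs more than transitivity for a bare measure. What actually works here is concrete: disintegrating $\density_{\metric}$ along the fibers of $\pi$ shows $\nu$ has a \emph{continuous density} $\rho$ with respect to $\vol_{B}$; invariance of both $\nu$ and $\vol_{B}$ under each $\bar{\varphi}$ gives $\rho\circ\bar{\varphi}=\rho$; and since $B$ is a single orbit of the control system, $\rho$ is constant. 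You should run that argument explicitly rather than appeal to a uniqueness principle.

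Second, your justification that the $\bar{\varphi}$ preserve $\vol_{B}$ is mis-cited: Step 1 of Lemma \ref{lemma-item-c-definitionAVP} concerns the \emph{principal} leaves $L\subset U^{0}$ and uses the Riemannian submersion onto $U^{0}/\mathcal{F}$, whereas $B=L_{p_{0}}$ is in general a singular leaf to which that argument does not apply. This is fixable: either choose the AVP system as in Proposition \ref{MALEX-flows-preserve-volume}, whose flows preserve $\density_{\metric_{B}}$ by construction (this is exactly the normalization the paper makes at the start of the proof of Lemma \ref{lemma-item-c-definitionAVP}), or restrict the identity $\varphi^{*}\density_{\metric}=\density_{\metric}$ to the zero section, using that $\varphi$ maps fibers to fibers isometrically. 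Finally, the closing claim that transitivity on $B$ comes from ``$B$ coinciding with an orbit of the isotropy group $G_{p_{0}}$'' is a misstatement: the group of Definition \ref{definition-isotroy-group} acts on the fiber $E_{p_{0}}$, not on $B$. The correct reason --- which you also state earlier --- is that $B$ is a single leaf of $\mathcal{F}^{\ell}$, hence a single orbit of the AVP control system.
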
 
\begin{proof}
Given a linearized basic function $b$, 
we consider the function $x\to h(x)=\mathcal{L}^{\ell}(\nabla b(x),b(x))$, that is, 
as we have seen at  Lemma \ref{lemma-lagrangiano-eh-basico}, also linearized basic function.
Let $E_{r}$ be a  relatively compact   $\mathcal{F}^{\ell}$-saturated set 
 contained in the principal stratum of $E^{\delta}$ with distance  to  the singular stratum smaller than $r>0$. 
By Lemma \ref{lemma-item-c-definitionAVP},  $\density_{\metric}=\kappa\density_{\hat{\metric}^{\tau}}$ 
where $\kappa\in C^{\infty}(U^{0})^{\ell}$. 
In order to prove Equation \eqref{eq-1-lemma-relation-integral-linear-functional-subsets-basic} it suffices to check that:
\begin{equation}
\label{eq-2-1-lemma-relation-integral-linear-functional-subsets-basic}
\int_{E_{r}} h\, \kappa\density_{\hat{\metric}^{\tau}} = c_{0} \int_{E_{r}\cap \pi^{-1}(\vizinhancaB) } h \kappa\, \density_{\hat{\metric}^{\tau}}. 
\end{equation}
Let $\pi_{\mathcal{F}^{\ell}}$ be the canonical projection whose fibers are the leaves of $\mathcal{F}^{\ell}|_{E_{r}}$.
Item (a) of Lemma \ref{lemma-adapted-sasaki-metric-foliations}  implies  that 
$\pi_{\mathcal{F}^{\ell}}\colon (E_{r},\hat{\metric}^{\tau}) \to (E_{r}/\mathcal{F}^{\ell},\metric_{\mathcal{F}^{\ell}})$ 
is a Riemannian submersion.
On the one hand,
setting  $e^{*}=\pi_{\mathcal{F}^{\ell}}(e)$ for each $e\in E_{r}$, denoting 
$L_{e^{*}}^{\ell}:=(\pi_{\mathcal{F}^{\ell}})^{-1}(e^{*})=L_{e}^{\ell}$  
 and applying Fubini's theorem, we have
\begin{equation}
\label{eq-2-lemma-relation-integral-linear-functional-subsets-basic}
\int_{E_{r}} h(e) \kappa(e) \, \density_{\hat{\metric}^{\tau}} = 
\int_{ E_{r}/\mathcal{F}^{\ell}} h(e^{*})\kappa (e^{*})\, \big(\int_{L_{e^{*}}^{\ell}}  \density_{L_{e}^{\ell}}\big) \density_{\metric_{\mathcal{F}^{\ell}}},
\end{equation}
\begin{equation}
\label{eq-3-lemma-relation-integral-linear-functional-subsets-basic}
\int_{E_{r}\cap \pi^{-1}(\vizinhancaB) } h(e) \kappa(e) \density_{\hat{\metric}^{\tau}} 
 = \int_{ E_{r}/\mathcal{F}^{\ell}} h(e^{*}) \kappa(e^{*})\,  
 \big(\int_{L_{e^{*}}^{\ell}\cap \pi^{-1}(\vizinhancaB)}  \density_{L_{e}^{\ell} }\big) \density_{\metric_{\mathcal{F}^{\ell}}}.
\end{equation}
On the other hand, by item (b) of  Lemma \ref{lemma-adapted-sasaki-metric-foliations},    
each leaf $L_{e}^{\ell}$ admits a Riemannian submersion with  fibers isometric to  the (possible nonconnected) orbit of the isotropy group
  $G_{\pi(e)}(e)$ and basis $B$, i.e.
we have the bundle structure $G_{\pi(e)}(e)\to L_{e}^{\ell}\to B$. This implies that:
\begin{equation}
\label{eq-4-lemma-relation-integral-linear-functional-subsets-basic}
\int_{L_{e}^{\ell}}  \density_{L_{e}^{\ell}}=|G_{\pi(e)}(e)| |B|,
\end{equation}
\begin{equation}
\label{eq-5-lemma-relation-integral-linear-functional-subsets-basic}
 \int_{L_{e}^{\ell}\cap \pi^{-1}(\vizinhancaB)}  \density_{L_{e}^{\ell}}= |G_{\pi(e)}(e)|  |\vizinhancaB|.
\end{equation}
Equation \eqref{eq-1-lemma-relation-integral-linear-functional-subsets-basic} follows direct from Equations
\eqref{eq-2-lemma-relation-integral-linear-functional-subsets-basic}, \eqref{eq-3-lemma-relation-integral-linear-functional-subsets-basic},
\eqref{eq-4-lemma-relation-integral-linear-functional-subsets-basic} and \eqref{eq-5-lemma-relation-integral-linear-functional-subsets-basic}.
\end{proof}

\subsection{The average operator and Lie groupoids}
\label{subsection-avarage-first-intuition}
In the proof of the  
 criticality principle for a  symmetric integral linear functional 
(i.e. Theorem \ref{theorem-Palais-principle-variational-formulation}), 
we have used   an  \emph{average operator} $\mathrm{Av}\colon C^{\infty}_{c} (E^\delta) \to C^{\infty}_{c} (E^\delta)^{\ell}$ 
 that projects  smooth functions with compact support on $E^{\delta}$ (identified via exponential map with $\mathrm{Tub}_{\delta}(B)$) onto $\mathcal{F}^\ell$-basic functions with compact support on $E^{\delta}$.
The idea in the definition of the average operator is to observe that the foliation $\mathcal{F}^{\ell}$ 
comes from a Lie groupoid $\mathcal{G}^{\ell}$ and integrates along the fibers of the source map of this groupoid.  
As we are going to see below, this  generalizes the classical definition of  the average operator associated to an isometric action, see Definition \ref{remark-Av-acao-grupo} and Definition \ref{definition-AV-general}.
We also stress in Remark \ref{remark-why-integration-groupoid} that there are other ways of defining an average operator, depending on the type of problem one wants to address.

Let us start by recalling the definition of Lie groupoid and giving some examples,  increasing the complexity of the examples, 
allowing readers unfamiliar with these concepts to gradually develop an intuition of how they will be used here.

Roughly speaking a
\emph{Lie groupoid} $\mathcal{G} = \mathcal{G}_{1}\rightrightarrows \mathcal{G}_{0}$ consist 
of:
\begin{itemize}
 \item two  manifolds  $\mathcal{G}_{0}$ (the  \emph{set of objects}) and  $\mathcal{G}_{1}$ (\emph{the set of arrows} between objects);
\item two submersions maps  $\mathrm{s},\mathrm{t}\colon \mathcal{G}_{1} \to \mathcal{G}_{0}$ which associate to an arrow $g \in \mathcal{G}_{1}$ 
its \emph{source} (i.e. $\mathrm{s}(g)$) and its \emph{target} (i.e. $\mathrm{t}(g)$) respectively. 
\end{itemize}
We can multiply (or compose) arrows  as long as 
the source of one coincides with the 
the target of the other, and we can   invert arrows as well.
A groupoid also admits  a global section 
$\textbf{1}\colon \mathcal{G}_{0} \to \mathcal{G}_{1}$ called \emph{unit} that satisfies $\mathrm{t}(\textbf{1}(x)) = x = \mathrm{s}(\textbf{1}(x))$, $\mathbf{1}_{x}h = h$, 
$g \mathbf{1}_{x} = g$ (for all $h \in \mathrm{t}^{-1}(x)$ and $g \in \mathrm{s}^{-1}(x)$).

Let us review two simple but important examples to have in mind.

\begin{example}
\label{example-principal-Liegroup}
Let $G$ be a compact Lie group and  $\mu\colon G\times B\to B$ be an action on a  homogeneous
manifold $B=G(p_0)$. In this case 
the arrow set is   $\mathcal{G}_{1}=G\times B;$ the object set is  $\mathcal{G}_{0}=B;$ 
the source map is  $\mathrm{s}(g,p) =p;$  the target map is  $\mathrm{t}(g,p)=\mu(g,p)$
and  the identity map is  $\textbf{1}(p)=(id,p)$. 
\end{example}

\begin{example}
\label{example-transformation-group-action}
Let 
$\mu\colon G\times E\to E$  be a linear  action  on a fiber  bundle $\mathbb{R}^{k}\to E\to B=G(p_0)$, i.e. $\mu(g,\cdot)\colon E_{p}\to E_{\mu(g,p)}$ is a linear map. 
In this case: 
\begin{itemize}
\item The arrow set is   $\mathcal{G}^{\ell}_{1}=G\times E;$ 
\item the object set is  $\mathcal{G}^{\ell}_{0}=E;$ 
\item the source map is  $\mathrm{s}^{\ell} (g,e_p)  =e_p;$ 
\item the target map is  $\mathrm{t}^{\ell}(g,e_p)  =\mu(g,e_p);$ 
\item the identity map is  $\textbf{1}(e_p)=(id,e_p)$.  
\end{itemize}
\end{example}

Before discussing more complex examples and dealing with linearized foliations $\mathcal{F}^{\ell}$ and the 
Linear Holonomy Groupoid $\mathcal{G}^{\ell} = \mathcal{G} \ltimes E$, 
let us present what the Av operator is, at least for the groupoid we have just presented.

\begin{remark}
\label{remark-Av-acao-grupo}
In Example \ref{example-transformation-group-action}  the classical average operator 
  $\mathrm{Av}\colon C_{c}^{\infty}(E^{\delta})\to C_{c}^{\infty}(E^{\delta})^{\ell}$ is defined as:
$
\mathrm{Av}(f)(e_p): = \frac{1}{|G|} \int_{G} f \big( \mu(g, e_p)\big) \, \density
$
where $\density$  is a Riemannian density of a bi-invariant metric $Q$ on the  compact  Lie group $G$. 
This operator can also be written in the groupoid language as follows:
$
\mathrm{Av}(f)(e_p)
=\frac{1}{|G|} \int_{\mathrm{s}^{-1}(p)} f \circ \mu_{e_p} \, \density_p$. 
Here we note that $ \mathrm{s}^{-1}(p)= G\times \{p\}$ 
and therefore the fibers of sources are diffeomorphic to $G$ and hence we can induce a Riemannian metric on them,
such that they turn to be isometric to each other. 
\end{remark}

We can now move on to the formal definition of Lie groupoids. 

\begin{definition} \label{MALEXdefinition-Lie-groupoids}
A \emph{Lie groupoid} $\mathcal{G} = \mathcal{G}_{1} \rightrightarrows \mathcal{G}_{0}$ consist of:
\begin{enumerate}
\item a manifold $\mathcal{G}_{0}$ called the \emph{set of objects};

\item a (possibly non-Hausdorff) manifold $\mathcal{G}_{1}$ called \emph{the set of arrows} (between objects);

\item submersions $\mathrm{s},\mathrm{t}\colon \mathcal{G}_{1} \to \mathcal{G}_{0}$ 
which associate to an arrow $g \in \mathcal{G}_{1}$ its \emph{source} (i.e. $\mathrm{s}(g)$) and its \emph{target} (i.e. $\mathrm{t}(g)$) respectively;

\item a multiplication map $\mathrm{m}\colon \mathcal{G}_{2}\to\mathcal{G}_{1}$, $\mathrm{m} (g,h) = gh$ where 
$\mathcal{G}_{2} = \{(g,h)\in \mathcal{G}_{1} \times \mathcal{G}_{1} \;|\; \mathrm{s}(g) = \mathrm{t}(h)\}$, 
that satisfies $\mathrm{s}(g h) = \mathrm{s}(h)$ and $\mathrm{t}(g h) = \mathrm{t}(g)$;

\item a global section $\mathbf{1}\colon \mathcal{G}_{0} \to \mathcal{G}_{1}$ called \emph{unit} that satisfies $t (\mathbf{1}_x) = x = s(\mathbf{1}_x)$, 
$\mathbf{1}_x h = h$ and $g \mathbf{1}_x = g$ for all $h \in \mathrm{t}^{-1}(x)$ and $g \in \mathrm{s}^{-1}(x)$;

\item a diffeomorphism $i\colon \mathcal{G}_{1} \to \mathcal{G}_{1}$, $i(g)=g^{-1}$ 
called \emph{inverse map} that satisfies $\mathrm{s}(g^{-1}) = \mathrm{t}(g)$, $\mathrm{t}(g^{-1}) = \mathrm{s}(g)$, $g g^{-1} = \textbf{1}_{\mathrm{t}(g)}$,
$g^{-1}g = \textbf{1}_{\mathrm{s}(g)}$.
\end{enumerate}
\end{definition}

Every Lie groupoid $\mathcal{G}$ induces a singular foliation on $\mathcal{G}_{0}$ whose leaves are the connected components of the \emph{orbits} of $\mathcal{G}$, i.e. $\mathcal{G}(x) := \{\mathrm{t} (\mathrm{s}^{-1}(x)) \;|\; x \in \mathcal{G}_{0}\}$.

As proved at \cite{alexandrino2021lie}, the leaves of the linearizable foliation $\mathcal{F}^{\ell}$ 
(defined on a tubular neighborhood $U$ of a leaf $L$ of a SRF)
 are orbits of a Lie groupoid $\mathcal{G}^{\ell}$. 
Very roughly speaking, an arrow of the groupoid $\mathcal{G}^{\ell}$ is an equivalence  class that 
  admits a representative which is a flow $t\to \efunction^{t X}=\varphi_{t}$ of a  linear vector  field $\vec{X}$. 
	It  can  act on a vector $e_p$ of the normal bundle $\nu(L)$, as long as
	 $p\in L$ is contained in the domain of the flow   $\varphi_t$. 
To make this idea formal and compatible with the language of groupoid theory,  we are going to 
introduce the necessary ideas in the next three examples.
As will become clear very soon, Examples \ref{example-principal-groupoid}  and \ref{example-Linear-Holonomy-Groupoid}
will be generalizations of Examples \ref{example-principal-Liegroup}
 and \ref{example-transformation-group-action}.
We will also take this opportunity to introduce the Riemannian density along the fibers of the source map of the 
groupoid $\mathcal{G}^{\ell}$,   which we will need to define the average operator.

\begin{example}[The holonomy groupoid of the lifted foliation]
\label{example-holonomy-groupoid-lifted-foliation}

As in Section \ref{subsection-sasaki-metric-SRF}, let $\mathcal{F}$ be a SRF (with  closed leaves) on $(M,\metric)$. 
For a fixed $p_0$ set $B=L_{p_0}$  and let  $E = \nu(B)$ be the normal bundle of the leaf $B$. 
Consider the Euclidean vector bundle $\mathbb{R}^{k} \to E \to B$ where the metric on each fiber $E_p$ is the metric $\metric_p$ 
restricted to the normal space $\nu_p B$.  
The singular foliations  $\mathcal{F}^{\ell}$ and $\mathcal{F}_{U}$ are foliations on $E$,
via identification of the normal   exponential map.  
Consider   a homothety invariant distribution $\mathcal{T}$ tangents to the leaves
of $\mathcal{F}^{\ell}$  (and hence to $\mathcal{F}_U$ ) that induces a  connection $\nabla^{\tau}$ compatible with the metrics on the fibers.
Denote by $\mathbb{O}(k) \to \mathbb{O}(E) \overset{\tilde{\pi}}{\longrightarrow} B$ the orthonormal frame bundle of $E$ 
and $\widetilde{\mathcal{T}} $ the horizontal distribution on $\mathbb{O}(E)$  
induced by $\mathcal{T}$. We recall that each fiber $\mathbb{O}(E)_p$ of $\mathbb{O}(E)$ can be viewed 
as the set of isometries of $\mathbb{R}^k$ to $E_p$.   
It is also well known that $\mathbb{O}(E)$ is a principal $\mathbb{O}(k)$-bundle and we are going to denote 
the canonical right action of $T \in \mathbb{O}(k)$ on $\mathbb{O}(E)$ by $\mathrm{m}_T\colon \mathbb{O}(E) \to \mathbb{O}(E)$.

Each linearized flow $t\to \efunction^{t X_{\alpha}} $ on $E$ induces a flow $t \to \widetilde{\varphi}_{t}$ on $\mathbb{O} (E)$
by defining the integral curve starting at a frame $\xi_{p}=\{\xi_{i}\}\in \mathbb{O}(E)_p$ as
$\widetilde{\varphi}_{t}(\xi_{p}):=\{ \efunction^{t X_{\alpha}}(\xi_{i})\}$.  Let   $\widetilde{X}_{\alpha}$ denote 
the associated vector  field to $\widetilde{\varphi}_{t}$.  
 Let $\widetilde{\mathcal{F}} = \{\widetilde{L}_{\xi_p}\}_{\xi_p \in O (E)}$ 
be the singular foliation whose leaves are orbits of the geometric control $\{\widetilde{X}_{\alpha}\}$.  
Then
\begin{equation}
\label{eq-1-lemma-groupoid-linear-foliation}
T_{\xi_{p}}\widetilde{L}_{\xi_{p}}=\widetilde{\mathcal{T}}_{\xi_{p}}\oplus T_{\xi_{p}}\big( \mathbb{O}(E)_{p}\cap  \widetilde{L}_{\xi_{p}}  \big).
\end{equation}
The isotropic group $G_{p}^{0}$ induces a free left action on $\mathbb{O}(E)_{p}$ and the orbits of this action 
coincide with the intersection of the leaves of $\widetilde{\mathcal{F}}$ with $\mathbb{O}(E)_{p}$. In particular:
\begin{equation}
\label{eq-2-lemma-groupoid-linear-foliation}
\dim \big( \mathbb{O}(E)_{p}\cap  \widetilde{L}_{\xi_{p}}  \big)=\dim G_{p}^{0}.
\end{equation}
Once $ \dim G_{p}^{0}$ does not depend on $p\in B$,
 we infer from equations \eqref{eq-1-lemma-groupoid-linear-foliation} and \eqref{eq-2-lemma-groupoid-linear-foliation} that the foliation $\widetilde{\mathcal{F}}$ 
is a (regular) foliation.

Let $\mathrm{Hol}(\widetilde{\mathcal{F}}) \rightrightarrows \mathbb{O}(E)$ 
be the \emph{holonomy groupoid} of the foliation $\widetilde{\mathcal{F}}$,
i.e., the Lie groupoid over $ \mathbb{O}(E)$ whose arrows are holonomy class of paths contained in the leaves (cf \cite[Chapter 5]{MM03}). 
Since $\mathcal{F}$ has trivial holonomy (see Lemma \ref{Properties-Lifted-Foliation}),  
one can provide an alternative  definition of this Lie groupoid as  follows:
\begin{enumerate}
\item[(1)] \emph{the objects} are  frame $\xi=\{\xi_{i}\}\in\mathbb{O}(E);$
\item[(2)] \emph{arrows} are equivalence classes $\tilde{g}=[\widetilde{\varphi},\xi]$, where the equivalence relation is defined as:
$(\widetilde{\varphi},\xi)\sim (\widetilde{\varphi}^{'},\xi^{'})$ iff $\xi=\xi^{'}$ and $\widetilde{\varphi}(\xi)=\widetilde{\varphi}^{'}(\xi^{'});$
\item[(3)] \emph{source and target maps} are $\tilde{\mathrm{s}}([\widetilde{\varphi},\xi])=\xi$ 
and $\tilde{\mathrm{t}}([\widetilde{\varphi},\xi])=\widetilde{\varphi}(\xi);$
\item[(4)] the \emph{multiplication} is 
$\tilde{\mathrm{m}}([\widetilde{\varphi}_{\beta}, \widetilde{\varphi}_{\alpha}(\xi^{\alpha})],[\widetilde{\varphi}_{\alpha},\xi^{\alpha}])
=[\widetilde{\varphi}_{\beta}\circ\widetilde{\varphi}_{\alpha},\xi^{\alpha}];$
\item[(5)] the \emph{unit} is  $\mathbf{1}_{\xi}=[id,\xi];$
\item[(6)] the \emph{inverse} is $[\widetilde{\varphi},\xi]^{-1}=[\widetilde{\varphi}^{-1},\widetilde{\varphi}(\xi)]$. 
\end{enumerate}
\end{example}

The next lemma will be proved at Lemma \ref{Properties-Lifted-Foliation}.

\begin{lemma}
\label{lemma-metric-on-fibers-tildeF}
Consider the (regular) lifted foliation $\widetilde{\mathcal{F}}$ on $\mathbb{O}(E)$ defined in Example \ref{example-holonomy-groupoid-lifted-foliation}.
Then there exists a metric $\tilde{\metric}$ along the leaves of $\widetilde{\mathcal{F}}$
(i.e., a semi-Riemannian metric on $\mathbb{O}(E)$ associated to the distribution $T\widetilde{\mathcal{F}}$)
that satisfies the following properties:
\begin{itemize}
\item $\tilde{\metric}$  is  invariant by the canonical  $\mathbb{O}(k)$-right action on $\mathbb{O}(E);$ 
 \item the orbits of the left action of isotropy groups are isometric to each other.
\end{itemize}
 In particular, 
this semi-Riemannian metric $\tilde{\metric}$ induces a
Riemannian metric (also denoted by  $\tilde{\metric}$) 
on the fibers of the source map of the Lie groupoid $\mathrm{Hol}(\widetilde{\mathcal{F}})$. 
\end{lemma}

Now we can move to the next example. 

\begin{example}[The \emph{ ``principal"} Lie groupoid $\mathcal{G}$]
\label{example-principal-groupoid}
The foliation $\widetilde{\mathcal{F}}$ defined in Example \ref{example-holonomy-groupoid-lifted-foliation}
 is invariant by the  $\mathbb{O}(k)$-right action on $\mathbb{O}(E)$. 
This action can be lifted to a proper  free right  action on the holonomy groupoid 
$\mathrm{Hol}(\widetilde{\mathcal{F}})\times \mathbb{O}(k)\to \mathrm{Hol}(\widetilde{\mathcal{F}})$
and hence
\[
\xymatrix{
\mathrm{Hol}(\widetilde{\mathcal{F}}) \ar@<0.25pc>[r] \ar@<-0.25pc>[r] \ar[d] & \mathbb{O}(E) \ar[d] \\
\mathrm{Hol}(\widetilde{\mathcal{F}})/\mathbb{O}(k) \ar@<0.25pc>[r] \ar@<-0.25pc>[r] & \mathbb{O}(E)/\mathbb{O}(k) = B.
}
\]

The  Lie groupoid  $\mathcal{G} := \mathrm{Hol}(\widetilde{\mathcal{F}})/\mathbb{O}(k) \rightrightarrows B$ is characterized by:
\begin{enumerate}
\item[(1)] \emph{objects} being points of $B;$
\item[(2)] \emph{arrows} being the lateral classes $\bar{g}=\tilde{g}\, \mathbb{O}(k)=[\widetilde{\varphi},\xi]\,\mathbb{O}(k);$
\item[(3)]\emph{source/target maps} being  $\mathrm{s}\big([\widetilde{\varphi},\xi_{p}]\, \mathbb{O}(k)\big)=p$ and  
$\mathrm{t}\big([\widetilde{\varphi},\xi_{p}] \,\mathbb{O}(k)\big)=\varphi(p);$
\item[(4)]  \emph{multiplication map} being 
$\mathrm{m}\big(\tilde{g}_{\beta}\, \mathbb{O}(k),\tilde{g}_{\alpha}\, \mathbb{O}(k)\big)= 
\tilde{\mathrm{m}}\big(\tilde{g}_{\beta} \tilde{g}_{\alpha}\big)\, \mathbb{O}(k);$
\item[(5)] \emph{unit map} being  $\mathbf{i}_{p}=[id,\xi_{p}]\, \mathbb{O}(k);$
\item[(6)]\emph{inverse} being $\big(\tilde{g}\, \mathbb{O}(k)\big)^{-1}=\tilde{g}^{-1}\, \mathbb{O}(k)$. 
\end{enumerate}

We finish this example stressing that the fibers of the source map $\mathrm{s}:
 \mathrm{Hol}(\widetilde{\mathcal{F}})/\mathbb{O}(k) \rightrightarrows B$ admit also  Riemannian metrics $\bar{\metric}$.

In  fact, the $\mathbb{O}(k)$-right action on $\mathrm{Hol}(\widetilde{\mathcal{F}})$
 sends fibers to the fibers of the source map and does not fix  them. More precisely: 
\[
\big(\tilde{\mathrm{s}}^{-1}(\xi_{p})\big)\cdot g = \tilde{\mathrm{s}}^{-1}(\xi_{p}\cdot g). 
\]
Therefore the metric $\tilde{\metric}$ 
(defined at Lemma \ref{lemma-metric-on-fibers-tildeF}) on $\tilde{s}^{-1}(\xi_p)$ 
descends to a metric $\bar{\metric}$ on $s^{-1}(p)$ 
via pushforward by $\rho |_{\tilde{s}^{-1}(\xi_p)}$, which becomes an isometry. 
Here $\rho\colon\mathrm{Hol}(\widetilde{\mathcal{F}}) \to  \mathrm{Hol}(\widetilde{\mathcal{F}})/\mathbb{O}(k) $  is the canonical submersion.

We obtain an isometry 
$\psi_\xi\colon (s^{-1} (p),\bar{\metric})\to (\tilde{L}_{\xi_p},\tilde{\metric})$, 
which makes the following diagram commute: 

\begin{equation}
\label{diagram-metric-source-s}
\vspace{\baselineskip}
\hspace{0.5cm}
\begin{tikzcd}[column sep={1.5cm}, row sep={1.5cm}]
\tilde{s}^{-1}(\xi_{p}) \arrow{r}{\tilde{t}} \arrow[swap]{d}{\rho\hspace{0.25ex}} & \tilde{L}_{\xi_p} \\
s^{-1} (p) \arrow[swap]{ur}{\psi_{\xi_p}} &
\end{tikzcd}.
\vspace{\baselineskip}
\end{equation}
\end{example}


\begin{example}[Linear Holonomy Groupoid]
\label{example-Linear-Holonomy-Groupoid}
The Lie groupoid $\mathcal{G} := \mathrm{Hol}(\widetilde{\mathcal{F}})/\mathbb{O}(k) \rightrightarrows B$
defined in Example \ref{example-principal-groupoid}
 comes with a canonical representation on $E=\mathbb{O}(E)\times_{\mathbb{O}(k)} \mathbb{R}^k$.  In  fact, 
for $e_{p}=[\xi_{p},v]\in E_{p}$  we define: 
$\mu(\bar{g}_p, e_{p})=\mu(\bar{g}_p, [\xi_p, v]) := [\tilde{\mathrm{t}}(\tilde{g}),v]$, where $\tilde{g}$ 
is the unique representative of $\bar{g}$ in $\mathrm{Hol}(\widetilde{\mathcal{F}})$ such that $\tilde{s}(\tilde{g}) = \xi_p$.
The  \emph{Linear Holonomy Groupoid}  $\mathcal{G}^{\ell}:=\mathcal{G}\ltimes E \rightrightarrows  E$  
is defined by the  following data: 
\begin{enumerate}
\item[(1)] \emph{objects} being vectors of $E;$
\item[(2)]\emph{arrows}  being  pairs $(\bar{g}_p,e_{p})\in \mathcal{G}\times E$ where   
$\mathrm{s} ({\bar{g}}_{p})=p=\pi(e_p);$
\item[(3)] \emph{source and target maps} being $\mathrm{s}^{\ell}\big((\bar{g}_p,e_p)\big)=e_p$ 
 and $\mathrm{t}^{\ell}\big( (\bar{g}_p,e_p) \big)=\mu(\bar{g}, e_p);$
\item[(4)] \emph{multiplication mpa } being  
$\mathrm{m}^{\ell}\big((\bar{g}_{q},\bar{g}_{p}\cdot e_{p} ),(\bar{g}_{p},e_{p}) \big)=
(\mathrm{m}(\bar{g}_{q},\bar{g}_{p}),e_{p});$
\item[(5)] \emph{unit map} being $\mathbf{i}^{\ell}_{e_p}=(\mathbf{i}_{p},e_{p});$
\item[(6)] \emph{inverse}  being $(\bar{g}_{p},e_{p})^{-1}=(\bar{g}^{-1}, \mu(\bar{g}_{p}, e_{p}))$. 
\end{enumerate}

\end{example}

\begin{remark} \label{MALEX-remark-groupoid-volume-preserve}
The construction of the  above groupoids, in particular  the Lie linear holonomy groupoid,
can be done using only linearized flows that preserve the Sasakian volume of $E$.
\end{remark}

From the constructions presented in  Example  \ref{example-principal-groupoid} and Example \ref{example-Linear-Holonomy-Groupoid},
there is a natural relationship between the fibers of 
 the source map $\mathrm{s}:  \mathrm{Hol}(\widetilde{\mathcal{F}})/\mathbb{O}(k) \rightrightarrows B$ 
and the fibers of the  source map $\mathrm{s}^{\ell}\colon \mathcal{G}^{\ell}\to   E$  given by  the equation below.
\begin{equation}
\label{equation-remark-relation-sourcemaps-lineargroupoid-principalgroupoid}
(\mathrm{s}^{\ell})^{-1}(e_p) = \mathrm{s}^{-1}(p) \times \{e_p\}.
\end{equation}

\begin{definition}
We define a Riemannian metric along the fibers of the  source  map $\mathrm{s}^{\ell}: \mathcal{G}^{\ell}\to   E$ so that
the difeomorphism presented in Equation \eqref{equation-remark-relation-sourcemaps-lineargroupoid-principalgroupoid} is an isometry. 
We can denote this metric also as  $\tilde{\metric}$ since the  
$(\mathrm{s}^{\ell})^{-1}(e_p)$ can be identified with a leaf $\widetilde{L}$ of the lifted foliation on 
$\mathbb{O}(E)$. 
\end{definition}


We can now present the main definition of this section, 
generalizing the classical average map 
associated to group action on a vector fiber bundle, recall Remark \ref{remark-Av-acao-grupo}.

\begin{definition}
\label{definition-AV-general}
The \emph{average operator} $\mathrm{Av}\colon C^{\infty}_{c}(E)\to  C_{c}^{\infty}(E)^{\ell}$ is defined as:
\begin{equation*}
\mathrm{Av}(f)(e_p) =\frac{1}{\mathcal{V}(p)} \int_{(\mathrm{s}^{\ell})^{-1}(e_p)} f \circ \mathrm{t}^{\ell} \, \density^{\ell}_{e_p} 
=\frac{1}{\mathcal{V}(p)} \int_{\mathrm{s}^{-1}(p)} f \circ \mu_{e_p} \, \bar{\density}_p,
\end{equation*}
where:
\begin{itemize}
 \item $\mathrm{s}^{\ell}$,  $\mathrm{t}^{\ell}$  are the source and target map of 
the Linear Holonomy Groupoid $\mathcal{G}^{\ell} = \mathcal{G} \ltimes E$ and  $\mathrm{s}$ the source map of the principal groupoid $\mathcal{G}$;
\item $\density^{\ell}_{e_p}$  is a Riemannian density of a metric $\tilde{\metric}$ on $(\mathrm{s}^{\ell})^{-1} (e_p)$
and $\mathcal{V}(p)=|(s^{\ell})^{-1}(p)|$ is the volume of the fibers (that we will prove  is constant). 
\end{itemize} 
In the second equality,  we used Equation \eqref{equation-remark-relation-sourcemaps-lineargroupoid-principalgroupoid}
\end{definition}

We postpone to Lemma  \ref{Av-bem-definido} the fact that  $\mathrm{Av}(f)$ is  smooth and $\mathcal{V}(p)$ is constant.

\begin{remark}
\label{remark-why-integration-groupoid}
We should highlight that it was introduced in \cite{LytchakRadeschi}  another very important way of calculating an average operator with respect to SRF   
and this other average operator was used  in relevant problems, see  e.g., \cite{Mendes-Radeschi-quadradic}.
More precisely,  
Lytchak and Radeschi's average  operator integrates over  the leaves, i.e., 
$Av(f)(x)=\frac{1}{|L_x|}\int_{L_x} f\density_{\metric}$ (c.f., also Savo \cite[p.156]{Savo-18})
while our operator  integrates over a  groupoid. 
We believe it seems more convenient in our work to integrate over a groupoid because
  we are dealing with operators that are symmetric with respect to the geometric control (induced by the algebroid of the groupoid). 
In particular, two actions of the same group $G$  (one isometric action and the other not) 
could be orbit equivalent, but the operator could be symmetric with respect to one action but not the other.
We hope that this technical issue will become clearer in the proof of 
 Proposition   \ref{proposition-new-MALEX--Av-l-orbitlike} step 5. 
We also want to emphasize that our average operator is a generalization of the operator studied by E. Park and K. Richardson. In this work, the authors considered the Average for SRFs, which are the closures of regular Riemannian foliations, see \cite[p.1253]{Park-Richardson}. 
\end{remark}

\subsection{Relation between symmetric linear functional  and the average operator }
\label{subsection-l(av(f)=l(f)}

The next proposition is central to understanding the principle of symmetric criticality in vector bundles
i.e. Theorem \ref{theorem-Palais-principle-variational-formulation}.

\begin{proposition}
\label{proposition-new-MALEX--Av-l-orbitlike}
Let $\mathcal{F}$ be an AVP foliation on a  compact (connected) Riemannian manifold $(M,\metric)$ and
$\{U_{i}\}$ a finite cover of $M$ with tubular neighborhoods $U_{i}=\mathrm{Tub}_{\delta}(L_{i})$. 
Consider a   $C^{1}$ operator  $J\colon W^{1, p}(M) \to \mathbb{R}$  such that for each smooth basic function $b\in \linearizedbasic$ 
\begin{equation}
\label{eq2: derivative of considered operators}
\mathrm{d} J(b)f=\int_{M}\big( \mathcal{L}_{1}^{\ell}(b)\, \metric (\nabla b, \nabla f) +  \mathcal{L}_{2}^{\ell}(b) f\big) \density_{\metric}, \, \,
\forall f\in C^{\infty}(M),
\end{equation}
where (for $k=1,2$)  
$ \mathcal{L}_{k}^{\ell}\colon \linearizedbasic\to C^{0}(\{U_{i}\})^{\ell}$,
i.e., $\mathcal{L}_{k}^{\ell}(b)$  is a continuous linearized basic function. 
Let $B$ be a fixed leaf $L_{i}$ and  identify $U_{i}$ (via the normal exponential map) with 
the normal (disc) bundle $D_{\delta} (0)\to E^{\delta}\to B$, recall Equation \eqref{definition-E-delta}.
For $b_{0}\in \closedlinearizedbasic$ set 
$\lfunctional(f)=\mathrm{d}J(b_{0})(f)$ the restriction of $d J(b_0)$ on $C^{\infty}_{c} (E^\delta)$.
Then for each $f \in C^{\infty}_{c} (E^\delta)$ we have 
\begin{equation}
\label{equation-1-new-MALEX-lemma-Av-l-orbitlike} 
 \lfunctional \left( \mathrm{Av}(f) \right) =   \lfunctional \left( f \right).
\end{equation}
\end{proposition}

\begin{remark}
For the reader who decides to skip the proof of the result, we can at least comment on the idea of the proof.
The  main idea is that when we   are  dealing with the function  $\mathrm{Av}(f)$, 
instead of considering the integration along (different) fibers  of the source submersions,
 we consider the integration along a fixed manifold $\widetilde{L}$ (contained in the orthonormal frame bundle $\mathbb{O}(E)$) 
of a  function $(\tilde{q},e_{p})\to h(\tilde{q},e_p)$. The integral 
$\mathcal{V}\mathrm{Av}(f)(e_p) =\int_{\widetilde{L}} h(\cdot, e_p)\density_{\tilde{\metric}}$  is then approximated by a Riemann sum. 
Finally, we can apply the symmetric linear functional  $\lfunctional_{\metric}$ to this finite sum. 
\end{remark}

\begin{proof}
By partition of unity, it suffices to prove that 
 for each $p_0\in B$ there exists a neighborhood $\vizinhancaB\subset B$ so that for each $f\in C_{c}^{\infty}(E^{\delta})$ 
with $\mathrm{supp } f\subset \vizinhancaB$ we have Equation \eqref{equation-1-new-MALEX-lemma-Av-l-orbitlike}.
Also since $J$ is $C^{1}$ it suffices to check 
the equation for $\lfunctional=\mathrm{d} J(b)$ for smooth functions $b\in \linearizedbasic$.

We present a proof that holds for the general case of our linear holonomy groupoid $\mathcal{G}^{\ell}$, but the reader can think about the particular case of 
Example \ref{example-transformation-group-action} if it helps to get intuition.

\subsubsection{
\textbf{Step 1:} 
Recall from Example~\ref{example-principal-groupoid} that there exists a metric $\bar{\metric}$ 
along the fibers of the source map $\mathrm{s}\colon \mathcal{G}\to B$. 
This metric induces a Riemannian density $\bar{\density}$   on the fibers of $\mathrm{s}\colon \mathcal{G}\to B$, 
which appears in the definition of the average operator:
 }

\begin{equation}
\label{equation-1-5-new-MALEX-lemma-Av-l-orbitlike} 
\mathrm{Av}(f)(e_p)  =\frac{1}{\mathcal{V}(p)} \int_{\mathrm{s}^{-1}(p)} f \circ \mu_{e_p}  \bar{\density}_p, \,   \,  \,  \forall  e_p\in \pi^{-1}(P).
\end{equation}
Also  as explained in Example \ref{example-principal-groupoid}, 
  we have  an isometry 
$\psi_{\xi_p}\colon (s^{-1} (p),\bar{\metric})\to (\tilde{L}_{\xi_p},\tilde{\metric})$, 
which makes the diagram  \eqref{diagram-metric-source-s} commutative. 
In Lemma \ref{s-fiber-isometric-lifted-leaf} we prove that it is possible to locally collect all the isometries $\psi_{\xi_p}$. 
More precisely, fixed  a leaf $\widetilde{L} \in \widetilde{\mathcal{F}}$ and let  $\xi \in \Gamma(\mathbb{O}(E)_P)$ 
\textcolor{blue}{be a local} orthonormal frame into $\widetilde{L}$, there exists  $\Psi\colon s^{-1}(P) \to \widetilde{L}$ such that:
\begin{itemize}
\item  $\tilde{\pi} \circ \Psi = t$ and $\Psi \circ \textbf{1} = \xi$;
\item $\Psi$ is a submersion and $\psi_{\xi(p)}=\Psi|_{s^{-1}(p)} =: \psi_{p}$ is an isometry, for all $p \in P$;
\end{itemize}

This map allows  us to rewrite Equation  \eqref{equation-1-5-new-MALEX-lemma-Av-l-orbitlike} as an integral
 over a fixed leaf   $\widetilde{L}$ as we see in the  next step.

\subsubsection{
\textbf{Step 2:} For $p\in \vizinhancaB$, we write  the integrals along the pre-images of the source map of $\mathcal{G}^{\ell}$
 as an integral along an open set on the fixed leaf $\widetilde{L}\subset \mathbb{O}(E)$ }

Let $\widetilde{\pi}\colon \widetilde{L}\to B$ be the footpoint projection
and $\widetilde{\vizinhancaB}\subset\widetilde{L}$ 
the lift of $\vizinhancaB$ i.e. $\widetilde{\vizinhancaB}=\widetilde{\pi}^{-1}(\vizinhancaB)$. 
Note that  for $p\in \vizinhancaB$ we have:
\begin{equation}
\label{equation-step3-avarage-sketch}
\mathcal{V}\mathrm{Av}(f) (e_p)  = \int_{\widetilde{L}}  f\circ \mu_{e_p}\circ(\psi_{p})^{-1}  \density_{\tilde{\metric}}
 = \int_{\widetilde{\vizinhancaB}}    f\circ \mu_{e_p}\circ(\psi_{p})^{-1} \density_{\tilde{\metric}}, 
\end{equation}
where 
$\mu_{e_p}\colon s^{-1}(p)\to E^{\delta}$ is defined as $\mu_{e_p}(g)=\mu(g,e_{p})$

\subsubsection{\textbf{Step 3:} Adapted metric $\hat{\metric}$ on regular stratum $E_{r}$  }

Let $E_{r}$ be a saturated open set contained in the principal
stratum of $E^{\delta}$ with distance to the singular stratification
lower than $r>0$. We consider the adapted Sasaki metric  $\hat{\metric}^{\tau}$, see Definition \ref{definition-adapted-sasaki-metric}.
To avoid cumbersome notations, let us denote for the rest of this proof $\hat{\metric}^{\tau}$ by $\hat{\metric}$.  
The construction  implies that the AVP system preserves the normal space with respect to 
$\hat{\metric}$.

Given a basic function  
$b\in C_{c}^{\infty}(E^{\delta})^{\ell}$,
let  us denote by $\nabla b$ and $\widehat{\nabla}b$ the gradients with respect
to the original metric $\metric$ and the adapted metric $\widehat{\metric}$ respectively.  Let us define the difference vector field: 
\begin{equation}
\label{equation-diference-between-gradients}
\mathfrak{z}=\widehat{\nabla}b - \nabla b.
\end{equation}

Let us denote by $\lfunctional^{r}\colon C_{c}^{\infty}(E_{r})\to\mathbb{R}$ the operator $\lfunctional$ 
restricted to  $C_{c}^{\infty}(E_{r})$.  We have then: 
\begin{align*}
 \lfunctional^{r}(f) & = \mathrm{d}J(b)(f)\\
&= \int_{E_{r}} \mathcal{L}_{1}^{\ell}(b)\metric(\nabla b,\nabla f) \density_{\metric} \\
&+ \int_{E_{r}} \mathcal{L}_{2}^{\ell}(b) f   \density_{\metric}. 
\end{align*}
Note that
\begin{align*}
\hat{\metric}(\widehat{\nabla} b, \widehat{\nabla} f ) &= df (\widehat{\nabla} b)\\
&= df(\nabla b)+\mathrm{d}f(\mathfrak{z})\\
&=\metric(\nabla b, \nabla f) +\mathrm{d}f(\mathfrak{z}).
\end{align*}
This allow us to describe \emph{the linear operator $\lfunctional^{r}$ 
with respect to the adapted metric $\hat{\metric}$ on $E_{r}$}.
\begin{align*}
\lfunctional^{r}(f)
 & =\int_{E_{r}} \mathcal{L}_{1}^{\ell}(b)\hat{\metric}(\widehat{\nabla} b, \widehat{\nabla} f ) \density_{\metric}\\
&+ \int_{E_{r}}\mathcal{L}_{2}^{\ell}(b)  f  \density_{\metric}\\
&-\int_{E_{r}} \mathcal{L}_{1}^{\ell}(b) (\mathfrak{z}\cdot f) \density_{\metric}.
\end{align*}

It is also convenient (for the next step)  to fix some notation.

We start by defining  the Lagrangian (recall Equation \eqref{definition-E-delta})
$
\widehat{\mathcal{L}}^{\ell}\colon T E_{r}\times \mathbb{R} \to \mathbb{R}
$
\begin{equation}
\widehat{\mathcal{L}}^{\ell}(V_{x}, t) = 
\mathcal{L}_{1}^{\ell}(b)(x)\hat{\metric}(\widehat{\nabla} b(x), V_{x})  + \mathcal{L}_{2}^{\ell}(b)(x)\, t
\end{equation}

Note that 
\begin{equation}
\label{eq-Fl-zero}
\widehat{\mathcal{L}}^{\ell}(0_{x}, 0)=0, \, \forall x\in E_{r}.
\end{equation}

We also need to define the operators $\lfunctional_{i}\colon C_{c}^{\infty}(E_{r})\to\mathbb{R}$ as:
\begin{equation}
\hat{\lfunctional}_{1}(f) :=\int_{E_{r}}  \widehat{\mathcal{L}}^{\ell}(\widehat{\nabla}f, f) \density_{\metric},
\end{equation}
\begin{equation}
\hat{\lfunctional}_{2}(f) :=\int_{E_{r}}  - \mathcal{L}_{1}^{\ell}(b) \, \mathfrak{z}\cdot f \, \density_{\metric}
\end{equation}

Therefore 
\begin{equation}
\label{equation-lr-decomposto-step3}
\lfunctional^{r}=\hat{\lfunctional}_{1}+\hat{\lfunctional}_{2}.
\end{equation}
 Note that the fact that  $b$ is a basic function implies:
\begin{equation}
\label{equation-funcional-linear-gradiante-tangente}
\hat{\lfunctional}_{1}(f)=\int_{E_{r}} \widehat{\mathcal{L}}^{\ell}(\widehat{\nabla}^{E} f, f) \density_{\metric},
\end{equation}
where $\widehat{\nabla}^{E} f$ is the tangent part of $\widehat{\nabla} f$
to the fibers of $E$.


\subsubsection{
\textbf{Step 4:} We consider a partition $\{\vizinhancaB_{i}\}$ of $\vizinhancaB$ and for each fixed 
$i_{\scriptscriptstyle{0}}$ we rewrite,  for $p\in \vizinhancaB_{i_0}$ 
the integral in Equation \eqref{equation-step3-avarage-sketch} as a Riemann sum.
}

We start step 4 with a 
 coordinate system $(p_{1}(\cdot),\cdots p_{m}(\cdot))$ of $\vizinhancaB$ (recall  $\mathrm{supp } f\subset \vizinhancaB$)   
so that the pullback of  the Euclidian volume $\mathrm{d}p_{1}\wedge \cdots \wedge \mathrm{d}p_{m}$
is the volume in $B$ (recall \cite[p.6 Example 2.3]{Kob95}), we identify  $\vizinhancaB$  with a $m$-retangle of $\mathbb{R}^{m}$. 
Now we consider a partition of $\{\vizinhancaB_i\}_{i=1}^{m_1}$ so that $|\vizinhancaB_{i}|=|\vizinhancaB_{i_{\scriptscriptstyle{0}}}|$ 
 for all $i\in \{1\cdots m_{1}\}$.
For each fixed $i$ we consider  on $\widetilde{\vizinhancaB}_{i}:=\tilde{\pi}^{-1}(\vizinhancaB_{i})\subset \widetilde{L}$
a partition  into connected components $\{\widetilde{\vizinhancaB}_{i j}\}_{j=1}^{m_2}$.  
By Setting  $V_{i j}=|\widetilde{\vizinhancaB}_{i j}|$ we can check by Fubini's theorem
that 
\begin{equation}
\label{equation-step3-igual-volumeVij}
V:=V_{i j}=|G_{p_{0}}^{0}| |\vizinhancaB_{i_{0}}|, 
\end{equation}
where $G_{p_0}^{0}$ is the connected component of isotropy  group, recall Definition \ref{definition-isotroy-group} and  
Remark \ref{remark-Connected-component-isotropy}.

In each ``band'' $\widetilde{\vizinhancaB}_{i j}$ we   consider a small partition 
$\{ \widetilde{\vizinhancaB}_{i j k} \}_{k=1}^{m_3}$.
By setting $V_{i j k}=|\widetilde{\vizinhancaB}_{i j k}|$ we can observe that $\sum_{k} V_{i j k}= V_{i j}$.

We also need to say a few words about how small each $\widetilde{\vizinhancaB}_{i j k}$ should be.  
 Given an $\epsilon>0$, 
we can find $\delta_1$ so that for 
$|t_{1}|_{\hat{\metric}^{\tau}}<\delta_1$ (the norm with respect to $\hat{\metric}^{\tau})$, $|t_{2}|<\delta_{1}$, (the modulus  function) we have: 
\begin{equation}
\label{equation-step3-lagrangean-lower-E}
|\widehat{\mathcal{L}}^{\ell}(t_1,t_2)|< \frac{\epsilon}{2 |\pi^{-1}(\vizinhancaB)|}.
\end{equation}
Set  
\begin{equation}
\label{eq-step4-defh}
h(\tilde{q},e_p)=f\circ \mu(\psi_{p}^{-1}(\tilde{q}),e_p). 
\end{equation}

Using the uniform continuity of $h$ and its gradient $\widehat{\nabla} h$  restricted to the relatively  compact sets $\widetilde{\vizinhancaB}$ 
and $\pi^{-1}(\vizinhancaB)$,
we can find  $\delta_{2}>0$, so that if $\mathrm{diam}(\widetilde{\vizinhancaB}_{i j k})<\delta_{2}$ 
and $\tilde{q}, \tilde{y} \in \widetilde{\vizinhancaB}_{i j k}$ then:
\begin{equation}
\label{eq-ste3-uniforme-continua-h}
|h(\tilde{q},e_{p})-h(\tilde{y},e_{p})|<\frac{\delta_{1}}{|\widetilde{\vizinhancaB}|},
\end{equation}
\begin{equation}
\label{eq-ste3-uniforme-continua-d-h}
\Big| \widehat{\nabla} h(\tilde{q},e_{p})-\widehat{\nabla} h(\tilde{y},e_{p}) \Big|_{\hat{\metric}^{\tau}}<\frac{\delta_{1}}{|\widetilde{\vizinhancaB}|},
\end{equation}

\begin{equation}
\label{eq-ste3-uniforme-continua-derivada-direcional}
\Big|  \frac{d}{d t}h(\tilde{q},\efunction^{t\mathfrak{z}}(e_{p}))\big|_{t=0} 
- \frac{d}{d t} h(\tilde{y},\efunction^{t\mathfrak{z}}(e_{p}))\big|_{t=0}  \Big|<\frac{\epsilon}{2 K|\widetilde{\vizinhancaB}|},
\end{equation} 
for $t\in I$ so that $\efunction^{t\mathfrak{z}}(\vizinhancaB)$ is contained in a 
neighborhood of $\vizinhancaB$ and $K>0$ is a constant satisfying 
$| \mathcal{L}_{1}^{\ell}(b,\cdot)| <K $.

For $\tilde{q}\in \widetilde{\vizinhancaB}_{i j k}$ we can define the \emph{rest} as
\[
R_{i j k}(\tilde{q},e_p):=\int_{\widetilde{\vizinhancaB}_{i j k}} h(\cdot, e_{p})\density_{\tilde{\metric}} - h(\tilde{q}, e_{p})V_{i j k}.
\]
Equation \eqref{eq-ste3-uniforme-continua-h} implies 
\begin{equation}
\label{eq-ste3-Resto-ij}
|R_{i j k}|<\frac{\delta_{1}}{|\widetilde{\vizinhancaB}|}|\widetilde{\vizinhancaB}_{i j k}|. 
\end{equation}
We can also derive the rest $R_{ijk}$ in direction of $Y\in E_{p}$ ($|Y|_{\hat{\metric}^{\tau}}=1$) more precisely
\begin{align*}
\frac{\partial}{\partial Y} R_{i j k}(\tilde{q},e_p)& =\frac{d}{\mathrm{d}s}\big(R_{i j k}(\tilde{q}, e_{p}+s Y_p) \big)\big|_{s=0}\\
& = \int_{\widetilde{\vizinhancaB}_{i j k}} \big(\frac{\partial}{\partial Y} h(\cdot, e_{p}) - \frac{\partial}{\partial Y} h(\tilde{q}, e_{p})\big) 
\density_{\tilde{\metric}}.
\end{align*}
The above equation and Equation \eqref{eq-ste3-uniforme-continua-d-h} imply:
\begin{equation}
\label{eq-ste3-d-Resto-ij}
|\widehat{\nabla}^{E} R_{i j k}|_{\hat{\metric}^{\tau}}<\frac{\delta_{1}}{|\widetilde{\vizinhancaB}|}|\widetilde{\vizinhancaB}_{i j k}|
\end{equation}
where $\widehat{\nabla}^{E} R_{i j k}$ is the tangent part of $\widehat{\nabla} R_{i j k}$ with respect to the fibers of $E^{\delta}$.

Finally, we can derive in the direction of the vector field $\mathfrak{z}$:
\begin{align*}
\frac{\partial}{\partial \mathfrak{z}} R_{i j k}(\tilde{q},e_p)& =\frac{d}{\mathrm{d}t}\big(R_{i j k}(\tilde{q}, \efunction^{t \mathfrak{z}}(e_{p})) \big)\big|_{t=0}\\
& = \int_{\widetilde{\vizinhancaB}_{i j k}} \big(\frac{d}{d t}h(\cdot,\efunction^{t\mathfrak{z}}(e_{p}))|_{t=0}
- \frac{d}{d t}h(\tilde{q},\efunction^{t\mathfrak{z}}(e_{p}))|_{t=0} \big) \density_{\tilde{\metric}}.\\ 
\end{align*}
and hence
\begin{equation}
\label{eq-ste3-Z-Resto-ij}
| \frac{\partial}{\partial \mathfrak{z}} R_{i j k} |<\frac{\epsilon}{2 K |\widetilde{\vizinhancaB}|}|\widetilde{\vizinhancaB}_{i j k}|.
\end{equation}

We want to describe points $\tilde{q}_{i j k}\in \widetilde{\vizinhancaB}_{i j k}$
in terms of the $\mathcal{F}^{\ell}$ invariant local frame 
$\xi_{\vizinhancaB}\colon \vizinhancaB\to \mathbb{O}(E)$. 
 Let $\mathrm{Flow}^{\ell}(E^{\delta})\subset C^{\infty}(E^{\delta},E^{\delta})$ 
be the space of $\mathcal{F}$-linearized flows. 
Note that an element  $\efunction^{t X}\in  \mathrm{Flow}^{\ell}(E^{\delta})$ 
acts on $E^{\delta}$ as well as on $\mathbb{O}(E)$, because  it sends an orthonormal frame to another orthonormal frame. 
We define a map \emph{collection of rotations} 
 $\mathcal{R}\colon G_{p_0}\to \mathrm{Flow}^{\ell}(E^{\delta})$  so that  $\mathcal{R}(g)$ acts on $\pi^{-1}(\vizinhancaB)$ 
 fixing each fiber isometrically. Roughly speaking, the local frame $\xi_{\vizinhancaB}$ provides a trivialization
$\pi^{-1}(\vizinhancaB)= \vizinhancaB\times\mathbb{R}^{k}$, and under this identification we have $\mathcal{R}(g)(p,e)=(p, g e)$ where $g$ is identified with an element of
$\mathbb{O}(n)$.   The $\mathcal{F}^{\ell}$ invariant local frame $\xi_{\vizinhancaB_0}$ also allow us to construct the
map \emph{colllection of permutations} $\mathcal{P}\colon \vizinhancaB\times \vizinhancaB\to \mathrm{Flow}^{\ell}(E^{\delta})$. 
Roughly speaking, given two points $p, q$ in the $m$-rectangle $\vizinhancaB$, we consider a vector field   
   that is the constant vector field $\vec{p q}=q-p$ on $\vizinhancaB$  and is zero outside of an open set containing $\vizinhancaB$. The 
map $\mathcal{P}(p, q)$ is the parallel translation along the flow of this vector field $\vec{p q}$. 
In particular, using the local trivialization $\xi_{\vizinhancaB}$, we have that $\mathcal{P}(p, q) (p,e)=(q,e)$.
We can finally define $\mathcal{P}_{i i_{0}}=\mathcal{P}(p_{i_{0}},p_{i})$ for $p_{i_0}\in \vizinhancaB_{i_0}$ and $p_{i}\in \vizinhancaB_{i}$ and
$ \mathcal{R}_{ i j k}^{i_0}=\mathcal{R}(g_{i j k})$.
We must stress that  the maps $\mathcal{R}_{ i j k}^{i_0} $ 
and  $\mathcal{P}_{i i_{0}}$  have the  following properties:
\begin{itemize}  
\item the \emph{rotation map }  $ \mathcal{R}_{ i j k}^{i_0}\colon \pi^{-1}(\vizinhancaB_{i})\to \pi^{-1}(\vizinhancaB_{i})$ fixed each fiber; 
\item the \emph{permutation} map  $\mathcal{P}_{i i_{0}}\colon \pi^{-1}(\vizinhancaB_{i_0})\to \pi^{-1}(\vizinhancaB_{i})$ 
when restricted to $B$ permute $\vizinhancaB_{i_0}$ with $\vizinhancaB_{i}$ (in particular preserving
the volume);
\item $\mathcal{R}_{i j k}^{i_0}\circ\mathcal{P}_{i i_{0}}\circ \xi(p)\in \widetilde{\vizinhancaB}_{i j k}$ 
for each $p\in \vizinhancaB_{i_{0}}$.
\end{itemize}
Recalling the definition in Equation \eqref{eq-step4-defh}, 
it is possible to check  that 
\[
h(\mathcal{R}_{i j k}^{i_0}\circ\mathcal{P}_{i i_{0}}\circ \xi_{\vizinhancaB}(p),  e_p)
= f\big(\mathcal{R}_{i j k}^{i_0}\circ\mathcal{P}_{i i_{0}} e_{p} \big),
\]
see details in Lemma \ref{s-fiber-isometric-lifted-leaf}.
	
Finally we can write  \eqref{equation-step3-avarage-sketch} as a Riemann sum:
\begin{equation}
\label{equation-step4-1-avarage-sketch}
 \mathcal{V}\mathrm{Av}(f) (e_p) 
= \sum_{i j k} f\big(\mathcal{R}_{i j k}^{i_0}\circ\mathcal{P}_{i i_{0}} e_{p} \big) V_{i j k} + R_{i_0} (e_p)
\end{equation}
for $e_{p}\in \pi^{-1}(\vizinhancaB_{i_{\scriptscriptstyle{0}}})$ and  
$ R_{i_0} (e_p)=\sum_{i j k} R_{i j k}(  \mathcal{R}_{i j k}^{i_0}\circ\mathcal{P}_{i i_{0}} \circ\xi_{\vizinhancaB}(p), e_{p})$.

Since $\mathrm{diam}(\vizinhancaB_{i j k})<\delta_2$, Equation \eqref{eq-ste3-Resto-ij} implies that $|R_{i_0}|< \delta_1$ and 
Equation \eqref{eq-ste3-d-Resto-ij} implies   $|\widehat{\nabla}^{E} R_{i_0}|_{\hat{\metric}^{\tau}}<\delta_{1}$.   
It follows from  Equation \eqref{equation-step3-lagrangean-lower-E} and 
\eqref{equation-funcional-linear-gradiante-tangente}
 that: 
\begin{align*}
|(\hat{\lfunctional}_{1})_{\vizinhancaB_{i_0}}(R_{i_{0}})| &= \Big|\int_{\pi^{-1}(\vizinhancaB_{i_{0}})}
\widehat{\mathcal{L}}^{\ell}\big(\widehat{\nabla} R_{i_{0}},R_{i_{0}}\big)\density_{\metric}\Big|\\
&=   \Big|\int_{\pi^{-1}(\vizinhancaB_{i_{0}})}\widehat{\mathcal{L}}^{\ell}\big(\widehat{\nabla}^{E} R_{i_{0}},R_{i_{0}}\big)\density_{\metric}\Big|\\
 & <\frac{\epsilon}{2|\pi^{-1}(\vizinhancaB)|}|\pi^{-1}(\vizinhancaB_{i_{0}})|. 
\end{align*}

From Equation   \eqref{eq-ste3-Z-Resto-ij} we also infer that:
\begin{align*}
|(\hat{\lfunctional}_{2})_{\vizinhancaB_{i_0}}(R_{i_{0}})| &=  \Big| \int_{\pi^{-1}(\vizinhancaB_{i_{0}})}   
\mathcal{L}_{1}^{\ell}(b)
 \mathfrak{z}\cdot R_{i_{0}}\, \density_{\metric}\Big|\\
 & 
<\frac{\epsilon}{2|\pi^{-1}(\vizinhancaB)|}|\pi^{-1}(\vizinhancaB_{i_{0}})|. 
\end{align*}
Therefore 
\[
|(\lfunctional_{\vizinhancaB_{i_0}}^{r}(R_{i_{0}})|  <\frac{\epsilon}{|\pi^{-1}(\vizinhancaB)|}|\pi^{-1}(\vizinhancaB_{i_{0}})|.
\]

In addition letting the index $i$ vary from $1$ to $n$, we infer: 
\begin{equation}
\label{equation-step3-controle-resto}
\Big|\sum_{i=1}^{m_{1}} \lfunctional_{\vizinhancaB_i}^{r}(R_{i}) \Big|< \epsilon.
\end{equation}

\subsubsection{
\textbf{Step 5:} We apply the symmetric linear operator $l_{\vizinhancaB_{i_{\scriptscriptstyle{0}}}}$ on Equation \eqref{equation-step4-1-avarage-sketch} 
to simplify the expression, taking into consideration Equation \eqref{equation-step3-igual-volumeVij}:
}
\begin{align*}
\lfunctional_{\scriptscriptstyle \vizinhancaB_{\scriptscriptstyle i_{\scriptscriptstyle 0}}}^{r}\big( \mathcal{V}\mathrm{Av}(f) \big)
& = 
 \sum_{i j k} \lfunctional_{\scriptscriptstyle \vizinhancaB_{i_{0}}}^{r}\big(  f\circ \mathcal{R}_{i j k}^{i_0}\circ\mathcal{P}_{i i_{0}}  \big) 
V_{i j k} +  \lfunctional_{\scriptscriptstyle \vizinhancaB_{i_{\scriptscriptstyle{0}}}}^{r}\big( R_{i_0} \big)\\
 & \stackrel{\eqref{eq-mudanca-variavel-operadorlinear-restrito}}{=}  
\sum_{i j k} \lfunctional_{ \scriptscriptstyle \vizinhancaB_{i}}^{r}\big(  f  \big)V_{i j k}
+  \lfunctional_{\scriptscriptstyle \vizinhancaB_{i_{\scriptscriptstyle{0}}}}^{r}\big( R_{i_0} \big) \\
 & =  \sum_{i j } \lfunctional_{ \scriptscriptstyle \vizinhancaB_{i}}^{r}\big(  f  \big)V_{i j }
+  \lfunctional_{\scriptscriptstyle \vizinhancaB_{i_{\scriptscriptstyle{0}}}}^{r}\big( R_{i_0} \big) \\
 & \stackrel{\eqref{equation-step3-igual-volumeVij}}{=} \sum_{i j } \lfunctional_{ \scriptscriptstyle \vizinhancaB_{i}}^{r}\big(  f  \big)V
+  \lfunctional_{\scriptscriptstyle \vizinhancaB_{i_{\scriptscriptstyle{0}}}}^{r}\big( R_{i_0} \big) \\
& =  \sum_{j=1 }^{m_2} \lfunctional_{ \scriptscriptstyle \vizinhancaB}^{r}\big(  f  \big)V
+  \lfunctional_{\scriptscriptstyle \vizinhancaB_{i_{\scriptscriptstyle{0}}}}^{r}\big( R_{i_0} \big)\\
& =   \lfunctional_{ \scriptscriptstyle \vizinhancaB}^{r}\big(  f  \big) m_{2}V
+  \lfunctional_{\scriptscriptstyle \vizinhancaB_{i_{\scriptscriptstyle{0}}}}^{r}\big( R_{i_0} \big). \\  
\end{align*}

\subsubsection{
\textbf{Step 6:} In  the previous expression, we let the  index $i_0$ vary from $1$ to $m_1$:}

\begin{align*}
 \lfunctional_{\vizinhancaB}^{r}\big( \mathcal{V}\mathrm{Av}(f) \big)
& = \sum_{i=1}^{m_{1}} \lfunctional_{\vizinhancaB_{i}}^{r}\big( \mathcal{V}\mathrm{Av}(f) \big)\\
& = \lfunctional_{ \scriptscriptstyle \vizinhancaB}^{r}\big(  f  \big)m_{1} m_{2}V  
+ \sum_{i=1}^{m_{1}} \lfunctional_{\vizinhancaB_{i}}^{r}\big( R_{i} \big).\\  
\end{align*}
Therefore
\[
\lfunctional_{\vizinhancaB}^{r}\big( \mathrm{Av}(f) \big)=
\big(\frac{ m_{1}m_{2}V }{\mathcal{V}} \big) \lfunctional_{\vizinhancaB}^{r}( f )  +  \sum_{i=1}^{m_{1}} \frac{1}{\mathcal{V}}\lfunctional_{\vizinhancaB_{i}}^{r}\big( R_{i} \big).
\]

Note that Equation  \eqref{equation-step3-controle-resto} 
and the  arbitrariness in the choice of $\epsilon$  
  allow us to conclude that: 
\begin{equation}
\label{equation-step5-avarage-sketch}
\lfunctional_{\vizinhancaB}^{r}\big( \mathrm{Av}(f) \big)=\tilde{\lambda} \lfunctional_{\vizinhancaB}^{r}( f )=\tilde{\lambda} \lfunctional^{r}( f ),
\end{equation}
where 
\[
\tilde{\lambda}=\frac{|\widetilde{\vizinhancaB}|}{\mathcal{V}}=\frac{|G_{x}||\vizinhancaB|}{|G_x||B|}=\frac{|\vizinhancaB|}{|B|}.
\]

\subsubsection{\textbf{Last step:} }  Equation \eqref{equation-1-new-MALEX-lemma-Av-l-orbitlike} now follows from
Equation \eqref{equation-step5-avarage-sketch}, Lemma \ref{lemma-relation-integral-linear-functional-subsets-basic}, and 
the arbitrariness in the choice of $r$.
\end{proof}

\begin{remark}
Consider the vector field $\mathfrak{z}=\widehat{\nabla}b - \nabla b \in\mathfrak{X}(\mathcal{F}_{U^{0}})$
defined in \eqref{equation-diference-between-gradients}.
Note that  \emph{$\mathfrak{z}$ commutes with the AVP system $\{ \vec{X}_{\alpha}\}$.} In fact
since $(\efunction^{-t X_{\alpha}})_{*}\widehat{\nabla b}=\widehat{\nabla b}$ and  $(\efunction^{-t X_{\alpha}})_{*} \nabla b=\nabla b$  we have 
$(\efunction^{-t X_{\alpha}})_{*}\mathfrak{z}=\mathfrak{z}$ and hence, deriving with respect to $t=0$, $[\vec{X}_{\alpha},\mathfrak{z}]=0$. 
\end{remark}


\section{Examples of AVP foliations}
\label{section-exa-AVP}

 In this section, we will provide examples of AVP foliations (recall Definition \ref{definition-a-v-p}).
   In particular, we stress that  \emph{SRF on a fiber bundle with
Sasaki metric are AVP} (see Proposition \ref{MALEX-flows-preserve-volume}) and that
  \emph{isoparametric foliations are AVP} (see Theorem \ref{theorem-isoparemetric-is-AVP}).

\subsection{SRF on a fiber bundle  with Sasaki metric are AVP}
\label{subsection-Ex-SRF-Sasaki-AVP}

\begin{proposition}[Linearized vector fields and volume] \label{MALEX-flows-preserve-volume}
Consider an Euclidean vector bundle $\mathbb{R}^{k} \to E \to B$  with a Sasaki metric $\metric^{\tau}$ induced by an Ehresmann
connection $\mathcal{T}$ tangent to the leaves of a singular foliation $\mathcal{F}$ on $E$. 
Assume that $\mathcal{F}$ is SRF with respect to $\metric^{\tau}$ . 
Then 
there exists a geometric control systems  of linearized vector fields $\mathcal{C}=\{\vec{X}_{\alpha}\}$ generating $\mathcal{F}^{\ell}$, 
whose flows $\efunction^{t X_{\alpha}}$
preserve the Riemannian density $\density_{\metric^{\tau}}$. 
Therefore, the SRF $\mathcal{F}$  on $E$ is an AVP foliation with respect to the Sasaki metric $\metric^{\tau}$.
\end{proposition}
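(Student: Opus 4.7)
The strategy is to construct the geometric control system explicitly from two classes of linearized vector fields, then verify volume-preservation via the Riemannian submersion structure of the Sasaki metric. By Theorem \ref{MALEX-theorem-modelo-semi-local}, $\mathcal{F}^{\ell}=\mathrm{Hol}^{\tau}(\{G_p^{0}(e_p)\}_{e_p\in E_p})$, which suggests taking, on the one hand, vertical \emph{rotation fields} $\vec{V}_{\alpha}$ whose restriction to each fiber $E_q$ is the fundamental vector field of an element of the Lie algebra of $G_q^{0}$ (extended coherently via parallel transport of frames), and on the other hand, horizontal \emph{parallel-translation fields} $\vec{Y}_{X}$ obtained as horizontal lifts of vector fields $\vec{X}$ on $B$ with respect to $\mathcal{T}$. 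Both families are $h_{\lambda}$-invariant and tangent to $\mathcal{F}^{\ell}$, and their combined flows realize arbitrary parallel transports composed with fiber rotations, so they generate $\mathcal{F}^{\ell}$ as a geometric control system.

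Next, I will use that $\pi\colon (E,\metric^{\tau})\to (B,\metric_B)$ is a Riemannian submersion with totally geodesic flat Euclidean fibers to factor the Sasaki density locally as $\density_{\metric^{\tau}}=\density_{\mathrm{fib}}\wedge \pi^{*}\density_{B}$. For a rotation field $\vec{V}_{\alpha}$, the flow $\efunction^{tV_{\alpha}}$ fixes every base point, so $\pi^{*}\density_{B}$ is trivially preserved, while within each fiber the flow is a linear Euclidean isometry and therefore preserves $\density_{\mathrm{fib}}$. For a parallel-translation field $\vec{Y}_{X}$, parallel transport between fibers is a linear isometry, preserving $\density_{\mathrm{fib}}$ fiberwise, and preservation of $\pi^{*}\density_{B}$ reduces to $\vec{X}$ being divergence-free on $B$.

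It remains to ensure enough divergence-free vector fields exist on $B$ so that, together with the rotation fields, they generate $\mathcal{F}^{\ell}$. If $\dim B\geq 2$, divergence-free fields span $TB$ at every point via standard local constructions, so one can include enough such fields in $\mathcal{C}$. If $\dim B=1$, I change the metric on $B$ to a flat one; this leaves $\mathcal{T}$ and the foliation $\mathcal{F}$ unchanged and only modifies $\metric^{\tau}$, and the resulting unit coordinate field is divergence-free and spans $TB$ by itself. This is exactly the modification allowed by the statement.

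For the AVP conclusion, condition (b) of Definition \ref{definition-a-v-p} is exactly the preservation of $\density_{\metric^{\tau}}$ just established. For condition (a), on the regular stratum one has $\mathcal{T}\subset T\mathcal{F}$, so $\nu(\mathcal{F})$ is purely vertical and equals the orthogonal complement of the holonomy orbit inside each fiber. Rotation flows act as linear isometries on each fiber mapping holonomy orbits to holonomy orbits (since $G_p^{0}$ normalizes the connected holonomy), so they preserve this vertical subspace; parallel-translation flows act as linear isometries between fibers intertwining holonomy actions, preserving the normal distribution fiberwise. The main obstacle I anticipate is the $\dim B=1$ modification—specifically, verifying that the rescaled Sasaki metric still makes $\mathcal{F}$ an SRF with the same leaves—but this should follow because the fiber metrics and the connection $\mathcal{T}$ are unaffected by the base-metric change.
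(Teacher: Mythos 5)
Your proposal is correct and follows essentially the same route as the paper: the paper likewise builds the control system from two families of linearized fields (horizontal lifts of volume-preserving fields on $B$, constructed via cut-off Euclidean Killing fields in volume-normalizing coordinates, and fiberwise rotation fields coming from $G_{p_0}^{0}$ via parallel transport), factors $\density_{\metric^{\tau}}$ as $|\pi^{*}\vol_{B}\wedge\vol_{E}|$ using the Riemannian-submersion structure, and treats $\dim B=1$ by changing the base metric. Your explicit check of condition (a) of Definition \ref{definition-a-v-p} is a small addition the paper leaves implicit, but the argument is the same.
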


\begin{proof}
For the sake of simplification, we assume that $E$ is orientable. 
It suffices to construct two types of linearized vector fields that preserve the volume $\vol:=\vol_{\metric^{\tau}}$:

\vspace{0.25\baselineskip}

\begin{enumerate}
\item[Type 1:] $\vec{\mathfrak{T}}\in\mathfrak{X}(E)$ whose flows restricted to $B$ 
are non trivial. 
\item[Type 2:] $\vec{\Theta}\in\mathfrak{X}(E)$ whose flows fix the fibers of $E$.  
\end{enumerate}

\vspace{0.25\baselineskip}

\emph{Constructing type 1 vector fields:}

\vspace{0.25\baselineskip}

First, we consider a vector field $\vec{\mathfrak{T}}\in \mathfrak{X}(B)$ that preserves the volume $\vol_{B}$ of $B$. 
If $\dim B=0$, there is nothing to do. If $\dim B=1$, i.e., if $B$ is the circle $S^{1}$, 
it is easy to see the existence of a global vector field that preserves the volume.
 So let us assume that $\dim B\geq 2$ and let us review the construction that given a point $p_{0}\in B$ and 
small neighborhood $W\subset B$ of $p_0$, there exists a vector field $\vec{\mathfrak{T}}\in \mathfrak{X}(B)$ with support on neighborhood $W$ that preserves the volume. Consider a coordinate system $\{p_{i}(\cdot)\}_{i=1}^{m} $so that $p_0$ is identified with $0$ and $\vol_{B}$ 
with $d p_{1}\wedge \cdots \wedge d p_{m}$,  recall \cite[p.6]{Kob95}.
Let $\widetilde{\mathfrak{T}}$ be \textcolor{blue}{a} (Euclidean)  Killing vector field that fixes $0$ and a smooth  non negative function $\rho\colon \mathbb{R}\to\mathbb{R}$, with small  compact support in $(-\epsilon,\epsilon)$, and $\rho(0)=1$. Since the flow of $\widetilde{\mathfrak{T}}$ preserves $\vol_{B}$, 
then the flow of  $\vec{\mathfrak{T}}(p)=\rho(\|p\|) \widetilde{\mathfrak{T}}(p)$ also preserves $\vol_{B}$ and has compact support. By pulling back 
$\vec{\mathfrak{T}}$ via the coordinate system, we identify the vector field $\vec{\mathfrak{T}}$ with a vector field 
on $B$, which we are also denoting as $\vec{\mathfrak{T}}$. 
We observe that even thought the vector field $\vec{\mathfrak{T}}$ preserves the volume $\vol_{B}$, it is not a Killing field over $B$.

Now we can extend the vector field $\vec{\mathfrak{T}}$ on $B$ to a $\pi$-basic vector field on $E$ so that $\vec{\mathfrak{T}}\in\mathfrak{X}(\mathcal{T})$. 
Since $\mathcal{T}\subset T\mathcal{F}^{\ell}$ is homothety-invariant, the vector field $\vec{\mathfrak{T}}$ 
is a homothety invariant $\mathcal{F}^{\ell}$-vector field. Therefore, 
$\vec{\mathfrak{T}}$ is a linearized vector field, and its flows are isometries between the fibers (recall properties of the Sasaki metric $\metric^{\tau}$). 

Let $\vol_{E}$ be the volume form on the fibers $E$, note that $\pi^{*}\vol_{B}\wedge \vol_{E} $ coincides with the volume $\vol_{\metric^{\tau}}$ of the Sasaki metric, because $\pi\colon E \to B$ is a Riemannian submersion.

Since $\pi\circ   \efunction^{t \mathfrak{T}} =  \efunction^{t \mathfrak{T}}\circ\pi$ and $\efunction^{t\mathfrak{T}}$ preserves $\vol_{B}$ we infer that
\[
(\efunction^{t \mathfrak{T}} )^{*}\Big(\pi^{*}\vol_{B}\wedge \vol_{E}\Big)=\pi^{*}\vol_{B}\wedge \vol_{E}.
\]

Based on the construction we recalled above, we can infer that the set of vector fields that preserves the volume $\vol_{B}$ of $B$
 can be chosen as transitive in $B$. Thus, this property remains valid for type 1 vector fields on $\mathfrak{X}(E)$.

\vspace{0.25\baselineskip}

\emph{Constructing type 2 vector fields:}

\vspace{0.25\baselineskip}

Consider the connected isotropy group $G_{p_0}^{0}$ (recall definition in Remark \ref{remark-Connected-component-isotropy}). Using the parallel transport with respect to $\nabla^{\tau}$, 
we can induce action of $\mu\colon  G_{p_0}^{0}\times\pi^{-1}(W)\to \pi^{-1}(W)$ on a neighborhood  $W\subset B$ of $p_0$ that fixes the fibers, and act on each fiber isometrically (with respect to the fiberwise Riemannian metric on the fibers of $\pi$). 
For $\vartheta$ in the Lie algebra of $G_{p_0}^{0}$, set $\vec{\Theta}(x)=d\mu_{x}\big(f(\pi(x)) \vartheta\big)$ for some smooth non negative function $f$ with compact support on $W$ and such that $f(p_0)=1$. Since its flow acts  isometrically on each fiber and its projection on $B$ is the identity, we have that:
\[
( \efunction^{t \Theta} )^{*}\Big(\pi^{*}\vol_{B}\wedge \vol_{E}\Big)=\pi^{*}\vol_{B}\wedge \vol_{E}.
\]
By construction, the restriction of the set type 2 vector fields  is transitive on  the infinitesimal foliations of $\mathcal{F}^{\ell}$. 
\end{proof}

\begin{remark}
\label{remark-control-preserve-volume-adapted-sasaki-metric}
Let $D_{\delta}(0)\to E^{\delta}\to B$ be a disc bundle with a  Riemannian metric $\metric$ and $\mathcal{F}$ be a SRF on 
$(E^{\delta},\metric)$. Consider an adapted Sasaki metric  $\hat{\metric}^{\tau}$ (recall Definition \ref{definition-adapted-sasaki-metric}).
By an argument similar to the proof of Proposition \ref{MALEX-flows-preserve-volume}, we can prove that there is a family of linearized vector fields 
$\{\vec{X}_{\alpha}\}$ whose orbits generate $\mathcal{F}^\ell$ and that preserves the Riemannian density
$\density_{\hat{\metric}^{\tau}}=|\vol_{\hat{\metric}^{\tau}}|$.
\end{remark}

\subsection{ Isoparametric foliations are AVP }
\label{Section-Isoparametric-are-AVP}

In this subsection, we will prove Theorem \ref{theorem-isoparemetric-is-AVP}
i.e., that each isoparametric foliation $\mathcal{F}$ (recall Definition \ref{definition-polar-isoparametric}) on 
a compact manifold $(M,\metric)$ is AVP.

\begin{definition}
\label{definition-polar-isoparametric}
  A SRF $\mathcal{F}=\{ L \}$ on a (complete) Riemannian manifold $(M,\metric)$ is called a \emph{polar foliation}, if for each
	regular point $p\in M$, the set $\polarsection_{p}=\exp_{p}^{\nu}(\nu_{p}(L))$, the so-called \emph{section},  
	is a complete totally geodesic immersed submanifold that intersects all the leaves orthogonally. 
	This is equivalent to saying that the normal bundle of $\mathcal{F}$ restricted to the  regular stratum is integrable,
	see  \cite[Theorem 5.24]{alexandrino-Bettiol-2015}. 
	 A polar foliation with  the property that the restriction of the mean curvature vector field of each leaf to the regular stratum is basic,
	is called an \emph{isoparametric foliation}.
	\end{definition}

Some  references for the theory of isoparametric submanifolds are 
\cite{PalaisTerng88,Berndt-Console-Olmos-2016,Terng-Thorbergsson-95,Thorbergsson-survey-10}. We also suggest
the   survey  \cite{Thor22} for recent contributions in this theory.

On the one hand, this subsection is not a prerequisite for the next  sections, 
and, a reader more interested in analytical discussions can safely skip it. 
On the other hand, this new property of this well-studied geometric object may interest geometers.

\subsubsection{A brief review on polar foliations}

Let us briefly review a few properties of polar foliations (recall Definition \ref{definition-polar-isoparametric}),
which will help us understand the proof of Theorem \ref{theorem-isoparemetric-is-AVP}.
These results are extracted from \cite[Chapter 5]{alexandrino-Bettiol-2015}.

\begin{theorem}[Slice theorem]
\label{malex-slice-theorem}
Let $\mathcal{F}$ be a polar foliation and $S_x$ a slice at $x\in M$. Then 
\begin{enumerate}
\item[(a)] $S_{x}=\cup_{\sigma_x\in\Lambda(x)}$, where $\Lambda(x)$ is the set of \emph{local sections} $\sigma_{x}$, i.e. 
open convex neighborhoods of $\polarsection$ centered at $x$;
\item[(b)] $S_{y}\subset S_{x}$ for each $y\in S_{x};$
\item[(c)] $\mathcal{F}|_{S_x}$ is a polar foliation on $S_x$ with the induced metric;
\item[(d)] the infinitesimal foliation $\mathcal{F}_{x}:= (\exp_{x}^{\nu})^{-1}(\mathcal{F}\cap S_{x})$ is a polar foliation on the vector space, 
$\nu_{x}L_{x}=T_{x}S_{x}$. In addition, the
sections of $\mathcal{F}_{x}$ are sent to the sections of $\mathcal{F}$ via the normal exponential map $\exp_{x}^{\nu}$. 
\end{enumerate}
\end{theorem}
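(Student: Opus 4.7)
My plan is to establish the four claims in the order they are stated, using as the principal tools the totally-geodesic character of the sections of a polar foliation and the Riemannian (equidistance) property of $\mathcal{F}$, namely that a geodesic starting orthogonal to a leaf remains orthogonal to every leaf it meets.

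For (a), the strategy is to produce, for each $y \in S_x$, a local section $\sigma_x \in \Lambda(x)$ containing $y$. First I would take $y$ regular and let $v \in \nu_{x} L_{x}$ with $\exp_x^\nu(v)=y$. The radial geodesic $\gamma(t)=\exp_x^\nu(tv)$ satisfies $\gamma'(0)=v \in \nu_{x} L_{x}$, so by the equidistance property we have $\gamma'(t) \in \nu_{\gamma(t)} L_{\gamma(t)}$ for every $t \in [0,1]$; in particular $-\gamma'(1) \in \nu_{y} L_{y}$. Because $y$ is regular, $\Sigma_y=\exp_y^\nu(\nu_{y} L_{y})$ is a totally-geodesic section, so the reverse geodesic $s \mapsto \exp_y^\nu(-s\gamma'(1))=\gamma(1-s)$ lies in $\Sigma_y$; setting $s=1$ gives $x \in \Sigma_y$. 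Shrinking $\delta$ so that $\gamma$ stays within the normal injectivity radius at $y$, an open convex neighborhood of $x$ in $\Sigma_y$ provides the required $\sigma_x \in \Lambda(x)$ with $y \in \sigma_x$. Singular points $y \in S_x$ are handled by the same argument applied to any section through $y$ that contains the radial geodesic from $x$ to $y$; the existence of such a section is part of the polar structure (every point lies on at least one section, and this one must contain $\gamma$ by total geodesy).

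For (b), given $y \in S_x$, I would argue that every $\sigma_y \in \Lambda(y)$ is in fact an element of $\Lambda(x)$, so that $S_y=\bigcup_{\sigma_y \in \Lambda(y)} \sigma_y \subset \bigcup_{\sigma_x \in \Lambda(x)} \sigma_x = S_x$ for $\epsilon$ sufficiently small. The inclusion $\sigma_y \in \Lambda(x)$ is obtained by picking a regular $z \in \sigma_y$ and applying the argument of (a) to the section $\Sigma_z$, which contains $x$, $y$, and $\sigma_y$ (for $\epsilon$ small). For (c), the restriction $\mathcal{F}|_{S_x}$ is a singular foliation by the general slice theorem, and part (a) covers $S_x$ by the local sections $\sigma_x \in \Lambda(x)$, each of which is totally geodesic in $M$ (hence in $S_x$) and orthogonal to the leaves of $\mathcal{F}|_{S_x}$; thus $\mathcal{F}|_{S_x}$ is polar.

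For (d), the normal exponential $\exp_x^\nu : B_\delta(0)\cap \nu_{x} L_{x} \to S_x$ is a diffeomorphism, and pulling back $\mathcal{F}|_{S_x}$ produces $\mathcal{F}_x$ on $B_\delta(0)$; extending by the homothety $h_\lambda^0(v)=\lambda v$ promotes it to an SRF on the entire vector space $\nu_{x} L_{x}$. Each $\sigma_x \in \Lambda(x)$ is totally geodesic in $M$ and passes through $x$, so its preimage $(\exp_x^\nu)^{-1}(\sigma_x)$ is an open subset of the linear subspace $T_x\sigma_x \subset \nu_{x} L_{x}$, and these linear subspaces cover $\nu_{x} L_{x}$ (after extension by $h_\lambda^0$); since $\mathrm{d}(\exp_x^\nu)_0=\mathrm{Id}$ preserves orthogonality at $0$, they meet the leaves of $\mathcal{F}_x$ orthogonally. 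Hence $\mathcal{F}_x$ is polar with sections $T_x\sigma_x$, which is exactly what (d) asserts. The principal obstacle I anticipate lies in (a): one must choose $\delta$ small enough that the radial geodesics remain within the normal injectivity radius at every $y \in S_x$ simultaneously (so that each $\Sigma_y$ really reaches back to $x$), and one must justify carefully that through a singular point $y \in S_x$ there exists a section containing the radial geodesic $\gamma$ from $x$; this step leans on the structural result that sections of a polar foliation extend as totally-geodesic submanifolds across strata.
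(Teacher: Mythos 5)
The paper does not prove this theorem: it is stated as a review item, with the explicit attribution ``These results are extracted from \cite[Chapter 5]{alexandrino-Bettiol-2015}'', so there is no in-paper argument to compare yours against. Judged on its own, your proposal follows the standard route from that reference (radial horizontal geodesics plus totally geodesic, complete sections), and the regular-point half of (a) is correct. But there are two genuine gaps. First, in (a) the singular case is not an afterthought: the definition of polar foliation only provides a section through each \emph{regular} point, so the existence, for a singular $y\in S_x$ (and likewise for a prescribed direction $v\in\nu_xL_x$ when $x$ itself is singular), of a section whose tangent space contains the given horizontal direction is precisely the nontrivial lemma ``$\nu_yL_y=\bigcup_{\Sigma\ni y}T_y\Sigma$''. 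You defer it to ``the polar structure'', but it requires a limiting argument (regular points are dense; the sections $\Sigma_{y_n}$ through regular $y_n\to y$ all contain $x$ and subconverge to a section through $y$ containing the radial geodesic). Without it, (a) — and everything downstream — is incomplete.

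Second, your argument for (b) is circular when $y$ is singular. You pick a regular $z\in\sigma_y$ and invoke ``the argument of (a)'' to conclude $\Sigma_z\ni x$; but (a) only shows $\Sigma_z\ni x$ for $z\in S_x$, and $z\in S_y$ with $S_y\subset S_x$ is exactly what you are trying to prove. Through a singular $y$ there are many sections, and only one of them is guaranteed by (a) to contain $x$; you must show that \emph{every} local section at $y$ (equivalently, every section through $y$) reaches back to $x$, which needs a separate argument (e.g., that any section through $y$ meets $L_x$ orthogonally at a point within distance $d(y,x)$ of $y$, which by uniqueness of the foot point must be $x$). Smaller issues: you never verify the easy inclusion $\bigcup_{\sigma_x}\sigma_x\subset S_x$ in (a); in (c) the fact that $\mathcal{F}|_{S_x}$ is a singular \emph{Riemannian} foliation for the induced metric is asserted, not derived (the slice is not totally geodesic in general, so its geodesics are not $M$-geodesics); and in (d) orthogonality of the linear sections to the leaves of $\mathcal{F}_x$ is only checked at the origin, whereas it is needed at every point of $\nu_xL_x$ with respect to the flat metric $\metric_x$.
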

\begin{remark}
It is easy to check that  each polar foliation on Euclidean spaces and spheres is an isoparametric foliation, see e.g., 
\cite[Proposition 5.5]{alexandrino-Bettiol-2015}. 
'More recently, it was proved in \cite[Theorem 1.1]{Liu-Radeschi}
 that every polar foliation  on a simply connected symmetric space with non-negative curvature is isoparametric.
 However, already for hyperbolic spaces, one can easily construct counterexamples of this bijection between polar foliations and isoparametric foliations. 
It is also worth noting here that there are no polar foliations on compact manifolds with negative curvature, see \cite{Toe07}.
\end{remark}

Let us now recall the concept of singular (transverse) holonomy, which extends the classical concept of holonomy of (regular) foliations. 
Let $x_0$ be a regular point of $\mathcal{F}$ and $\beta\colon [0,1]\to L_{x_0}$ be a piecewise smooth curve. We define \emph{a singular holonomy along $\beta$}
as the diffeomorphism $\varphi_{[\beta]}\colon \sigma_{\beta(0)}\subset \polarsection_{\beta(0)}\to \sigma_{\beta(1)}\subset\polarsection_{\beta(1)}$ defined as:
$\varphi_{[\beta]}(x)=\exp_{\beta(1)}^{\nu}(\mathcal{P}_{\beta} Y)$, where 
$Y=\exp_{\beta(0)}^{-1}(x)\in\nu_{\beta(0)}L_{\beta(0)}$ and $\mathcal{P}_{\beta}$ is the parallel transport along $\beta$
 with respect to  the normal connection $\nabla^{\nu}$. 
\begin{proposition} The singular holonomy $\varphi_{[\beta]}\colon \sigma_{\beta(0)}\subset \polarsection_{\beta(0)}\to \sigma_{\beta(1)}\subset\polarsection_{\beta(1)}$
fulfills the following properties:
\begin{enumerate}
\item[(a)]$\varphi_{[\beta]}$ depends only on the homotopy class with fixed endpoints of $\beta;$
\item[(b)] $\varphi_{[\beta]}(x)=L_{x}$, for all $x\in \polarsection_{\beta(0)};$
\item[(c)]$\varphi_{[\beta]}$ is an isometry where $\sigma_{\beta(i)}$ (for $i=0,1$) 
can be chosen to be a convex ball $B_{\delta}(\beta(i))$
including singular points of $\mathcal{F}$, where $\delta$ depends only on the geometry of $\polarsection$ (and not of $\mathcal{F}$).
\end{enumerate}
\end{proposition}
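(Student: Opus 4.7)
The plan is to establish each of the three items using two structural facts about polar foliations: (i) the normal distribution $\nu(\mathcal{F})$ is integrable on the regular stratum (the sections are integral leaves), so the induced normal connection $\nabla^{\nu}$ on any regular leaf is \emph{flat with trivial local holonomy}; and (ii) the Slice Theorem \ref{malex-slice-theorem}, which identifies a neighborhood $\sigma_{\beta(i)}\subset \Sigma_{\beta(i)}$ with a convex ball in $T_{\beta(i)}\Sigma_{\beta(i)}$ carrying the infinitesimal polar foliation $\mathcal{F}_{\beta(i)}$.

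For item (a), observe that $\varphi_{[\beta]}$ is determined by the normal parallel transport $\mathcal{P}_{\beta}\colon \nu_{\beta(0)}L\to \nu_{\beta(1)}L$ together with the normal exponential maps, which are intrinsic to the endpoints. Thus it is enough to show that $\mathcal{P}_{\beta}$ depends only on the homotopy class of $\beta$ (with fixed endpoints) in $L_{x_0}$. Since the leaf $L_{x_0}$ is regular, the existence of sections shows that $\nu(\mathcal{F})|_{L_{x_0}}$ admits local parallel frames for $\nabla^{\nu}$ given by $\xi \mapsto \exp_{\beta(t)}^{\nu}(\xi)$ into the section. Flatness of $\nabla^{\nu}$ then delivers homotopy invariance by the standard argument (variations of curves lift to parallel vector fields).

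For item (b), I would invoke the equifocal property for polar foliations: for any $\xi \in \nu_{\beta(0)} L_{x_0}$, the curve $t\mapsto \exp^{\nu}_{\beta(t)}(\mathcal{P}_{\beta}^{t}\xi)$ is tangent to a single leaf of $\mathcal{F}$. This is a consequence of the SRF condition (Definition \ref{MALEX-definition-SRF}(b)): geodesics emanating perpendicularly from a leaf remain perpendicular to every leaf they meet, so the family of radial geodesics $\{s\mapsto \exp_{\beta(t)}^{\nu}(s\mathcal{P}_{\beta}^{t}\xi)\}_{t}$ sweeps out a locally equidistant tube, forcing its endpoints at a fixed $s$ to lie in a single leaf. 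Specializing to $s=\|\xi\|$ gives $\varphi_{[\beta]}(x)\in L_{x}$.

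For item (c), the main obstacle (and the reason the statement insists $\delta$ depends only on $\Sigma$) is extending the obvious isometry property through singular points of $\mathcal{F}$ inside $\sigma_{\beta(i)}$. On the regular stratum, $\varphi_{[\beta]}$ is an isometry because $\mathcal{P}_{\beta}$ is a linear isometry of normal spaces and the normal exponentials, being restricted to the totally geodesic sections, are local isometries on a convex ball of radius the convex radius of $\Sigma$. To handle the singular points, I would use the Slice Theorem to identify $(\sigma_{\beta(i)}, \mathcal{F}\cap \sigma_{\beta(i)})$ with $(B_{\delta}(0)\subset T_{\beta(i)}\Sigma_{\beta(i)}, \mathcal{F}_{\beta(i)})$ via $\exp^{\nu}_{\beta(i)}$, and to recognize that under this identification $\varphi_{[\beta]}$ is exactly the linear isometry $\mathcal{P}_{\beta}\colon T_{\beta(0)}\Sigma_{\beta(0)} \to T_{\beta(1)}\Sigma_{\beta(1)}$. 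Linear isometries extend through singular strata with no continuity issues, and $\delta$ can then be chosen as the minimum of the convex radii of $\Sigma_{\beta(0)}$ and $\Sigma_{\beta(1)}$, a datum depending only on the geometry of $\Sigma$ (all sections are isometric by (a), (b)). The delicate point I would need to verify carefully is that this identification with the infinitesimal picture is compatible with $\mathcal{P}_{\beta}$, which in turn reduces to showing that $\mathcal{P}_{\beta}$ is intertwined with the infinitesimal polar foliations at both ends, a fact that follows once more from item (b) applied to rescaled normal vectors.
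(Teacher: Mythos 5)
First, a point of reference: the paper does not prove this proposition at all --- it is stated as review material and attributed to \cite[Chapter 5]{alexandrino-Bettiol-2015} --- so your attempt cannot be measured against an in-paper argument, only against what a complete proof would require. Measured that way, the outline identifies the right ingredients but has two genuine gaps. For item (b), the step ``the family of radial geodesics sweeps out a locally equidistant tube, forcing its endpoints at a fixed $s$ to lie in a single leaf'' is precisely the nontrivial content (equifocality), and the equidistance property of Definition \ref{MALEX-definition-SRF}(b) does not deliver it: the boundary $\partial\mathrm{Tub}_{s}(P_{\beta(0)})$ is a hypersurface saturated by a whole continuum of leaves, so knowing that all the points $\exp^{\nu}_{\beta(t)}(s\mathcal{P}^{t}_{\beta}\xi/\|\xi\|)$ lie on that cylinder does not place them in one leaf. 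Closing this requires showing the variation field of the family of geodesics is tangent to $\mathcal{F}$, which is where the homothetic transformation lemma/linearization (or the Alexandrino--T\"oben equifocality theorem) actually enters. Relatedly, your item (a) rests on flatness of $\nabla^{\nu}$, which you justify by producing ``parallel frames'' out of the sections; but verifying that the normal field $\xi(t)$ singled out by the condition that $\exp^{\nu}_{\beta(t)}(\xi(t))$ stay in a fixed leaf is $\nabla^{\nu}$-parallel is again the equifocality statement, so as written (a) and (b) lean on each other.

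The more serious problem is item (c). You assert that the normal exponentials, ``being restricted to the totally geodesic sections, are local isometries on a convex ball.'' This is false unless $\Sigma$ is flat: the exponential map of a Riemannian manifold is a radial isometry only. Consequently, the observation that under the slice-theorem identification $\varphi_{[\beta]}$ ``is exactly the linear isometry $\mathcal{P}_{\beta}$'' is just a restatement of the definition $\varphi_{[\beta]}=\exp^{\nu}_{\beta(1)}\circ\mathcal{P}_{\beta}\circ(\exp^{\nu}_{\beta(0)})^{-1}$ and does not prove that $\varphi_{[\beta]}$ is an isometry of the sections with their induced metrics; a linear isometry conjugated by exponential maps is an isometry only if it satisfies a Cartan-type condition (intertwining the curvature tensors and their parallel translates along radial geodesics), which must be established. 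The standard route is different: on the regular stratum $\mathcal{F}$ is a Riemannian foliation, so holonomy maps between local transversals are isometries for the transverse metric, which on a section coincides with the induced metric because sections meet leaves orthogonally; one then extends across the singular set by continuity, and it is exactly this extension that makes the uniform radius $\delta$, depending only on the geometry of $\Sigma$, necessary. Your proposal should either supply the Cartan verification or replace the exponential-conjugation argument by the transverse-isometry one.
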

The pseudogroup $\weylgroup(\polarsection)$ 
generated by the singular holonomies maps $\varphi_{[\beta]}$ with $\beta(0)$ and $\beta(1)$ in the same section $\polarsection$
is called \emph{Weyl pseudogroup of $\polarsection$}. It describes how the leaves of $\mathcal{F}$ intersect $\polarsection$. 

\begin{theorem}[Reflection pseudogroup]
The set of singular points of $\mathcal{F}$ on a section $\polarsection$ 
is a locally finite union of totally geodesic hypersurfaces $\{ \mathcal{H}_{i} \}\subset \polarsection$
(the so-called walls of Weyl chambers). The reflections into $\mathcal{H}_{i}$ are also elements of $\weylgroup(\polarsection)$. 
If $\mathcal{F}$ is a polar foliation
on a simply connected space (in particular in Euclidean spaces and round spheres), then 
these reflections generate the Weyl group. 
\end{theorem}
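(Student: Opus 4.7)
The plan is to prove the three claims in the order stated, using the slice theorem and homothety invariance to linearize the problem at each singular point, then analyzing the infinitesimal polar foliation, and finally using path lifting in the simply connected case.

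First, to see that the singular stratum of $\mathcal{F}$ on $\Sigma$ is a locally finite union of totally geodesic hypersurfaces, I would fix a singular point $x\in \Sigma$ and apply the slice theorem (Theorem \ref{malex-slice-theorem}) to identify $\Sigma\cap S_{x}$, via $\exp_{x}^{\nu}$, with a section $\Sigma_{0}\subset T_{x}S_{x}$ of the infinitesimal polar foliation $\mathcal{F}_{x}$. Since $\mathcal{F}_{x}$ is invariant under the homothetic transformations $h_{\lambda}^{0}(v)=\lambda v$, its singular stratum intersected with $\Sigma_{0}$ is a cone; combining this with the locally finite stratification property recalled in Remark \ref{remark-equivalent-definition-SRF}, the singular stratum in $\Sigma_{0}$ is a finite union of linear subspaces through the origin. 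The codimension-one components are linear hyperplanes in $\Sigma_{0}\subset T_{x}\Sigma$; their images under $\exp_{x}$ are totally geodesic hypersurfaces in $\Sigma$ because $\Sigma$ is itself totally geodesic. Local finiteness near $x$ is immediate, and a standard compactness (or second-countability) argument gives it on all of $\Sigma$.

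Second, for the fact that the orthogonal reflection $r_{i}$ across each wall $\mathcal{H}_{i}$ belongs to $W(\Sigma)$, I would pick a generic point $x\in \mathcal{H}_{i}$ not lying on any other wall, and analyze $\mathcal{F}_{x}$ at~$x$. By the previous paragraph, the singular stratum of $\mathcal{F}_{x}$ in $T_{x}\Sigma$ near $0$ is precisely one hyperplane $H_{i}$, tangent to $\mathcal{H}_{i}$. By the slice theorem applied at $x$ and the polar structure in the direction transverse to $H_{i}$, the infinitesimal foliation restricted to the normal line of $H_{i}$ in $T_{x}\Sigma$ is a polar foliation on $\mathbb{R}$ with a singular leaf at $0$; the homothety invariance together with the local equidistance of leaves forces this one-dimensional piece to be symmetric under $v\mapsto -v$. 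This $\mathbb{Z}/2$-symmetry can be realized as a singular holonomy: picking a regular $y$ on the normal line and a small loop $\beta$ in $L_{y}$ encircling $\mathcal{H}_{i}$ transversally once (which exists because $\mathcal{H}_{i}$ is locally the regular stratum of a codimension-one piece of the SRF stratification), the map $\varphi_{[\beta]}$ agrees near $x$ with $r_{i}$ and hence, by the uniqueness statement in item~(c) on singular holonomies (convex local sections), equals $r_{i}$ on all of $\Sigma$.

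Third, for the generation statement in the simply connected case, I would take an arbitrary $\varphi_{[\beta]}\in W(\Sigma)$ represented by a loop $\beta$ in a regular leaf $L_{x_{0}}$ with $x_{0}\in \Sigma$. Since $M$ is simply connected and the regular stratum is dense and open, $\beta$ is homotopic (with fixed endpoints in $L_{x_{0}}$, possibly after a small perturbation) to a concatenation $\beta_{1}\cdot\beta_{2}\cdots\beta_{m}$ where each $\beta_{j}$ is a small loop crossing transversally a single wall $\mathcal{H}_{i_{j}}$ once. By the homotopy-invariance item~(a) for singular holonomies, $\varphi_{[\beta]}=\varphi_{[\beta_{1}]}\circ\cdots\circ\varphi_{[\beta_{m}]}$, and by the previous paragraph each $\varphi_{[\beta_{j}]}$ is the reflection $r_{i_{j}}$, proving generation.

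The main obstacle I expect is the codimension-one analysis in the second step: one must rigorously exhibit a transverse loop in a regular leaf whose singular holonomy equals $r_{i}$, rather than the identity. This requires a careful local model for $\mathcal{F}$ near the generic stratum of $\mathcal{H}_{i}$ in $M$ (as opposed to merely near $\Sigma$), using that $\mathcal{F}$ is orbit-like along the codimension-one singular stratum so that the local transverse holonomy is a $\mathbb{Z}/2$-action whose associated normal exponential descends to the reflection $r_{i}$.
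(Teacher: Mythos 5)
The paper does not actually prove this statement: it appears in the review portion of Subsection \ref{Section-Isoparametric-are-AVP}, where the authors say the results are ``extracted from \cite[Chapter 5]{alexandrino-Bettiol-2015}'' (they go back to Terng's structure theory of isoparametric submanifolds and its extension to polar foliations by Alexandrino and T\"oben). So there is no in-paper proof to compare against, and your sketch has to stand on its own.

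It does not, because there is a genuine gap in your first step which then propagates. From homothety invariance you may only conclude that the singular set of $\mathcal{F}_x$ inside $\Sigma_0\subset T_x\Sigma$ is a cone with vertex $0$; a cone that is a locally finite union of smooth strata need not be a union of linear subspaces --- the cone over a small circle in $S^2$ is a smooth, homothety-invariant hypersurface of $\mathbb{R}^3$ away from the origin and is not a plane. So ``cone plus locally finite stratification implies finite union of linear subspaces'' is false, and the totally geodesic character of the walls does not come for free. In the classical development the logical order is the reverse of yours: one first produces the reflection $r_i$ as a singular holonomy (via the $\mathbb{Z}/2$-symmetry of the one-dimensional infinitesimal foliation transverse to a generic codimension-one singular point --- essentially your second step), and then the wall is totally geodesic because it lies in the fixed-point set of the local isometry $r_i$. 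You would also need an argument, absent from your sketch, that every singular stratum of $\Sigma$ is contained in the closure of a codimension-one stratum, i.e.\ that there are no isolated higher-codimension singular pieces in the section; this uses the structure theory of isoparametric foliations on Euclidean space, not just the stratification axioms. Your second and third steps do contain the right ideas (realizing $v\mapsto -v$ by a path in a regular leaf inside the slice, and decomposing a general holonomy path into wall crossings using simple connectivity of the complement of the strata of codimension at least two), but as written they are outlines of nontrivial theorems rather than proofs; in particular, the homotopy in the third step must be carried out inside the leaf, or at least in a manner compatible with item (a) of the preceding proposition, which requires the equifocality and path-lifting machinery that your sketch takes for granted.
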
 
\begin{remark}
The Weyl pseudogroup  $\weylgroup(\polarsection)$ (and the reflection pseudogroup)
 can be lifted to a group of isometries $\widetilde{\weylgroup}$ of the Riemannian univerval cover $\widetilde{\polarsection}$ of the section $\polarsection$.
All  sections have the same universal cover and the same lifted Weyl group $\widetilde{\weylgroup}$. In particular, given two sections
$\polarsection_0$ and $\polarsection_1$ the pseudogroups 
$(\weylgroup(\polarsection_{0}),\polarsection_0)$ and $(\weylgroup(\polarsection_{1}),\polarsection_1)$ are equivalent (i.e., Haefliger equivalence).  
\end{remark}

\subsubsection{Proof of Theorem \ref{theorem-isoparemetric-is-AVP} } 
For the sake of simplification, we assume that $M$ is orientable. 
Following the notation of this section  $B=L_{p_0}$ is going to be a  fixed  leaf of $\mathcal{F}$,    
$U$ a tubular neighborhood   $\mathrm{Tub}_{\delta}(B)$, and  
$\mathcal{C}(U)=\{\vec{X}_{\alpha}\}\subset\mathfrak{X}(U)$  
is going to be a geometric control system of complete linearized vector fields whose orbits are leaves of the linearized foliation 
$\mathcal{F}^{\ell}\subset\mathcal{F}_{U}$.  We also assume that the flows $\efunction^{t X_{\alpha}}$ preserves a  volume form $\vol^{\tau}$  of a 
Sasaki metric $\metric^{\tau}$ compatible with
$\mathcal{F}_{U}$, that means the associated  Ehresmann connection $\mathcal{T}$ 
is tangent to the leaves of $\mathcal{F}$, i.e.   $\mathcal{T}\subset T\mathcal{F}$,  recall Proposition  \ref{MALEX-flows-preserve-volume}.

The proof of the theorem is divided into two lemmas.

\begin{lemma}
\label{lemma1-isoparemetric-is-AVP}
The flows $\efunction^{t X_{\alpha}}$ send normal spaces to normal spaces of principal leaves on $U$.
\end{lemma}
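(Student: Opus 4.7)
The plan is to exploit the defining property of polar foliations: on the regular stratum $U^{0}$, the normal distribution $\nu(\mathcal{F})$ is involutive, and its integral leaves are the (open pieces of) sections $\Sigma$. Hence, for each principal point $q \in U^{0}$, one has the identification $\nu_{q} L_{q} = T_{q}\Sigma_{q}$, and it suffices to show the equivalent statement that for every small $t$ the flow $\varphi_{t}:=\efunction^{tX_{\alpha}}$ sends an open neighborhood of $q$ in $\Sigma_{q}$ into $\Sigma_{\varphi_{t}(q)}$. I will verify this by analyzing separately the two families of linearized vector fields $\{\vec{X}_{\alpha}\}$ constructed in the proof of Proposition \ref{MALEX-flows-preserve-volume} (applied to the adapted Sasaki picture, as in Remark \ref{remark-control-preserve-volume-adapted-sasaki-metric}).

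For \emph{type 1} vector fields — those that are $\pi$-basic and tangent to the horizontal distribution $\mathcal{T}\subset T\mathcal{F}$ on the normal bundle $E=\nu(B)$ — the flow $\varphi_{t}$ is, by construction, the $\mathcal{T}$-horizontal lift of the flow $\efunction^{t\bar{X}_{\alpha}}$ of the projected vector field $\bar{X}_{\alpha}\in\mathfrak{X}(B)$. I would then invoke the characterization of singular holonomy for polar foliations (see \cite[Ch.~5]{alexandrino-Bettiol-2015}): the normal parallel transport along a piecewise smooth curve in a leaf agrees with the singular holonomy map $\varphi_{[\beta]}\colon \sigma_{\beta(0)}\to\sigma_{\beta(1)}$, which takes sections to sections. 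Since $\mathcal{T}$ is a metric connection compatible with $\mathcal{F}$, the horizontal flow $\varphi_{t}$ is precisely such a normal parallel transport along the integral curves of $\bar{X}_{\alpha}$; therefore it sends $\Sigma_{q}\cap V$ into $\Sigma_{\varphi_{t}(q)}$ for sufficiently small $V$ and $t$.

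For \emph{type 2} vector fields — those whose flows fix each fiber $E_{p}$ of $\pi\colon E\to B$ and act on $E_{p}$ through the connected isotropy group $G_{p}^{0}$ — I would work on the slice $S_{p}\simeq E_{p}$ and apply the Slice Theorem \ref{malex-slice-theorem}(c)-(d): the restriction $\mathcal{F}|_{S_{p}}$ is polar, the infinitesimal foliation $\mathcal{F}_{p}$ on the Euclidean space $\nu_{p}L_{p}$ is polar, and the sections of $\mathcal{F}_{p}$ are the pullbacks under $\exp_{p}^{\nu}$ of the sections of $\mathcal{F}|_{S_{p}}$. The connected isotropy $G_{p}^{0}$ acts by linear isometries of $E_{p}$ preserving $\mathcal{F}_{p}$ (by Lemma \ref{MALEX-lemma-fluxo-isometiras}), and any foliation-preserving linear isometry of a polar arrangement on a Euclidean space permutes the sections: indeed the sections are characterized among the regular stratum as the unique maximal integral manifolds of the normal distribution, and that distribution is intrinsically determined by the metric and the foliation. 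Hence $\varphi_{t}(\Sigma_{q}\cap E_{p})$ lies in a section of $\mathcal{F}_{p}$, and therefore in a section of $\mathcal{F}$.

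Combining the two cases, at every regular $q\in U^{0}$ one has $d\varphi_{t}(\nu_{q}L_{q}) = d\varphi_{t}(T_{q}\Sigma_{q}) \subset T_{\varphi_{t}(q)}\Sigma_{\varphi_{t}(q)} = \nu_{\varphi_{t}(q)}L_{\varphi_{t}(q)}$, which is the claim. The most delicate step I anticipate is the identification in the type~1 case of the $\mathcal{T}$-horizontal flow with normal parallel transport of the polar foliation; this rests on the fact that $\mathcal{T}$ is chosen tangent to $\mathcal{F}^{\ell}$ (hence to $\mathcal{F}$) and is compatible with the fiber metric, so that $\nabla^{\tau}$-parallel translations restrict to isometries between sections along curves in $B$.
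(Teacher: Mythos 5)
Your proposal is correct and rests on the same two pillars as the paper's proof: the Slice Theorem item (d) (sections of the infinitesimal polar foliation $\mathcal{F}_p$ correspond, via $\exp^{\nu}_p$, to sections of $\mathcal{F}$) and the fact that on the regular stratum the normal spaces of the leaves are exactly the tangent spaces of the sections. The difference is organizational: the paper does not split the AVP system into type 1 and type 2 generators. It simply restricts an arbitrary linearized flow to the fibers, observes (via Lemma \ref{MALEX-lemma-fluxo-isometiras}) that this gives a foliated isometry $\varphi\colon (E_p,\mathcal{F}_p)\to(E_{\varphi(p)},\mathcal{F}_{\varphi(p)})$ between polar foliations on Euclidean spaces, and concludes at once that sections go to sections. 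Your type 2 argument is exactly this restricted to the isotropy directions. Your type 1 argument takes a detour through singular holonomy that is not needed and contains the one wrinkle you yourself flag: the $\mathcal{T}$-horizontal flow is the $\nabla^{\tau}$-parallel transport, which in general is \emph{not} the normal connection $\nabla^{\nu}$ used to define the singular holonomy maps $\varphi_{[\beta]}$, so you cannot literally quote the proposition on singular holonomy. But this is harmless: $\mathcal{T}$-parallel transport is still a foliated isometry between fibers (it is tangent to $\mathcal{F}$ and metric-compatible), so the same "foliated isometries of polar foliations on Euclidean spaces permute sections" argument you use for type 2 closes the type 1 case as well — which is in effect what the paper's uniform argument does. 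So: correct, same essential mechanism, with an avoidable case split and a repairable misidentification in the type 1 branch.
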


\begin{proof}
Set  $\varphi$ the restriction of the flow $\efunction^{tX_{\alpha}}$ to the fibers.  
The map $\varphi\colon E_{p}\to E_{\varphi(p)}$  sends
the infinitesimal foliation $\mathcal{F}_{p}$ to the infinitesimal foliation $\mathcal{F}_{\varphi(p)}$
isometrically (with respect to the standard  Sasaki metric $\metric^{\tau}$).
From item (d) Theorem \ref{malex-slice-theorem}, 
we know that infinitesimal foliations of polar foliations are polar foliations on vector spaces. 
Therefore $\varphi$ sends the sections of the polar  foliation $(\mathcal{F}_{p},E_{p},\metric_{p})$ to the sections
of the polar foliation $(\mathcal{F}_{\varphi(p)},E_{\varphi(p)},\metric_{\varphi(p)})$. 
Since the sections of the infinitesimal polar foliations coincide (via identification of normal exponential map)
with sections of the polar foliation $\mathcal{F}$ (see item (d) Theorem\ref{malex-slice-theorem}), 
we conclude that $\varphi$ sends sections of $\mathcal{F}$ to sections
of $\mathcal{F}$.  The result now follows since the normal spaces of regular leaves of $\mathcal{F}$ coincide with the tangent spaces
of sections of $\mathcal{F}$.
\end{proof}
Remark \ref{remark-control-preserve-volume-adapted-sasaki-metric} and the next lemma imply that 
the flows preserve $\vol_{\metric}$. 
\begin{lemma}
\label{lemma2-isoparemetric-is-AVP} 
Let $\hat{\metric}^{\tau}$ be the adapted Sasaki metric (recall Subsection \ref{subsection-Remark-definition-AVP}). Then
\[
\vol_{\metric}|_{U^{0}}=v\vol_{\hat{\metric}^{\tau}},
\]
where  $v\colon U^{0}\to \mathbb{R}$ is the $\mathcal{F}$-basic function on principal stratum $U^{0}$ 
 defined as $v(x)=\frac{\mathcal{V}(x)}{\mathcal{V}^{\tau}(x)}$,
 for $\mathcal{V}(x):=\int_{L_{x}}\density_{\metric}$ and $\mathcal{V}^{\tau}(x):=\int_{L_{x}}\density_{\metric^{\tau}}$, i.e., for 
the volume functions of the  leaves of $\mathcal{F}_{U^{0}}$ associated to the metric $\metric$ and  standard Sasaki metric $\metric^{\tau}$. 
\end{lemma}
\begin{proof}
Let $v\colon U^{0}\to\mathbb{R}$ be the positive function on the principal stratum defined as
\begin{equation}
\density_{\metric}|_{T\mathcal{F}}=v \density_{\metric^{\tau}}|_{T\mathcal{F}}, 
\end{equation} 
where 
$\density_{\metric}|_{T\mathcal{F}}$ and $\density_{\metric^{\tau}}|_{T\mathcal{F}}$ are the Riemannian densities (restricted to the 
distribution of the tangent spaces $T\mathcal{F}$ of regular foliation $\mathcal{F}|_{U^{0}}$)  
associated to the metrics $\metric$ and the
(standard) Sasaki metric $\metric^{\tau}$, respectively.

In order  to prove the lemma, we will need to make local calculations 
(such as in Equation \eqref{Eq1-2-lemma2-isoparemetric-is-AVP}, where we derive in the direction of a normal vector field $\vec{Y}$). 
For this reason, throughout the proof of this lemma, we will be able to replace the concept of density with the concept of volume form.

We start by claiming  that \emph{for  a projectable vector $\vec{Y}$ on  the principal stratum $U^{0}$ of $\mathcal{F}$,  $\vec{Y}$
is orthogonal to the principal leaves (concerning the original metric $\metric$ on $U$) if, and only if,
it is orthogonal to the principal leaves concerning the adapted Sasaki metric $\hat{\metric}^{\tau}$.}
In fact to say that  $\vec{Y}$ is orthogonal with respect to $\metric$ means
that $\vec{Y}(x)$ is tangent to a local section $\sigma_x$, 
and hence (by slice theorem) it is equivalent to saying that it is   
tangent to a section of the infinitesimal foliations. 
By the construction of the adapted Sasaki metric, this section is  contained in the fiber $E_{\pi(p)}$ and that is orthogonal
to the foliation.  \emph{The same holds for  the (standard) Sasaki metric $\metric^{\tau}$. }

From now on, $\vec{Y}$ denotes an
orthogonal vector field to the principal leaves. Also recall that $\mathcal{F}_{U}$  is 
isoparametric with respect to (standard) Sasaki metric $\metric^{\tau}$, see \cite[Lemma 2.1]{ACG20}.

Let us summarize some facts about the mean curvature vector field, which we will use next;
 see \cite[Chapter 4]{Gromoll-Walschap09} and \cite[Lemma 5.2]{Alex-Radeschi-MCF}.
 Let $\formacurvaturamedia(\cdot)=\metric(\vec{H},\cdot)$ and  $\formacurvaturamedia^{\tau}(\cdot)=\metric^{\tau}(\vec{H}^{\tau},\cdot)$
be   mean curvature forms with respect to $\metric$ and $\metric^{\tau}$ 
and $\omega:=\vol_{T\mathcal{F}}$ and $\omega^{\tau}:=\vol_{T\mathcal{F}}^{\tau}$  be  local volume forms 
of the  leaves of $\mathcal{F}_{U^{0}}$   (with respectively with the metrics $\metric$ and $\metric^\tau$). Then we have:
\begin{enumerate}
\item[(a)]   $\omega|_{T\mathcal{F}}=v\omega^{\tau}|_{T\mathcal{F}};$
\item[(b)]  $\mathcal{L}_{\vec{Y}}\omega|_{T\mathcal{F}}=-\formacurvaturamedia(\vec{Y})\omega|_{T \mathcal{F}};$
\item[(c)]   $\mathcal{L}_{\vec{Y}}\omega^{\tau}|_{T\mathcal{F}}=-\formacurvaturamedia^{\tau}(\vec{Y})\omega^{\tau}|_{T \mathcal{F}};$
\item[(d)]  $\vec{H}=-\nabla(\ln(\mathcal{V}));$
\item[(e)]$\vec{H}^{\tau}=-\nabla^{\tau}(\ln(\mathcal{V}^{\tau}))$. 
\end{enumerate}
Items (a), (b), and (c) imply 
$\vec{Y}\cdot v\, \omega^{\tau}|_{T\mathcal{F}}- v\, \formacurvaturamedia^{\tau}(\vec{Y}) \omega^{\tau}|_{T\mathcal{F}}
=-\formacurvaturamedia(\vec{Y}) v \, \omega^{\tau}|_{T\mathcal{F}}$. 
Since $\omega^{\tau}|_{T\mathcal{F}}$ is a volume form, we have 
$\vec{Y}\cdot v=v \big(\formacurvaturamedia^{\tau}(\vec{Y})-\formacurvaturamedia(\vec{Y}) \big)$.  This equation, together with items (d) and (e), implies
\begin{align*}
\vec{Y}\cdot v & = v \Big(\metric^{\tau}(\vec{H}^{\tau},\vec{Y})-\metric(\vec{H},\vec{Y}) \Big)\\
               & = v\Big(  \metric^{\tau}(-\nabla^{\tau}(\ln(\mathcal{V}^{\tau})),\vec{Y})-\metric(-\nabla(\ln(\mathcal{V})),\vec{Y}) \Big)\\
							& = v\Big( \vec{Y}\cdot \ln(\mathcal{V})- \vec{Y}\cdot \ln(\mathcal{V}^{\tau}) \Big)\\
							& = v \Big(\vec{Y}\cdot \ln\big( \frac{\mathcal{V}}{\mathcal{V}^{\tau}}\big) \Big).
\end{align*}

The above equality implies 
$
\vec{Y}\cdot\ln(v)=\frac{\vec{Y}\cdot v}{v}= \vec{Y}\cdot \ln  \big(\frac{\mathcal{V}}{\mathcal{V}^{\tau}}\big) 
$
and hence:
\[
\vec{Y}\cdot\Big(\ln(v)-  \ln\big( \frac{\mathcal{V}}{\mathcal{V}^{\tau}}\big)\Big)=
\vec{Y}\cdot\ln(v)- \vec{Y}\cdot \ln\big( \frac{\mathcal{V}}{\mathcal{V}^{\tau}}\big)  =0.
\]
This implies that the next equation holds on the principal stratum $U^{0}$:
\begin{equation}
\label{Eq1-lemma2-isoparemetric-is-AVP}
\vec{Y}\cdot \ln\Big(\frac{v}{\big( \frac{\mathcal{V}}{\mathcal{V}^{\tau}} \big)} \Big)=0. 
\end{equation}
Now define the basic function on the principal stratum $v_{2}=\frac{\mathcal{V}}{\mathcal{V}^{\tau}}$, 
and the function $v_1=\frac{v}{v_2}$. From \eqref{Eq1-lemma2-isoparemetric-is-AVP} we have 
$0=\vec{Y}\cdot \ln(v_1)=\frac{\vec{Y}\cdot v_1}{v_1}$ and hence:
\begin{equation}
\label{Eq1-2-lemma2-isoparemetric-is-AVP}
\vec{Y}\cdot v_{1}=0,
\end{equation}
which implies that $v_1$ restrict to a section $\polarsection$ is locally constant. 

We claim that: \emph{given a point $p\in B$ and a fixed section $\polarsection$ containing $p$, 
for each  principal point $x_0\in \polarsection$ 
there exists a  minimal segment of unit geodesic 
$\gamma\colon [0,\delta]\to \polarsection$ so that
$\gamma(0)=p\in B$, $\gamma(\delta)=x_0$
and $\gamma|_{(0,\delta]}$ contains only
principal points.} In fact, 
let  $\weylgroup$ be the Weyl group of the infinitesimal foliation $\mathcal{F}_{p}$. We  can find a minimal geodesic $\tilde{\gamma}$ with $\tilde{\gamma}(0)\in \weylgroup(p)=\{p\}$  and  $\tilde{\gamma}(\delta)\in \weylgroup(x_0)$.  
By Klein's argument there are only principal points on $\tilde{\gamma}|_{(0,\delta)}$,
see \cite[Lemma 3.70]{alexandrino-Bettiol-2015}. 
Since the Weyl group $\weylgroup$  is a group of isometries and $p$ is fixed point of the $\weylgroup$ action, 
we can choose   an $\mathrm{w}\in \weylgroup$ (e.g, by composition of reflections) so that $\gamma= \mathrm{w}(\tilde{\gamma})$
 is a minimal unit geodesic joining $x_0$ with $p$, and this concludes the proof of the claim.   

From  Lemma \ref{lemma-volume-sobre-raio} we know that:

\begin{equation}
\label{Eq2-lemma2-isoparemetric-is-AVP}
\lim_{r\to 0} v_{2}(\gamma(r)) = \lim_{r\to 0} \frac{\frac{\mathcal{V}(\gamma(r))}{r^{k}}}{\frac{\mathcal{V}^{\tau}(\gamma(r))}{r^{k}}}= 1.
\end{equation}

 From Equation \eqref{Eq1-2-lemma2-isoparemetric-is-AVP},  we have that $v_1$ is locally constant on the principal part of $\polarsection$, i.e. $v_1=c$.
Hence, for a sequence $t_n\to 0$ we have that $v(\gamma(t_{n}))= c v_{2}(\gamma(t_{n}))$.  
We claim that
\begin{equation}
\label{Eq2-5-lemma2-isoparemetric-is-AVP}
\lim_{r\to 0} v(\gamma(r)) =  1.
\end{equation}
In order to check the claim, set $\vol_{\polarsection}$ and $\vol_{\polarsection}^{0}$ to be the volume forms in a neighborhood of $p=\gamma(0)$ 
of $\polarsection$ with respect to the original metric $\metric$ and the Euclidean metric $\metric^{0}:=\metric_{p}$ respectively. 
Define the function $f\colon \polarsection\to \mathbb{R}$ as $\vol_{\polarsection}=f\vol_{\polarsection}^{0}$.  
Multiplying this expression  on both sides  with $\omega $ and using the fact that  $\omega=v\omega^{\tau}$ we have, on  the principal stratum $U^{0}$ that:
\begin{align*}
\vol_{g}&= \omega\wedge \vol_{\polarsection}\\
&= f \omega\wedge \vol_{\polarsection}^{0}\\\
&= f v \omega^{\tau}\wedge \vol_{\polarsection}^{0}\\
&= f v \vol_{\metric^{\tau}}.
\end{align*}
Since $f(p)=1$ and $(\vol_{g})_{p}= (\vol_{\metric^{\tau}})_{p}$ we infer \eqref{Eq2-5-lemma2-isoparemetric-is-AVP}.

From Equations \eqref{Eq2-5-lemma2-isoparemetric-is-AVP} and  \eqref{Eq2-lemma2-isoparemetric-is-AVP} we conclude that $v_{1}=c=1$.
This argument can be applied for any other principal point $x_0$, section $\polarsection$ and point $p\in B$ and hence we have 
 proved that $v_{1}|_{U^{0}}=1$. This implies that $v=v_2$ and hence a basic function.

Finally recall that    there exists a  metric $\metric_{\mathcal{F}}$
on the submanifold $U^{0}/\mathcal{F}$ 
so that $\pi_{\mathcal{F}}\colon (U_{0},\metric)\to (U_{0}/\mathcal{F},\metric_{\mathcal{F}})$  
is a  Riemannian submersion, where $\pi_{\mathcal{F}}\colon U_{0}\to U_{0}/\mathcal{F}$ is the canonical projection. Therefore  
\begin{align*}
\vol_{\metric}& =\omega\wedge \pi_{\mathcal{F}}^{*}\vol_{g_{\mathcal{F}}}\\
&= v \omega^{\tau}\wedge \pi_{\mathcal{F}}^{*}\vol_{g_{\mathcal{F}}}\\
&= v\vol_{\hat{\metric}^{\tau}},
\end{align*}
which finishes the proof. 
\end{proof}

\begin{lemma}
\label{lemma-volume-sobre-raio}
Let $\gamma\colon [0,\delta]\to \polarsection$ be a minimal horizontal segment of unit geodesic  so that
$\gamma(0)=p_{0}\in B$ and $\gamma|_{(0,\delta]}$ contains only principal points. Then:
\[
 \lim_{r\to 0}\frac{\mathcal{V}(\gamma(r))}{r^{l}}
=\lim_{r\to 0} \frac{\mathcal{V}^{\tau}(\gamma(r))}{r^{l}}=N \big|L_{e_{p_0}}^{0}\big| |B|
\]
where $\gamma'(0)=e_{p_0}$, $\big|L_{e_{p_0}}^{0}\big| $ is the volume (with respect to the flat metric $\metric_{p_{0}}$)
of the (Euclidean) isoparametric leaf $L_{e_{p_0}}^{0}$ of $(\mathcal{F}_{p_{0}}, E_{p_{0}})$,  
$l=\dim(L_{e_{p_0}}^{0})$ and $N$ is the number of intersections of $L_{\gamma(\delta)}$ with $E_{p_{0}}$.
\end{lemma}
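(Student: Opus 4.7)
My plan is to handle the Sasaki case first, where the computation reduces to Fubini for a Riemannian submersion, and then to show that the original metric $\metric$ differs from $\metric^{\tau}$ by a factor that tends to $1$ on the shrinking leaves, so that the two limits coincide.

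The first step is to pass to the normal bundle picture: via the normal exponential map, I identify the tubular neighborhood $U = \mathrm{Tub}_{\delta}(B)$ with the disc bundle $E^{\delta} = \nu^{\delta}(B) \to B$, and under this identification the Sasaki metric $\metric^{\tau}$ makes $\pi\colon (E^{\delta},\metric^{\tau}) \to (B,\metric|_{B})$ a Riemannian submersion with flat totally geodesic fibers $(E_{p},\metric_{p})$. Because $\gamma$ is horizontal in the section $\Sigma$ (hence normal to $B$ at $p_{0}$), we have $\gamma(r) = \exp^{\nu}_{p_{0}}(r e_{p_{0}})$ and the leaf $L_{\gamma(r)}$, viewed in $E^{\delta}$, fibers over $B$ via $\pi$ with fiber $L_{\gamma(r)} \cap E_{p}$. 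By hypothesis on $N$ and continuity of the singular holonomy (from the slice theorem), for $r$ small enough this fiber consists of exactly $N$ connected components, each of which is a regular leaf of the infinitesimal polar foliation $\mathcal{F}_{p_{0}}$ passing through some point $h_{r}(\xi) = r\xi$ with $\xi$ in the Weyl-orbit of $e_{p_{0}}$.

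In the second step, I use the homothetic invariance of $\mathcal{F}_{p_{0}}$: under $h_{r}\colon v \mapsto rv$, the Euclidean leaf through $re_{p_{0}}$ equals $r \cdot L^{0}_{e_{p_{0}}}$, so its $l$-dimensional flat-Euclidean volume is exactly $r^{l}\,\mathcal{V}^{0}(L^{0}_{e_{p_{0}}})$. Since each of the $N$ components of $L_{\gamma(r)} \cap E_{p}$ is isometric (under the Weyl group / singular holonomy) to a scaled copy of $L^{0}_{e_{p_{0}}}$, Fubini for the Riemannian submersion $\pi|_{L_{\gamma(r)}}$ yields
\begin{equation*}
\mathcal{V}^{\tau}(\gamma(r)) \;=\; \int_{B} \bigl| L_{\gamma(r)} \cap E_{p} \bigr|_{\metric_{p}} \, \density_{\metric|_{B}} \;=\; N\, r^{l}\, \mathcal{V}^{0}(L^{0}_{e_{p_{0}}})\, |B|,
\end{equation*}
from which the claimed limit for $\mathcal{V}^{\tau}$ is immediate (in fact, an equality for every small $r$).

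In the third step, I compare $\metric$ with $\metric^{\tau}$. Along the zero section $B$ the two metrics agree (both split as $\metric|_{B} \oplus \metric_{p}$ on $TB \oplus \nu_{p}B$), and they differ by $O(\|\xi\|)$ as $\xi \to 0$ in each fiber. Pulling back by the homothety $h_{r}$ and rescaling, $r^{-2} h_{r}^{*}\metric$ and $r^{-2} h_{r}^{*}\metric^{\tau}$ both converge, uniformly on compact subsets of a fundamental domain of the Weyl pseudogroup in $E_{p_{0}}$, to the flat Euclidean metric on $E_{p_{0}}$. Consequently the ratio of their $l$-dimensional volume densities on each fiber component tends to $1$ uniformly, and integrating over $B$ (whose metric is fixed and independent of $r$) gives $\mathcal{V}(\gamma(r)) / \mathcal{V}^{\tau}(\gamma(r)) \to 1$; combined with Step 2, this yields the desired common limit.

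The main obstacle I expect is the third step: I must justify that the $N$ components of $L_{\gamma(r)} \cap E_{p}$, taken together with their varying footpoints on $B$, can be compared uniformly under $h_{r}^{-1}$. This requires combining the uniform equivalence of $\metric$ and $\metric^{\tau}$ near $B$ with the fact that the singular holonomy identifying the $N$ components is by isometries of the metric on $\Sigma$, so that no component is ``lost'' or collapses onto another in the limit. The slice theorem (Theorem \ref{malex-slice-theorem}) together with the polar structure of $\mathcal{F}_{p_{0}}$ controls precisely this behavior.
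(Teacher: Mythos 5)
Your proposal is correct and follows essentially the same route as the paper's proof: Fubini over $B$ for the bundle projection restricted to the leaf, exact $r^{l}$-scaling of the flat fiber volumes under the homothety $h_{r}$ together with the holonomy/Weyl isometries identifying the $N$ components (which gives $N\,\mathcal{V}^{0}(L^{0}_{e_{p_0}})\,|B|$ on the nose for the Sasaki metric), and then a comparison showing the $\metric$- and $\metric^{\tau}$-volume densities of $L_{\gamma(r)}$ agree in the limit. The one point where your Step 3 is phrased imprecisely — and which the paper handles explicitly via the factor $h$ defined by $\vol_{\mathcal{T}}=h\,\pi^{*}\vol_{B}|_{\mathcal{T}}$ together with the observation that $\mathcal{T}_{\gamma(t)}\to T_{\gamma(0)}B$ — is that the comparison must also control the horizontal part of $T L_{\gamma(r)}$ (the Jacobian of $\pi$ on $\mathcal{T}$ with respect to $\metric$), not only the fiberwise densities, and the normalization $r^{-2}h_{r}^{*}\metric$ is appropriate only in the fiber directions, since $h_{r}$ fixes $B$ and does not rescale the base directions.
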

\begin{proof}
 The proof follows from straightforward calculations (via coordinate) and Fubini's theorem.
For the sake of completeness, we provide a few additional details here.
  
We are going to check that
$ \lim_{r\to 0}\frac{\mathcal{V}(\gamma(r))}{r^{l}}=N \big|L_{e_{p_0}}^{0}\big| |B|$.  
The proof of the other equality can be demonstrated in the same way.

We start by defining a few objects.
Let $\pi\colon E\to B$ be the metric foot  point projection, $\vol_{\mathcal{T}}$ the volume form (with respect to the
original metric $\metric$)  
on the distribution $\mathcal{T}$  and $\vol_{\pi^{-1}}$ the volume form (with respect  to the original metric) of the fibers $\pi^{-1}$. 
Consider 
$h\colon E\to\mathbb{R}$ be the function defined by  
\begin{equation}
\label{equation-definition-h-lemma-volume-sobre-raio}
\vol_{\mathcal{T}}=h\, \pi^{*}\vol_{B}|_{\mathcal{T}}.
\end{equation}
Note that in particular 
\[
\vol_{\metric}=h \pi^{*}\vol_{B}\wedge \vol_{\pi^{-1}}.
\]
Let $\{\vizinhancaB_{\alpha}\}$ be an (finite) open cover of $B$ and $\{\rho_{\alpha}\}$ a partition of unity  subordinate to $\{ \vizinhancaB_{\alpha}\}$.  
Let $\pi_{r}=\pi|_{L_{\gamma(r)}}\to B$  be restriction of $\pi$ to the leaf $L_{\gamma(r)}$  
and set $\rho_{\alpha}^{r}=\rho_{\alpha}\circ\pi_{r}$ and $\vizinhancaB_{\alpha}^{r}=\pi_{r}^{-1}(\vizinhancaB_{\alpha})$.  
By Fubini's theorem, we have:
 \begin{align*}
\frac{\mathcal{V}(\gamma(r))}{r^{l}} & = 
\sum_{\alpha} \int_{\vizinhancaB_{\alpha}^{r}}\rho_{\alpha}^{r} \, \frac{h}{r^{l}}\,| \pi^{*}\vol_{B}\wedge \vol_{\pi^{-1}_r}|\\
& = \sum_{\alpha} \int_{\vizinhancaB_{\alpha}}\rho_{\alpha}(p) \Big( \int_{\pi^{-1}_{r}(p)} \, h\, \frac{\vol_{\pi^{-1}_r}}{r^{l}}\Big) |\vol_{B}|\\
& = \sum_{\alpha} \int_{\vizinhancaB_{\alpha}}\rho_{\alpha}(p) I(p,r) |\vol_{B}|,
\end{align*}
where 
$ I(p,r)=\Big( \int_{\pi^{-1}_{r}(p)} \, h\, \frac{\vol_{\pi^{-1}_r}}{r^{l}}\Big). $
Our goal is then to check that
\begin{equation}
\label{eq-1-lemma-volume-sobre-raio}
\lim_{r\to 0} I(p_{0},r)=    N \big|L_{e_{p_0}}^{0}\big|.
\end{equation}
In order to integrate on $\pi^{-1}_{r}(p)$ (that is a possible non connected fiber
with $N$ components diffeomorphic to  $L_{e_{p_0}}^{0}$) we need to consider parametrizations. 
Consider a local $\mathcal{F}$-invariant orthonormal  frame $\{\xi_{k}\}$ on $\vizinhancaB_\alpha$ so that
$L_{\xi_{k}(p)}=L_{\xi_{k}(p_0)}$. 
Let $\psi_{\beta}\colon V_{\beta}\to \pi^{-1}_{1}(p_0)\subset E_{p_0}^{1}$ be  parametrizations 
$\psi_{\beta}(\theta)=\sum _{k} \psi_{k,\beta}(\theta)\xi_{k}(p_0)$ 
of the connected components of  $\pi_{1}^{-1}(p_0)$, i.e.,  the isoparametric submanifolds
to $L_{e_{p_0}}^{0}$. 
Define the parametrization 
$F^{r}_{\beta}\colon V_{\beta}\times \vizinhancaB_{\alpha}\to L_{\gamma(r)}$ as
$F^{r}_{\beta}(\theta,p)=  \sum _{k} r\psi_{k,\beta}(\theta)\xi_{k}(p) $
and $F^{r,p}_{\beta}(\theta)=F^{r}_{\beta}(\theta,p)$. 
Note that 
\begin{equation}
\label{eq-2-lemma-volume-sobre-raio}
\frac{\partial}{\partial \theta_i} F_{\beta}^{r,p}(\theta)=
\Big( \sum _{k} r\frac{\partial \psi_{k,\beta}}{\partial \theta_{i}}(\theta)\xi_{k}(p) \Big)_{\sum _{k} r\psi_{k,\beta} \xi_{k}},
\end{equation}
\begin{equation}
\label{eq-3-lemma-volume-sobre-raio} 
(F_{\beta}^{r,p})^{*}\vol_{\pi^{-1}_r}= 
\sqrt{ 
\begin{vmatrix}
\metric( \frac{\partial}{\partial \theta_i} F_{\beta}^{r,p},\frac{\partial}{\partial \theta_j} F_{\beta}^{r,p} )
\end{vmatrix}
} 
d\theta_{1}\wedge \cdots\wedge \mathrm{d}\theta_{l}.
\end{equation}
Therefore:
\begin{equation}
\label{eq-4-lemma-volume-sobre-raio} 
\lim_{r\to 0} \frac{ (F_{\beta}^{r,p})^{*}\vol_{\pi^{-1}_r}}{r^{l}}=
\sqrt{ 
\begin{vmatrix}
\mathrm{g}_{p}( \frac{\partial \psi_{\beta} }{\partial \theta_i} ,\frac{\partial \psi_{\beta}}{\partial \theta_j} )
\end{vmatrix}
} 
d\theta_{1}\wedge \cdots\wedge \mathrm{d}\theta_{l}.
\end{equation}
Note that $\mathcal{T}_{\gamma(r)}$ converges to $ T_{\gamma(0)}B$ and hence from  
Eq. \eqref{equation-definition-h-lemma-volume-sobre-raio} (i.e. definition of $h$) 
we have:
\begin{equation}
\label{eq-5-lemma-volume-sobre-raio} 
\lim_{r\to 0} h\circ F^{r}_{\beta}(\theta,p)=1.
\end{equation}

Now set $\mathrm{P}_{\beta}=F^{1}_{\beta}(V_{\beta},\vizinhancaB_{\alpha})$
and $\mathrm{P}_{\beta,p}=F^{1,p}_{\beta}(V_{\beta})$, and let $\{\rho_{\beta} \}$
be the partition of unity subordinate  to $\{\mathrm{P}_{\beta}\}$ as well as $\mathrm{P}_{\beta, p}$.  
Consider the homothety transformation $\mathbb{P}\colon E-\{0\}\to E^{1}$ defined as 
$\mathbb{P}(\exp(r\xi))= \exp( \xi)$, with $r\in(0,1]$.  Note that
$\rho^{r}_{\beta}=\rho_{\beta}\circ\mathbb{P}$ is partition of unity subordinate
to $\mathrm{P}_{r,\beta}=F^{r}_{\beta}(V_{\beta},\vizinhancaB_{\alpha})$ and $\mathrm{P}_{r,\beta,p}=F^{r,p}_{\beta}(V_{\beta})$. Then we have

\begin{align*}
\lim_{r\to 0} I(p,r) &= \lim_{r\to 0} \sum_{\beta} \int_{W^{r,\beta,p}} \rho_{\beta}^{r} \, h \, \frac{\vol_{\pi^{-1}_{r}(p)}}{r^{l}}\\
& = \lim_{r\to 0} \sum_{\beta} \int_{V_{\beta}} \rho_{\beta}^{r} \circ F_{\beta}^{r,p}(\theta) 
\,  \, h\circ F^{r,p}_{\beta}(\theta) \frac{(F_{\beta}^{r,p})^{*} \vol_{\pi^{-1}_{r}(p)}}{r^{l}}\\
& = \lim_{r\to 0} \sum_{\beta} \int_{V_{\beta}} \rho_{\beta} \circ F_{\beta}^{1,p}(\theta) 
\, \, h\circ F^{r,p}_{\beta}(\theta) \frac{(F_{\beta}^{r,p})^{*} \vol_{\pi^{-1}_{r}(p)}}{r^{l}}\\
& =  \sum_{\beta} \int_{V_{\beta}} \rho_{\beta} \circ F_{\beta}^{1,p}(\theta)  \sqrt{ 
\begin{vmatrix}
\mathrm{g}_{p}( \frac{\partial \psi }{\partial \theta_i} ,\frac{\partial \psi}{\partial \theta_j} )
\end{vmatrix}
} 
d\theta_{1}\wedge \cdots\wedge \mathrm{d}\theta_{l}\\
& =   N \big|L_{e_{p_0}}^{0}\big|.
\end{align*}
\end{proof}


\section{A basic Rellich–-Kondrachov--Hebey--Vaugon theorem}
\label{Section-Rellich-Kondrachov}

Hebey and Vaugon proved in \cite{hebey3} that 
when a compact Riemannian manifold $(M,\metric)$ admits
an isometric action by a compact group  $G$ 
(assuming that $\dim G(x)>0, \forall x\in M$) then  the subset of Sobolev $G$-basic functions $W^{1,p}(M)^{G}$, admits
better Sobolev embedding than the one we have in general for Riemannian manifolds. To prove this they used the existence of slices (which by
hypothesis have dimension smaller than $M$) to  reduce
the local study of $G$-basic functions to functions on the slices.

Using Remark \ref{remark-equivalent-definition-SRF}, which generalizes \cite[Lemma 1, p.863]{hebey3}, 
it can be checked  that the  proof of \cite[Corollary 1, p.867]{hebey3}, also presented at \cite[Theorem 9.1]{Hebey_2000}), 
holds for SRF without any further changes. More precisely,  we can reformulate \cite[Theorem 9.1, p.252]{Hebey_2000}) as follows:

\begin{theorem}[A basic Rellich--Kondrachov--Hebey--Vaugon theorem]\label{thm:reillich}
Suppose that $M$ is a connected compact Riemannian manifold and $\mathcal{F}$ is a SRF on $M$ whose leaves are closed with no trivial leaf (that is, $d^*:= \min_{x\in M}\mathrm{dim}L_x\geq 1$). For any $\infty > p\geq 1$ there exists $p_0 > p^* := np/(n-p)$ such that for any $1\leq q<p_0$, the canonical embedding $W^{1,p}(M)^\mathcal{F} \hookrightarrow L^q(M)$ is completely continuous. That is, compact. Moreover, if $p \geq n-d^*$, the results hold for any $q\geq 1$.
\end{theorem}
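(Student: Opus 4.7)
The strategy is to adapt Hebey and Vaugon's original argument for isometric actions \cite{hebey3} to the SRF setting, leveraging Remark~\ref{remark-equivalent-definition-SRF} as the replacement for their slice-type lemma. The plan is to reduce the global statement to a local statement on transverse submanifolds (of dimension at most $n - d^*$) via a basic partition of unity, and then apply the classical Rellich--Kondrachov theorem on these lower-dimensional slices where the improved Sobolev exponent $(n-d^*)p/(n-d^*-p) > np/(n-p) = p^*$ is available.

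First I would cover $M$ by finitely many saturated geometric tubular neighborhoods $U_i = \mathrm{Tub}_{\delta_i}(L_{p_i})$, where $\dim L_{p_i} \geq d^*$, and fix a $\mathcal{F}$-basic partition of unity $\{\rho_i\}$ subordinate to $\{U_i\}$ (whose existence was recalled in Section~\ref{MALEX-subsection-Linear-afewfacts}). Since multiplication by $\rho_i$ preserves the basic Sobolev space $W^{1,p}(M)^{\mathcal{F}}$ and is a bounded operation, compactness of the embedding $W^{1,p}(M)^{\mathcal{F}} \hookrightarrow L^q(M)$ follows once we establish the corresponding local compactness for basic functions supported in each $U_i$. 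This reduces the problem to a sequence $\{u_k\} \subset W^{1,p}(M)^{\mathcal{F}}$ with $\mathrm{supp}(u_k) \subset U_i$ and uniformly bounded norm, for which we must extract an $L^q$-convergent subsequence.

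Next I would exploit Remark~\ref{remark-equivalent-definition-SRF} to describe $U_i$ locally via a simple sub-foliation $\mathcal{P}$ whose plaques $P_{q_0}$ are relatively compact open subsets of the leaves, so that each plaque has dimension at least $d^*$. A basic function $u$ is constant along each plaque, hence its restriction to a distinguished open set $V \subset U_i$ descends to a function $\tilde{u}$ on a transverse submanifold $T \subset V$ of dimension $m := n - \dim(P_{q_0}) \leq n - d^*$. The equidistance property (item (b) of Definition~\ref{MALEX-definition-SRF}, together with the formulation via tubular neighborhoods of plaques) ensures that the Riemannian volume on $V$ factors, up to a smooth bounded Jacobian, as a product of the plaque measure with the transverse measure; thus $\|u\|_{W^{1,p}(V)}$ controls $\|\tilde{u}\|_{W^{1,p}(T)}$ (with respect to the transverse metric) and vice versa, modulo constants depending only on the covering.

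Finally, on the transverse manifold $T$ of dimension $m \leq n-d^*$, the classical Rellich--Kondrachov theorem gives compact embedding $W^{1,p}(T) \hookrightarrow L^q(T)$ for all $1 \leq q < mp/(m-p)$ when $p < m$, and for all $q \geq 1$ when $p \geq m$. Setting $p_0 := (n-d^*)p/(n-d^*-p)$ when $p < n-d^*$ (and $p_0 = +\infty$ otherwise), we have $p_0 > p^*$ strictly because $d^* \geq 1$. Extracting subsequences $\tilde{u}_k \to \tilde{u}$ in $L^q(T)$ for each chart of a finite atlas, then reassembling via the basic partition of unity and using that $L^q$-convergence on the transverse slice, multiplied by the bounded plaque volume factor, yields $L^q$-convergence on $M$, completes the argument. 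The main obstacle I anticipate is the careful handling of the stratification: different $U_i$ sit over leaves of different dimensions, and the transverse Jacobian can degenerate near singular strata; this is precisely where Remark~\ref{remark-equivalent-definition-SRF} is essential, since it provides a uniform local product structure on plaques regardless of stratum, so that the Jacobian estimates needed in the Hebey--Vaugon argument transfer verbatim with $d^* = \min_{x} \dim L_x$ playing the role of the minimal orbit dimension.
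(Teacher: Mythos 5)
Your proposal is correct and follows exactly the route the paper takes: the paper does not write out a proof but simply observes that Remark~\ref{remark-equivalent-definition-SRF} generalizes Hebey--Vaugon's Lemma~1 (the local product/plaque structure with plaques of dimension at least $d^*$), so that their argument for \cite[Corollary 1]{hebey3} (\cite[Theorem 9.1]{Hebey_2000}) carries over verbatim, with basic functions descending to transverse submanifolds of dimension at most $n-d^*$ where the classical Rellich--Kondrachov theorem supplies the improved exponent. Your write-up is a faithful (and more detailed) expansion of precisely that argument.
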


\begin{remark}
\label{remark-thm:reillich}
A result analogous to Theorem \ref{thm:reillich}
holds for basic linearizable functions $\closedlinearizedbasic$.

\end{remark}

\begin{proof}
The proof is a straightforward modification of Theorem 9.1 in \cite[p. 252-253]{Hebey_2000}. 
		For the sake of completeness, let us briefly review the main   ideas.

From Remark \ref{remark-equivalent-definition-SRF} we can check that for each point $x\in M$ there exists an open set $\Omega$ satisfying
\begin{enumerate}[(i)]
    \item $\varphi(\Omega) = P\times S\subset \mathbb{R}^\mathrm{d}\times \mathbb{R}^{n-d};$
    \item $\forall y \in \Omega,~P\times \mathrm{pr}_2(\varphi(y)) \subset \varphi(L_y\cap \Omega)$, where $\mathrm{pr}_2 \colon \mathbb{R}^{d}\times
\mathbb{R}^{n-d} \to \mathbb{R}^{n-d}$ is the second-factor projection. \label{item:restritor}
\end{enumerate}
Using the compactness of $M$ and the fact that the leaves of $\cal F$ are closed, we can find finitely many coordinate charts $\{(\Omega_{i},\varphi_i)\}$ with $\Omega_i$ satisfying (i)-(ii).

Item \eqref{item:restritor} implies that any smooth basic function $u\colon M\rightarrow \mathbb{R}$ is such that
    $u\circ \varphi_i^{-1} \colon P_i\times S_i \rightarrow \mathbb{R}$ is constant along the first factor, so we identify $u\circ \varphi_i^{-1}(x,y) \equiv \widetilde{u}_i(y) \colon S_i \rightarrow \mathbb{R}$. For integration theory purposes, it is convenient, and we can assume without loss of generality that $\varphi_i$ is defined on some open set $\widetilde\Omega_i$ containing the closure $\overline{\Omega}_i$ such that $\varphi_i(\widetilde\Omega_i) = P_i\times S_i$ and $\overline S_i\subset\widetilde S_i$. In this manner, we obtain that $\widetilde u_i\in C^{\infty}(\bb R^{n-d_i})$.

    Proceeding \emph{mutatis mutandis} as in \cite[p.253]{hebey1}, for fixed $i$ and $1\leq p< \infty$ we get the existence of positive numbers $A_i,~\widetilde A_i$ (which do not depend on $u$) such that
    \begin{align}
        \int_{\Omega_i}|u|^p\mathrm{d}\mu_{\metric} &= \int_{P_i\times S_i}\left(|u|^p\sqrt{{\mathrm{det}~\metric}^i}\circ \varphi_i^{-1}\right)\mathrm{d}x\mathrm{d}y\\
        &\leq A_i\int_{P_i\times S_i}|u\circ \varphi_i^{-1}|^p\mathrm{d}x\mathrm{dy}\\
        &= \widetilde{A}_i\int_{S_i}|\widetilde u_i|^p\mathrm{d}y \label{eq:primeiraqueajuda}
    \end{align}
    Similarly we find $B_i$ (not depending on $u$) and such that
    \begin{align}
        \int_{\Omega_i}|u|^p\mathrm{d}\mu_{\metric} \geq B_i\int_{S_i}|\widetilde u_i|^p\mathrm{d}y.
    \end{align}
    and $\widetilde B_i > 0$ (also not depending on $u$)
    \begin{align}
        \int_{\Omega_i}|\nabla u|^p\mathrm{d}\mu_{\metric} \geq \widetilde B_i\int_{S_i}|\nabla{\widetilde u}_i(y)|^p\mathrm{d}y. \label{eq:ultimaqueajuda}
    \end{align}

   Set $d^* := \min_id_i > 0$. For any $i$, $n-d_i \leq n-d^*$. Using that each $\Omega_i$ has a smooth boundary, one can combine inequalities \eqref{eq:primeiraqueajuda}-\eqref{eq:ultimaqueajuda} with the Sobolev embedding theorem for bounded open sets on $\mathbb{R}^{n-d_i}$ (cf. \cite[Theorem 6, p.270]{evans2010partial}) to get the following:
   
 \textbf{Assume first that $1\leq p < n-d_i$}.
      For any $1\leq q <\frac{(n-d_i)p}{n - d_i -p}$ there exists $C_i >0$ such that
      \begin{align*}
      \left(\int_{\Omega_i}|u|^q\mathrm{d}\mu_{\metric}\right)^{1/q} &\leq C_i\left(\left(\int_{\Omega_i}|\nabla u|^p\mathrm{d}\mu_{\metric}\right)^{1/p}{+}\left(\int_{\Omega_i}|u|^p\mathrm{d}\mu_{\metric}\right)^{1/p}\right).
    \end{align*}

    Moreover, denoting by $|i|$ the cardinality of the collection $\{(\Omega_i,\varphi_i)\}$ it is straightforward obtaining
    \begin{align*}
        \left(\int_M|u|^q\mathrm{d}\mu_{\metric}\right)^{1/q}&\leq \sum_{i=1}^{|i|}\left(\int_{\Omega_i}|u|^q\mathrm{d}\mu_{\metric}\right)^{1/q}\\
        \sum_{i=1}^{|i|}\left(\left(\int_{\Omega_i}|\nabla u|^p\mathrm{d}\mu_{\metric}\right)^{1/p}{+}\left(\int_{\Omega_i}|u|^p\mathrm{d}\mu_{\metric}\right)^{1/p}\right)&\leq |i|\left(\left(\int_{M}|\nabla u|^p\mathrm{d}\mu_{\metric}\right)^{1/p}{+}\left(\int_{M}|u|^p\mathrm{d}\mu_{\metric}\right)^{1/p}\right)
    \end{align*}

      Therefore, we have that for $p_0 := \min_i\frac{(n-d_i^*)p}{n-d_i^* - p}$, for any $1\leq q < p_0$ it holds we can find a universal constant $N = N(A_i,B_i,\widetilde A_i,\widetilde B_i,C_i)$ (depending on the covering) such that
    \begin{equation}\label{eq:bounding}
        \left(\int_M|u|^q\mathrm{d}\mu_{\metric}\right)^{1/q}\leq N\left(\left(\int_M|\nabla u|^p\mathrm{d}\mu_{\metric}\right)^{1/p}{+}\left(\int_M|u|^p\mathrm{d}\mu_{\metric}\right)^{1/p}\right).
    \end{equation}

    \textbf{Assume now that $p \geq n-d^*$}. In this case, following the same argumentation as before but also using that $p\geq n-d^*\geq n-d_i$ for each $i$, we have that for any fixed $q\geq 1$, we can bound
       \begin{align*}
      \left(\int_{\Omega_i}|u|^q\mathrm{d}\mu_{\metric}\right)^{1/q} &\leq C_i\left(\left(\int_{\Omega_i}|\nabla u|^p\mathrm{d}\mu_{\metric}\right)^{1/p}{+}\left(\int_{\Omega_i}|u|^p\mathrm{d}\mu_{\metric}\right)^{1/p}\right).
    \end{align*}
The same computation as before ensures an analogous inequality to that of Equation \eqref{eq:bounding}.

Lastly, one observes that the former shows the validity and the continuity of the embeddings in question in the statement. Standard arguments, 
such as in \cite[Chapter 5,Theorem 1]{evans2010partial}, 
show these embeddings are compact for any $q \geq 1$ in the former case, and any $q < \frac{(n - d^*) p}{n - d^* - p}$ in the latter case.
\end{proof}

%


\section{An abstract setup for variational problems with (and via) symmetries}
\label{section-An abstract setup for variational problems with (and via) symmetries}

As we pointed out earlier,  this paper does not aim to delve deeply into analytical problems. 
However, we can describe one of the common methods employed in the Calculus of Variations in combination with our symmetric criticality principle to illustrate the strength of the theory.

In Subsection \ref{subsectio-motivate-example}, we present a specific family of partial differential equations that mathematicians typically study, known in the literature as ``$p$-Kirchhoff bi-non-local problems.'' 
Then, in Subsection \ref{Section-Applications-PP}, we will provide a proof of Proposition \ref{corollary:kirshofinho} 
by applying Theorem \ref{theorem-simple-version-Palais-principle-variational-formulation} 
(or its generalization, Theorem \ref{theorem-Palais-principle-variational-formulation}), 
alongside classical arguments from the Calculus of Variations. Finally, in Section \ref{Subsection-Proof of Theorem-thm:prettygeneralzinho}
we generalize the discussion presented in the Subsection \ref{Section-Applications-PP} 
 to the  functional energy presented in Theorem \ref{proposition-operator-J-kirschoff-generalized}.

For purely didactic reasons, we present here the proofs for the case of AVP orbit-like foliations.
But we point out that by Definition \ref{definition-linearized-basic}, Remark \ref{remark-thm:reillich} and 
Theorem \ref{theorem-Palais-principle-variational-formulation}
we do not need the orbit-like hypothesis.
Without the orbit-like hypothesis, the reader should replace $W^{1,p}(M)^{\mathcal{F}}$ with $\closedlinearizedbasic$, 
and Theorem \ref{theorem-simple-version-Palais-principle-variational-formulation} with Theorem~\ref{theorem-Palais-principle-variational-formulation}  throughout the proofs in this section.

\subsection{A motivating example} 
\label{subsectio-motivate-example}
$p$-Kirchhoff bi-non-local problems are related to a stationary model for the
vibration of an elastic string using a variation of the classical wave equation. 
Following \cite{martinez} or \cite{ALVES200585}, there has been recent interest in proving the existence of critical (minimal) points for functionals defined in $W^{1,p}(M)$ of the form
\begin{equation}\label{eq:energy0}
J_{\lambda}(u) := \weight(\|u\|^p) - \frac{\lambda}{r+1}\left(\int_MF(u,x) \density_{\metric}\right)^{r+1},
\end{equation}
where we have $\weight (t):= \int_0^t \mathrm{m}(s)\mathrm{d}s$ for a continuous non-negative function $\mathrm{m}\colon \bb R_+ \rightarrow \bb R_+$  with additional hypotheses, depending on the PDE being considered. A typical critical point $u\in W^{1,p}(M)$ of $J_\lambda$ is characterized by satisfying 
the following for all $v \in W^{1,p}(M)$:
\begin{align}\label{eq:criticalweak}
0 =\mathrm{d}J_{\lambda}(u)[v] := \mathrm{m}(\|u\|^p)\int_M|\nabla u|^{p-2}\metric(\nabla u,\nabla v)\density_{\metric}
-\lambda\left(\int_MF(u,x)\density_{\metric}\right)^r\int_M f(u,x)v \, \density_{\metric}.
\end{align}

We say that  $u\in W^{1,p}(M)$ satisfying \eqref{eq:criticalweak} is a \emph{weak solution} to some PDE. 
For example, Equation \eqref{eq:criticalweak} represents in the literature the existence of a weak solution for the PDE 
\[
-\mathrm{m}(\|u\|^p)\mathrm{div}_{\metric}(|\nabla u|^{p-2}\nabla u) = -\mathrm{m}(\|u\|^p)\Delta_p u 
= \lambda f(u,x)\left[\int_MF(u,x)\density_{\metric}\right]^r,~p\geq 2,~\lambda \in \bb R,
\]
where $f\colon \bb R\times M \rightarrow \bb R$ is a function satisfying a minimum regularity requirement, which makes the problem well-defined, $\int_0^tf(s,x)\mathrm{d}s:= F(t,x)$. The term $\mathrm{m}$ is referred to in the literature as a \emph{weight}, and usually it is a non-negative continuous function. 
Particular examples of what we have  discussed can be found at
\cite{correa} and \cite{MENDEZ2021124671}.

Next, some common techniques in the field of Calculus Variations shall be employed to the proof of  Proposition \ref{corollary:kirshofinho}.


\subsection{Proof  of Proposition \ref{corollary:kirshofinho}}
\label{Section-Applications-PP}

\begin{proof}

		We first observe that  $J_{\lambda}$ is $C^1$. In fact,
		$u\mapsto \int_M|\nabla u|^p\density_{\metric}$ is $C^1$ since 		$p\geq 2$. 		Moreover, assuming that the map $t\mapsto \weight (t)$ is $C^1$, the chain rule implies that 
		$u\mapsto \weight\big(\frac{1}{p}\int_M|\nabla u|^p\density_{\metric}\big)$ is $C^1$.
	Using that $M$ is $n$-dimensional, with $n\geq 3$ and $n>p\geq 2$, we get:
    \begin{align*}
        p^* &= \frac{np}{n-p}\\
        &= \frac{p}{1-\tfrac{p}{n}}\\
        &> p \geq 2.
    \end{align*}
    Thus $u\mapsto \int_M|u|^{p^*}\density_{\metric}$ is $C^1$. 
		Since $r > 1$ we get that $u\mapsto \left(\int_M|u|^{p^*}\density_{\metric}\right)^{r+1}$ is $C^1$.

Due to Theorem  \ref{theorem-simple-version-Palais-principle-variational-formulation},  to prove the existence of a weak basic solution, i.e.  
$b_0\in W^{1,p}(M)^{\mathcal{F}}$
so that  $\mathrm{d} J(b_0)=0$ it suffices  to prove the existence
of $b_0\in W^{1,p}(M)^{\mathcal{F}}$ such that $\mathrm{d} J(b_0)b=0$ for all $b\in W^{1,p}(M)^{\mathcal{F}}$.
In other words, we can reduce our search for weak solutions to the closed vector space $W^{1,p}(M)^{\mathcal{F}}$.

We fix $\epsilon > 0$ and consider the subset 
\[
\mathbf{M}_{\epsilon}^{\cal F} := \left\{u \in W^{1,p}(M)^{\cal F} : \int_M|u|^{p^*}\density_{\metric} = 
\epsilon^{\tfrac{1}{r+1}}(r+1)^{\tfrac{1}{r+1}}p^* \right\}.
\]
It is straightforward to check that $\mathbf{M}_{\epsilon}^{\cal F}$ is non-empty.

\begin{claim}
\emph{$\mathbf{M}_{\epsilon}^{\cal F}$ is a closed co-dimension one submanifold of $W^{1,p}(M)^{\cal F}$.} 
\end{claim}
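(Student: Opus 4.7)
The plan is to realize $\mathbf{M}_{\epsilon}^{\cal F}$ as a regular level set of a $C^{1}$ functional on the Banach space $W^{1,p}(M)^{\cal F}$, and then invoke the implicit function theorem in the Banach setting. Concretely, I would introduce
\[
\Phi\colon W^{1,p}(M)^{\cal F} \longrightarrow \mathbb{R}, \qquad \Phi(u) := \int_{M} |u|^{p^{*}}\density_{\metric},
\]
and set $c := \epsilon^{1/(r+1)}(r+1)^{1/(r+1)}p^{*}$, so that $\mathbf{M}_{\epsilon}^{\cal F} = \Phi^{-1}(c)$. Since $W^{1,p}(M)^{\cal F}$ is a closed linear subspace of $W^{1,p}(M)$ and $\Phi$ is continuous, closedness of $\mathbf{M}_{\epsilon}^{\cal F}$ is immediate from the closedness of $\{c\}\subset \mathbb{R}$.

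Next I would verify that $\Phi$ is of class $C^{1}$. The critical Sobolev embedding $W^{1,p}(M)\hookrightarrow L^{p^{*}}(M)$ holds, and since $p^{*}>p\geq 2$ the Nemytskii map $u\mapsto |u|^{p^{*}}$ is continuously Fr\'echet differentiable from $L^{p^{*}}(M)$ into $L^{1}(M)$. Composing with the embedding and integrating yields
\[
\mathrm{d}\Phi(u)[v] = p^{*}\int_{M} |u|^{p^{*}-2}u\, v\, \density_{\metric},
\]
which defines a bounded linear functional on $W^{1,p}(M)^{\cal F}$ by H\"older's inequality with exponents $p^{*}/(p^{*}-1)$ and $p^{*}$ combined with the Sobolev embedding. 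Continuity of $u\mapsto \mathrm{d}\Phi(u)$ follows from continuity of the Nemytskii operator $u\mapsto |u|^{p^{*}-2}u$ from $L^{p^{*}}(M)$ to $L^{p^{*}/(p^{*}-1)}(M)$, which is standard since $p^{*}>2$.

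The decisive step is then to show that $c$ is a regular value, i.e. $\mathrm{d}\Phi(u)\neq 0$ at every $u\in\mathbf{M}_{\epsilon}^{\cal F}$. This is immediate by testing against $v=u$ itself (which lies in $W^{1,p}(M)^{\cal F}$):
\[
\mathrm{d}\Phi(u)[u] = p^{*}\int_{M} |u|^{p^{*}}\density_{\metric} = p^{*}\, c > 0,
\]
since $\epsilon>0$. Because the codomain is $\mathbb{R}$, non-vanishing at $u$ implies surjectivity of $\mathrm{d}\Phi(u)$ together with a bounded linear right inverse $t\mapsto (t/\mathrm{d}\Phi(u)[u])\,u$, so $\ker \mathrm{d}\Phi(u)$ is a closed, topologically complemented subspace of codimension one. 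The implicit function theorem for $C^{1}$ maps between Banach spaces then produces local charts expressing $\mathbf{M}_{\epsilon}^{\cal F}$ as a closed $C^{1}$ submanifold of $W^{1,p}(M)^{\cal F}$ of codimension one. I expect no serious obstacle in this argument; the only delicate point is the regularity of $\Phi$ at the critical exponent, which is handled by the classical Nemytskii continuity recalled above.
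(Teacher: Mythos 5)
Your proposal is correct and follows essentially the same route as the paper: both realize $\mathbf{M}_{\epsilon}^{\cal F}$ as a level set of the $C^{1}$ functional $u\mapsto\int_M|u|^{p^*}\density_{\metric}$, verify surjectivity of the derivative by testing against $u$ itself, note that the kernel is a closed complemented hyperplane, and invoke the Banach-space implicit function theorem (Sard--Smale). Your added detail on the Nemytskii-operator argument for $C^{1}$ regularity is fine and matches what the paper establishes at the start of the proof of Proposition \ref{corollary:kirshofinho}.
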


\begin{proof}
We first check that for each  $u\in \mathbf{M}_{\epsilon}$, the $C^1$-map 
$u\mapsto G(u) := \int_M|u|^{p^*}\density_{\metric} -\epsilon^{\tfrac{1}{r+1}}(r+1)^{\tfrac{1}{r+1}}p^*$ has surjective derivative.
Indeed since $\mathrm{d}G(u)[v] = p^{*}\int_M|u|^{p^*-2}uv\, \density_{\metric}$ for every $v\in W^{1,p}(M)^{\cal F}$, we have that
$\mathrm{d}G(u)[u] = p^*\int_M|u|^{p^*}\density_{\metric} = \epsilon^{\tfrac{1}{r+1}}(r+1)^{\tfrac{1}{r+1}}{p^*}^2 > 0$. We fix $u\in \mathbf{M}_{\epsilon}^\cal F$ and consider $\cal V_u := \{v \in W^{1,2}(M)^{\cal F} : \mathrm{d}G(u)[v] = p^{*}\int_M|u|^{p^*-2}uv\, \density_{\metric} = 0\}$. Since $u\mapsto G(u)$ is a $C^1$-map it follows that $\cal V_u$ is closed in $W^{1,p}(M)^{\cal F}$. Moreover, $W^{1,p}(M)^{\cal F}/\cal V_u$ is closed and 
\[
W^{1,p}(M)^{\cal F} = \left(W^{1,p}(M)^{\cal F}/\cal V_u\right) \oplus \cal V_u.
\]
We thus have verified the needed hypotheses for the implicit function theorem for Banach spaces 
-- also known as the Sard--Smale Theorem (Corollary 1.5 in \cite{sardsmale}). Therefore,
 $\mathbf{M}_{\epsilon}^{\cal F}$ is a closed co-dimension one submanifold of $W^{1,p}(M)^{\cal F}$. 
\end{proof}

\begin{claim}
\emph{$\mathbf{M}_{\epsilon}^{\cal F}$ is  weakly closed.}
\end{claim}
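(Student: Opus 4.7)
The plan is to verify weak closedness by extracting strong $L^{p^*}$-convergence from weak $W^{1,p}$-convergence via the basic Rellich--Kondrachov--Hebey--Vaugon theorem, and then pass to the limit in the integral constraint defining $\mathbf{M}_{\epsilon}^{\cal F}$.

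More precisely, take a sequence $\{u_n\} \subset \mathbf{M}_{\epsilon}^{\cal F}$ with $u_n \rightharpoonup u$ in $W^{1,p}(M)^{\cal F}$. Since $W^{1,p}(M)^{\cal F}$ is a closed subspace of $W^{1,p}(M)$, the weak limit $u$ lies in $W^{1,p}(M)^{\cal F}$. The key step is to apply Theorem \ref{thm:reillich}: by hypothesis $\mathcal{F}$ has closed leaves with $d^{*} := \min_{x\in M}\dim L_x \geq 1$, so there exists some $p_0 > p^{*} = np/(n-p)$ such that the embedding $W^{1,p}(M)^{\cal F} \hookrightarrow L^{q}(M)$ is compact for every $1 \leq q < p_0$. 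Choosing $q = p^{*} < p_0$, this yields $u_n \to u$ strongly in $L^{p^*}(M)$.

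From the strong convergence in $L^{p^*}$ it follows directly that
\[
\int_M |u_n|^{p^*} \density_{\metric} \longrightarrow \int_M |u|^{p^*} \density_{\metric}.
\]
Since the left-hand side equals the constant $\epsilon^{1/(r+1)}(r+1)^{1/(r+1)} p^{*}$ for every $n$, the limit satisfies the same identity, so $u \in \mathbf{M}_{\epsilon}^{\cal F}$, concluding the argument.

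The only potential obstacle is verifying that Theorem \ref{thm:reillich} is genuinely applicable at exactly the exponent $p^{*}$; this is exactly the improvement that the $\mathcal{F}$-symmetric setting provides over the classical (non-symmetric) Rellich--Kondrachov embedding, where compactness fails at the critical exponent. The hypothesis that every leaf has positive dimension is precisely what makes Theorem \ref{thm:reillich} deliver some $p_0 > p^{*}$, circumventing the usual critical-exponent obstruction.
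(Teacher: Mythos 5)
Your proposal is correct and follows essentially the same route as the paper: extract strong $L^{p^*}$-convergence from the weak $W^{1,p}(M)^{\cal F}$-convergence via the basic Rellich--Kondrachov--Hebey--Vaugon theorem (which applies at $q=p^*$ precisely because the theorem furnishes some $p_0>p^*$ under the positive-leaf-dimension hypothesis), and then pass to the limit in the integral constraint. Your added remarks on the closedness of the subspace $W^{1,p}(M)^{\cal F}$ under weak limits and on why the critical exponent is admissible are correct elaborations of steps the paper leaves implicit.
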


\begin{proof}
Take $\{u_m\} \subset \mathbf{M}_{\epsilon}^\cal F$ a weakly-convergent sequence with limit $u$, i.e. $u_m \rightharpoonup u$ in $W^{1,p}(M)^{\cal F}$. 
We claim that $u \in \mathbf{M}_{\epsilon}^\cal F$. The basic version of Rellich--Kondrachov Theorem \ref{thm:reillich} implies that $\{u_m\}$ strongly converges to $u$ with respect to the $L^{p^*}(M)$-norm. Hence,
\[
\epsilon^{\tfrac{1}{r+1}}(r+1)^{\tfrac{1}{r+1}}p^* = \lim_{m\to\infty}\int_M|u_m|^{p^*}\density_{\metric} = \int_M|u|^{p^*}\density_{\metric}.
\]
\end{proof}

\begin{claim}
\emph{$J_{\lambda}\Big|_{\mathbf{M}_{\epsilon}^{\cal F}}$ is weakly lower-semicontinuous and coercive.}
\end{claim}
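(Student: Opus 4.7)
The plan is to exploit the defining constraint of $\mathbf{M}_\epsilon^{\cal F}$: since $\int_M|u|^{p^*}\density_{\metric}$ is fixed on the constraint (equal to $\epsilon^{1/(r+1)}(r+1)^{1/(r+1)}p^*$), the second term of $J_\lambda$ simplifies to the constant $\lambda\epsilon$, and so
\begin{equation*}
J_\lambda\big|_{\mathbf{M}_\epsilon^{\cal F}}(u)=\weight\!\Big(\tfrac{1}{p}\!\int_M|\nabla u|^p\density_{\metric}\Big)-\lambda\epsilon.
\end{equation*}
The analysis thus reduces to studying the single functional $u\mapsto\weight\bigl(\tfrac{1}{p}\|\nabla u\|_{L^p}^p\bigr)$.

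For weak lower semicontinuity, I would first recall that $u\mapsto\int_M|\nabla u|^p\density_{\metric}$ is convex in $\nabla u$ and continuous on $W^{1,p}(M)^{\cal F}$, hence weakly lower semicontinuous by a standard result (Mazur's lemma / Tonelli). Since $\weight=\int_0^{\cdot}\mathrm{m}(s)\mathrm{d}s$ is $C^1$ and non-decreasing (because $\mathrm{m}\geq 0$), its composition with a weakly lsc functional is again weakly lsc, which gives the desired property.

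For coercivity, the crucial step is to control $\|u\|_{L^p}$ on the constraint: since $M$ is compact and $p<p^*$, Hölder's inequality yields $\|u\|_{L^p}\leq C(M,p,p^*)\|u\|_{L^{p^*}}$, and the right-hand side is uniformly bounded on $\mathbf{M}_\epsilon^{\cal F}$. Consequently, any sequence $\{u_n\}\subset\mathbf{M}_\epsilon^{\cal F}$ with $\|u_n\|_{W^{1,p}}\to\infty$ must satisfy $\|\nabla u_n\|_{L^p}\to\infty$. It then suffices to show that $\weight(t)\to\infty$ as $t\to\infty$: since $\mathrm{m}=\weight'$ is non-decreasing (by convexity of $\weight$) and $\weight\not\equiv 0$ (otherwise $\lim_{t\to\infty}\weight(t)=0$, contradicting $\lim_{t\to\infty}\weight(t)\geq C>0$), there exists $t_0$ with $\mathrm{m}(t_0)>0$; monotonicity of $\mathrm{m}$ then gives $\weight(t)\geq\weight(t_0)+\mathrm{m}(t_0)(t-t_0)$ for $t\geq t_0$, which diverges.

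The main point of subtlety is the interplay between the structural assumptions on $\weight$: the hypothesis $\lim_{t\to\infty}\weight(t)\geq C>0$ on its own is merely a positive lower bound at infinity, but combined with convexity and $\mathrm{m}\geq 0$ it forces $\weight$ to grow at least linearly, which is precisely what upgrades the reduction-via-the-constraint argument to genuine coercivity in the $W^{1,p}$-norm. Everything else is a routine application of Hölder's inequality on the compact manifold $M$ and general facts about compositions of weakly lsc functionals.
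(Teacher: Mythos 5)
Your proof is correct and follows the same overall strategy as the paper: use the constraint to reduce $J_\lambda|_{\mathbf{M}_\epsilon^{\cal F}}$ to $u\mapsto\weight\bigl(\tfrac1p\int_M|\nabla u|^p\density_{\metric}\bigr)-\lambda\epsilon$, then handle weak lower semicontinuity and coercivity of that single term. Two points where your route genuinely differs are worth noting. For weak lsc, the paper first shows $\liminf_m\int_M|\nabla u_m|^p\density_{\metric}\geq\int_M|\nabla u|^p\density_{\metric}$ (via weak lsc of $\|\cdot\|_{1,p}$ together with the basic Rellich--Kondrachov theorem to dispose of the $L^p$ part) and then invokes convexity plus norm-continuity of $u\mapsto\weight(\tfrac1p\int|\nabla u|^p)$ to get weak lsc from \cite[Corollary 3.9]{brezis2010functional}; you instead compose a weakly lsc functional with the continuous non-decreasing function $\weight$, which is equally valid and slightly more elementary. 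For coercivity, you add two steps the paper leaves implicit or glosses over: the H\"older bound $\|u\|_{L^p}\leq C\|u\|_{L^{p^*}}$ showing that blow-up of the $W^{1,p}$-norm on the constraint set forces $\|\nabla u\|_{L^p}\to\infty$, and, more importantly, the derivation of $\weight(t)\to\infty$ from convexity, $\mathrm{m}\geq 0$ and $\lim_{t\to\infty}\weight(t)\geq C>0$. The paper's proof simply asserts $\lim_{t\to\infty}\weight(t)=\infty$, which is stronger than the hypothesis actually stated in Proposition \ref{corollary:kirshofinho}; your linear lower bound $\weight(t)\geq\weight(t_0)+\mathrm{m}(t_0)(t-t_0)$ closes that gap cleanly, so on this point your argument is the more complete one.
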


\begin{proof}
To check $J_{\lambda}\Big|_{\mathbf{M}_{\epsilon}^\cal F}$ is coercive, we note that since we are considering elements of $W^{1,p}(M)^{\cal F}$ 
with the fixed constraint $\int_M|u|^{p^*}\density_{\metric} = \epsilon^{\tfrac{1}{r+1}}(r+1)^{\tfrac{1}{r+1}}p^*$, 
verifying the claim for $J_{\lambda}\Big|_{\mathbf M_{\epsilon}^{\cal F}}$ is reduced to understanding the behavior 
of $J_{\lambda}\Big|_{\mathbf{M}_{\epsilon}^\cal F}$ when $\int_M|\nabla u|^p\density_{\metric} \rightarrow +\infty$. 
Moreover, using that $\weight \colon \bb R_+\rightarrow \bb R_+$ is continuous and 
$\lim_{t\to\infty}\weight(t) =\infty$, we have that $\int_M|\nabla u|^p\density_{\metric}\rightarrow +\infty$ implies that
\[
\weight \left(\tfrac{1}{p}\int_M|\nabla u|^p\density_{\metric}\right) \rightarrow +\infty.
\]
Since
$J_{\lambda}(u) = \weight \left(\frac{1}{p}\int_M|\nabla u|^p\density_{\metric}\right)-\lambda\epsilon$, we conclude that 
$J_{\lambda}\Big|_{\mathbf{M}^{\cal F}_{\epsilon}}$ is coercive. It is only left to check that $J_{\lambda}\Big|_{\mathbf{M}_{\epsilon}^\cal F}$ 
is weakly lower-semicontinuous.

Using that $\mathbf{M}_{\epsilon}^{\cal F}$ is weakly closed, we consider a weakly-convergent sequence
$\mathbf{M}_{\epsilon}^{\cal F}\supset \{u_m\} \rightharpoonup u \in \mathbf{M}_{\epsilon}^\cal F$. 
On the one hand since $\|\cdot\|_{1,p}$ is weakly lower-semicontinuous, and by the Rellich--Kondrachov Theorem we have
$\int_M|u_m|^p\density_{\metric}\rightarrow \int_M|u|^p\density_{\metric}$, it follows that 
\[
\liminf_{m\to\infty}\int_M|\nabla u_m|^p\density_{\metric} \geq \int_M|\nabla u|^p \density_{\metric}.
\]  
On the other hand, since $\weight \colon \bb R_+ \rightarrow \bb R_+$ is continuous, it is lower-semicontinuous. Being $\weight$ convex, the function $W^{1,p}(M)^{\cal F}\ni u\mapsto  \weight \left(\frac{1}{p}\int_M|\nabla u|^p\density_{\metric}\right)$ is continuous (in the norm topology) and convex, thus weakly lower-semicontinuous (see \cite[Corollary 3.9+Remark 6, p.61]{brezis2010functional}). 
Therefore, $\liminf_{m\to\infty}\weight\left(\frac{1}{p}\int_M|\nabla u_m|^p\density_{\metric}\right)
 \geq \weight\left(\frac{1}{p}\int_M|\nabla u|^p\density_{\metric}\right)$  and thus 
\[
\liminf_{m\to\infty}J_{\lambda}(u_m) \geq J_{\lambda}(u). \]
\end{proof}

Since $J_\lambda|_{\mathbf{M}^{\cal F}_{\epsilon}}$ is coercive and weakly lower-semicontinuous, there exists $u_0\in \mathbf{M}_{\epsilon}^\cal F$ which is an extremal  point (a minimum) of $J_\lambda|_{\mathbf{M}^{\cal F}_{\epsilon}}$. The Lagrange Multiplier Theorem (see \cite{Botelho2013}) ensures the existence of some $\theta \in \bb R$ such that
\[
\theta\mathrm{d}G(u_0)[v]= \mathrm{d}J_{\lambda}(u_0)[v], \ \forall v\in W^{1,p}(M)^{\cal F}.
\]
Writing explicitly, we have $\forall v \in W^{1,p}(M)^{\cal F}$ 
\begin{align*}
\theta p^* \int_M|u_{0}|^{p^*-2}u_{0}v\density_{\metric} &=
\mathrm{m}\left(\frac{1}{p}\int_M |\nabla u_{0} | ^p \density_{\metric}\right)\int_M|\nabla u_{0}|^{p-2}\metric(\nabla u_{0},\nabla v)\density_{\metric}\\
&-\lambda\left(\int_M\frac{1}{p^*}|u_{0}|^{p^*}\density_{\metric}\right)^r\int_M |u_0|^{p^*-2}u_{0}v \, \density_{\metric}\\
\end{align*}
Taking $\lambda^*=\lambda^*(\epsilon) := \lambda\epsilon^{\tfrac{r}{r+1}}(r+1)^{\tfrac{r}{r+1}} + \theta p^*$, we can rewrite the former identity as
\begin{multline*}
0=  \mathrm{m}\left(\frac{1}{p}\int_M|\nabla u_0|^p\density_{\metric}\right)\int_M|\nabla u_0|^{p-2}\metric(\nabla u_0,\nabla v)\density_{\metric}
-\lambda^* \int_M|u_{0}|^{p^*-2}u_{0}v\density_{\metric}
=\mathrm{d}J_{\lambda^*}(u_0)[v],\\~\forall v \in W^{1,p}(M)^{\cal F}.
\end{multline*}

Since the extremal point  $u_0\in \mathbf{M}_{\epsilon}^\cal F$  depends on $\epsilon$, by varying $\epsilon$ we  obtain the claimed sequence.
\end{proof}



\subsection{A generalized formulation of $p$-Kirchhoff problem}
\label{Subsection-Proof of Theorem-thm:prettygeneralzinho}

In what follows, we  generalize  the ideas of the  proof of  Proposition \ref{corollary:kirshofinho}, which we can consider a 
general formulation of $p$-Kirchhoff's problem.

\begin{theorem}\label{thm:prettygeneralzinho}
Let $(M^n,\metric)$,  be a  $n$-compact  Riemannian manifold 
with an  AVP  foliation $\mathcal{F}$, where $n\geq 3$. We assume that the leaves of  $\mathcal{F}$ are closed, and each leaf has positive dimension. 
Set 
\[
J_{\lambda}(u):= \weight\Big(\int_{M} | \nabla u |^{p}\density_{\metric}\Big)\int_M\cal L(|\nabla u|^2,u,x)\density_{\metric} 
- \frac{\lambda}{c}\left(\int_MF(u,x)\density_{\metric}\right)^{r+1},
\] 
where $r, c, \lambda >0$ and $p\in [2,n[$. Assume that $J_{\lambda}\colon W^{1,p}(M)\to\mathbb{R}$ is a 
 $C^1$-energy functional  for a continuous non-negative map $\weight\geq 0$, and that the Lagrangian $\mathcal{L}$ and $F$ areof class $C^{1}$ and $\mathcal{F}$-basic.
Moreover, suppose that:
\begin{enumerate}[$(A)$]
    \item The map $u\mapsto  \weight \big(\int_{M} | \nabla u |^{p}\density_{\metric}\big) \int_M\cal L(|\nabla u|^2,u,x)\density_{\metric}$ 
		is a $C^1$ weakly-lower semi-continuous map and there exists $C_1$ such that 
    \begin{equation*}
        C_1\|u\|_{1,p}^p \leq \weight \Big(\int_{M} | \nabla u |^{p}\density_{\metric}\Big) \int_M\cal L(|\nabla u|^2,u,x)\density_{\metric}.
    \end{equation*} \label{item:essecoerciva}
    \item  The map $u\mapsto \int_MF(u,x)\density_{\metric}$ is a $C^1$-map 
		such that $x\mapsto F(0,x)$ is constant and \label{item:essecompacta} \begin{enumerate}[$(a)$]
        \item \label{eq:weaklyclosedvaivir} $\left|f(t,x) := \frac{\partial}{\partial s}\Big|_{s=t}F(s,x)\right| \leq a(x) + k|t|^{p^*-1}$, 
				where $k\in (0,\infty)$ and $a\in L^{p^*}(M)$;
        \item \label{eq:sardsmale}for each $u\in W^{1,p}(M)$, the functional 
				$W^{1,p}(M) \ni v\mapsto \left(\int_M F(u,x)\density_{\metric}\right)'(u)[v]$ is not identically zero.
    \end{enumerate}
\end{enumerate}
Then, there exist infinitely many weak  solutions to our problem. More precisely  
there exist a sequence  $\{\lambda^{*}_{i}\}_{i=1}^{\infty} \subset \bb R$  and 
a sequence of distinct non-zero functions
 $\{ u_{i} \}_{i=1}^{\infty}\subset W^{1,p}(M)$ such that
$\mathrm{d}J_{\lambda^{*}_{i}}(u_i) = 0$
\end{theorem} 
\begin{remark}\label{rem:remakrs-regarding-analytic-hypotheses}
\begin{enumerate}
\item Hypothesis given by Item \eqref{eq:sardsmale} hold, for instance, if condition \eqref{eq:weaklyclosedvaivir} holds 
and  $\int_Mf(u,x)^2\density_{\metric} \neq 0$ for every $u\in W^{1,p}(M)^{\cal F}$  holds. 
Indeed, $\mathrm{d}\left(u\mapsto \int_M F(u,x)\density_{\metric}\right)(u)[v] = \int_Mf(u,x)v\density_{\metric}$. 
The hypothesis given by item \eqref{eq:weaklyclosedvaivir} ensures that $f(u,x) \in W^{1,p}(M)^{\cal F}$, 
so we can choose $v = f(u,x)$, thus concluding the remark.
\item As stated above, the orbit-like hypothesis is not necessary for the proofs of Theorems \ref{corollary:kirshofinho} and \ref{thm:prettygeneralzinho}. Nonetheless, in the particular case where the SRF in  Theorem \ref{thm:prettygeneralzinho} is an orbit-like AVP foliation, 
the infinitely many weak solutions may be chosen to be basic, i.e. $\{u_i\}_{i=1}^{\infty}\subset W^{1,p}(M)^{\mathcal{F}}$.
\end{enumerate}
\end{remark}

\begin{proof}[Proof of Theorem \ref{thm:prettygeneralzinho}]
    
		As the first step, we observe that we can find $\pm \epsilon \in \bb R$ such that
    \[
    \mathbf{M}_{\epsilon}^{\cal F} := \{u\in W^{1,p}(M)^{\cal F} : \int_MF(u,x)\density_{\metric} = (\pm c\epsilon)^{1/r+1}\}
    \]
		is non-empty, otherwise the term $\int_MF(u,x) \density_{\metric}$ would be absent from $J_{\lambda}$. 
		From now on, we assume without loss of generality that we are in the case of $+\epsilon$.
		We define $G(u):= \int_MF(u,x)\density_{\metric} - (c\epsilon)^{1/r+1}$ and note that 
		$\mathbf{M}_{\epsilon}^{\cal F}$ is also closed, since $u\mapsto \int_MF(u,x)\density_{\metric}$ is $C^1$.  
    \begin{claim}
       $\mathbf{M}_{\epsilon}^\cal F$ is a closed co-dimension one submanifold of $W^{1,p}(M)^{\cal F}$. 
    \end{claim}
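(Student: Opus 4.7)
The plan is to adapt almost verbatim the argument used for the corresponding claim inside the proof of Proposition \ref{corollary:kirshofinho}, replacing the specific constraint $\int_M|u|^{p^*}\density_{\metric}$ by the general constraint $\int_M F(u,x)\density_{\metric}$. First I would note that $G\colon W^{1,p}(M)^{\cal F}\to\mathbb{R}$ is $C^{1}$: this is immediate from hypothesis \eqref{item:essecompacta}, and differentiating yields
$$
\mathrm{d}G(u)[v]=\int_M f(u,x)\,v\,\density_{\metric},\qquad v\in W^{1,p}(M)^{\cal F},
$$
where the growth condition \eqref{eq:weaklyclosedvaivir} guarantees that the integral is well defined (and continuous in $v$) on $W^{1,p}(M)$ and hence on the closed subspace $W^{1,p}(M)^{\cal F}$.

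Second, I would show that $\mathrm{d}G(u)$ is surjective at every $u\in \mathbf{M}^{\cal F}_{\epsilon}$. Since its target is one-dimensional, this reduces to exhibiting some basic test function $v$ with $\mathrm{d}G(u)[v]\neq 0$. Hypothesis \eqref{eq:sardsmale} supplies such a $v$ in the ambient space $W^{1,p}(M)$, but we need it in $W^{1,p}(M)^{\cal F}$. This is where the main (and really the only) obstacle lies. It can be resolved in two equivalent ways. The first is to follow the remark right after Theorem \ref{thm:prettygeneralzinho}: since $F$ is an $\cal F$-basic Lagrangian and $u$ is basic, $f(u,x)$ is basic; \eqref{eq:weaklyclosedvaivir} gives $f(u,x)\in L^{p^{*}}(M)^{\cal F}\hookrightarrow (W^{1,p}(M)^{\cal F})^{*}$ as a test functional, and if $\int_M f(u,x)^{2}\density_{\metric}\neq 0$ the choice $v=f(u,x)$ works directly. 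The second, in full generality, is to apply the averaging operator $\mathrm{Av}\colon C^{\infty}_{c}(U_i)\to C^{\infty}_{c}(U_i)^{\ell}$ from Section \ref{Section- Symmetric Criticality of Palais on manifold}: the linear functional $\mathrm{d}G(u)$ is of the integral form considered in Proposition \ref{proposition-new-MALEX--Av-l-orbitlike} (with $\mathcal{L}^{\ell}_{1}\equiv 0$ and $\mathcal{L}^{\ell}_{2}=f(u,\cdot)$, which is basic), so for any $v\in W^{1,p}(M)$ with $\mathrm{d}G(u)[v]\neq 0$ one gets $\mathrm{d}G(u)[\mathrm{Av}(v)]=\mathrm{d}G(u)[v]\neq 0$ and $\mathrm{Av}(v)\in W^{1,p}(M)^{\cal F}$.

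Third, once $\mathrm{d}G(u)$ is continuous, linear and non-zero on $W^{1,p}(M)^{\cal F}$, its kernel
$$
\mathcal{V}_u:=\{v\in W^{1,p}(M)^{\cal F}:\mathrm{d}G(u)[v]=0\}
$$
is a closed hyperplane, and picking any $v_{0}\in W^{1,p}(M)^{\cal F}$ with $\mathrm{d}G(u)[v_{0}]\neq 0$ produces the topological splitting $W^{1,p}(M)^{\cal F}=\mathbb{R}\,v_{0}\oplus \mathcal{V}_u$. All hypotheses of the implicit function theorem for Banach spaces (Sard--Smale, \cite{sardsmale} Corollary~1.5) are therefore met at every point of $\mathbf{M}^{\cal F}_{\epsilon}=G^{-1}(0)$, so this set is a closed codimension-one $C^{1}$-submanifold of $W^{1,p}(M)^{\cal F}$, completing the claim. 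The hard part is really only the surjectivity/non-triviality of $\mathrm{d}G(u)$ on the basic subspace; the rest of the argument is formal and identical to the one already carried out for $|u|^{p^{*}}$ in Subsection \ref{Section-Applications-PP}.
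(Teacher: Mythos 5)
Your argument is essentially the paper's own: compute $\mathrm{d}G(u)[v]=\int_M f(u,x)v\,\density_{\metric}$, get a nonvanishing direction from hypothesis \eqref{eq:sardsmale}, observe that the kernel $\mathcal{V}_u$ is closed and splits off a one-dimensional complement, and invoke the Sard--Smale regular value theorem. You are in fact slightly more careful than the paper at the one delicate point — the paper reads \eqref{eq:sardsmale} as directly supplying a test function in $W^{1,p}(M)^{\cal F}$, whereas you note that the hypothesis is stated on all of $W^{1,p}(M)$ and supply two valid ways (the choice $v=f(u,x)$ as in the remark, or averaging via Proposition \ref{proposition-new-MALEX--Av-l-orbitlike}) to land in the basic subspace.
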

    \begin{proof}
     For any $u\in \mathbf{M}_{\epsilon}^\cal F$ since $u\mapsto \int_MF(u,x)\density_{\metric}$ is $C^1$ and that $f(u,x) = \partial_{s=u}F(s,x)$, we have
    \[
    \mathrm{d}G(u)[v] := \int_Mf(u,x)v \density_{\metric}.
    \]
    For $u$ fixed, from the hypothesis \eqref{eq:sardsmale} we get the existence of some $v\in W^{1,p}(M)^{\cal F}$ such that 
    $\mathrm{d}G(u)[v] \neq 0$. Thus, the derivative of $G$ at any $u\in \mathbf{M}_{\epsilon}^\cal F$ is surjective. 
		In addition to that, the linear subspace 
		$\cal V_u := \{v\in W^{1,p}(M)^{\cal F} : \int_M f(u,x)v \density_{\metric} = 0\}$ is closed and 
		$W^{1,p}(M)^{\cal F} = \left(W^{1,p}(M)^{\cal F}/\cal V_u\right)\oplus \cal V_u$. 
		The hypotheses of the Banach-space version of the regular value theorem, also known as the Sard--Smale Theorem (Corollary 1.5 in \cite{sardsmale}), 
		are satisfied and  we conclude that
		$\mathbf{M}_{\epsilon}^\cal F$ is a co-dimension one submanifold of $W^{1,p}(M)^{\cal F}$.
    \end{proof}

\begin{claim}
    $\mathbf{M}^{\cal F}_{\epsilon}$ is weakly closed.
\end{claim}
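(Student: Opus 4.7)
The plan is to follow the same strategy used to prove the analogous claim for $\mathbf{M}^{\cal F}_{\epsilon}$ in Proposition \ref{corollary:kirshofinho}, but replacing the explicit polynomial integrand $|u|^{p^*}$ with the general Nemytskii-type integrand $F(u,x)$. The crucial ingredients will be the basic Rellich--Kondrachov--Hebey--Vaugon Theorem \ref{thm:reillich} together with the growth condition \eqref{eq:weaklyclosedvaivir}. First I would take a sequence $\{u_m\}\subset \mathbf{M}^{\cal F}_{\epsilon}$ with $u_m\rightharpoonup u$ in $W^{1,p}(M)^{\cal F}$, and reduce the statement to showing
\[
\int_M F(u_m,x)\density_{\metric}\longrightarrow\int_M F(u,x)\density_{\metric},
\]
from which $u\in \mathbf{M}^{\cal F}_{\epsilon}$ is immediate.

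Since $p\in[2,n[$, Theorem \ref{thm:reillich} provides a compact embedding $W^{1,p}(M)^{\cal F}\hookrightarrow L^{p^*}(M)$, so (passing to a subsequence) $u_m\to u$ strongly in $L^{p^*}(M)$ and almost everywhere on $M$. To prove the convergence of the integrals, I would write $F(u_m,x)-F(u,x)$ via the fundamental theorem of calculus as a line integral of $f=\partial_s F$, and apply the growth bound from hypothesis \eqref{eq:weaklyclosedvaivir} to obtain the pointwise estimate
\[
|F(u_m,x)-F(u,x)|\leq \bigl(a(x)+C(|u(x)|+|u_m(x)|)^{p^*-1}\bigr)\,|u_m(x)-u(x)|.
\]
Integrating over $M$ and applying Hölder's inequality with conjugate exponents $p^*$ and $p^*/(p^*-1)$ splits the estimate into two pieces: one controlled by $\|a\|_{L^{p^*}}\,\|u_m-u\|_{L^{p^*/(p^*-1)}}$ and one controlled by a constant times $\bigl\||u|+|u_m|\bigr\|_{L^{p^*}}^{p^*-1}\,\|u_m-u\|_{L^{p^*}}$. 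The weak convergence of $\{u_m\}$ in $W^{1,p}(M)^{\cal F}$ provides a uniform $L^{p^*}$-bound via the embedding, so the prefactors stay bounded, and both terms vanish thanks to the strong $L^{p^*}$-convergence together with the inclusion $L^{p^*}(M)\subset L^q(M)$ for all $q\le p^*$ on the compact manifold~$M$.

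The only subtle point I anticipate is checking that $p^*/(p^*-1)\leq p^*$, which is needed in order to absorb the first Hölder term using the strong $L^{p^*}$-convergence; this is automatic since $p\geq 2$ forces $p^*\geq 2$. A secondary issue is ensuring the argument applies to the whole sequence and not merely a subsequence, which follows from the standard Urysohn-type principle: every subsequence admits a further subsequence along which the above reasoning succeeds, forcing convergence of the full sequence. Combining these observations, the limit $u$ satisfies $\int_M F(u,x)\density_{\metric}=(\pm a\epsilon)^{1/(r+1)}$, so $u\in\mathbf{M}^{\cal F}_{\epsilon}$, proving that $\mathbf{M}^{\cal F}_{\epsilon}$ is weakly closed.
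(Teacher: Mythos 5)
Your proof is correct, but it takes a genuinely different route from the paper's. The paper fixes a subsequence converging pointwise a.e., establishes the pointwise domination $|F(u_m(x),x)-F(0,x)|\leq a(x)|u_m(x)|+\tfrac{b}{p^*}|u_m(x)|^{p^*}$ via the fundamental theorem of calculus, separately proves $\int_M a(x)|u_m(x)|\density_{\metric}\to\int_M a(x)|u(x)|\density_{\metric}$ (through a somewhat involved convergence-in-measure argument), and then invokes the Generalized Dominated Convergence Theorem. You instead derive the quantitative Lipschitz-type estimate $|F(u_m,x)-F(u,x)|\leq\bigl(a(x)+C(|u|+|u_m|)^{p^*-1}\bigr)|u_m-u|$ and close the argument with two applications of H\"older's inequality, using only the strong $L^{p^*}$-convergence furnished by the compact embedding of Theorem \ref{thm:reillich} and the uniform $L^{p^*}$-bound coming from weak convergence. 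Your route is more direct: it dispenses with the dominated convergence machinery, with the pointwise a.e. convergence (which you mention but never actually use), and with the hypothesis that $x\mapsto F(0,x)$ is constant, which the paper's decomposition relies on. In fact your final Urysohn-type step is also unnecessary, since a compact embedding sends the \emph{entire} weakly convergent sequence to a strongly convergent one, so no passage to subsequences is needed at any stage. The exponent check $p^*/(p^*-1)\leq p^*$ is correct since $p^*>p\geq 2$. What the paper's approach buys in exchange is a template that extends more readily to integrands where only one-sided domination (rather than a difference estimate) is available; what yours buys is brevity and fewer hypotheses actually invoked.
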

\begin{proof}
Take $\mathbf{M}_{\epsilon}^{\cal F}\supset \{u_m\} \rightharpoonup u\in W^{1,p}(M)^{\cal F}$. 
Due to the basic Rellich--Kondrachov Theorem \ref{thm:reillich}, we get that 
$\int_M|u_m|^{p^*} \density_{\metric} \rightarrow \int_M|u|^{p^*}\density_{\metric}$.  
Since $p^*\geq 2$, up to passing to a subsequence, we can suppose that $\{u_m\}$ converges pointwise almost everywhere to $u$. 
Since for a fixed $x\in M$ the map $u\mapsto F(u(x),x)$ continuous, then $F(u_m,\cdot)\colon x\mapsto F(u_m(x),x)$ 
converges pointwise a.e. to $F(u(x),x)$. We now prove that the following holds:
    \begin{enumerate}[(i')]
    \item \[
    \left|F(u_{m} (x),x)-F(0,x)\right|\leq a(x)|u_m(x)| + \frac{k}{p^*}|u_m(x)|^{p^*}~\text{a.e. }x\in M 
    \]
        \item $\int_{M} a |u_m|\density_{\metric} \rightarrow \int_{M} a |u|\density_{\metric}$.  
    \end{enumerate}
    With these points, the Generalized Dominated Convergence Theorem implies
		\[
		\int_MF(u_m,x) \density_{\metric}\rightarrow \int_M F(u,x)\density_{\metric}.
		\]
    This verifies that $\mathbf{M}_{\epsilon}^{\cal F}$ is weakly closed, 
		since if $\{u_m\}\subset\mathbf{M}_{\epsilon}^{\cal F}$ weakly converges to $u$, 
		then $(c\epsilon)^{\tfrac{1}{r+1}}=\int_MF(u_m,x)\density_{\metric}$ for each $m$. 
		Under the assumption $\lim_{m\to\infty}\int_{M} F(u_{m},x)\density_{\metric}=\int_MF(u,x)\density_{\metric}$, 
		we get that $\int_MF(u,x)\density_{\metric}=(c\epsilon)^{\tfrac{1}{r+1}}$, i.e. $u\in \mathbf{M}^{\cal F}_{\epsilon}$.

     Fix $m\geq 1$ arbitrarily. Observe that both sets
     \[
     [u\geq 0]:=\{x\in M:u(x)\geq 0\},~[u<0]:=\{x\in M:u(x)<0\}
     \]
     are measurable, since $u$ is measurable. Using Item \eqref{eq:weaklyclosedvaivir} 
		and the Fundamental Theorem of Calculus, we get that for $x\in [u\geq 0]$ it holds
    \begin{align*}
        \left|F(u_m(x),x) - F(0,x)\right| &= \left|\int_0^{u_m(x)}f(t,x)\mathrm{d}t\right|\\
        &\leq\int_0^{u_m(x)}|f(t,x)|\mathrm{d}t\\
        &\leq\int_0^{u_m(x)}a(x) + k|t|^{p^*-1}\mathrm{d}t\\
        &= a(x)u_m(x) + k\left(\int_0^{u_m(x)}|t|^{p^*-1}\mathrm{d}t\right)\\
        &= a(x)u_m(x) + \frac{k}{p^*}u_m(x)^{p^*}.
    \end{align*}
    On the other hand, for $x\in [u<0]$, we have
        \begin{align*}
        \left|F(u_m(x),x) - F(0,x)\right| &= \left|\int_{u_m(x)}^0f(t,x)\mathrm{d}t\right|\\
        &=\left|\int_0^{-u_m(x)}f(-t,x)\mathrm{d}t\right|\\
        &\leq\int_0^{|u_m(x)|}|f(-t,x)|\mathrm{d}t\\
        &\leq \int_0^{|u_m(x)|}a(x)+k|t|^{p^*-1}\mathrm{d}t\\
        &= a(x)|u_m(x)|+k|u_m(x)|^{p^*-1}.
    \end{align*}
    This implies (i').

     Let us verify (ii').  
  H\"older's inequality ensures that
    \begin{equation*}
        \left|\int_{M} a\, u_{m}\density_{\metric}\right|\leq 
				\left(\int_{M} |a|^{p^*}\density_{\metric}\right)^{\tfrac{1}{p^*}}\left(\int_{M} |u_{m} |^{\tfrac{np}{n(p-1)+p}}
				\density_{\metric}\right)^{\tfrac{n(p-1)+p}{np}}.
    \end{equation*}
  Since $p^*/(np/n(p-1)+p) \geq 1$, another H\"older's inequality argument implies that there exists $C>0$ such that 
    \[
    \left(\int_{M} |u_{m}|^{\tfrac{np}{n(p-1)+p}} \density_{\metric}\right)^{\tfrac{n(p-1)+p}{np}}\leq C\left(\int_M |u_{m} |^{p^*}  
		\density_{\metric} \right)^{\frac{1}{p^*}}.
		\]
    Thus, 
    \[
    \int_M\left|a\, u_{m}\right|\density_{\metric}\leq  C\left(\int_M|a|^{p^*}\density_{\metric}\right)^{\tfrac{1}{p^*}}\left(\int_M|u_{m}|^{p^*}  
		\density_{\metric} \right)^{\frac{1}{p^*}}.
		\]
    
    Therefore
    \[
    \lim_{m\to\infty}\int_{M} \left|a\, u_{m}\right|\density_{\metric}\leq C\left(\int_{M} |a|^{p^*}\density_{\metric}\right)^{\tfrac{1}{p^*}}\left(\int_M|u|^{p^*}  \density_{\metric} \right)^{\frac{1}{p^*}}< \infty.
    \]      
So there exists $m^*\in \mathbb{N}$ such that the sequence $\left\{\int_M|a\, u_m|\density_{\metric}\right\}_{m\geq m^*}$ is contained in a compact subset of $\mathbb{R}$. Hence, it is possible to extract a converging subsequence $\left\{\int_M|a\, u_{m_k}|\density_{\metric}\right\}_{k\in \bb N}$ to some $L\in \bb R$, and such that the same holds for $U\subset M$ measurable. Since $au_{m_k}$ converges almost everywhere to $au$ 
they converge in measure. Moreover, $au\in L^1(M)$ and thus by \cite[4.5.6. Theorem]{Bogachev} we have that  $L=\int_Ma(x)u(x)\density_{\metric}$. 
Consequently, $\int_{M} a\, u_{m_k}\density_{\metric}\rightarrow \int_{M} a\, u\density_{\metric}$, verifying item (ii').

    Now the Generalized Dominated Convergence Theorem allows us to conclude that 
		$\lim_{m\rightarrow \infty}\int_MF(u_{m},x)\density_{\metric}=\int_MF(u,x)\density_{\metric}$.\end{proof}
 
    Finally, since $J_{\lambda}\Big|_{\mathbf{M}_{\epsilon}^\cal F}$ has the form
    \[
    \weight\Big(\int_M|\nabla u|^p\density_{\metric}\Big)\int_M\cal L(|\nabla u|^2,u,x)\density_{\metric} - \lambda\epsilon,
    \]
    Item \eqref{item:essecoerciva} implies that $J_{\lambda}\Big|_{\mathbf{M}_{\epsilon}^\cal F}$ is coercive and 
		weakly lower-semicontinuous. Therefore, there exists $u\in \mathbf{M}_{\epsilon}^\cal F$ such that $\mathrm{d}J_{\lambda}(u)[v] = 0$ for every $v\in \cal V_u$. The Lagrange Multiplier Theorem \cite{Botelho2013} ensures the existence of some $\theta \in \bb R$ such that
\[
\mathrm{d}J_{\lambda}(u)[v] = \theta\mathrm{d}G(u)[v]~\forall v\in W^{1,p}(M)^{\cal F}.
\]
Similarly to the proof of Proposition \ref{corollary:kirshofinho} we can check that by taking $\lambda^* = \lambda^*(\epsilon,\theta) := 
\frac{\lambda}{c}(r+1)(c\epsilon)^{\tfrac{r}{r+1}} + \theta$, we can rewrite the former equation as
\[
\mathrm{d}J_{\lambda^*}(u)[v] = 0~\forall v \in W^{1,p}(M)^{\cal F}.
\]
Theorem  \ref{theorem-simple-version-Palais-principle-variational-formulation}   
finishes the result. 
\end{proof}

\subsection{Pointed Kirchhoff problems and an open problem}
\label{sec:pointedkirsh}

 \emph{Electrorheological fluids} (see \cite{ruzicka2007electrorheological}) are characterized by their ability to change their mechanical properties drastically when influenced by an external electromagnetic field (see \cite{KANU1999775}): These fluids are such that their viscosity depends on the electric field in the fluid. The field induces (turbulent) string-like formations in the fluid, which are parallel to the field and can raise the viscosity proportionally to the orders of magnitude. Such a phenomenon is known as the \emph{Winslow effect} (\cite{winslow}). This motivates the theory of  \emph{Sobolev Spaces with Variable Exponents}, 
see \cite{diening2011lebesgue} and also \cite{FAN2001424,diening2011lebesgue}.
In this context, one could investigate weak solutions to the following problem:
\begin{equation}\label{eq:pointedkirch}
 -\mathrm{m}\left(\int_{M}\frac{1}{p(x)}|\nabla u|^{p(x)}\mathrm{d}\mu_{\metric}\right)\Delta_{p(x)}u=\lambda |u|^{q(x)-2}
u\left[\int_M\frac{1}{q(x)}|u|^{q(x)}\right]^r 
\end{equation}
where $p, q \in C(M)$,
 $N>q(x) \geq p(x)>1$, $r>0$ and $\lambda$ are real parameters, and $\mathrm{m}\colon\mathbb{R}_+\to \mathbb{R}_+$ is a Kirchhoff function. The $p(x)$-Laplacian operator is defined as 
 \[
 \Delta_{p(x)}u=\mathrm{div}_{\metric}(|\nabla u| ^{p(x)-2}\nabla u).
\]
It is non-homogeneous and depends on the exponent Lebesgue space $L^{p(x)}(M)$ and the variable exponent Sobolev space $W^{1,p(x)}(M)$ 

\begin{question}
\label{question2}
Can  our work  be partially generalized to this context?
\end{question}

\section{Metric partition on Hilbert manifolds}
\label{section-Metric-foliation-Hilbert-manifolds}

In this section, inspired by the proof of Proposition \ref{lemma-new-theorem-Diego-general-version-Palais-Hilbert},
 we will proof Theorem \ref{theorem-CorrosTheorem-A}  
which guarantees a symmetric criticality principle for basic functionals with respect to a metric partition on a Hilbert manifold.

\subsection{Hilbert manifolds}\label{S: Hilbert manifolds}
We begin by presenting the notion of differentiation for Hilbert (or Banach) vector spaces following \cite{Lang} and then present the notion of a Hilbert smooth manifold.

Consider two Hilbert spaces $E$, $F$, an open subset $U\subset E$, and $f\colon U\to F$ a continuous map. We say that \emph{$F$ is Fréchet  
differentiable at $x_0\in U$} if there exists a bounded linear map $A\colon E\to F$ such that
\[
\lim_{\Vert h\Vert_E\to 0 } \frac{\Vert f(x_0+h)-f(x_0)-A(h)\Vert_f}{\Vert h\Vert_E}=0.
\]
The linear map $A$ is uniquely determined, and thus we define the \emph{derivative of $f$ at $x_0$} as $\mathrm{d}f_{x_0} = A$. 
If $f$ is differentiable at each point of $U$, we say that \emph{$f$ is differentiable}, with derivative $\mathrm{d} f\colon U\to \mathrm{L}(E,F)$. Here, $\mathrm{L}(E, F)$ denotes the linear space of all bounded linear operators, and we consider the operator norm on it. In this fashion, $L(E, F)$ is again a Hilbert space to consider higher derivatives. In the case when $\mathrm{d} f$ is a continuous map, we say that $f$ is \emph{of class $C^1$}, and we may consider the derivative of 
$\mathrm{d}f$ at $x_0$, which we denote by $D_{x_0}^2f$. In this way, we may consider higher derivatives of $f$. 
If derivatives of $f$ of any order exist on $U$, then we say that $f$ is a \emph{$C^\infty$-map}, or a \emph{smooth map}.  

Let $\M$ be a second-countable Hausdorff topological space. 
A \emph{Hilbert atlas on $\M$} is a collection of pairs $(U_\alpha,\varphi_\alpha)$, where $\alpha\in \Lambda$ an index set, satisfying:
\begin{enumerate}[(i)]
\item Each $U_\alpha\subset \M$ is an open subset, and $\{U_\alpha\}_{\alpha\in \Lambda}$ is an open cover of $\M$.
\item Each $\varphi_\alpha\colon U_\alpha\to \varphi_\alpha(U_\alpha)\subset E_{\alpha}$ is an homeomorphism  onto an open subset $\varphi_\alpha(U_\alpha)$ of a Hilbert space $E_\alpha$.
\item For any two indices $\alpha,\beta\in \Lambda$, the set $\varphi_\alpha (U_\alpha\cap U_\beta)\subset E_\alpha$ is open.
\item For any two indices $\alpha,\beta\in \Lambda$, the map 
\[
\varphi_\beta\circ\varphi_\alpha^{-1}\colon \varphi_\alpha(U_\alpha\cap U_\beta)\to \varphi_\beta(U_\alpha\cap U_\beta)
\] 
is smooth. 
\end{enumerate}
The space $\M$ equipped with a Hilbert atlas is called a \emph{smooth Hilbert manifold}.

In the case when $E_\alpha=E$ for all indices, we can define a Riemannian metric $g$ on $\M$ (see \cite[Chapter VII]{Lang}), and $(\M,g)$ is called a \emph{smooth Riemannian Hilbert manifold}.


\subsection{Metric partition on Hilbert manifolds}\label{S: Metric foliations on Hilbert spaces} 
In this section, we introduce the notion of metric partition on general smooth Riemannian Hilbert manifolds. The notion of a metric foliation has been defined on finite-dimensional manifolds and finite-dimensional Alexandrov spaces in  \cite{LytchakPre, LytchakPhD}, and also discussed in \cite{KapovitchLytchak20}.

Let $(\M,\metric)$ be a smooth Riemannian Hilbert manifold. For a $C^1$ curve $\gamma\colon [a,b]\to \M$ we define the \emph{length $l_{\metric}(\gamma)$} 
of $\gamma$ as
\[
l_{\metric}(\gamma) = l(\gamma) = \int_a^b \metric(\gamma'(t),\gamma'(t))^{\frac{1}{2}}\, dt = \int_a^b\|\gamma'(t)\|_{\metric}\, dt.
\]
We define the length of a piecewise $C^1$ path as the sum of the lengths of the constituting $C^1$  curves. 
For a connected smooth Riemannian Hilbert manifold $(\M,\metric)$ 
we defined the \emph{distance induced by $\metric$} between two points $p,q\in \M$, denoted by $d_\metric$, as
\[
d_{\metric}(p,q) = \inf\{l(\gamma)\mid \gamma \mbox{ is a piecewise $C^1$ path from $p$ to $q$}\}.
\]

With the definition of distance, we define metric partition on Riemannian Hilbert manifolds.

\begin{definition}\label{D: Metric Partition}
Let $\M$ be a smooth Riemannian Hilbert manifold. A \emph{metric partition } $\FH$ of $\M$ is a partition 
$\FH = \{\LH_u\mid u\in \M\}$ into closed
subsets $\FH_u$, called \emph{leaves}, such that the leaves are locally equidistant. 
That is, for each $u\in \M$, there exists $\varepsilon>0$ such that if $d(u,\LH)<\varepsilon$ for some $\LH\in \FH$, 
then there exists an open neighborhood  $\vizinhancaB\subset \LH_u$ of $u$ in $\LH_u$, such that
for any $v\in \vizinhancaB$ we have $d(v,\LH) = d(u,\LH)$.
\end{definition}

We recall that for a fixed point $u\in \M$ in a Hilbert manifold $\M$, by  \cite[Corollary 5.2 (2)]{Lang} there exists $\varepsilon>0$ 
such the exponential map $\exp_u$ is a diffeomorphism over the closed ball $B_\varepsilon (u)$ of radius $\varepsilon$ centered at $u$.

Given a metric partition $\FH$ on a Hilbert manifold $\M$, we define for a leaf point 
$\LH_u = \{u\}\in \FH$ and $0\leq \lambda \leq 1$ the \emph{homothetic transformation} $h_\lambda\colon B_\varepsilon (u)\to B_\varepsilon (u)$ 
given by $h_\lambda(\exp_u(\zeta)) = \exp_\varepsilon(\lambda \zeta)$. 
Then we have the following result, which is analogous to \cite[Section 6.2, Lemma 6.2]{Molino} for point leaves.

\begin{lemma}[Homothetic transformation lemma]\label{L: Homothetic transformation}
Given $(\M,\FH)$ a metric partition on a Hilbert manifold, we fix $u\in M$ such that $\LH_u = \{u\}$. We fix sufficiently small $\varepsilon>0$, in particular such that the exponential map is a diffeomorphism over the tubular neighborhood 
$B_\varepsilon (u)\subset \M$. Then $B_\varepsilon(u)$ is saturated by $\FH$, i.e. if $\LH\cap B_\varepsilon (u)\neq \varnothing$,
 then $\LH\subset B_\varepsilon(u)$.  Furthermore,  for $0\leq \lambda\leq 1$ the map $h_\lambda$ over $B_\varepsilon(u)$ sends leaves of $\FH$ to leaves of $\FH$.
\end{lemma}

\begin{proof}
Fix $v=\expo_u(\zeta)\in B_\varepsilon(u)\cap \LH$. Then since $\LH_u = \{u\}$, 
we have $d(v,\LH_u)=d(v,u)\leq \varepsilon$. 
We may assume that $\varepsilon$ is small enough so that there exists an open neighborhood 
$\vizinhancaB\subset \LH_v$ of $v$ such that for any $v'\in \vizinhancaB$ we have $d(v',u) = d(v',\LH_u) = d(v,u)\leq \varepsilon$.

Now, for $h_\lambda(v)$ and $v$ there exists open subsets $\Omega\subset 
\LH_v$ and $\Omega_\lambda\subset \LH_{h_\lambda(v)}$ such that for any point $v'\in \Omega_\lambda$ 
the distance $d(v',\Omega)$ is constant. Denoting by $d(v,u) = \rho$, we have $d(v,h_\lambda(v))= \rho(1-\lambda)$. 
From this it follows that $d(v',\Omega) = \rho(1-\lambda)$ for  arbitrary $v'\in \Omega_\lambda$.

Now fix $v'\in\Omega_\lambda$, and denote by $w\in \Omega$ the closest point to $v'$ in $\Omega$. Now observe that  $v'\in \partial B_{\rho\lambda}(u)$, and $w\in  \partial B_{\rho}(u)$. Writing $v'= \expo_u(\zeta')$, the distance $\rho(1-\lambda)$ between $v'$ and $\partial B_{\rho\lambda}(u)$ is only realized by a geodesic of the form $\expo_u(t\zeta')$. Thus, we conclude that $v'= h_\lambda(w)$. This implies that $\Omega_\lambda = h_{\lambda}(\Omega)$. 
\end{proof}

In the following result 
 we stress that the set $\basicspace$ admits a  ``cone structure'' which will   suffice for our purposes in this paper.

\begin{lemma}
\label{lemma-condition-basicspace-geodesic}
Let $(\M,\FH)$ be a metric partition on a Riemannian Hilbert manifold. 
Then the set $\basicspace = \{u\in \M\mid \LH_u = \{u\}\}$  satisfies the following property: 
for $u\in \basicspace$ there exists a $B_{\epsilon}(u)$ such that
if $v\in \basicspace\cap B_{\epsilon}(u)$ and 
$v=\exp_{u}(\zeta)$ then the geodesic $\exp_{u}(t\zeta)$ is contained
in $\basicspace$ for $|t|<1$.  
\end{lemma}

\begin{proof}
Fix $u\in \basicspace$. Observe that by definition, for a sufficiently small $\varepsilon>0$, for every 
$0<r\leqslant\varepsilon$ we have that if the distance from $v$ to $u$ is equal to $r$, 
then the distance from $w\in \LH_v$ to $u$ is $r$. That is if $v\in \partial B_r(u)$, then $\LH_v\subset\partial B_r(u)$.  

For $\lambda\in(0,1]$ we consider the homothety-transformation $h_\lambda\colon B_\varepsilon(u)\to B_\varepsilon(u)$ 
defined as $h_\lambda(\expo_u(\zeta)) = \expo_u(\lambda\zeta)$. 
By Lemma \ref{L: Homothetic transformation}, the map $h_\lambda$ preserves the leaves of the foliation $\FH|_{B_\varepsilon(u)}$.

Let us now see that  if $v\in B_\varepsilon(u)\cap\basicspace$, 
then $h_{\lambda}(v)$ is in a $0$ dimensional leaf. Suppose this is not the case. 
Then there exists a non-constant curve $\gamma\colon I\to \LH_{h_\lambda(v)}$. 
However, since $h_{\lambda}^{-1}$ also maps leaves to leaves, then we get a nonconstant curve $\bar{\gamma}\colon I \to \LH_v$, which is a contradiction. This implies that if $v=\expo_u(\zeta)\in \basicspace$ then the geodesic $\expo_u(t\zeta)$ is contained in $\basicspace$. 
 \end{proof}

\subsection{A version of a principle of symmetric criticality on Hilbert manifolds } 
\label{S: Proof of Theorem A}

We present the proof of the main result of this section, following the proof of \cite[Theorem in Section 2]{palais1979}

First note that 
Lemma \ref{lemma-condition-basicspace-geodesic} allow us to  differentiate
 the operator $\JH$ along geodesics contained in the set $\basicspace$ 
and hence we can define the concept of critical points of $\JH|_{\basicspace}$.

\begin{definition}
 $u\in \basicspace$ is a critical point of $\JH|_{\basicspace}\to\mathbb{R}$  if 
\[
\frac{d}{d t} \JH\circ\gamma(t)|_{t=0}= \mathrm{d} \JH_{u}(\gamma'(0))=0
\]
for all geodesic 
$t\to \gamma(t)=\exp_{u}(t\zeta)\subset\basicspace$, where $|t|<1$. 
\end{definition}

\begin{theorem}
\label{theorem-CorrosTheorem-A}
Let $\M$  be a Riemannian Hilbert manifold, with
a  metric partition (see Definition \ref{D: Metric Partition}) $\FH$. Let
$\JH \colon \M\to\mathbb{R} $ be a differential operator constant along
the leaves of $\FH$ and denote by $\basicspace$
the set of $0$ dimensional leaves of $\FH$.
If $b\in \basicspace$ is a critical point of $\JH|_{\basicspace}\colon \basicspace\to\mathbb{R}$
then $b$ is a critical point of $\JH$.
\end{theorem}

\begin{proof}[Proof of  Theorem \ref{theorem-CorrosTheorem-A}]
We start by recalling some observations from  \cite[p.~23]{palais1979}. For  the given $C^1$ 
function  $\JH \colon \M\to \R$ there is a unique associated gradient vector field $\nabla \JH\colon \M\to T\M$ on 
$\M$ (see \cite[Proposition 6.1]{Lang}) which is related to the differential 
$\mathrm{d}\JH$ of $\JH$ by the following identity for any $u\in \M$:
\[
\mathrm{d} \JH_{u}(V) = \langle V,\nabla \JH(u)\rangle_u,\quad \mbox{for all } V\in T_u \M.
\] 
From this, it follows that $\nabla \JH(u) = 0$ if and only if $u$ is a critical point of $\JH$.

Fix now  a critical point $u\in \basicspace$ of $\JH|_{\basicspace}$  and assume that $\nabla \JH(u)\neq 0$. 

Set $\zeta=\nabla \JH(u)\in T_u\M$, and consider $t_0>0$ small enough so that for $t_0\geq t \geq 0$ 
the curve $\gamma_0(t)=\expo_u(t\zeta)$ is well defined. Moreover we take $t_0$ small enough \textcolor{blue}{so that on} $B_{t_0}(\bar{0})\subset T_u\M$ 
the exponential map $\expo_u$ is a diffeomorphism onto $B_{t_0}(u)\subset \M$. 
We now assume that for $v=\gamma_0(t_0)$ the leaf $\LH_v$ is at least $1$-dimensional. Then, there exists in 
$B_{t_0}(u)\cap \LH_v$ a  non-constant  curve $\alpha\colon (-\varepsilon,\varepsilon)\to \LH_v\cap B_{t_0}(u)$ such that $\alpha(0) = v$. 
We observe that by construction there exists a curve $\zeta\colon (-\varepsilon,\varepsilon)\to T_u\M$  such that $\zeta(0) = \zeta$ 
and the curve $\alpha$ is of the form $\alpha(s) = \expo_u(t_0\zeta(s))$. Recall that for $0<\lambda\leq 1$ 
the map $h_\lambda\colon B_{t_0}(u)\to B_{t_0}(u)$ given by $h_\lambda(\expo_u(\zeta))=\expo_u(\lambda\zeta)$ preserves the foliation $\FH$. 
 We also stress that the leaf $\LH_{v}$ stays equidistant from the leaf $\LH_{u}=\{u\}$.  
Thus we can choose $\zeta(s)$ so that $\|t_0\zeta(s)\|=\|t_0\zeta\|$.
Then for each $s\in (-\varepsilon,\varepsilon)$ we get a geodesic $\gamma_s\colon [0,t_0]\to \M$ 
of $\M$ given by $\gamma_s(t)= \expo_u(t\zeta(s))$. Observe that $\gamma_s(0) = u$, $\gamma_{s}'(0) = \zeta(s)$, 
$\|\zeta(s)\|=\|\zeta\|$,
and $\gamma_s(t)\in \LH_{\gamma_0(t)}$. 
In particular we have $\JH\circ \gamma_s(t) = \JH\circ\gamma_0(t)$ for each $s$ and $t$. Therefore we get
\begin{linenomath}
\begin{align*}
    \langle \nabla \JH(u), \gamma_s'(0) \rangle &= \frac{d}{dt}(\JH\circ\gamma_s(t))|_{t=0}\\
    &=\frac{d}{dt}(\JH\circ\gamma_0(t))|_{t=0}\\
    &=\langle \nabla \JH(u),\zeta\rangle\\
    &=\|\nabla \JH(u)\|^2.
\end{align*}
\end{linenomath}
 Thus we get that $\|\nabla \JH(u)\|\|\zeta(s)\|\cos(\theta_s)=\|\nabla \JH(u)\|^2$, 
where $\theta_s$ is the angle from $\zeta(s)$ to $\zeta=\nabla \JH (u)$. 
Given that $\|\zeta(s)\|=\|\nabla \JH(u)\|\neq 0$ we conclude that $\theta_s=0$.
That is we have 
\[
\gamma_s'(0) = \nabla \JH (u).
\]
This implies that the curve $\alpha(s)= \expo_u(t_0\gamma_s'(0))$ is constant, which is a contradiction. Thus we conclude that $\widetilde{L}_v$ is $0$-dimensional, 
 and hence, by applying Lemma \ref{lemma-condition-basicspace-geodesic},    we have that $\exp_u(t\nabla \JH(u))\in \basicspace$. 
Now by definition we have $\|\nabla \JH(u)\|^2=\mathrm{d} \JH_{u}(\nabla \JH(u))=\frac{d}{d t} (\JH(\exp_u(t\nabla \JH(u))))|_{t=0}=0$, 
which is a contradiction. 
Therefore $\nabla \JH(u)=0$, i.e., $u$ is a critical point of $\JH$.
\end{proof}

\begin{remark}
Consider the inner product $\langle \cdot,\cdot\rangle_{\perp}$ defined in Equation \eqref{eq1-lemma-new-theorem-Diego-general-version-Palais-Hilbert}
 and let  $W^{1,2}_{0,\perp}(U_r)$ be the closure of $C^{\infty}_{c}(U_r)$ (i.e, \emph{the completion})  
with respect to  the associated norm $\|\cdot\|_{\perp}$. 
As observed in Lemma \ref{lemma1-new-theorem-Diego-general-version-Palais-Hilbert}, there exists a linear isometric action
$W^{1,2}_{0,\perp}(U_r)\times  \mathcal{G}^{\ell}\to W^{1,2}_{0,\perp}(U_r)$. 
If this action is at least continuous, we would have that the partition by orbits would be a metric partition, because $\mathcal{G}$ is compact (the fiber of the source map are compact and $B$ is compact).
Thus, it would be possible to use the above theorem to give an alternative proof of Theorem \ref{theorem-Diego-general-version-Palais-Hilbert}.
To avoid additional technicalities, we do not pursue this direction here and leave the verification of continuity as an open question.
We believe this additional line of argument can be addressed in future investigations, 
particularly in the context of groupoid actions on Banach and Hilbert spaces.
\end{remark}

\section{Appendix: Linear Holonomy groupoid and average map}
\label{section-appendix}

In this section, we address some technical lemmas used in Subsection \ref{subsection-avarage-first-intuition} 
and in the proof of Proposition \ref{proposition-new-MALEX--Av-l-orbitlike}.

\subsection{Properties of the lifted foliation $\widetilde{\mathcal{F}}$}
 \label{MALEXsec-Sasaki metrics-SRF-holonomy-groupoid}

Let us start by stressing a few properties of the foliation constructed in Example  \ref{example-holonomy-groupoid-lifted-foliation}

\begin{lemma}%
\label{Properties-Lifted-Foliation}
\quad
\begin{enumerate}
\item[(a)] $\widetilde{\mathcal{F}}$ is a regular foliation. In addition for each $\xi \in \mathbb{O}(E)$
\begin{equation*}
T_\xi \widetilde{L}_{\xi} = \widetilde{\mathcal{T}}_{\xi} \oplus T_\xi G_{p}^{0} (\xi), \hspace{1cm} T_\xi \mathbb{O}(E)_p 
= T_\xi \widetilde{S}_\xi \oplus T_\xi G_{p}^{0} (\xi),
\end{equation*}
where $\widetilde{\pi} (\xi) = p$ and $\widetilde{S}_\xi$ is a slice at $\xi$.

\vspace{0.25\baselineskip}

\item[(b)] Given any two leaves of $\widetilde{\mathcal{F}}$, there is a $T \in \mathbb{O}(k)$ 
such that $\mathrm{m}_T$ (the right action of $T$) is a diffeomorphism between them.

\vspace{0.25\baselineskip}

\item[(c)] $\widetilde{\mathcal{F}}$ has trivial holonomy.

\vspace{0.25\baselineskip}

\item[(d)] There exists a fiberwise metric $\widetilde{\mathrm{g}}$ on $T \widetilde{\mathcal{F}}$ such that:
\begin{enumerate}
\item[(d.1)]  $\tilde{\pi}_{\widetilde{L}}\colon \widetilde{L}\to B$ 
a Riemannian submersion, so that the fibers $\{G_{p}=\tilde{\pi}_{\widetilde{L}}^{-1}(p)\}_{p\in B}$ are isometric.
\item[(d.2)] $\mathrm{m}_T$ restricted to any leaf is an isometry, for all $T \in \mathbb{O}(k)$.
\end{enumerate}
\end{enumerate}
\end{lemma}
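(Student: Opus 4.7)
The plan is to exploit the key structural fact that lifting from $E$ to the orthonormal frame bundle $\mathbb{O}(E)$ turns the potentially non-free left action of $G_p$ on $E_p$ into a \emph{free} action on $\mathbb{O}(E)_p$ (since a linear isometry fixing an orthonormal frame is the identity), and that this free left action commutes with the right $\mathbb{O}(k)$-action on frames. All four items should follow from this rigidity.

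For item (a), I would first observe that each lifted linearized flow $\widetilde{\varphi}^\ell$ acts freely on orthonormal frames, so the infinitesimal generators of $\widetilde{\mathcal{F}}$ span a subspace of \emph{constant} dimension at every point $\xi \in \mathbb{O}(E)$, yielding regularity. The first decomposition is obtained by splitting the generators of $\widetilde{\varphi}^\ell$ into a ``horizontal'' part (the horizontal lift of linearized flows whose base projection is nontrivial, giving exactly $\widetilde{\mathcal{T}}_\xi$) and a ``vertical'' part (the generators of the $G_p^0$-action on $\mathbb{O}(E)_p$, giving $T_\xi G_p^0(\xi)$). The second decomposition is the standard slice decomposition for the free $G_p^0$-action on $\mathbb{O}(E)_p$.

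For items (b) and (c), I would use that $\mathrm{m}_T$ commutes with every lifted linearized flow $\widetilde{\varphi}^\ell$ (since left fiberwise isometries are $\mathbb{O}(k)$-equivariant with respect to the right principal action on frames), so $\mathrm{m}_T$ maps leaves of $\widetilde{\mathcal{F}}$ to leaves of $\widetilde{\mathcal{F}}$. Given two leaves, both project onto $B$ (using the zero section as a leaf of $\mathcal{F}$ and the horizontal lifts of curves in $B$), so choosing $\xi_1, \xi_2$ in the respective leaves over a common $p \in B$, the free transitive right $\mathbb{O}(k)$-action on $\mathbb{O}(E)_p$ produces the required $T$ with $\mathrm{m}_T(\xi_1) = \xi_2$, and equivariance gives (b). Item (c) then follows because any small transversal to $\widetilde{L}_\xi$ at $\xi$ can be chosen inside the principal $\mathbb{O}(k)$-orbit through $\xi$, and by (b) each leaf meets this orbit in a single $\mathbb{O}(k)$-translate; thus holonomy around any loop must fix the transversal pointwise.

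For item (d), I would build $\widetilde{\mathrm{g}}$ on $T\widetilde{\mathcal{F}}$ by declaring $\widetilde{\mathcal{T}}|_{\widetilde{L}}$ and the vertical directions $T_\xi G_p^0(\xi)$ to be orthogonal, pulling back $\metric_B$ on the horizontal part, and using a fixed $\mathrm{Ad}$-invariant inner product on the Lie algebra of $G_p^0$ to define the vertical part (transported to each orbit via the free action). Then (d.1) is immediate: $\tilde{\pi}_{\widetilde{L}}$ is Riemannian by construction of the horizontal part, and the fibers are isometric copies of $G_p^0$ with a fixed bi-invariant metric. For (d.2), the right $\mathbb{O}(k)$-action preserves $\widetilde{\mathcal{T}}$ (because the connection is metric-compatible, so its induced horizontal on $\mathbb{O}(E)$ is $\mathbb{O}(k)$-invariant), handling the horizontal part, while the commutativity of the left $G_p^0$-action and the right $\mathbb{O}(k)$-action together with the bi-invariance of the Lie algebra inner product handles the vertical part. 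The main technical obstacle will be the coherent globalization across the base: the isotropy groups $G_p^0$ at different $p \in B$ must be identified consistently so that the single ``bi-invariant'' structure transports globally; this is exactly what the holonomy presentation $\mathcal{F}^\ell = \mathrm{Hol}^\tau(\{G_p^0(e_p)\})$ from Theorem \ref{MALEX-theorem-modelo-semi-local} provides, via the parallel transport of $\nabla^\tau$.
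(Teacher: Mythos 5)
Your proposal is correct and follows essentially the same route as the paper: regularity and the two splittings from the freeness of the lifted $G_p^0$-action together with the horizontal/vertical decomposition, items (b) and (c) from the equivariance of $\mathrm{m}_T$ with the lifted linearized flows plus transitivity of the right $\mathbb{O}(k)$-action on each fiber, and the metric in (d) built by declaring $\widetilde{\mathcal{T}}$ orthogonal to the fibers, pulling back $\metric_B$ horizontally, and transporting a bi-invariant metric on the isotropy group via $\mathcal{F}$-parallel frames. You correctly identify the one real subtlety (consistent identification of the isotropy groups across $B$, resolved by bi-invariance under the holonomy transition elements), which is exactly how the paper handles it.
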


\begin{proof} \quad
\begin{enumerate}
\item[(a)] Since the horizontal distribution $\mathcal{T}$ is tangent to the foliation and the foliation is transverse to the fibers of $\mathbb{O}(E)$, we have $\widetilde{\mathcal{T}}_{\xi} \subset T_\xi \widetilde{L}_\xi \not\subset T_\xi \mathbb{O}(E)_p$. When we intersect $T_\xi \mathbb{O}(E) = \widetilde{\mathcal{T}}_{\xi} \oplus T_\xi \mathbb{O}(E)_p$ with $T_\xi \widetilde{L}_\xi$, we obtain
\begin{equation*}
T_\xi \widetilde{L}_\xi = \widetilde{\mathcal{T}}_{\xi} \oplus T_\xi (\mathbb{O}(E)_p \cap \widetilde{L}_\xi) 
= \widetilde{\mathcal{T}}_{\xi} \oplus T_\xi G_{p}^{0} (\xi).
\end{equation*}

\noindent Then the regularity of the foliation $\widetilde{\mathcal{F}}$ follows from the fact that the action of $G_{p}^{0}$ on $\mathbb{O}(E)_p$ is free.

\noindent Moreover, the transversality of the foliation to the fibers of $\mathbb{O}(E)$ implies that $T_\xi \widetilde{S}_\xi \subset T_\xi \mathbb{O}(E)_p \not\subset T_\xi \widetilde{L}_\xi$. Intersecting  $T_\xi \mathbb{O}(E) = T_\xi \widetilde{S}_\xi \oplus T_\xi \widetilde{L}_\xi$ 
with $T_\xi \mathbb{O}(E)_p$ imply
\begin{equation*}
T_\xi \mathbb{O}(E)_p = T_\xi \widetilde{S}_\xi \oplus T_\xi (\widetilde{L}_\xi \cap \mathbb{O}(E)_p) 
= T_\xi \widetilde{S}_\xi \oplus T_\xi G_{p}^{0} (\xi).
\end{equation*}

\vspace{0.25\baselineskip}

\item[(b)] First we note that $\mathrm{m}_T (\widetilde{\varphi}^\ell \circ \xi) = \widetilde{\varphi}^\ell \circ \mathrm{m}_T (\xi)$ 
for all linearized flow $\widetilde{\varphi}^\ell$ and $T \in \mathbb{O}(k)$, which implies that $\mathrm{m}_T$ is a foliated diffeomorphism.

\noindent Finally, each leaf of $\mathcal{F}^\ell$ intersects all the fibers of $E$ (by construction), and $\widetilde{\mathcal{F}}$ 
inherit this property. Then given $\xi, \zeta \in \mathbb{O}(E)$ there is $\hat{\xi} \in \mathbb{O}(E)_{\pi(\xi)} \cap L_\zeta$ 
and $T \in \mathbb{O}(k)$ such that $\hat{\xi} = \mathrm{m}_T (\xi)$. 
Since $\mathrm{m}_T$ is a foliated diffeomorphism, $\mathrm{m}_T$ restricts to a diffeomorphism between $L_\xi$ and $L_{\hat{\xi}} = L_\zeta$.

\vspace{0.25\baselineskip}

\item[(c)] We consider $\varphi_{\tilde{\alpha}}\colon \widetilde{S}_{\xi_0} \subset \mathbb{O} (E)_{\pi(\xi_0)} \to \widetilde{S}_{\xi_0}$ 
to be a holonomy map along a loop $\alpha$. Since the right action of $\mathbb{O}(k)$ in $\mathbb{O}(E)$ 
is transitive and foliated, for a given $\xi \in \widetilde{S}_{\xi_0}$ there is $T_\xi \in \mathbb{O}(k)$ such that $\xi = \xi_0 \cdot T_\xi$ and
\begin{equation*}
\varphi_{\tilde{\alpha}} (\xi_0 \cdot T_\xi) = \varphi_{\tilde{\alpha}} (\xi_0) \cdot T_\xi = \xi_0 \cdot T_\xi
\end{equation*}
Hence $\varphi_{\tilde{\alpha}}$ must be the identity map.

\vspace{0.25\baselineskip}

\item[(d)] 
We start by defining the metric $\tilde{\metric}|_{\tilde{\pi}_{\widetilde{L}}^{-1}}=\langle \cdot, \cdot\rangle$ on the fibers. 
We consider a cover  $\{ \vizinhancaB_{\alpha}\}$  of $B$ and 
$\{\widetilde{\vizinhancaB}_{\alpha}=\tilde{\pi}_{\widetilde{L}}^{-1}(\vizinhancaB_{\alpha})\}$  a cover of $\widetilde{L}$.  
Set $K=G_{p_{0}}$ and consider a bi-invariant metric $Q$ on $K$.   We recall that $\mathrm{Hol}_{p_0}\subset G_{p_0}$. 
The idea is to define   parametrizations 
$\varphi_{\alpha}\colon K\times \vizinhancaB_{\alpha} \to \widetilde{\vizinhancaB}_{\alpha}$ so that
$d(\varphi_{\beta}^{-1}\circ\varphi_{\alpha})\vec{\Theta}= \vec{\Theta}\cdot h_{\alpha, \beta}$ for $h_{\alpha,\beta}\in \mathrm{Hol}_{p_0}\subset G_{p_0}$ 
and for each $\vec{\Theta}$  right invariant vector field on $K$.  This implies that 
\[
Q\big(d(\varphi_{\beta}^{-1}\circ\varphi_{\alpha})\vec{\Theta}_{i}, d(\varphi_{\beta}^{-1}\circ\varphi_{\alpha})\vec{\Theta}_{j}\big)
= Q\big(\vec{\Theta}_{i}\cdot h_{\alpha,\beta} ,\vec{\Theta}_{j}\cdot h_{\alpha,\beta} \big)
= Q\big(\vec{\Theta}_{i},\vec{\Theta}_{j}\big),
\]
what guarantee  that the metric $\langle \cdot,\cdot \rangle=\varphi_{\alpha}^{*}Q\times p$ does not dependent on $\varphi_{\alpha}$, i.e.  
 it is well defined. Parametrizations $\varphi_{\alpha}$ can be constructed using a $\mathcal{F}$-parallel local frame $\xi_{\vizinhancaB_{\alpha}}$. More precisely
$\varphi_{\alpha}(k,p)=\rho_{\alpha}(k)\cdot \xi_{\vizinhancaB_{\alpha}}$ for  an isomorphism $K\to G_{p_{\alpha}}$ constructed also using a 
$\mathcal{F}$-parallel frame, along a geodesic joining $p_0$ to a chosen point $p_{\alpha}\in \vizinhancaB$. 
Now define the metric on $\widetilde{\mathcal{T}}$ so that 
$\mathrm{d}_\xi \pi\colon  (\widetilde{\mathcal{T}}_{\xi}, \pi^\ast g_p) \to (T_p B, g_p)$ is an isometry and define 
that the distribution $\widetilde{\mathcal{T}}$ to be  orthogonal to the fibers $\{\tilde{\pi}_{\widetilde{L}}^{-1}(p)\}_{p\in B}$. 
The rest of the item follows from the two facts:
\begin{itemize}
\item $\mathrm{m}_T$ is a isometry between $G_{p}^{0}(\xi)$ and $G_{p}^{0} (\xi \circ T)$, 
since $\widetilde{\mu}_{\xi \circ T} = \mathrm{m}_T \circ \widetilde{\mu}_\xi$;
\item $\mathrm{m}_T$ is an isometry between $\widetilde{\mathcal{T}}_\xi$ and this $\widetilde{\mathcal{T}}_{\xi \circ T}$, 
since $\widetilde{\mathcal{T}}$ is a horizontal distribution on $\mathbb{O}(k)$-principal bundle 
(i.e. $\mathrm{d\hspace{0.1ex}} \mathrm{m}_T (\widetilde{\mathcal{T}}) = \widetilde{\mathcal{T}}$) and $\pi \circ \mathrm{m}_T = \pi$.
\end{itemize}
\end{enumerate}
\end{proof}

\begin{remark} \label{action-leaves}
Item (b) of the previous lemma implies that the induced
proper $\mathbb{O}(k)$-action on $T \mathbb{O}(E)$ restricts to $T \widetilde{\mathcal{F}}$ and the fiberwise metric $\widetilde{\mathrm{g}}$ in item (d) assures this action is isometric. 
\end{remark}

\vspace{0.5\baselineskip}

As discussed in Example \ref{example-principal-groupoid}  the metric on $\tilde{s}^{-1}(\xi)$ descends to a metric on $s^{-1}(p)$ via pushforward by $\rho |_{\tilde{s}^{-1}(\xi)}$, which becomes an isometry. Thus we obtain an isometry $\psi_\xi\colon s^{-1} (p)\to \tilde{L}_\xi$, which makes the following diagram commute: 

\begin{center}
\vspace{\baselineskip}
\hspace{0.5cm}
\begin{tikzcd}[column sep={1.5cm}, row sep={1.5cm}]
\tilde{s}^{-1}(\xi) \arrow{r}{\tilde{t}} \arrow[swap]{d}{\rho\hspace{0.25ex}} & \tilde{L}_\xi \\
s^{-1} (p) \arrow[swap]{ur}{\psi_\xi} &
\end{tikzcd}.
\vspace{\baselineskip}
\end{center}

In Lemma \ref{s-fiber-isometric-lifted-leaf} we prove that it is possible to locally collect all the isometries $\psi_\xi:= \tilde{t} |_{\tilde{s}^{-1}(\xi)} \circ (\rho |_{\tilde{s}^{-1}(\xi)})^{-1}$ in a unique submersion $\Psi$.

\begin{lemma}\label{s-fiber-isometric-lifted-leaf}
Given a leaf $\widetilde{L} \in \widetilde{\mathcal{F}}$ and $\xi \in \Gamma(\mathbb{O}(E)_U)$ a local orthonormal frame into $\widetilde{L}$, there exists  $\Psi\colon s^{-1}(U) \to \widetilde{L}$ such that:
\begin{enumerate}
\item[(a)] $\tilde{\pi} \circ \Psi = t$ and $\Psi \circ \textbf{1} = \xi$;

\vspace{0.25\baselineskip}

\item[(b)] $\Psi$ is a submersion and $\Psi|_{s^{-1}(p)} = \psi_{\xi(p)}$ is an isometry, for all $p \in U$;

\vspace{0.25\baselineskip}

\item[(c)] (invariance by linearized flows) $\mu (\psi_{\xi(p)}^{-1} (\widetilde{\varphi}^{\ell}(\xi(p))), u_p) = \varphi^{\ell}(u_p)$, for all linearized flow $\varphi^{\ell}\colon E_U \to E$ and $u_p \in E_U$.
\end{enumerate}
\end{lemma}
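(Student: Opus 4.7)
The plan is to define $\Psi$ directly via the groupoid quotient and then verify the three items by unwinding the definitions assembled in the preceding subsections. For each $g \in s^{-1}(U)$ with $s(g) = p \in U$, since the $\mathbb{O}(k)$-action on $\tilde{\mathcal{G}}_1$ is free and $\rho|_{\tilde{s}^{-1}(\xi(p))}\colon \tilde{s}^{-1}(\xi(p)) \to s^{-1}(p)$ is a diffeomorphism (as noted just before the statement of the lemma), there is a unique lift $\tilde{g} \in \tilde{s}^{-1}(\xi(p))$ with $\rho(\tilde{g}) = g$; I set $\Psi(g) := \tilde{t}(\tilde{g})$. Equivalently, $\Psi|_{s^{-1}(p)} = \psi_{\xi(p)}$. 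Smoothness of $\Psi$ is a consequence of $\xi$ being a smooth section together with the fact that $\rho$ is a principal $\mathbb{O}(k)$-bundle projection, so its inverse along a smooth section is smooth, and $\tilde{t}$ is smooth.

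Property (a) is formal. The identity $\tilde{\pi} \circ \tilde{t} = t \circ \rho$ is the compatibility defining the quotient groupoid $\mathcal{G} = \tilde{\mathcal{G}}/\mathbb{O}(k)$, whence $\tilde{\pi}(\Psi(g)) = \tilde{\pi}(\tilde{t}(\tilde{g})) = t(\rho(\tilde{g})) = t(g)$. For the unit, $\rho$ carries the unit of $\tilde{\mathcal{G}}$ at $\xi(p)$ to the unit of $\mathcal{G}$ at $p$, so $\Psi(\mathbf{1}(p)) = \tilde{t}(\tilde{\mathbf{1}}(\xi(p))) = \xi(p)$. Property (b) is immediate from the construction of the metric on $s$-fibers, which was designed precisely to make each $\psi_{\xi(p)}$ an isometry; since $\psi_{\xi(p)}$ is a diffeomorphism onto $\widetilde{L}_{\xi(p)} = \widetilde{L}$, the differential $d\Psi$ restricted to $T_g s^{-1}(p)$ already surjects onto $T_{\Psi(g)}\widetilde{L}$, so $\Psi$ is a submersion.

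The key step is property (c). Recall from Subsection \ref{MALEXsec-Sasaki metrics-SRF-holonomy-groupoid} that the lifted flow acts by postcomposition on frames viewed as isometries $\mathbb{R}^k \to E_p$; more precisely, $\widetilde{\varphi}^{\ell}(\xi(p)) = \varphi^{\ell} \circ \xi(p)$ as isometries into $E_{\varphi^{\ell}(p)}$. The element $g_\varphi := \psi_{\xi(p)}^{-1}(\widetilde{\varphi}^{\ell}(\xi(p))) \in s^{-1}(p)$ is represented by the unique lift $\tilde{g}_\varphi \in \tilde{s}^{-1}(\xi(p))$ with $\tilde{t}(\tilde{g}_\varphi) = \widetilde{\varphi}^{\ell}(\xi(p))$. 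Writing $u_p = \xi(p)(e)$ for a unique $e \in \mathbb{R}^k$, the formula for the representation $\mu$ recalled in Subsection \ref{MALEX-sec-Linear-Holonomy-Groupoid} gives
\[
\mu(g_\varphi, u_p) \;=\; \widetilde{\varphi}^{\ell}(\xi(p))(e) \;=\; \varphi^{\ell}(\xi(p)(e)) \;=\; \varphi^{\ell}(u_p),
\]
which is the claimed identity.

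The only nontrivial point I anticipate having to justify carefully is that $\widetilde{\varphi}^{\ell}(\xi(p))$ actually lies in the same leaf $\widetilde{L}$ as $\xi(p)$ (so that $\psi_{\xi(p)}^{-1}$ is applicable), and that the arrow $g_\varphi$ does not depend on an auxiliary choice of representative in $\tilde{s}^{-1}(\xi(p))$. The first is the defining property of $\widetilde{\mathcal{F}}$ from Lemma \ref{Properties-Lifted-Foliation}, and the second follows from the injectivity of $\rho|_{\tilde{s}^{-1}(\xi(p))}$ that underlies the very definition of $\psi_{\xi(p)}$.
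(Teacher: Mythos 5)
Your proposal is correct and follows essentially the same route as the paper: you define $\Psi$ on each fiber as $\tilde{t}$ composed with the unique $\rho$-lift into $\tilde{s}^{-1}(\xi(p))$, deduce (a) from $\tilde{\pi}\circ\tilde{t}=t\circ\rho$, obtain (b) from the diffeomorphism $\rho|_{\tilde{s}^{-1}(\xi(U))}$ and the construction of the fiber metric, and verify (c) by the same frame computation $(\varphi^{\ell}\circ\xi(p))\circ\xi(p)^{-1}=\varphi^{\ell}$. No gaps.
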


\begin{proof}
We define the map $\Psi\colon s^{-1}(U) \to \widetilde{L}$ by setting
\begin{equation*}
\Psi (\varphi_{\tilde{\alpha}} \cdot \mathbb{O}(k)) = \tilde{t} (\varphi_{\tilde{\alpha}_\xi}) = \tilde{\alpha}_\xi (1),
\end{equation*}
where $\varphi_{\tilde{\alpha}} \cdot \mathbb{O}(k) \in s^{-1}(U) \subset \mathcal{G}_1$ denotes the $\mathbb{O}(k)$-class of the holonomy map $\varphi_{\tilde{\alpha}}$, and $\varphi_{\tilde{\alpha}_\xi}$ is the unique representative of $\varphi_{\tilde{\alpha}} \cdot \mathbb{O}(k)$ such that $\tilde{\alpha}_\xi (0) = \xi(\alpha(0))$ with $\alpha =\pi \circ \tilde{\alpha}$.

\begin{enumerate}
\item[(a)] Follows directly from this definition and from the fact that $\tilde{\pi} \circ \tilde{t} = t \circ \rho$.

\vspace{0.25\baselineskip}

\item[(b)] We observe that $\tilde{s}^{-1}(\xi(U)) \subset \widetilde{\mathcal{G}}_1$ is a submanifold since $\tilde{s}$ is a submserion and $\xi(U) \subset \widetilde{L}$ is a submanifold. Also $\rho (\tilde{s}^{-1}(\xi(U))) = s^{-1}(U)$ and therefore $\rho_{\xi}:= \rho |_{\tilde{s}^{-1}(\xi(U))}$ is a submersion onto $s^{-1}(U)$.

\noindent Furthermore $\rho_{\xi}$ is injective and therefore a diffeomorphism: Indeed if $\varphi_{\tilde{\alpha}} \cdot \mathbb{O}(k) = \varphi_{\tilde{\beta}} \cdot \mathbb{O}(k) \in \tilde{s}^{-1}(\xi(U))$, then there exists a $T \in \mathbb{O}(k)$ such that $\tilde{\alpha} \cdot T = \tilde{\beta}$ and consequently $\tilde{\alpha}(0) = \xi_{\alpha(0)} = \xi_{\beta(0)} = \tilde{\beta}(0)$. Thus $T = \mathrm{Id}$.

\noindent Then we conclude that $\Psi$ is a submersion since $\Psi = \tilde{t} \circ \rho_{\xi}^{-1}$. This identity also implies that $\Psi|_{s^{-1}(p)} = \psi_{\xi(p)}$ for all $p \in U$, which is an isometry.

\vspace{0.25\baselineskip} 

\item[(c)] Consider $\varphi_{\tilde{\alpha}_\xi}$ the unique representative of $\psi_{x}^{-1} (\widetilde{\varphi}^{\ell}(\xi_p))$ such that $\tilde{\alpha}_\xi (0) = \xi(\alpha(0))$. Then \begin{equation*}
\tilde{\alpha}_\xi (1) = \psi_{\xi(p)} (\varphi_{\tilde{\alpha}_\xi} \cdot \mathbb{O}(k)) = \widetilde{\varphi}^{\ell}(\xi_p) = \varphi^{\ell} \circ \xi_p
\end{equation*}
which implies that
\begin{equation*}
\mu(\psi_{x}^{-1}(\widetilde{\varphi}^{\ell}(\xi_p)), u_p) = (\tilde{\alpha}_\xi (1) \circ \tilde{\alpha}_\xi (0)^{-1}) (u_p) = \varphi^{\ell} (u_p).
\end{equation*}
\end{enumerate}
\end{proof}

\begin{remark} 
We observed in Remark \ref{action-leaves} that the $\mathbb{O}(k)$-action in $T \widetilde{\mathcal{F}}$ is by isometries. Since the target $\tilde{t}$ map is 
$\mathbb{O}(k)$-invariant, the induced fiberwise metric on the fibers of the source $\tilde{s}$ is invariant with respect to the $\mathbb{O}(k)$-action.
\end{remark}

\begin{remark}
The construction of the metric on $\tilde{s}$-fibers can be done in the case of a generic groupoid $\widetilde{\mathcal{G}}$, for which the corresponding Lie algebroid $A = \tilde{\mathbf{1}}^{\ast} \mathrm{Ker\hspace{0.1ex}} (\mathrm{d} \tilde{s}) \to \widetilde{\mathcal{G}}_{0}$ is Riemannian (i.e. provided with a fiberwise metric). According to Boucetta in \cite{Bou11}, the target $\tilde{t}\colon  s^{-1}(p) \to \widetilde{\mathcal{G}}(p)$ restricted to the orthogonal complement of the Lie algebra $\tilde{\mathfrak{g}}_p$ of the isotropy group $\widetilde{\mathcal{G}}_p$ is an isomorphism. It is possible to push forward the metric of $\tilde{\mathfrak{g}}_{p}^{\perp}$ to the tangent space of the orbits $T_p \widetilde{\mathcal{G}}(p)$, which turns out to  be $\tilde{t} |_{s^{-1}(p)}$ into a Riemanian submersion onto the orbit $\widetilde{\mathcal{G}}(p)$. Also, we observe that the fiberwise metric of the fibers of the source $\tilde{s}$ map   
turns out  to be the right multiplication into an isometry.
\end{remark}

\vspace{0.5\baselineskip}

\subsection{Average operator on $\mathcal{G}^{\ell}$} \label{sec:average}

Here we consider the average operator presented in Definition \ref{definition-AV-general}.

\begin{lemma} [Smoothness of the Average]  \label{Av-bem-definido}
The function $\mathcal{V}$ is constant and equal to the volume of any leaf of $\widetilde{\mathcal{F}}$ and $\mathrm{Av}(f)$ is a $\mathcal{G}^\ell$-basic smooth function, for every $f \in C^{\infty}_{c} (E^\delta)$.
\end{lemma}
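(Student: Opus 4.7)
The plan is to prove both claims by reducing integration on the (possibly varying) source fibers $(\mathrm{s}^{\ell})^{-1}(u_p) = \mathrm{s}^{-1}(p) \times \{u_p\}$ to integration on a single fixed leaf $\widetilde{L} \subset \mathbb{O}(E)$ of the orthonormal lifted foliation, via the local isometry $\Psi$ provided by Lemma \ref{s-fiber-isometric-lifted-leaf}.

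First, to show that $\mathcal{V}$ is constant, I would argue that $\mathcal{V}(p) = |\mathrm{s}^{-1}(p)|$ equals $|\widetilde{L}|$ for any leaf $\widetilde{L}$ of $\widetilde{\mathcal{F}}$: Lemma \ref{s-fiber-isometric-lifted-leaf}(b) gives, for any choice of local orthonormal frame $\xi$ with $\xi(p)\in \widetilde{L}$, that $\psi_{\xi(p)} \colon \mathrm{s}^{-1}(p) \to \widetilde{L}_{\xi(p)}$ is an isometry, so $\mathcal{V}(p) = |\widetilde{L}_{\xi(p)}|$. By Lemma \ref{Properties-Lifted-Foliation}(b) any two leaves of $\widetilde{\mathcal{F}}$ are related by a right-translation $\mathrm{m}_T$, which by Lemma \ref{Properties-Lifted-Foliation}(d.2) restricts to an isometry between leaves; hence all leaves have the same volume, and $\mathcal{V}$ is a constant, which we henceforth denote $\mathcal{V}$.

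Next, for smoothness of $\mathrm{Av}(f)$, I would fix $v_{p_0}\in E^\delta$, choose a local orthonormal frame $\xi$ of $\widetilde{\mathcal{F}}$ on a neighborhood $U\subset B$ of $p_0$, and use the submersion $\Psi\colon \mathrm{s}^{-1}(U)\to \widetilde{L}$ of Lemma \ref{s-fiber-isometric-lifted-leaf} to change variables. Since $\Psi$ restricted to each $\mathrm{s}^{-1}(q)$ is the isometry $\psi_{\xi(q)}$, the integral rewrites, for $v_q\in \pi^{-1}(U)$, as
\[
\mathrm{Av}(f)(v_q) = \frac{1}{\mathcal{V}} \int_{\widetilde{L}} f\bigl( \mu(\psi_{\xi(q)}^{-1}(\tilde{\eta}),\, v_q)\bigr)\, \density^{\widetilde{L}}(\tilde{\eta}).
\]
Since $\mu$ is smooth and the dependence $(q, v_q)\mapsto \psi_{\xi(q)}^{-1}$ is smooth (because $\Psi$ is smooth), the integrand depends smoothly on $v_q$; as $f$ has fiberwise compact support, differentiation under the integral sign gives smoothness of $\mathrm{Av}(f)$ on $\pi^{-1}(U)$, and arbitrariness of $v_{p_0}$ concludes smoothness on $E^\delta$.

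Finally, to show that $\mathrm{Av}(f)$ is $\mathcal{G}^\ell$-basic (equivalently, $\mathcal{F}^\ell$-basic), I would exploit that the fiberwise metric on the source fibers of $\widetilde{\mathcal{G}}$, and hence the induced metric on those of $\mathcal{G}^\ell$, is by construction right-invariant. Given an arrow $g_0\in \mathcal{G}^\ell$ with $\mathrm{s}^{\ell}(g_0)=u_p$ and $\mathrm{t}^{\ell}(g_0)=w_q$, right multiplication $R_{g_0}\colon (\mathrm{s}^\ell)^{-1}(w_q)\to (\mathrm{s}^\ell)^{-1}(u_p)$ is then an isometry satisfying $\mathrm{t}^\ell\circ R_{g_0}=\mathrm{t}^\ell$; a change of variable gives
\[
\mathrm{Av}(f)(u_p) = \frac{1}{\mathcal{V}}\int_{(\mathrm{s}^\ell)^{-1}(u_p)} f\circ \mathrm{t}^\ell\, \density^\ell_{u_p}
= \frac{1}{\mathcal{V}}\int_{(\mathrm{s}^\ell)^{-1}(w_q)} f\circ \mathrm{t}^\ell\, \density^\ell_{w_q}
= \mathrm{Av}(f)(w_q),
\]
proving $\mathrm{Av}(f)$ is constant along $\mathcal{G}^\ell$-orbits, i.e.\ along the leaves of $\mathcal{F}^\ell$. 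The main technical point in the whole argument is the smoothness, and the hard part there is ensuring the smooth dependence of the family of isometries $\psi_{\xi(q)}$ on the basepoint $q$; this is precisely packaged by the single smooth map $\Psi$ of Lemma \ref{s-fiber-isometric-lifted-leaf}, so once that lemma is in hand, the rest reduces to standard differentiation under the integral.
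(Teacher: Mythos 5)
Your proof is correct and follows essentially the same route as the paper: constancy of $\mathcal{V}$ via the isometries between source fibers and leaves, smoothness by transporting the integral to a fixed leaf $\widetilde{L}$ through the submersion $\Psi$ of Lemma \ref{s-fiber-isometric-lifted-leaf}, and basicness via right translation by arrows being an isometry compatible with the target map. The only cosmetic differences are that the paper deduces constancy of $\mathcal{V}$ directly from right translations $\mathrm{R}_g$ between $\mathrm{s}$-fibers (rather than comparing leaf volumes via $\mathrm{m}_T$), and phrases the basicness computation at the level of $\mathcal{G}$ using $\mu_{v_q}=\mu_{u_p}\circ\mathrm{R}_g$ rather than at the level of $\mathcal{G}^{\ell}$; both variants rest on the same lemmas.
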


\begin{proof}
Since the $s$-fibers are isometric to any leaf of $\widetilde{\mathcal{F}}$ (see Lemma \ref{s-fiber-isometric-lifted-leaf}), they are compact and hence $\mathcal{V}(p)$ is well defined (i.e., finite). Furthermore, for each $g \in \mathcal{G}_1$, the right multiplication $\mathrm{R}_g\colon s^{-1}(t(g)) \to s^{-1} (s(g))$ is an isometry, which implies that $\mathcal{V}(p)$ is constant.

Furthermore, the average is basic. Indeed, given $(g, u_p) \in \mathcal{G}_1 \times_B E^\delta$, set $\mu(g, u_p) = v_q$ (which implies $t(g) = \pi (v_q) = q$). Since the right multiplication by $g$, $\mathrm{R}_g$, is an isometry and $\mu_{v_q} = \mu_{u_p} \circ \mathrm{R}_g$, it follows that
\begin{equation*}
\mathrm{Av}(f) \left( \mu(g, u_p) \right) = \frac{1}{\mathcal{V}} \int_{\mathrm{s}^{-1}(q)} (f \circ \mu_{v_q}) \, \nu_q = \frac{1}{\mathcal{V}} \int_{\mathrm{s}^{-1}(p)} (f \circ \mu_{u_p}) \, \nu_p = \mathrm{Av}(f) (u_p).
\end{equation*}

\vspace{0.25\baselineskip}

It remains to prove that the average is smooth. We prove this in an open neighborhood of each $u_p \in E^{\delta}$. Consider $\Psi\colon s^{-1}(U) \to \widetilde{L}$ the submersion described in Lemma \ref{s-fiber-isometric-lifted-leaf} with $p \in U$, and denote $\psi_{q} = \Psi |_{s^{-1}(q)}$ for each $q \in U$. The function $\tilde{f}\colon \widetilde{L} \times E_U \to \mathbb{R}$ defined by
\begin{equation*}
\tilde{f} (\xi, v_q) = \left( f \circ \mu_{v_q} \circ \psi_{q}^{-1} \right) (\xi)
\end{equation*}
is smooth by construction. Since $\tilde{f} \circ (\Psi \times_{U} \mathrm{Id}) = f \circ \mu$ for $\Psi \times_U \mathrm{Id}\colon s^{-1}(U) \times_U E_U \to E_U$ defined as $(\Psi \times_{U} \mathrm{Id}) (g, v_q) = (\Psi(g), v_q)$, which is a submersion, then we have
\begin{equation*}
\mathrm{Av}(f)(v_q) = \frac{1}{\mathcal{V}} \, \int_{s^{-1}(q)} (f \circ \mu_{v_q}) \, \nu_{q} = \frac{1}{\mathcal{V}} \int_{\widetilde{L}} \tilde{f} (\cdot, v_q) \, \tilde{\nu},
\end{equation*}
for each $v_q \in E^{\delta}_{U}$ (given that $\psi_q$ is an isometry). This proves the average is smooth in $E^{\delta}_{U}$.
\end{proof}



\end{document}